\numberwithin{equation}{section}
\theoremstyle{definition}
\newtheorem{theorem}{Theorem}
\newtheorem{lemma}[theorem]{Lemma}
\newtheorem{proposition}[theorem]{Proposition}
\newtheorem{corollary}[theorem]{Corollary}
\newtheorem{definition}[theorem]{Definition}
\newtheorem{remark}[theorem]{Remark}
\newtheorem{example}[theorem]{Example}
\newcommand{\obj}{\filledstar}
\newcommand{\op}{\operatorname}
\DeclareMathOperator{\gr}{gr}
\DeclareMathOperator{\End}{\op{End}}
\DeclareMathOperator{\Conn}{\op{Conn}}
\DeclareMathOperator{\Hom}{{Hom}}
\DeclareMathOperator{\str}{str}
\DeclareMathOperator{\tr}{tr}
\tikzstyle{V}=[draw, fill =black, circle, inner sep=0pt, minimum size=1.5pt]
\tikzstyle{C}=[draw, fill =white, circle, inner sep=0pt, minimum size=1.5pt]
\tikzstyle{over}=[draw=white,double=black,line width=2pt, double distance=.5pt]
 \def\CC{\mathbb{C}}
 \def\fgl{\mathfrak{gl}} 
 \def\fp{\mathfrak{p}}
 \def\cX{\mathcal{X}}
 \def\<{\langle}
  \def\>{\rangle}
  \def\half{\frac{1}{2}}
  \def\mC{\mathbb{C}}
\newcommand{\nc}{\newcommand}
\nc{\sVW}{\mathit{s}\hspace{-0.7mm}\bigdoublevee}
\nc{\VW}{\bigdoublevee}
\nc{\VWd}{{\bigdoublevee}_d}
\nc{\sBr}{\mathit{s}\mathcal{B}\mathit{r}}
\nc{\GVW}{\mathit{gs}\hspace{-0.7mm}\bigdoublevee}
\nc{\pn}{\mathfrak{p}(n)-\text{mod}}
\nc{\C}{\mathbb{C}}
\nc{\Z}{\mathbb{Z}}
\nc{\s}{\sigma}
\nc{\NN}{\mathbb{N}}
\nc{\Bra}{B^{(r)}_a}
\nc{\Cra}{C^{(r)}_a}
\nc{\FVect}{\C\mathrm{-fmod}}
\nc{\GVect}{\C\mathrm{-gmod}}
\nc{\icom}{\color{orange} Iva: }
\newcommand\TikZ[1]{\begin{matrix}\begin{tikzpicture}#1\end{tikzpicture}\end{matrix}}
\begin{document}

\title{The affine VW supercategory}
 
\author[]{M. Balagovi\'c 
\and Z. Daugherty 
\and I. Entova-Aizenbud 
%\and Maria Gorelik 
\and I. Halacheva 
\and J. Hennig 
\and M. S. Im 
\and G. Letzter 
\and E. Norton 
\and V. Serganova 
\and C. Stroppel}

\address{M. B.: School of Mathematics, Statistics and Physics, Newcastle University, Newcastle upon Tyne, NE1 7RU UK, \it{martina.balagovic@newcastle.ac.uk}}
%\email{martina.balagovic@newcastle.ac.uk} 
\address{Z. D.: Department of Mathematics, City College of New York, New York, NY 10031 USA,  \it{zdaugherty@gmail.com}}
%\email{zdaugherty@gmail.com}
\address{I. E.: Department of Mathematics, Ben-Gurion University, Beer-Sheva, Israel, \it{inna.entova@gmail.com}}
%\email{inna.entova@gmail.com}
\address{I. H.: Department of Mathematics and Statistics, Lancaster University, Lancaster, LA1 4YF UK, \it{ihalacheva@gmail.com}}
%\email{i.halacheva@lancaster.ac.uk}
\address{J. H.: Center for Communications Research, San Diego, CA 92121 USA, \it{jhennigmath@gmail.com}}
%\email{jhennig1@ualberta.ca}
\address{M. S. I.: Department of Mathematical Sciences, United States Military Academy, West Point, NY 10996 USA, \it{meeseongim@gmail.com}}
%\email{meeseongim@gmail.com}
\address{G. L.: Department of Defense, Ft. George G. Meade, MD 20755 USA, \it{gletzter@verizon.net}}
%\email{gletzter@verizon.net}
\address{E. N.: Max Planck Institute for Mathematics, Vivatsgasse 7, 53111 Bonn, Germany, \it{enorton@mpim-bonn.mpg.de}}
%\email{enorton@mpim-bonn.mpg.de}
\address{V. S.: Department of Mathematics, University of California at Berkeley, Berkeley, CA 94720 USA, \it{serganov@math.berkeley.edu}}
%\email{serganov@math.berkeley.edu}
\address{C. S.: Hausdorff Center, University of Bonn, Endenicher Allee 60,  53115 Bonn, Germany, \it{stroppel@math.uni-bonn.de}}
%\email{stroppel@math.uni-bonn.de}\date{\today}

\begin{abstract}	
	We define the affine VW supercategory $\sVW$, which arises from studying the action of the periplectic Lie superalgebra $\fp(n)$ on the tensor product $M\otimes V^{\otimes a}$ of an arbitrary representation $M$ with several copies of the vector representation $V$
	of $\fp(n)$. It plays a role analogous to that of the degenerate affine Hecke algebras in the context of representations of the general linear group; the main obstacle was the lack of a quadratic Casimir element in $\fp(n)\otimes \fp(n)$. When $M$ is the trivial representation, the action factors through the Brauer supercategory $\sBr$. Our main result is an explicit basis theorem for the morphism spaces of $\sVW$ and, as a consequence, of $\sBr$. The proof utilises the close connection with the representation theory of $\fp(n)$. As an application we explicitly describe the centre of all endomorphism algebras, and show that it behaves well under the passage to the associated graded and under deformation.
\end{abstract}

\maketitle 
\vspace{-0.6cm}
\tableofcontents
\vspace{-0.6cm}

\addtocontents{toc}{\protect\setcounter{tocdepth}{0}}
\section*{Introduction}

\subsection*{Classical and higher Schur-Weyl duality}
Classical and higher Schur-Weyl dualities are important tools in representation theory. Working over the fixed ground field $\mathbb{C}$, the classical {\it Schur-Weyl duality} for the general linear Lie algebra $\mathfrak{gl}_n$ refers to the double centralizer theorem applied to the commuting actions of $\mathfrak{gl}_n$ and the symmetric group $S_a$ 
\begin{align}
\mathfrak{gl}_n\;\curvearrowright V^{\otimes a}\;\curvearrowleft S_a \label{SWtriv}
\end{align}
on the tensor product of $a$ copies of the vector representation $V$. By {\it (higher) Schur-Weyl duality} (see \cite{AS}, \cite{BK}) we mean the existence of commuting actions 
\begin{align}
\mathfrak{gl}_n\;\curvearrowright M\otimes V^{\otimes a}\;\curvearrowleft H_a \label{SW}
\end{align}
of $\mathfrak{gl}_n$ and the degenerate affine Hecke algebra $H_a$ on the tensor product of an arbitrary $\mathfrak{gl}_n$-representation $M$ with $V^{\otimes a}$. The degenerate affine Hecke algebra $H_a$, introduced by Drinfeld \cite{Drinfeld} and Lusztig \cite{Lusztig}, contains the group algebra $\mathbb{C}[S_a]$ and the polynomial algebra $\mathbb{C}[y_1,\ldots, y_a]$ as subalgebras, and is isomorphic as vector space to $\mathbb{C}[S_a]\otimes \mathbb{C}[y_1,\ldots, y_a]$. In particular it has a basis 
\begin{eqnarray*}
%\label{basis}
\mathcal{B}&=&\{wy_1^{k_1}\cdots y_a^{k_a}\mid w\in S_a, k_i\in \mathbb{N}_0\}.
\end{eqnarray*}
The action of the symmetric group on $M\otimes V^{\otimes a}$ is %as in (\ref{SWtriv})
 given by permuting the tensor factors of $V^{\otimes a}$. To get the action of the polynomial generators $y_i$, one additionally considers the Casimir element 
\begin{eqnarray}
\label{Casimir}
\Omega^{\mathfrak{gl}_n}&=&\sum_{1\leq i,j\leq n}E_{ij}\otimes E_{ji}\in \mathfrak{gl}_n\otimes \mathfrak{gl}_n,
\end{eqnarray}
labels the tensor factors of $M\otimes V^{\otimes a}$ as $0,1,\ldots, a$, and then sets
\begin{eqnarray}
\label{yi-action-Hecke}
y_i=\sum_{j=0}^{i-1}\Omega_{ji}^{\mathfrak{gl}_n},
\end{eqnarray}
with $\Omega_{ji}$ denoting the action of $\Omega$ on the $j$-th and $i$-th tensor factors of $M\otimes V^{\otimes a}$. 
These operators satisfy $y_{i+1}=s_iy_is_i+ s_i$ for $s_i=(i,i+1)\in S_a$, and  define an action of $H_a$. When $M$ is the trivial representation, this action factors through the quotient $H_a\to \mathbb{C}[S_a]$, and (\ref{SW}) reduces to (\ref{SWtriv}). The quotient map $H_a\to \mathbb{C}[S_a]$ sends $y_1,\ldots, y_a$ to the Jucys-Murphy elements of $\mathbb{C}[S_a]$. 

The existence of \eqref{SWtriv} and \eqref{SW} allows one to pass knowledge about the representation theory between the two sides of the duality. It is also crucial for the construction and definition of $2$-Kac Moody representations in the sense of Rouquier, \cite{Rouquier}.

\subsection*{Commuting actions for the periplectic Lie superalgebras $\mathfrak{p}(n)$}

We aim to establish a duality analogous to \eqref{SW} in a situation where $\mathfrak{gl}_n$ is replaced by the {\it periplectic Lie superalgebra} $\mathfrak{p}(n)$. The family $\mathfrak{p}(n)$, $n\ge 2$, is the first family of so-called ``strange'' Lie superalgebras in the classification of reductive Lie superalgebras \cite{Kac}. The hope is to use a duality like \eqref{SW} as a tool in understanding the representation theory of $\mathfrak{p}(n)$.

The superalgebra $\mathfrak{p}(n)$ is defined as the subalgebra of the general linear superalgebra $\mathfrak{gl}(n|n)$, consisting of all elements preserving a certain bilinear form $\beta$ on the vector representation $V$ of $\mathfrak{gl}(n|n)$ (see Section \ref{sec:periplectic} for the definition). The duality analogous to \eqref{SWtriv} has been established in \cite{M}, where it was shown that the centralizer algebra $\End_{\mathfrak{p}(n)}(V^{\otimes a})$ is a certain \emph{Brauer superalgebra}, a signed version of the Brauer algebra. One would like to add polynomial generators $y_1,\ldots, y_a$ to the Brauer superalgebra, and define their action on the tensor product $M\otimes V^{\otimes a}$ of an arbitrary $\mathfrak{p}(n)$-representation $M$ with $a$ copies of the vector representation $V$ using an analogue of \eqref{yi-action-Hecke} for some suitably defined element $\Omega\in \mathfrak{p}(n)\otimes  \mathfrak{p}(n)$, which centralizes the action of $\mathfrak{p}(n)$ on tensor products. Unfortunately, such an element {$\Omega$ does not exist in $\mathfrak{p}(n)\otimes  \mathfrak{p}(n)$.

The main idea is to instead consider a {\it fake} Casimir element (see also \cite{BDEHHILNSS}) $$\Omega=\sum_{b\in \cX}{b\otimes b^*}\in \mathfrak{p}(n)\otimes  \mathfrak{gl}(n|n).$$ 
Here $\cX$ is a basis of $\mathfrak{p}(n)$, and $\{b^*\mid b\in \cX\}$ is the dual basis with respect to the supertrace form on $\mathfrak{gl}(n|n)$. This element does not act on a tensor product $M\otimes N$ of arbitrary $\mathfrak{p}(n)$-representations, but does act on the tensor product $M\otimes V$ of an arbitrary $\mathfrak{p}(n)$-representation $M$ and the vector representation $V$ for $\fgl(n|n)$. A formula analogous to \eqref{yi-action-Hecke} defines the action of commuting elements $y_1,\ldots, y_a$ on $M\otimes V^{\otimes a}$, centralizing the $\mathfrak{p}(n)$ action. We thus obtain, see Proposition~\ref{prop:funct}, commuting actions
\begin{eqnarray}
\label{SWpn}
\mathfrak{p}(n)\;\curvearrowright M\otimes V^{\otimes a}\;\curvearrowleft {\sVW}_a, 
\end{eqnarray}
of $\mathfrak{p}(n)$ and a certain {\it affine VW superalgebra} $\sVW_a$. More generally, we establish an action of the affine VW super{\it category} $\sVW$, see Section~\ref{section:sVW}, on the category of modules of the form $M\otimes V^{\otimes a}$ obtained by varying $a$.  Our main result (Theorem~\ref{Thm2}) gives an explicit basis of all the morphism spaces in  $\sVW$. 

The linear independence is proved using the duality \eqref{SWpn} for a specific choice for $M$, namely a Verma module of highest weight $0$. We verify that the PBW filtration on $M$ is compatible with a filtration on the algebras $\sVW_a$, which we build to mimic the filtration by the degree of the polynomials in $\mathbb{C}[y_1,\ldots, y_a]$ in case \eqref{SW}.  We explicitly describe the associated graded algebra and deduce the basis theorem from there. As an application we give a description of the centre of all endomorphism algebras involved. The arguments involve the concept of PBW-deformations and (noncommutative) Rees algebras.

\subsection*{Links to other results of this type}
A special feature of the periplectic Lie superalgebras is that $\sVW_a$ are {\it super}algebras, since the involved endomorphism algebra has odd generators. This does not occur in the context of higher Schur-Weyl dualities of the classical Lie superalgebras (see \cite{CW}, \cite{Sergeev} for a general treatment, \cite{BS5}, \cite{ES2}, \cite{LZ} for different cases with $M=\mathbb{C}$, and \cite{BS4}, \cite{DRV}, \cite{ES1}, \cite{RS}, \cite{Sartori} for higher dualities).  

The superalgebra $\sVW_a$ is a super (or signed) version of the affine VW algebra, defined in \cite{Nazarov} and studied in \cite{ES1} in the context of higher Schur-Weyl dualities for classical Lie algebras  in type $BCD$. In other words, it is a super version of the degenerate BMW algebras, see e.g. \cite{DRV}. This means that, in addition to involving superalgebras, the duality \eqref{SWpn} also has flavours of type $BCD$. In diagrammatic terms, this means working with generalized dotted Brauer diagrams with height moves involving signs.

A basis theorem for the endomorphism algebras of objects in $\sVW$ was obtained independently in \cite{CP} by an algebraic method developed in \cite{Nazarov}, also using the fake Casimir operator. The Brauer superalgebras recently appeared in the literature under the names {\it odd Brauer algebras}, {\it marked Brauer algebras} or {\it periplectic Brauer algebras}, indicating the slightly different points of view on the subject. 

Brauer supercategories can be realized as subcategories, as well as quotients, of the VW supercategories. (In terms of representations, this corresponds to taking $M$ to be the trivial representation; they are a super version of the classical Brauer categories as defined e.g.\ in \cite{LZ2}). As a direct consequence of our basis theorem we thus obtain a basis theorem for the Brauer supercategories, hence reprove results from \cite{BE}, \cite{KT} and \cite{M}. Under this quotient, the elements $y_1,\ldots, y_a$ of the superalgebra $\sVW_a$ specialise to Jucys-Murphy elements in the Brauer superalgebras. This allows one to apply the Cherednik \cite{Cherednik} and Okounkov-Vershik \cite{CST}, \cite{OV}  approaches in this context. First steps in this direction were already successfully taken in \cite{BDEHHILNSS} and \cite{C} from different perspectives to determine the blocks and decomposition numbers in the category of finite dimensional representations of $\mathfrak{p}(n)$ and of the Brauer superalgebra, and further developed in \cite{EC}. A thorough treatment of the corresponding category $\mathcal{O}$ is so far missing and will be deferred to subsequent work. 
 
 \subsection*{The roadmap of the paper}
In Section 2 we define the Brauer supercategory $\sBr$, the VW supercategory $\sVW$, and their endomorphism algebras $\sBr_a$ and $\sVW_a$, and state the main results, Theorems \ref{Thm1} and \ref{Thm2}. In particular, Theorem~\ref{Thm2} gives bases $S_{a,b}^\bullet$ of the endomorphism spaces of $\sVW$. In Section 3 we prove that $S_{a,b}^\bullet$ are spanning sets using  a topological argument. In Section 4 we discuss the Lie superalgebra $\mathfrak{p}(n)$ and its representations, the fake Casimir $\Omega$, and prove the existence of the commuting action \eqref{SWpn}. In Section 5 we prove linear independence of the sets $S_{a,b}^\bullet$ by finding large $n$ and large enough $\mathfrak{p}(n)$-representations $M$, so that the set $S_{a,b}^\bullet$ maps into a set of linearly independent operators on $M\otimes V^{\otimes a}$. This proves Theorem~\ref{Thm2}, and Theorem~\ref{Thm1} follows as a corollary. As an application, in Section~\ref{centre} we describe  the presentation, the centre, and a certain deformation of the endomorphism algebras $\sVW_a=\End_{\sVW}(a)$.

\textbf{Acknowledgements}. We thank Gwyn Bellamy, Michael Ehrig, Stephen Griffeth, Joanna Meinel, Travis Schedler and Anne Shepler for helpful discussions. This project was started at the WINART workshop in Banff, and was developed and finalised during several visits of some of the authors to the Hausdorff Center of Mathematics (in particular to MPI and HIM) in Bonn. We thank these places for the excellent working conditions.

\addtocontents{toc}{\protect\setcounter{tocdepth}{1}}

\section{Definitions and main results}
\label{sectionone}
In this section we define the Brauer supercategory $\sBr$ and the affine VW supercategory $\sVW$ as monoidal supercategories, and state Theorems \ref{Thm1} and \ref{Thm2}, which give diagrammatic bases for the morphism spaces in these categories. 

We fix $\mathbb{C}$ as the ground field for the whole paper. 

\subsection{Monoidal supercategories}

We start by recalling some basic facts about monoidal supercategories. For a thorough discussion, see e.g. \cite{BE}. 

A \emph{superspace} is a vector space $V$ with a $\mathbb{Z}_2$ grading, $V=V_{\overline{0}}\oplus V_{\overline{1}}$. Homogeneous vectors $v\in V_{\overline{0}}$ are said to be \emph{even} or \emph{of parity $\overline{v}=\overline{0}$}, and $v\in V_{\overline{1}}$ are said to be \emph{odd} or \emph{of parity $\overline{v}=\overline{1}$}. Linear maps between superspaces inherit the grading; homogeneous linear maps are called \emph{even} or \emph{odd}, respectively, depending on whether they preserve or change the parity of homogeneous vectors. Formulas involving parity are usually written for homogeneous elements and extended linearly. 
A tensor product of superspaces is again a superspace. For $f,g$ homogeneous linear maps of superspaces, $f\otimes g$ is defined as$$(f\otimes g)(v\otimes w)=(-1)^{\overline{g}\overline{v}} f(v) \otimes g(w)$$
on homogeneous vectors $v \otimes w$.
The following Koszul sign rule holds for compositions
\begin{equation}
\label{signrule}
(f\otimes g)\circ (h\otimes k)= (-1)^{\overline{g}\overline{h}}(f\circ h)\otimes (g\circ k).
\end{equation}
We use the common diagram calculus: the object $a\otimes b$ is depicted by drawing the $b$ to the right of $a$, similar for $f\otimes g$.

A \emph{supercategory} is a category enriched in superspaces; this means all morphism sets are superspaces, and composition preserves parity. 
We will be using the usual string calculus for morphisms in strict monoidal supercategories (see e.g. \cite[Definition XI.2.1]{K}). More precisely, we will define strict monoidal supercategories ($\sBr$ and $\sVW$) using generators and relations by 
\begin{enumerate} [(i)]
\item specifying a set of generating objects; all objects in the category are obtained as finite tensor products $a_1 \otimes \hdots \otimes a_r$ of generating objects $a_i$ (including the empty tensor product, which is defined to be the unit object $\mathbbm{1}$);% and depicted as horizontal decompositions; 
\item specifying a set of generating morphisms; all morphisms in the category are then obtained as linear combinations of finite compositions of \emph{horizontal} (using the tensor product $f\otimes g$) and \emph{vertical} (using the composition $f\circ g$) stackings of compatible generating morphisms and the identity morphisms. Diagrammatically, $f \otimes g$ is presented as placing $f$ to the left of $g$, whereas $f \circ g$ is presented as stacking $f$ on top of $g$; in particular, morphisms are read from bottom to top;
\item specifying a set of generating relations for morphisms; the full set of relations is obtained as the two sided tensor ideal generated by the specified generating relations. Implicitly, we also require the morphisms to respect the sign rule (\ref{signrule}); these are sometimes called the \emph{height moves} in string calculus. 
\end{enumerate}

\subsection{The Brauer supercategory $\sBr$} 
 
The \textit{Brauer supercategory} is the $\mathbb{C}$-linear strict monoidal supercategory $\sBr$, generated as a monoidal supercategory by a single object
$\obj$ and morphisms
\vspace{-0.5cm}
\begin{eqnarray*}
&s=\TikZ{[scale=.5] \draw (0,0) node{} to (1,1)node{} (1,0) node{} to (0,1)node{} ;}:\;\obj\otimes \obj\longrightarrow\obj\otimes \obj,&\nonumber \\
&b=\TikZ{[scale=.5] \draw (1,0) node{} (0,0) node{} (0,0) arc(180:0:0.5) ;}:\;\obj\otimes \obj\longrightarrow\mathbbm{1},\quad\text{and}\quad
b^*=\TikZ{[scale=.5] \draw (1,1) node{} (0,1) node{} (0,1) arc(-180:0:0.5) ;}:\;\mathbbm{1}\longrightarrow \obj\otimes \obj ,&
\end{eqnarray*}
with parities $\overline s=\overline 0$, $\overline{b} =\overline{b^*}=\overline 1$, subject to the following defining relations:

\begin{comment}
\begin{eqnarray*}
 s^2=1_{\obj\otimes\obj},&& (\mathbf{1}_{\obj}\otimes s )\circ (s\otimes \mathbf{1}_{\obj}) \circ (\mathbf{1}_{\obj}\otimes s )=
(s\otimes \mathbf{1}_{\obj})\circ  (\mathbf{1}_{\obj}\otimes s )  \circ (s\otimes \mathbf{1}_{\obj}),\\
(b\otimes\mathbf{1}_{\obj})\circ(\mathbf{1}_{\obj}\otimes  b^*)=\mathbf{1}_{\obj}, &&
(\mathbf{1}_{\obj}\otimes  b)\circ (b^*\otimes\mathbf{1}_{\obj})=-\mathbf{1}_{\obj}, \\
b\circ b^*=0, &&(\mathbf{1}_{\obj}\otimes s) \circ( b^*\otimes \mathbf{1}_{\obj})=(s\otimes \mathbf{1}_{\obj})\circ (\mathbf{1}_{\obj}\otimes b^*),\\
s\circ b^*=-b^*. 
\end{eqnarray*}
\end{comment}

%\begin{relations}\label{sBr relns}
\begin{enumerate}[(R1)]
\item {The {\it braid relations}:} $ \TikZ{[scale=.5] \draw (0,0) node{} to (1,1)node{} to (0,2)node{} (1,0) node{} to (0,1)node{}  to (1,2)node{}  ;}= \TikZ{[scale=.5] \draw (0,0) node{} to (0,2)node{} (1,0) node{} to (1,2)node{};}$  and $\TikZ{[scale=.5] \draw  (0,0) node{} to (1,1)node{} (1,0) node{} to (0,1)node{} (2,0) node{} to (2,1)node{} (0,1) node{} to (0,2)node{} (1,1) node{} to (2,2)node{} (2,1) node{} to (1,2)node{} (0,2) node{} to (1,3)node{} (1,2) node{} to (0,3)node{} (2,2) node{} to (2,3)node{};}=
\TikZ{[scale=.5] \draw (0,0) node{} to (0,1)node{} (1,0) node{} to (2,1)node{} (2,0) node{} to (1,1)node{}
(0,1) node{} to (1,2)node{} (1,1) node{} to (0,2)node{} (2,1) node{} to (2,2)node{} 
(0,2) node{} to (0,3)node{} (1,2) node{} to (2,3)node{} (2,2) node{} to (1,3)node{}
;}$,  
\item {The {\it snake relations} or {\it adjunctions}:} $ \TikZ{[scale=.5] \draw
(0,0) node{} to (0,1)node{} 
(0,1) arc(180:0:0.5)
(1,1) arc(-180:0:0.5)
(2,1)node{} to (2,2)node{};}=
-\TikZ{[scale=.5] \draw
(0,0) node{} to (0,2)node{} 
;}$ and $ \TikZ{[scale=.5] \draw
(0,1) node{} to (0,2)node{} 
(1,1) arc(180:0:0.5)
(0,1) arc(-180:0:0.5)
(2,0)node{} to (2,1)node{};}=
 \TikZ{[scale=.5] \draw
(0,0) node{} to (0,2)node{} 
;}$, 
\item {The {\it untwisting} relations}:  $\TikZ{[scale=.5] \draw
(0,1) node{} to (0,2)node{} 
(2,0.5) node{} to (2,1)node{} 
(1,1) node{} to (2,2)node{} 
(2,1) node{} to (1,2)node{} 
(0,1) arc(-180:0:0.5)
;}= 
\TikZ{[scale=.5] \draw
(0,0.5) node{} to (0,1)node{} 
(2,1) node{} to (2,2)node{} 
(0,1) node{} to (1,2)node{} 
(1,1) node{} to (0,2)node{} 
(1,1) arc(-180:0:0.5)
;}$ and  $ \TikZ{[scale=.5] \draw
(1,1) node{} to (0,2)node{} 
(0,1) node{} to (1,2)node{} 
(0,1) arc(-180:0:0.5)
;}= -
\TikZ{[scale=.5] \draw
(1,1) node{} to (1,2)node{} 
(0,1) node{} to (0,2)node{} 
(0,1) arc(-180:0:0.5)
;}$. 
\end{enumerate}
The supercategory structure means the {\it height moves} via \eqref{signrule} are also satisfied, e.g. 
$\TikZ{[scale=.5] \draw
	(0,0) node{} to (0,0.5)node{} 
	(0,0.5) arc(180:0:0.5)
	(1,0) node{} to (1,0.5) node{}
	(1.5,0) arc(180:0:0.5);}=b \circ (1 \otimes 1 \otimes b) = b \otimes b =
\TikZ{[scale=.5] \draw
	(0,0) arc(180:0:0.5)
	(1.5,0) arc(180:0:0.5);} \; , \quad 
\TikZ{[scale=.5] \draw
	(1.5,0) node{} to (1.5,0.5)node{} 
	(1.5,0.5) arc(180:0:0.5)
	(2.5,0) node{} to (2.5,0.5) node{}
	(0,0) arc(180:0:0.5);}=b \circ (b \otimes 1 \otimes 1) =-\; b \otimes b =
-\;\TikZ{[scale=.5] \draw
	(0.0,0) arc(180:0:0.5)
	(1.5,0) arc(180:0:0.5);}.$

 The objects of $\sBr$ are sometimes written as natural numbers $\mathbb{N}_0$, identifying $a\in\mathbb{N}_0$ with $\obj^{\otimes a}$, where $\obj^{\otimes 0}=\mathbbm{1}$. 
 A {\it diagram} is a finite composition (horizontally or vertically) of generating morphisms and identity morphisms. It consists of lines, connecting pairs of points among the bottom and top ones, which we call \emph{strings}. Elements of $\Hom_{\sBr}(a,b)$ are linear combinations of diagrams with strings connecting $a$ points at the bottom and $b$ points at the top. We let $1_a\in \Hom_{\sBr}(a,a)$ denote the identity morphism, and let
\begin{eqnarray*}
&b_i=1_{i-1}\otimes b\otimes 1_{a-i+1} \in \Hom_{\sBr}(a+2,a), \quad \quad
b_i^*=1_{i-1}\otimes b^*\otimes 1_{a-i+1} \in \Hom_{\sBr}(a,a+2),\\ 
&s_i=1_{i-1}\otimes s\otimes 1_{a-i-1} \in \Hom_{\sBr}(a,a)
\end{eqnarray*}
denote the morphisms obtained by applying $b,b^*$ and $s$ on the $i$-th and $(i+1)$-st tensor factors. The supercategory $\sBr$ can alternatively be generated as a supercategory (as opposed to a monoidal supercategory) by vertically stacking compatible $b_i,b_i^*,s_i$.
 
\subsection{Normal diagrams} 
 We call a string with both ends at the top of the diagram a \emph{cup}, a string with both ends at the bottom of the diagram a \emph{cap}, a string with one end at the top and one at the bottom a \emph{through string}, and a string with no endpoints a \emph{loop}.

Call a diagram $d\in \Hom_{\sBr}(a,b)$ \emph{normal} if all of the following hold:
\begin{itemize}
\item any two strings intersect at most once;
\item no string intersects itself; 
\item no two cups or caps are at the same height; 
\item all cups are above all caps;
\item the height of caps decreases when the caps are ordered from left to right with respect to their left ends;
\item the height of cups increases when the caps are ordered from left to right with respect to their left ends.
\end{itemize}
 As a consequence, every string in a normal diagram has either one cup, or one cap, or no cups and caps, and there are no closed loops. A diagram with no loops in $\Hom_{\sBr}(a,b)$ has $\frac{a+b}{2}$ strings. In particular, if $a+b$ is odd then this space is zero.
 
Each normal diagram $d \in \Hom_{\sBr}(a,b)$, where $a,b\in \mathbb{N}_0$,  gives rise to a partition $P(d)$ of the set of $a+b$ points into $2$-element subsets given by the endpoints of the strings in $d$. We call such a partition a \emph{connector} and let $\Conn(a,b)$ denote the set of all such connectors; its size is $(a+b-1)!!$. For each connector $c\in \Conn(a,b)$, we pick a normal diagram $d_c\in P^{-1}(c) \subset \Hom_{\sBr}(a,b)$. (Note that different normal diagrams in a single fibre $P^{-1}(c)$ differ only by braid relations, and thus represent the same morphism, see Lemma~\ref{choice of Sab is a choice of sign}.)

\begin{theorem}\label{Thm1}
The set $S_{a,b}=\{ d_c  \mid  c\in \Conn(a,b)\}$ is a basis of $\Hom_{\sBr}(a,b)$.\end{theorem}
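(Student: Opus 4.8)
The plan is to prove the two assertions---spanning and linear independence---separately, exactly as the roadmap of the paper suggests: spanning by a topological/diagrammatic argument, and linear independence by reducing to $\sVW$ and the representation theory of $\mathfrak{p}(n)$.

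\emph{Spanning.} First I would show that every diagram in $\Hom_{\sBr}(a,b)$, hence every morphism, can be rewritten as a scalar multiple of some $d_c$. The idea is: given an arbitrary composition of generating morphisms, use the defining relations (R1)--(R3) together with the height moves to bring it into normal form. Concretely, the untwisting relations (R3) let one push any crossing off a cup or a cap, so that no through string crosses a cup or a cap more than necessary and no cup/cap ``loops around''; the snake relations (R2) remove any string that has both a cup and a cap (at the cost of a sign), which simultaneously removes closed loops (a loop evaluates to $0$, but one must check loops cannot actually occur once a diagram is brought to a form with all caps below all cups). Then the braid relations (R1) reorder the remaining crossings among through strings, and also let one slide cups/caps past each other to arrange them by increasing/decreasing height as in the definition of \emph{normal}. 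The key bookkeeping point is that any two normal diagrams with the same connector represent the \emph{same} morphism (this is the content of the referenced Lemma on the choice of $S_{a,b}$ being a choice of sign, i.e. braid relations among through strings with no double crossings give a well-defined element by coherence of the symmetric group), so the resulting $d_c$ is independent of the chosen reduction up to sign. This shows $S_{a,b}$ spans.

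\emph{Linear independence.} This is where I expect the real work to be, and where one must invoke the $\mathfrak{p}(n)$-side. The strategy is: for suitable $n$ (large relative to $a,b$), the functor sending $\obj \mapsto V$ realizes $\Hom_{\sBr}(a,b)$ as a space of $\mathfrak{p}(n)$-module homomorphisms $V^{\otimes a}\to V^{\otimes b}$, and by the Schur--Weyl duality of \cite{M} (the case $M=\mathbb{C}$ of \eqref{SWpn}) these are exactly $\Hom_{\mathfrak{p}(n)}(V^{\otimes a},V^{\otimes b})$. One then shows the images of the normal diagrams $d_c$ under this functor are linearly independent operators, for instance by exhibiting, for each connector $c$, a weight vector in $V^{\otimes a}$ on which $d_c$ acts by a ``leading term'' (a basis vector of $V^{\otimes b}$ with nonzero coefficient) not produced by any other $d_{c'}$; a triangularity argument with respect to a suitable ordering on connectors then forces independence. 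Alternatively---and this is presumably how the paper proceeds---one deduces Theorem~\ref{Thm1} as a \emph{corollary} of Theorem~\ref{Thm2}: the Brauer supercategory embeds in (and is a quotient of) $\sVW$, the basis $S_{a,b}^\bullet$ of $\Hom_{\sVW}(a,b)$ restricts/projects to $S_{a,b}$, and linear independence downstairs follows from linear independence upstairs once one checks that the spanning relation $|{\Conn(a,b)}| = (a+b-1)!!$ matches the count of surviving basis elements with no dots.

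\emph{Main obstacle.} The genuine difficulty is the linear independence, and within the self-contained route the crux is producing enough ``generic'' $\mathfrak{p}(n)$-representations (Verma modules of highest weight $0$) and a compatible filtration so that the associated graded operators are visibly independent; the signs coming from the Koszul rule and from the untwisting relation $s\circ b^* = -b^*$ must be tracked with care, since a sloppy sign could collapse a would-be basis element. Everything else---the normalization of the reduction algorithm, the fact that different normal diagrams for a connector agree up to sign, the count $(a+b-1)!!$---is routine diagrammatic combinatorics once the relations (R1)--(R3) and the height moves are in hand.
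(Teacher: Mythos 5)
Your proposal matches the paper's approach: spanning is established by the same topological normal-form argument (isolating and killing loops, then using (R1)--(R3) and height moves to reach a normal diagram representing the connector up to sign), and linear independence is deduced as a corollary of Theorem~\ref{Thm2} via the embedding functor $I:\sBr\to\sVW$ sending undotted diagrams to dotted diagrams with zero dots, which maps $S_{a,b}$ bijectively onto $S_{a,b}^{0}\subset S_{a,b}^{\bullet}$. One small simplification: the final counting step you mention ($|\Conn(a,b)|=(a+b-1)!!$ matching the number of dotless basis elements) is not needed, since $I$ sends a spanning set of $\Hom_{\sBr}(a,b)$ onto a linearly independent subset of a basis of $\Hom_{\sVW}(a,b)$, which already forces $S_{a,b}$ to be linearly independent.
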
 
We show that it is a spanning set using topology in Section~\ref{Sect-span}. Linear independence can also be seen directly using topology, since the defining relations of $\sBr$ do not change the underlying connector of a diagram. However, we obtain it using representation theory in Section~\ref{linindep} as a direct consequence of the more general Theorem~\ref{Thm2}. For the special case of $a=b$, this theorem appears as a basis theorem for the algebra $\mathcal{A}_a$ in \cite{M}.

Let us also remark that the above choice of normal diagrams for basis vectors is for convenience only. It is enough to choose one diagram $d'_c$ with no loops in every fibre $P^{-1}(c)$; the set $\{ d'_c \mid c\in \Conn(a,b) \}$ is then also a basis. This choice of basis differs from $S_{a,b}$ by signs only, meaning it is a subset of $\{ \pm d \mid d \in \Conn(a,b)\}$ with exactly one choice of sign for each $d$, see Proposition~\ref{oBr-span}.

\subsection{The affine VW supercategory $\sVW$} \label{section:sVW}

The \emph{affine VW supercategory}, or \emph{affine Nazarov-Wenzl supercategory}, is the $\mathbb{C}$-linear strict monoidal supercategory $\sVW$, generated as a monoidal supercategory by a single object
$\obj$, morphisms $s=\TikZ{[scale=.5] \draw (0,0) node{} to (1,1)node{} (1,0) node{} to (0,1)node{} ;}:\;\obj\otimes \obj\longrightarrow\obj\otimes \obj$, $b=\TikZ{[scale=.5] \draw (1,0) node{} (0,0) node{} (0,0) arc(180:0:0.5) ;}:\;\obj\otimes \obj\longrightarrow\mathbbm{1}$ and $b^*=\TikZ{[scale=.5] \draw (1,1) node{} (0,1) node{} (0,1) arc(-180:0:0.5) ;}:\;\mathbbm{1}\longrightarrow \obj\otimes \obj$ as above, and an additional morphism
\begin{eqnarray*}
&y=\TikZ{[scale=.5] \draw (0,0) node{} to (0,1) node{} (0,0.5) node[fill,circle,inner sep=1.5pt]{};}:\;\obj \longrightarrow \obj&
\end{eqnarray*}
with $\overline y=0$, subject to relations (R1)-(R3) above, and 

\begin{enumerate}
\item[(R4)]  {The {\it dot relations}:} $\TikZ{[scale=.5] \draw
(0,0) node{} to (0,2) node{} 
(1,0) node{} to (1,2) node{} (1,1) node[fill,circle,inner sep=1.5pt]{} 
;} =
\TikZ{[scale=.5] \draw
(0,0) node{} to (1,1) node{}  to (0,2) node{} 
(1,0) node{} to (0,1) node{}  to (1,2) node{} (0,1) node[fill,circle,inner sep=1.5pt]{} 
;}+
\TikZ{[scale=.5] \draw
(0,0) node{} to (1,2) node{} (1,0) node{} to (0,2) node{} 
;}+
\TikZ{[scale=.5] \draw
(0,0) arc(180:0:0.5) (0,2) arc(-180:0:0.5) 
;}$ \textrm{ and }
$\TikZ{[scale=.5] \draw 
(0,1) arc(180:0:0.5) (0,0) node{} to (0,1) node{} (1,0) node{} to (1,1) node{} (1,0.5) node[fill,circle,inner sep=1.5pt]{} 
;}=
\TikZ{[scale=.5] \draw 
(0,1) arc(180:0:0.5) (0,0) node{} to (0,1) node{} (1,0) node{} to (1,1) node{} (0,0.5) node[fill,circle,inner sep=1.5pt]{} 
;}+
\TikZ{[scale=.5] \draw 
(0,1) arc(180:0:0.5) (0,0) node{} to (0,1) node{} (1,0) node{} to (1,1) node{} 
;}.
$
\end{enumerate}

The objects in $\sVW$ can be identified with integers $a\in \mathbb{N}_0$, and the morphisms are linear combinations of dotted diagrams. The category can alternatively be generated by vertically stacking $b_i,b_i^*,s_i$ and $y_i=1_{i-1}\otimes y\otimes 1_{a-i}\in \Hom_{\sVW}(a,a)$. It is a filtered category, in the sense that the spaces $\Hom_{\sVW}(a,b)$ have a filtration with $\Hom_{\sVW}(a,b)^{\le k }$ being the span of all dotted diagrams with at most $k$ dots.

\subsection{Normal dotted diagrams}
Call a dotted diagram $d\in \Hom_{\sVW}(a,b)$ \emph{normal} if:
\begin{itemize}
\item the underlying diagram obtained by erasing the dots is normal; 
\item all dots on cups and caps are on the leftmost end, and all dots on the through strings are at the bottom. 
\end{itemize}

Let $S^\bullet _{a,b}$ be the set of normal dotted diagrams obtained by taking all diagrams in $S_{a,b}$ and adding dots to them in all possible ways. Let $S^{k} _{a,b}\subset S^\bullet _{a,b}$ and $S^{\le k}_{a,b}=\bigcup _{l=0}^k S^l_{a,b}$ be the sets of such diagrams with exactly $k$ dots, respectively at most $k$ dots. In particular, $S^{0}_{a,b}=S^{\leq 0}_{a,b}=S_{a,b}$. Note that if $a\equiv b \text{ mod } 2$ then the cardinality of $S^k_{a,b}$ is ${\frac{a+b}{2}+k-1 \choose k}\cdot (a+b-1)!!$, and if $ a \not\equiv b \text{ mod } 2$ then the cardinality of $S^k_{a,b}$ is $0$.

The following basis theorem is the main result of this paper. 
 
\begin{theorem}[Basis Theorem]\label{Thm2}
The set $S^{\le k} _{a,b}$ is a basis of $\Hom_{\sVW}(a,b)^{\le k}$, and consequently the set $S^\bullet _{a,b}$ is a basis of $\Hom_{\sVW}(a,b)$.      
\end{theorem}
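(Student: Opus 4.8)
The plan is to prove that $S^{\le k}_{a,b}$ both spans and is linearly independent in $\Hom_{\sVW}(a,b)^{\le k}$, then pass to the union over $k$. The spanning statement is the easier half and I would do it by a diagrammatic straightening algorithm: starting from an arbitrary dotted diagram with at most $k$ dots, I would first invoke Theorem~\ref{Thm1} (and its proof in Section~\ref{Sect-span}) to bring the underlying undotted diagram into normal form, being careful that the moves used there are among the relations (R1)--(R3) that also hold in $\sVW$; this works modulo terms with strictly fewer crossings, handled by induction. Then I would move dots around using (R4) and the height moves: each time a dot is pushed past a crossing, the dot relations produce one term with the dot relocated plus lower-order correction terms (an uncrossing term and a cup-cap term) that have \emph{strictly fewer dots}, so they land in $\Hom^{\le k-1}$ and are dealt with by a second, nested induction on $k$. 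Dots on a closed component or dots colliding on the same string are absorbed or reduced similarly. The upshot is that every dotted diagram with $\le k$ dots is a linear combination of elements of $S^{\le k}_{a,b}$, so $S^{\le k}_{a,b}$ spans. Combined with the cardinality count already recorded before the theorem, it then suffices to prove linear independence.

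\textbf{Linear independence via representation theory.} For linear independence I would follow the route advertised in the introduction: realize the abstract morphisms as honest operators on $M\otimes V^{\otimes a}$ for a well-chosen $\fp(n)$-module $M$, using the functor from Proposition~\ref{prop:funct} (the commuting action \eqref{SWpn}). The natural choice is $M = M(0)$, the Verma module of highest weight $0$, with $n$ taken large relative to $a,b,k$. The key is to match the dot-filtration on $\sVW$ with the PBW filtration on $M(0)$: the generator $y$ acts (up to lower-order Brauer-type terms) via the fake Casimir $\Omega \in \fp(n)\otimes\fgl(n|n)$, which raises PBW-degree by one, so a normal dotted diagram with $k$ dots acts as an operator whose top PBW-degree piece is explicitly computable. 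Passing to the associated graded on both sides, the claim reduces to linear independence of the \emph{symbols}: the images of $S^k_{a,b}$ in $\gr\Hom_{\sVW}(a,b)$ should map to linearly independent operators $\gr(M(0))\otimes V^{\otimes a} \to \gr(M(0))\otimes V^{\otimes b}$. Here $\gr M(0) \cong S(\fp(n)_{-1})$ (or the appropriate PBW-associated-graded space) is essentially polynomial, and the leading symbol of the $y_i$-action becomes multiplication by an explicit linear expression in the PBW generators applied through the $i$-th strand; the undotted part is controlled by the already-established Brauer basis theorem $S_{a,b}$ for $\sBr_n$-type centralizers (Theorem~\ref{Thm1}, cf.\ \cite{M}) once $n \gg 0$ so that no extra relations appear.

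\textbf{Assembling the argument.} Concretely: (1) fix $n$ large; (2) show $\Omega$ acts on $M\otimes V$ and that iterating gives the $y_i$'s as in \eqref{yi-action-Hecke}, with $y_i$ raising the PBW-degree filtration on $M\otimes V^{\otimes a}$ by exactly one and its symbol being an explicit multiplication operator; (3) observe that a normal dotted diagram in $S^k_{a,b}$ is sent to (Brauer part)$\circ$(product of $k$ dot-operators), whose PBW-leading term factors as a monomial of degree $k$ in distinct polynomial-type variables times a fixed Brauer diagram operator; (4) deduce that distinct elements of $S^k_{a,b}$ have linearly independent leading symbols, because the Brauer diagrams $S_{a,b}$ are linearly independent operators for $n\gg0$ and the degree-$k$ monomials attached to the dot placements are distinct monomials in a polynomial ring; (5) run the filtration argument: a vanishing linear combination of elements of $S^{\le k}_{a,b}$, looked at in top degree, forces the degree-$k$ coefficients to vanish, and then induction on $k$ finishes it. Therefore $S^{\le k}_{a,b}$ is linearly independent, hence a basis of $\Hom_{\sVW}(a,b)^{\le k}$; taking $k\to\infty$ gives that $S^\bullet_{a,b}$ is a basis of $\Hom_{\sVW}(a,b)$. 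The main obstacle is step~(3)--(4): making the ``leading symbol'' bookkeeping precise — i.e.\ proving that the PBW-leading term of a composite of $y_i$'s and Brauer generators really is the expected monomial-times-diagram with no unexpected cancellation or degree drop, and that $n$ can be chosen uniformly large enough to simultaneously witness independence of all the relevant Brauer diagrams. This is exactly where one needs the compatibility of the $\sVW_a$-filtration with the PBW filtration and a careful choice of dual bases $\{b^*\}$ for the fake Casimir.
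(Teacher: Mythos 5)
Your overall route is the same as the paper's: spanning by a diagrammatic straightening induction on the number of dots (this part is fine and matches Section~\ref{Sect-span}), and linear independence via the functor $\Psi_n^{M(0)}$ for the highest weight $0$ Verma module with $n$ large, matching the dot filtration with the PBW filtration and passing to the associated graded (the paper does this via the auxiliary graded category $\GVW$, the functors $\Phi_n$, $\Theta$ and the commuting square of Lemma~\ref{square}). However, there is a genuine gap exactly at the step you yourself flag as ``the main obstacle'': your steps (3)--(4). The image of a $k$-dotted diagram under $\Phi_n$ does \emph{not} factor as (multiplication by a degree-$k$ monomial on the $M(0)$ factor) times (a fixed Brauer operator on the $V$-factors). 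Each dot acts by the truncated fake Casimir $\bigl(\sum_{i<j}A^-_{ij}\otimes A^+_{ji}-\tfrac12\sum_i B^+_{ii}\otimes C^+_{ii}-\sum_{i<j}B^+_{ij}\otimes C^+_{ji}\bigr)_{0k}$, i.e.\ by a \emph{sum} of terms each of which multiplies the $0$-th factor by a degree-one element \emph{and simultaneously} changes the basis vector in the $k$-th tensor factor of $V^{\otimes a}$. So there is no well-defined ``monomial attached to the dot placement,'' and the argument ``distinct monomials times linearly independent Brauer operators are independent'' does not apply; moreover the independence of the Brauer operators themselves on $M(0)\otimes V^{\otimes a}$ is not an input you may quote, since in the paper Theorem~\ref{Thm1} is a \emph{consequence} of Theorem~\ref{Thm2} (the case $a=b$ in \cite{M} and the topological argument could substitute, but only after you untangle the coupling with the $M(0)$ factor). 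A small symptom of the same imprecision: $\gr M(0)\cong \Lambda(\mathfrak{g}_1)\otimes S(\mathfrak{n}_+)$, not $S(\fp(n)_{-1})$.

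What actually closes the gap in the paper is the key construction of Section~\ref{key-construction} together with Lemma~\ref{Key lemma}: to each normal dotted diagram $d\in S^k_{a,b}$ one attaches test vectors $v_d\in V^{\otimes a}$ and $w_d\in V^{\otimes b}$ by a labelling algorithm (caps, then through strings, then cups, with a dot counted as a unit step along the order $1\to 2\to\cdots\to n\to n'\to\cdots\to 1'$), and one proves the delta-property $\langle w_d \mid \Phi_n(d')\,v_d\rangle\neq 0$ if and only if $d'=d$. The proof of this lemma is where the real work happens: Lemma~\ref{graph} shows each dot can only move a label strictly forward along that order, and a careful counting argument (comparing numbers of primed labels at top and bottom with numbers of cups and caps) reconstructs the connector and the dot distribution of $d'$ from the non-vanishing of the matrix coefficient. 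Your proposal contains no substitute for this triangularity/matrix-coefficient argument, and without it the linear independence of the symbols of $S^k_{a,b}$ is asserted rather than proved. So the skeleton is right, but the decisive lemma is missing.
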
 

The proof will be given in Sections~\ref{Sect-span} and \ref{linindep}. The identification $S_{a,b}=S^0_{a,b}$ defines an embedding of categories $\sBr \longrightarrow \sVW$ and hence Theorem~\ref{Thm2} directly implies Theorem~\ref{Thm1}. 

As an immediate consequence of Theorem~\ref{Thm2} we obtain the following:

\begin{corollary}
The diagrams without dots form a supersubalgebra $\Hom_{\sBr}(a,a)$  of the superalgebra  $\Hom_{\sVW}(a,a)$. The dotted diagrams whose underlying undotted diagram is the identity morphism $1_a$ form a polynomial subalgebra $\C[y_1,\ldots,y_a]$, and the subalgebras $\C[y_1,\ldots,y_a]$ and $\Hom_{\sBr}(a,a)$ together generate $\Hom_{\sVW}(a,a)$ as vector superalgebra. 
\end{corollary}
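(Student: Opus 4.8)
The plan is to read off all three assertions from the Basis Theorem (Theorem~\ref{Thm2}), Theorem~\ref{Thm1}, and the embedding $\sBr\hookrightarrow\sVW$ induced by the identification $S_{a,b}=S^0_{a,b}$. The key structural input is that the dot-number filtration on $\Hom_{\sVW}(a,a)$ is multiplicative: composing dotted diagrams and applying the relations (R1)--(R4) never increases the total number of dots, so $\Hom_{\sVW}(a,a)^{\le k}\circ\Hom_{\sVW}(a,a)^{\le l}\subseteq\Hom_{\sVW}(a,a)^{\le k+l}$. In particular $\Hom_{\sVW}(a,a)^{\le 0}$, the span of all undotted diagrams, is a sub-superalgebra; by Theorem~\ref{Thm2} it has basis $S^{\le 0}_{a,a}=S_{a,a}$, which by Theorem~\ref{Thm1} is also a basis of $\Hom_{\sBr}(a,a)$. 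Since the embedding sends each chosen normal diagram $d_c$ to itself, it restricts to a superalgebra isomorphism $\Hom_{\sBr}(a,a)\xrightarrow{\ \sim\ }\Hom_{\sVW}(a,a)^{\le 0}$, proving the first claim. For the second, the generators $y_i=1_{i-1}\otimes y\otimes 1_{a-i}$ are even and, for $i\ne j$, act on disjoint strands, hence commute by the height moves; so they generate a quotient of $\C[y_1,\dots,y_a]$, and since the monomials $y_1^{k_1}\cdots y_a^{k_a}$ are exactly the normal dotted diagrams with underlying diagram $1_a\in S_{a,a}$, they lie in the basis $S^\bullet_{a,a}$ and are linearly independent by Theorem~\ref{Thm2}; hence that quotient equals $\C[y_1,\dots,y_a]$.

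For the generation statement, let $A\subseteq\Hom_{\sVW}(a,a)$ be the subalgebra generated by $\Hom_{\sBr}(a,a)$ and $\C[y_1,\dots,y_a]$; by Theorem~\ref{Thm2} it is enough to show that each normal dotted diagram $d^\bullet$ lies in $A$. Let $d$ be the underlying undotted (normal Brauer) diagram. In a normal dotted diagram every dot sits at an extreme end of its string: on a through string at the bottom, and on a cup or cap at its leftmost end. Sliding each such dot freely along its own (self-intersection-free) string to the boundary point where that string terminates, we obtain the identity
\[
d^\bullet \;=\; p_{\mathrm{top}}\circ d\circ p_{\mathrm{bot}},
\]
where $p_{\mathrm{bot}}=\prod_j y_j^{l_j}$ collects the dots that land at bottom boundary points $j$ (bottoms of through strings and left ends of caps) and $p_{\mathrm{top}}=\prod_i y_i^{m_i}$ those landing at top boundary points $i$ (left ends of cups). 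Since $p_{\mathrm{bot}},p_{\mathrm{top}}\in\C[y_1,\dots,y_a]$ and $d\in\Hom_{\sBr}(a,a)$, we get $d^\bullet\in A$, so $A=\Hom_{\sVW}(a,a)$.

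The corollary is essentially bookkeeping on top of Theorem~\ref{Thm2}, and the only step requiring a moment's care is the dot-sliding in the last paragraph. Pushing a dot through a crossing is governed by (R4) and produces correction terms with fewer dots, so one must confirm that the relocations used above change nothing. This holds because in a normal dotted diagram the dots in question already sit at the extreme ends of their strings, so sliding them the remaining distance to the boundary traverses no crossing; concretely, a dot adjacent to the bottom (resp.\ top) boundary at position $i$ \emph{is} the morphism $y_i$ precomposed (resp.\ postcomposed) on that strand. Everything else follows directly from Theorems~\ref{Thm1} and~\ref{Thm2} and multiplicativity of the filtration.
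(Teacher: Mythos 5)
Your proof is correct and follows exactly the route the paper intends: the paper states this corollary as an immediate consequence of Theorem~\ref{Thm2}, and your three steps (undotted diagrams spanning the degree-zero filtration piece with basis $S_{a,a}$, the monomials $y_1^{k_1}\cdots y_a^{k_a}$ being the normal dotted diagrams over $1_a$, and the factorization $d^\bullet=p_{\mathrm{top}}\circ d\circ p_{\mathrm{bot}}$) are precisely the observations already made in the proof of Proposition~\ref{oVW-span}, where each normal dotted diagram is written as $\prod_i y_i^{a_i}\, d''\, \prod_j y_j^{b_j}$ with $d''$ undotted. Your closing remark that no dot-sliding (hence no correction terms from (R4)) is needed, because in a normal dotted diagram the dots already sit adjacent to the boundary, is exactly the point that makes the corollary immediate.
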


\subsection{The affine VW superalgebra $\sVW_a$}
	
For any $a \in \NN$, the endomorphism space $\sVW_a=\Hom_{\sVW}(a,a)$ has the structure of a superalgebra. It is the signed version of the affine VW algebra (see \cite[Section 2]{ES1} for the setup we use), and the affine version of the Brauer superalgebra  $\Hom_{\sBr}(a,a)$. These algebras have an interesting structure, and allow an $\hbar$-deformation. For more details, including a presentation and a description of the centre, see Section \ref{centre}.

One can also define {\it cyclotomic quotients} of the algebras $\sVW_a$ by 
mimicking the constructions in \cite{AMR} for affine VW algebras, see also \cite{CP}.  We expect Lemma \ref{lem:dot-loop} (stating the vanishing of all loop values) to simplify the necessary admissibility conditions from \cite{AMR} and more explicitly \cite{ES1} drastically, but do not pursue this here.

\section{Spanning sets for $\sBr$ and $\sVW$}\label{Sect-span}

In this section we show that the sets $S_{a,b}$ and $S^{\bullet}_{a,b}$ span the corresponding morphism spaces in the categories $\sBr$ and $\sVW$ (Propositions \ref{oBr-span} and \ref{oVW-span}). 

\subsection{Some diagrammatic relations}

First, we establish some additional relations in these categories. Note that these relations are local and hold wherever they are defined within a bigger expression, and we indicate how the local diagram fits into the larger one by specifying the position ($i \in \NN$) of a string (always counted from the left).

The first lemma shows that in $\sBr$ (and consequently in $\sVW$), similar untwisting relations to (R3) hold for caps as they do for cups, and that any isolated loops are zero.

\begin{lemma}[Untwisting relations]
	\label{lem:sBRrel} The following relations hold in $\sBr$ and $\sVW$:
	\begin{align*}
	(a) \quad
	\TikZ{[scale=.5] \draw
		(0,0) node{} to (0,1)node{} 
		(2,1) node{} to (2,1.5)node{} 
		(1,0) node{} to (2,1)node{} 
		(1,1) node{} to (2,0)node{} 
		(0,1) arc(180:0:0.5)
		;}&= 
	\TikZ{[scale=.5] \draw
		(0,1) node{} to (0,1.5)node{} 
		(2,0) node{} to (2,1)node{} 
		(0,0) node{} to (1,1)node{} 
		(0,1) node{} to (1,0)node{} 
		(1,1) arc(180:0:0.5)
		;} \hspace{0.8cm} &
	(b) \quad  
	\TikZ{[scale=.5] \draw
		(1,0) node{} to (0,1)node{} 
		(0,0) node{} to (1,1)node{} 
		(0,1) arc(180:0:0.5)
		;}&= 
	\TikZ{[scale=.5] \draw
		(1,0) node{} to (1,1)node{} 
		(0,0) node{} to (0,1)node{} 
		(0,1) arc(180:0:0.5)
		;}\hspace{0.8cm} &
	(c) \quad 
	\TikZ{[scale=.5] \draw
		(0,1) arc(0:360:0.5)
		;}&=0 \\
	b_{i} s_{i+1}&=b_{i+1} s_{i}   &  b_{i} s_{i}&=b_{i}  &    b_{i} b_{i}^*&=0
	\end{align*} 
\end{lemma}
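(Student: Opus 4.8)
The overall strategy is to derive each of the six relations from the defining relations (R1)--(R3) together with the snake relations and the Koszul sign rule, essentially by ``bending strings around'' using the cups and caps to convert statements about caps into the corresponding statements about cups, which are already available as (R3) or as defining relations.

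\emph{Relations $(a)$, $b_i s_{i+1} = b_{i+1}s_i$, and $b_i s_i = b_i$.} These are the cap-analogues of the untwisting relations (R3) and of $s\circ b^* = -b^*$ (equivalently the second relation in (R3)). First I would prove $b_i s_i = b_i$: compose the first untwisting relation in (R3), read for $b^*$, i.e. the identity $s\circ b^* = -b^*$ together with its reflection, or more directly take the second snake relation and pre-compose with $s$ somewhere along the through-strand, then use the braid relation (R1) to slide the crossing down past the cup created by the snake, and finally use $s\circ b^* = -b^*$ to absorb it; the two signs (one from $s\circ b^* = -b^*$, one from a height move needed to line things up) cancel, giving $b_i s_i = b_i$. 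For $(a)$ and $b_i s_{i+1} = b_{i+1} s_i$, I would take the diagram on the left, use a snake relation (adjunction) on the cap to pull one of its legs up into a through-strand paired with a new cup at the top, apply the braid relation (R1) to move the resulting triple-crossing, then use the cup-version untwisting relation (R3) on that configuration, and finally undo the snake; this converts the left-hand picture into the right-hand one, and one must track that the sign contributions from the two snake relations (each contributing a $-1$ on one side) cancel against each other.

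\emph{Relations $(b)$ and $b_i b_i^* = 0$.} For $b_i b_i^* = 0$: the composite $b \circ b^*$ is a loop with no endpoints times the identity on the remaining strands; but the relation $b\circ b^* = 0$ is exactly (in the non-commented display) one of the hidden defining relations --- indeed it follows from the untwisting relation $s\circ b^* = -b^*$ together with $b\circ b^* = b\circ s\circ b^* = -b\circ b^*$ forcing $b\circ b^* = 0$, using $b\circ s = b$ which we have just established as $b_i s_i = b_i$. For $(b)$, the picture on the left is $b$ composed with a crossing on the two strands that feed into it; by the relation $b_i s_i = b_i$ just proved (applied with the crossing sitting directly below the cap), this equals $b$ with no crossing, which is the right-hand side --- so $(b)$ is an immediate corollary of $b_i s_i = b_i$, being literally the local form of that identity.

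\emph{Relation $(c)$.} The isolated loop is $b \circ b^*$ with no free strands, and by $b_i b_i^* = 0$ (the $a=0$ case, or directly $b\circ b^* = 0$) it is zero; alternatively, a closed loop can be created-and-removed via a snake relation and then killed by $b\circ b^* = 0$, and one checks the orientation/sign is consistent so that the value is genuinely $0$ and not $\pm$ something nonzero.

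\emph{Expected main obstacle.} The routine part is drawing the isotopies; the part demanding care is bookkeeping of signs. Every time one uses a snake/adjunction relation one picks up a $-1$ on one of the two forms, every height move across an odd morphism ($b$ or $b^*$, both odd) contributes a sign via \eqref{signrule}, and $s\circ b^* = -b^*$ carries its own sign; the content of the lemma is precisely that in each relation these accumulated signs cancel to give exactly the stated (sign-free, except where a sign is displayed) identity. So the main obstacle is organizing the derivations so that the sign cancellations are transparent --- most cleanly by first isolating the single identity $b_i s_i = b_i$ (equivalently $(b)$), proving it with careful sign tracking, and then deducing $b_i b_i^* = 0$, $(c)$, and reducing the remaining cap-untwisting relations $(a)$ and $b_i s_{i+1} = b_{i+1} s_i$ to the cup-versions in (R3) via adjunction, at which point no further sign subtleties arise.
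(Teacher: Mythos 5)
Your global strategy---bend strings with the snake relations so that the cap identities reduce to the cup identities in (R2)--(R3), and keep track of the Koszul signs---is the same as the paper's. The trouble is the backbone of your plan: you make everything rest on a direct, first-step proof of $b_i s_i=b_i$, and the derivation you sketch for it does not go through. The braid relation (R1) involves only crossings and cannot slide a crossing past a cup; the move that does that is the first untwisting relation in (R3) (or a mere height move when the crossing and the cup are horizontally disjoint). More seriously, even after inserting a snake into $b\circ s$ and sliding the crossing with (R3), the crossing never lands on both legs of the newly created cup, so $s\circ b^*=-b^*$ cannot simply ``absorb'' it: if you carry the computation out you arrive at a diagram of the shape $(b\otimes b)\circ(1\otimes s\otimes 1)\circ(b^*\otimes 1\otimes 1)$, with the crossing straddling the two caps, and to finish you need exactly the cap-version slide $b_i s_{i+1}=b_{i+1}s_i$, i.e.\ part (a)---the statement you planned to deduce afterwards.

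This is why the paper's order is the reverse of yours: it first proves (a), by rewriting $s$ itself as a rotated crossing via two snake insertions and the cup untwisting relation (this is where all the nontrivial sign bookkeeping sits), then deduces $b_is_i=b_i$ from (a), and finally gets (c) from (b) together with $s\circ b^*=-b^*$; your derivation $bb^*=bsb^*=-bb^*$, hence $bb^*=0$, is the paper's argument for (c) in different clothing, and identifying the picture in (b) with $b_is_i=b_i$ is of course correct. So the parts of your proposal that come after $b_is_i=b_i$ are fine, but the proof of that first node is not valid as stated, and your closing remark that reducing (a) to the cup version involves ``no further sign subtleties'' underestimates the step that is in fact the heart of the lemma. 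Either adopt the paper's order (prove (a) first), or supply an honest independent derivation of $b_is_i=b_i$---the moves you named do not constitute one.
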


\begin{proof}
	\begin{enumerate}[(a)]
		\item Using the relations in $\sBr$ and \eqref{signrule}, the morphism $s$ can be rewritten as
		$$\TikZ{[scale=.5] \draw
			(0,0) node{} to (1,1)node{}
			(1,0) node{} to (0,1)node{}
			;}\stackrel{(R2)}{=}
		-\TikZ{[scale=.5] \draw
			(0,0.5) node{} to (0,1)node{} 
			(0,1) arc(180:0:0.5)
			(1,1) arc(-180:0:0.5)
			(2,1)node{} to (2,1.5)node{} 
			(2,1.5)node{} to (3,2.5)node{}
			(3,1.5)node{} to (2,2.5)node{}
			(3,0.5)node{} to (3,1.5)node{}
			;}=
		-\TikZ{[scale=.5] \draw
			(0,0.5) node{} to (0,2)node{} 
			(0,2) arc(180:0:0.5)
			(1,2)node{} to (1,1)node{}
			(1,1) arc(-180:0:0.5)
			(2,2)node{} to (2,2.5)node{}
			(2,1)node{} to (3,2)node{}
			(3,1)node{} to (2,2)node{}
			(3,2)node{} to (3,2.5)node{}
			(3,0.5)node{} to (3,1)node{}
			;}\stackrel{(R3)}{=}
		-\TikZ{[scale=.5] \draw
			(0,0.5) node{} to (0,2)node{} 
			(0,2) arc(180:0:0.5)
			(2,2.5)node{} to (2,2)node{}
			(2,1) arc(-180:0:0.5)
			(1,1)node{} to (2,2)node{}
			(2,1)node{} to (1,2)node{}
			(1,0.5)node{} to (1,1)node{}
			(3,1)node{} to (3,2.5)node{}
			;}
		$$
		and therefore
		$$
		\TikZ{[scale=.5] \draw
			(0,0) node{} to (1,1)node{}
			(1,0) node{} to (0,1)node{}
			(0,1)node{} to (0,1.5)node{}
			(1,1) arc(180:0:0.5)
			(2,0)node{} to (2,1)node{}
			;}=
		-\TikZ{[scale=.5] \draw
			(0,0.5) node{} to (0,2)node{} 
			(0,2) arc(180:0:0.5)
			(2,2)node{} to (2,3)node{}
			(2,1) arc(-180:0:0.5)
			(1,1)node{} to (2,2)node{}
			(2,1)node{} to (1,2)node{}
			(1,0.5)node{} to (1,1)node{}
			(3,1)node{} to (3,2.5)node{}
			(3,2.5) arc(180:0:0.5)
			(4,0.5)node{} to (4,2.5)node{}
			;}=
		\TikZ{[scale=.5] \draw
			(0,0) node{} to (0,2)node{} 
			(0,2) arc(180:0:0.5)
			(1,2)node{} to (2,1)node{}
			(2,1)node{} to (2,0.5)node{}
			(2,0.5) arc(-180:0:0.5)
			(3,0.5) arc(180:0:0.5)
			(4,0.5)node{} to (4,0)node{}
			(2,2.5)node{} to (2,2)node{}
			(2,2)node{} to (1,1)node{}
			(1,1)node{} to (1,0)node{}
			;}\stackrel{(R2)}{=}
		\TikZ{[scale=.5] \draw
			(0,0) node{} to (0,1)node{} 
			(0,1) arc(180:0:0.5)
			(2,1)node{} to (2,1.5)node{}
			(1,0)node{} to (2,1)node{}
			(2,0)node{} to (1,1)node{}
			;}
		.$$

		\item We use part (a), the relations in $\sBr$ and the Koszul sign rule \eqref{signrule} to show
		\begin{align*}\TikZ{[scale=.5] \draw
			(0,0) node{} to (1,1)node{} 
			(1,1) arc(-180:0:-0.5)
			(0,1)node{} to (1,0)node{};}&\stackrel{(R2)}{=}
		\TikZ{[scale=.5] \draw
			(0,-1.5)node{} to (0,-0.5)node{}
			(0,-0.5) node{} to (1,0.5)node{} 
			(1,0.5) arc(-180:0:-0.5)
			(0,0.5)node{} to (1,-0.5)node{}
			(1,-0.5) node{} to (1,-1)node{} 
			(1,-1) arc(-180:0:0.5)
			(2,-1) arc(180:0:0.5)
			(3,-1)node{} to (3,-1.5)node{};}=
		-\TikZ{[scale=.5] \draw
			(0,-0.5)node{} to (0,0)node{}
			(0,0) node{} to (1,1)node{} 
			(1,1) arc(-180:0:-0.5)
			(0,1)node{} to (1,0)node{}
			(1,0) arc(-180:0:0.5)
			(2,0)node{} to (2,1.5)node{}
			(2,1.5) arc(180:0:0.5)
			(3,1.5)node{} to (3,-0.5)node{};}\stackrel{(R3)}{=}
		-\TikZ{[scale=.5] \draw
			(0,0.5) node{} to (0,1)node{}
			(0,1)node{} to (-1,2)node{}
			(-1,2) arc(-180:0:-0.5)
			(-2,2)node{} to (-2,1)node{}
			(-2,1) arc(-180:0:0.5)
			(-1,1)node{} to (0,2)node{}
			(0,2)node{} to (0,2.5)node{}
			(0,2.5) arc(180:0:0.5)
			(1,2.5)node{} to (1,0.5)node{};}=\\
		&\stackrel{(a)}{=}
		-\TikZ{[scale=.5] \draw
			(2,0) node{} to (2,1.5)node{}
			(2,1.5) arc(-180:0:-0.5)
			(1,1.5)node{} to (0,0.5)node{}
			(0,0.5) arc(-180:0:0.5)
			(1,0.5)node{} to (0,1.5)node{}
			(0,1.5) arc(180:0:1.5)
			(3,1.5)node{} to (3,0)node{};}\stackrel{(R3)}{=}
		\TikZ{[scale=.5] \draw
			(2,0) node{} to (2,1.5)node{}
			(2,1.5) arc(-180:0:-0.5)
			(1,1.5)node{} to (1,0.5)node{}
			(0,0.5) arc(-180:0:0.5)
			(0,0.5)node{} to (0,1.5)node{}
			(0,1.5) arc(180:0:1.5)
			(3,1.5)node{} to (3,0)node{};}\stackrel{(R2)}{=}	
		\TikZ{[scale=.5] \draw
			(0,0)node{} to (0,1)node{}
			(0,1) arc(180:0:0.5)
			(1,1)node{} to (1,0)node{};}
		\end{align*}

		\item With $\TikZ{[scale=.5] \draw
			(0,0)node{} to (0,1)node{}
			(1,0)node{} to (1,1)node{}
			(0,1) arc(180:0:0.5)
			;}=
		\TikZ{[scale=.5] \draw
			(0,0)node{} to (1,1)node{} (0,1)node{} to (1,0)node{}
			(0,1) arc(180:0:0.5)
			;}
		$ and $-\TikZ{[scale=.5] \draw
			(0,1)node{} to (0,0)node{}
			(1,1)node{} to (1,0)node{}
			(0,0) arc(-180:0:0.5);}=
		\TikZ{[scale=.5] \draw
			(0,1) arc(-180:0:0.5)
			(0,1)node{} to (1,2)node{} (1,1)node{} to (0,2)node{}
			;}$, we have
		$
		\TikZ{[scale=.5] \draw
			(0,1) arc(360:0:0.5)
			;}=
		\frac{1}{2} \TikZ{[scale=.5] \draw
			(0,1) arc(360:0:0.5)
			;}+\frac{1}{2} \TikZ{[scale=.5] \draw
			(0,1) arc(360:0:0.5)
			;}=
		\frac{1}{2} \TikZ{[scale=.5] \draw
			(0,1) arc(-180:0:0.5)
			(0,1)node{} to (1,2)node{} (0,2)node{} to (1,1)node{}
			(0,2) arc(180:0:0.5)
			;}-\frac{1}{2} \TikZ{[scale=.5] \draw
			(0,1) arc(-180:0:0.5)
			(0,1)node{} to (1,2)node{} (0,2)node{} to (1,1)node{}
			(0,2) arc(180:0:0.5)
			;}=0.
		$	\qedhere
	\end{enumerate}
\end{proof}

The next lemma explains how a dot can be moved within a dotted diagram in $\sVW$. In particular, it can slide through crossings and cups, modulo some diagrams with a smaller number of dots.

\begin{lemma}[Dot sliding relations]\label{lem:dot-slide} The following relations hold in $\sVW$:
	\begin{align*}
	(a) \;\;\TikZ{[scale=.5] \draw 
		(0,0) node{} to (1,2) node{} (1,0) node{} to (0,2) node{} (0.75,0.5) node[fill,circle,inner sep=1.5pt]{} 
		;}&=
	\TikZ{[scale=.5] \draw 
		(0,0) node{} to (1,2) node{} (1,0) node{} to (0,2) node{} (0.25,1.5) node[fill,circle,inner sep=1.5pt]{} 
		;}+
	\TikZ{[scale=.5] \draw 
		(0,0) node{} to (0,2) node{} (1,0) node{} to (1,2) node{} ;}-
	\TikZ{[scale=.5] \draw 
		(0,0) arc(180:0:0.5) (0,2) arc(-180:0:0.5)
		;}
	&
	(b) \;\;\TikZ{[scale=.5] \draw 
		(0,0) node{} to (1,2) node{} (1,0) node{} to (0,2) node{} (0.25,0.5) node[fill,circle,inner sep=1.5pt]{} 
		;}&=
	\TikZ{[scale=.5] \draw 
		(0,0) node{} to (1,2) node{} (1,0) node{} to (0,2) node{} (0.75,1.5) node[fill,circle,inner sep=1.5pt]{} 
		;}-
	\TikZ{[scale=.5] \draw 
		(0,0) node{} to (0,2) node{} (1,0) node{} to (1,2) node{} ;}-
	\TikZ{[scale=.5] \draw 
		(0,0) arc(180:0:0.5) (0,2) arc(-180:0:0.5)
		;} &
	(c) \;\;\TikZ{[scale=.5] \draw 
		(0,1) arc(-180:0:0.5)
		(0,1) node{} to (0,2) node{} (1,1) node{} to (1,2) node{} (1,1.5) node[fill,circle,inner sep=1.5pt]{} 
		;}&=\TikZ{[scale=.5] \draw 
		(0,1) arc(-180:0:0.5) (0,1) node{} to (0,2) node{} (1,1) node{} to (1,2) node{} (0,1.5) node[fill,circle,inner sep=1.5pt]{} 
		;}-
	\TikZ{[scale=.5] \draw 
		(0,1) arc(-180:0:0.5) (0,1) node{} to (0,2) node{} (1,1) node{} to (1,2) node{} 
		;}\\
	s_iy_{i+1}&=y_is_i+1-b^*_ib_i   &   s_iy_i&=y_{i+1}s_i-1-b_i^*b_i & y_{i+1}b^*_i&=y_ib^*_i-b^*_i
	\end{align*}
\end{lemma}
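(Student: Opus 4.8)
The statement to prove is Lemma~\ref{lem:dot-slide} (the dot sliding relations). Here is my plan.

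\medskip

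\textbf{Overall approach.} All three relations (a), (b), (c) are local statements about moving a dot through a crossing or along a cap, at the cost of lower-dot-count error terms. The natural strategy is to derive them from the defining relations (R1)--(R4) of $\sVW$, together with the auxiliary untwisting relations already established in Lemma~\ref{lem:sBRrel}. Relation (a) is essentially the first dot relation in (R4) read as a rewriting rule: (R4) says that the double crossing with a dot on one outgoing strand equals (double crossing with dot on the other strand) $+$ (identity) $-$ (cap--cup), so (a) is literally a rearrangement of (R4). I would start there: write (R4) in the form $y_{i+1}s_i s_i$-type expression, use the braid/untwisting relations to recognize the pictures, and read off (a). The bottom-row algebraic versions ($s_iy_{i+1}=y_is_i+1-b_i^*b_i$, etc.) are the same identities transcribed into the $b_i,b_i^*,s_i,y_i$ generators, so once the pictures are established the algebra is immediate.

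\medskip

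\textbf{Key steps, in order.} First, I would establish (a): take the first relation of (R4), interpret the left-hand side as ``dot at the bottom of the crossing after passing through it,'' and the right-hand side's first term as ``dot at the top before passing through,'' so that solving gives exactly (a); here $s^2=1$ (braid relation R1) is used to pass from the double crossing to a single crossing where needed. Second, derive (b) from (a) by conjugating with the crossing $s$: compose (a) on top and bottom with $s_i$, use $s_i^2=1$ and the height/sign rules, and note that $s_i$ applied to the cap--cup term and to the identity term produces the sign changes that turn $+1-b^*b$ into $-1-b^*b$. (One must be careful with the Koszul signs from \eqref{signrule} since $b,b^*$ are odd; this is where the sign on the $-1$ and on the $b_i^*b_i$ term gets pinned down.) Third, derive (c): this is the second relation of (R4) (the ``dot through a cup'' relation $y_{i+1}b_i^*=y_ib_i^*-b_i^*$) read directly; for the cap version one applies the snake/adjunction relations (R2) to bend the cup into a cap, or alternatively composes the cup relation with a cap and uses Lemma~\ref{lem:sBRrel}. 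I would also double-check consistency: applying (a) then (b) should return the original diagram, and (c) composed appropriately with (a) should be compatible — these are good sanity checks but not strictly needed for the proof.

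\medskip

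\textbf{Main obstacle.} The only real subtlety is bookkeeping of Koszul signs in passing between the diagrammatic and algebraic forms, and in the conjugation argument deriving (b) from (a): because $b$ and $b^*$ are odd, sliding a cap past a crossing or past another cap introduces signs via \eqref{signrule}, and getting the overall sign of the correction terms right (the difference between $+1-b_i^*b_i$ in the first relation and $-1-b_i^*b_i$ in the second) requires care. I expect everything else to be a routine application of (R1)--(R4) and Lemma~\ref{lem:sBRrel}; the proof is short once the sign conventions are handled consistently, for instance by always reducing to the already-sign-checked relations (R3) and (R4) and to parts (a),(b) of Lemma~\ref{lem:sBRrel}.
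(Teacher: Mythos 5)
Your plan is essentially the paper's: the paper proves (a) and (b) by multiplying the first relation of (R4) by $s_i$ on the left, respectively on the right, and simplifying with (R1), (R3), and Lemma~\ref{lem:sBRrel}(b); it proves (c) by bending the cup into a cap with the snake relations (R2), applying the cap version of (R4), and bending back. Your route to (b) (conjugating (a) by $s_i$) is a harmless variant of the same calculation.

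Two details are worth flagging because they would slow you down if you carried them into a written proof. First, you mis-state (R4): the first dot relation reads $y_{i+1}=s_iy_is_i+s_i+b_i^*b_i$, with \emph{plus} signs on both correction terms, not a minus on the cap--cup term; the minus signs in (a) and (b) only appear after you multiply through by $s_i$ and use $s_ib_i^*=-b_i^*$ from (R3) (for the left multiplication) or $b_is_i=b_i$ from Lemma~\ref{lem:sBRrel}(b) (for the right multiplication). Second, you have the cup/cap direction in (c) reversed: the second relation in (R4) is the \emph{cap} relation $b_iy_{i+1}=b_iy_i+b_i$, and (c) is the \emph{cup} relation $y_{i+1}b_i^*=y_ib_i^*-b_i^*$, which is not contained in (R4) and is exactly the one you must derive via (R2). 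Your parenthetical "or alternatively... bend the cup into a cap" is in fact the only route, not an alternative. With these corrected the argument goes through as you describe.
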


\begin{proof}
	To obtain the relations (a) and (b), we multiply the first relation in (R4) by $s_i$ on the left, respectively on the right, and then use the braid and untwisting relations (R1), (R3) together with Lemma~\ref{lem:sBRrel}{(b)} to simplify.
	To prove (c), we compute:
	$$\TikZ{[scale=.5] \draw 
		(0,1) arc(-180:0:0.5)
		(0,1)node{} to (0,2)node{}
		(1,1)node{} to (1,2)node{}
		(1,1.5) node[fill,circle,inner sep=1.5pt]{} 
		;}\stackrel{(R2)}{=}
	-\TikZ{[scale=.5] \draw
		(0,3)node{} to (0,1)node{}
		(0,1) arc(-180:0:0.5)
		(1,1)node{} to (1,2.5)node{}
		(1,1.5) node[fill,circle,inner sep=1.5pt]{} 
		(1,2.5) arc(180:0:0.5)
		(2,2.5) arc(-180:0:0.5)
		(3,2.5)node{} to (3,3)node{}
		;}=
	\TikZ{[scale=.5] \draw
		(0,2.5)node{} to (0,1)node{}
		(0,1) arc(-180:0:0.5)
		(1,1)node{} to (1,2)node{}
		(1,1.5) node[fill,circle,inner sep=1.5pt]{} 
		(1,2) arc(180:0:0.5)
		(2,2)node{} to (2,0)node{}
		(2,0) arc(-180:0:0.5)
		(3,0)node{} to (3,2.5)node{}
		;}\stackrel{(R4)}{=}
	\TikZ{[scale=.5] \draw
		(0,2.5)node{} to (0,1)node{}
		(0,1) arc(-180:0:0.5)
		(1,1)node{} to (1,2)node{}
		(2,1.5) node[fill,circle,inner sep=1.5pt]{} 
		(1,2) arc(180:0:0.5)
		(2,2)node{} to (2,0)node{}
		(2,0) arc(-180:0:0.5)
		(3,0)node{} to (3,2.5)node{}
		;}-
	\TikZ{[scale=.5] \draw
		(0,2.5)node{} to (0,1)node{}
		(0,1) arc(-180:0:0.5)
		(1,1)node{} to (1,2)node{}
		(1,2) arc(180:0:0.5)
		(2,2)node{} to (2,0)node{}
		(2,0) arc(-180:0:0.5)
		(3,0)node{} to (3,2.5)node{}
		;}\stackrel{(R2)}{=}
	\TikZ{[scale=.5] \draw 
		(0,1) arc(-180:0:0.5)
		(0,1) node{} to (0,2) node{}
		(1,1) node{} to (1,2) node{}
		(0,1.5) node[fill,circle,inner sep=1.5pt]{} 
		;}-
	\TikZ{[scale=.5] \draw 
		(0,1) arc(-180:0:0.5)
		(0,1) node{} to (0,2) node{}
		(1,1) node{} to (1,2) node{} 
		;}.
	\hfill\qedhere$$
\end{proof}

By induction, we obtain formulas for sliding dots along cups or caps:

\begin{lemma}\label{lem:dot-on-cup} The following relations hold in $\sVW$ for any $k\geq 1$.
\begin{align*}
 	&(a) \quad \TikZ{[scale=.5] \node [right] at (1,0.5) {k};
		\draw 
		(0,1) arc(180:0:0.5) (0,0) node{} to (0,1) node{} (1,0) node{} to (1,1) node{} (1,0.5) node[fill,circle,inner sep=1.5pt]{} 
		;}=\sum\limits_{j=0}^k{k\choose j}\TikZ{[scale=.5] \node[left] at (0,0.5) {j};
		\draw 
		(0,1) arc(180:0:0.5) (0,0) node{} to (0,1) node{} (1,0) node{} to (1,1) node{} (0,0.5) node[fill,circle,inner sep=1.5pt]{} 
		;},
	&(b) \quad \TikZ{[scale=.5] \node [left] at (0,0.5) {k};
		\draw 
		(0,1) arc(180:0:0.5) 
		(0,0) node{} to (0,1) node{} 
		(1,0) node{} to (1,1) node{} 
		(0,0.5) node[fill,circle,inner sep=1.5pt]{} 
		;}=\sum\limits_{j=0}^k (-1)^{k+j}{k\choose j} 
	\TikZ{[scale=.5] 
		\node[right] at (1,0.5) {j};
		\draw 
		(0,1) arc(180:0:0.5) 
		(0,0) node{} to (0,1) node{} 
		(1,0) node{} to (1,1) node{} 
		(1,0.5) node[fill,circle,inner sep=1.5pt]{} 
		;}, \\
	\vspace{1cm}
	&(c) \quad \TikZ{[scale=.5] \node[left] at (0,1.5){k};
		\draw 
		(0,1) arc(-180:0:0.5) (0,1) node{} to (0,2) node{} (1,1) node{} to (1,2) node{} (0,1.5) node[fill,circle,inner sep=1.5pt]{} 
		;}=\sum\limits_{j=0}^k{k\choose j}\TikZ{[scale=.5] \node[right] at (1,1.5){j};
		\draw 
		(0,1) arc(-180:0:0.5) (0,1) node{} to (0,2) node{} (1,1) node{} to (1,2) node{} (1,1.5) node[fill,circle,inner sep=1.5pt]{} 
		;},
    &(d) \quad \TikZ{[scale=.5] 
		\node[right] at (1,1.5){k};
		\draw 
		(0,1) arc(-180:0:0.5) 
		(0,1) node{} to (0,2) node{} 
		(1,1) node{} to (1,2) node{} 
		(1,1.5) node[fill,circle,inner sep=1.5pt]{} 
		;}=\sum\limits_{j=0}^k (-1)^{k+j}{k\choose j} 
	\TikZ{[scale=.5] 
		\node[left] at (0,1.5){j};
		\draw 
		(0,1) arc(-180:0:0.5) 
		(0,1) node{} to (0,2) node{} 
		(1,1) node{} to (1,2) node{} 
		(0,1.5) node[fill,circle,inner sep=1.5pt]{} 
		;},
\end{align*}
where the integers attached to the dots indicate the number of dots on the strand.
\end{lemma}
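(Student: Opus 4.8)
The plan is to prove all four identities (a)--(d) simultaneously by induction on $k$, using the one-dot sliding relation Lemma~\ref{lem:dot-slide}(c) (and its cap analogue obtained by the same computation, reading the diagram upside down) as the base case $k=1$. The key point is that Lemma~\ref{lem:dot-slide}(c) precisely says
\begin{align*}
\TikZ{[scale=.5] \draw
	(0,1) arc(-180:0:0.5)
	(0,1) node{} to (0,2) node{} (1,1) node{} to (1,2) node{} (1,1.5) node[fill,circle,inner sep=1.5pt]{}
	;}&=\TikZ{[scale=.5] \draw
	(0,1) arc(-180:0:0.5) (0,1) node{} to (0,2) node{} (1,1) node{} to (1,2) node{} (0,1.5) node[fill,circle,inner sep=1.5pt]{}
	;}-
\TikZ{[scale=.5] \draw
	(0,1) arc(-180:0:0.5) (0,1) node{} to (0,2) node{} (1,1) node{} to (1,2) node{}
	;},
\end{align*}
i.e.\ moving a single dot from the right leg of a cup to the left leg costs one "loop-less" cup. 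Equivalently, on the right leg the dot acts as $(y_{\text{left}}-1)$ does on the left leg.

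For the inductive step of (c), I would take a cup with $k$ dots on the left leg, peel off one dot and move it to the right leg using the base case applied at that height: this converts a left-$k$-dotted cup into a left-$(k-1)$-dotted-then-right-$1$-dotted cup minus a left-$(k-1)$-dotted cup. Then I apply the induction hypothesis for $k-1$ to the remaining left-dotted cups, slide the extra single dot down past the (now right-side) dots — dots on the same strand commute freely, since $y$ is a single generator on one object, so the count just adds — and collect terms. The bookkeeping is exactly the Pascal recursion ${k\choose j}={k-1\choose j-1}+{k-1\choose j}$, which produces the binomial coefficients in the statement. Identity (d) is the inverse substitution: since on a cup "right leg dot $=$ left leg dot $-1$", formally $y_{\text{right}} = y_{\text{left}} - 1$ and hence $y_{\text{left}} = y_{\text{right}}+1$, so $y_{\text{left}}^k = (y_{\text{right}}+1)^k = \sum_j \binom{k}{j} y_{\text{right}}^j$ — wait, that gives (c) again; the signs in (d) come from inverting, i.e.\ $y_{\text{right}}^k = (y_{\text{left}}-1)^k = \sum_j (-1)^{k-j}\binom{k}{j} y_{\text{left}}^j$, matching the stated formula. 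So (d) follows from (c) by a purely algebraic inversion of the lower-triangular unipotent change of basis, or equally well by the same induction with the base case read the other way. Identities (a) and (b) for caps are proved by the identical argument using the cap analogue of Lemma~\ref{lem:dot-slide}(c); alternatively, one can deduce the cap relations from the cup relations by composing with snake relations (R2)/Lemma~\ref{lem:sBRrel}, though doing the induction directly is cleaner.

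The only genuinely delicate point — and the one I would be most careful about — is the sign and orientation convention when I first move a dot across the cup at an intermediate height inside a larger stack of dots: the diagrams here are genuinely $\mathbb{Z}_2$-graded, but since $\overline{y}=0$ the dot is even, so no Koszul signs intervene when sliding $y$ past $y$ or past the (even) identity strands, and the cup/cap $b,b^*$ are odd but appear only at the very bottom/top of these local pictures, so no height-move sign is incurred within the induction. Thus the signs appearing in (b) and (d) are entirely combinatorial, coming from inverting $y\mapsto y\pm 1$, and not from the supercategory structure; making this explicit is the substance of the proof. Everything else is the routine Pascal's-triangle induction, so I would state it compactly rather than writing out every diagram.
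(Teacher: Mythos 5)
Your proof is in substance the paper's own (the paper simply says the lemma follows by induction from the one-dot relations, i.e.\ Lemma~\ref{lem:dot-slide}(c) and the second relation in (R4)), and your reduction to the formal substitutions $y_ib_i^*=(y_{i+1}+1)b_i^*$, $y_{i+1}b_i^*=(y_i-1)b_i^*$ on cups (and $b_iy_{i+1}=b_i(y_i+1)$ on caps), together with the observation that all the $y$'s involved commute and that no Koszul signs arise because $\overline{y}=\overline{0}$, is exactly the right content and gives the stated binomial formulas. One sign slip to correct: in your description of the inductive step for (c) you say that peeling a dot off the left leg of a cup and moving it to the right leg produces a right-dotted cup \emph{minus} an undotted one; since Lemma~\ref{lem:dot-slide}(c) reads $y_{i+1}b_i^*=y_ib_i^*-b_i^*$, its inversion is $y_ib_i^*=y_{i+1}b_i^*+b_i^*$, so the left-to-right move comes with a \emph{plus} (the minus is for the right-to-left move used in (d)). With the minus you wrote, the recursion would generate alternating signs and contradict the all-positive Pascal identity ${k\choose j}={k-1\choose j-1}+{k-1\choose j}$ that you invoke; your later formal computation ($y_{\mathrm{left}}^k=(y_{\mathrm{right}}+1)^k$ for (c), $y_{\mathrm{right}}^k=(y_{\mathrm{left}}-1)^k$ for (d)) has the signs right, so the argument as a whole stands once that sentence is fixed.
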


The following formulas for sliding dots through a crossing can also be verified in a straightforward way using induction, and should be compared with \cite[Lemma 2.3]{AMR}.

\begin{lemma}[Generalized dot sliding]\label{AMR2.3} For any $k\in\mathbb{Z}_{\geq0}$ we have the following relations:
		\begin{align*}
		(a) \;\;\TikZ{[scale=.5] 
			\node[right] at (0.75,0.5){k};
			\draw 
			(0,0) node{} to (1,2) node{} (1,0) node{} to (0,2) node{} (0.75,0.5) node[fill,circle,inner sep=1.5pt]{} 
			;}&=
		\TikZ{[scale=.5]
			\node[left] at (0.25,1.5){k};
			\draw 
			(0,0) node{} to (1,2) node{} (1,0) node{} to (0,2) node{}
			(0.25,1.5) node[fill,circle,inner sep=1.5pt]{} 
			;}+\sum^{k-1}_{j=0}\left(
		\TikZ{[scale=.5]
			\node[left] at (0,1.5){k-1-j};
			\node[right] at (1,0.5){j};
			\draw 
			(0,0) node{} to (0,2) node{} (1,0) node{} to (1,2) node{}
			(0,1.5) node[fill,circle,inner sep=1.5pt]{}
			(1,0.5) node[fill,circle,inner sep=1.5pt]{}  ;}-
		\TikZ{[scale=.5]
			\node[left] at (0.12,1.71){k-1-j};
			\node[right] at (0.88,0.3){j};
			\draw 
			(0,0) arc(180:0:0.5) (0,2) arc(-180:0:0.5)
			(0.12,1.71) node[fill,circle,inner sep=1.5pt]{}
			(0.88,0.3) node[fill,circle,inner sep=1.5pt]{} 
			;}\right) \\
		(b) \;\;\TikZ{[scale=.5]
			\node[left] at (0.25,0.5){k};
			\draw 
			(0,0) node{} to (1,2) node{} (1,0) node{} to (0,2) node{}
			(0.25,0.5) node[fill,circle,inner sep=1.5pt]{} 
			;}&=
		\TikZ{[scale=.5]
			\node[right] at (0.75,1.5){k};
			\draw 
			(0,0) node{} to (1,2) node{} (1,0) node{} to (0,2) node{}
			(0.75,1.5) node[fill,circle,inner sep=1.5pt]{} 
			;}-\sum^{k-1}_{j=0}\left(
		\TikZ{[scale=.5]
			\node[left] at (0,0.5){k-1-j};
			\node[right] at (1,1.5){j};
			\draw 
			(0,0) node{} to (0,2) node{} (1,0) node{} to (1,2) node{}
			(0,0.5) node[fill,circle,inner sep=1.5pt]{}
			(1,1.5) node[fill,circle,inner sep=1.5pt]{}  ;}+
		\TikZ{[scale=.5]
			\node[left] at (0.12,0.3){k-1-j};
			\node[right] at (0.88,1.71){j};
			\draw 
			(0,0) arc(180:0:0.5) (0,2) arc(-180:0:0.5)
			(0.12,0.3) node[fill,circle,inner sep=1.5pt]{}
			(0.88,1.71) node[fill,circle,inner sep=1.5pt]{} 
			;}\right)\end{align*}
\end{lemma}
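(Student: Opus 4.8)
\textbf{Proof plan for Lemma~\ref{AMR2.3}.}

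The plan is to prove both identities (a) and (b) by induction on $k$, the number of dots on the lower-left strand. Identities (a) and (b) are mirror images of each other (reflecting the diagram through a vertical axis, which swaps $s_i$ with itself and exchanges the roles of the two strands), so once I establish (a), identity (b) follows by applying the same reflection together with the sign bookkeeping inherent in the height moves \eqref{signrule}; alternatively, I would simply run the symmetric induction. So the real content is (a).

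For the base case $k=1$, identity (a) is precisely Lemma~\ref{lem:dot-slide}(a), rewritten: $s_iy_{i+1}=y_is_i+1-b_i^*b_i$, which is the $k=1$ instance with the empty sum on the right contributing only the $j=0$ term $1_a - b_i^* b_i$. For the inductive step, assume (a) holds for $k-1$. I would write the left-hand side for $k$ dots as the left-hand side for $k-1$ dots with one extra dot stacked below on the same strand, i.e. factor $y_{i+1}^{k} = y_{i+1}\cdot y_{i+1}^{k-1}$ and apply the $k=1$ relation first to move one dot across the crossing:
\begin{align*}
s_i y_{i+1}^{k} &= (s_i y_{i+1}) y_{i+1}^{k-1} = (y_i s_i + 1_a - b_i^*b_i)\, y_{i+1}^{k-1}.
\end{align*}
Then I apply the inductive hypothesis to the term $s_i y_{i+1}^{k-1}$ inside $y_i s_i y_{i+1}^{k-1}$, and separately simplify the terms $y_{i+1}^{k-1}$ and $b_i^* b_i y_{i+1}^{k-1}$ using the dot-on-cap/cup formulas of Lemma~\ref{lem:dot-on-cup} (parts (a) and (c)) to normalize where the remaining dots sit. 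Collecting terms and re-indexing the summation should reproduce the stated right-hand side for $k$; the two families of correction terms (the ``uncrossed'' $y_i^{k-1-j} y_{i+1}^j$ term and the ``cap–cup'' $b_i^* b_i$ term with dots distributed as $k-1-j$ on top and $j$ on bottom) arise respectively from the $y_i s_i(\cdots)$ branch and from the $-b_i^* b_i(\cdots)$ branch of the $k=1$ expansion.

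The main obstacle I anticipate is purely bookkeeping: keeping track of \emph{which} strand each dot ends up on after each application of the $k=1$ relation, and making sure the binomial-style re-indexing of the telescoping sum is done correctly, since each step produces both a ``moved'' term and a lower-order remainder, and these remainders themselves must be pushed into normal form using Lemma~\ref{lem:dot-on-cup}. There are no sign subtleties beyond those already encoded in (R4) and Lemma~\ref{lem:dot-slide}, because $\overline{y}=\overline 0$, so dots commute past the height moves freely; the only signs are the explicit $(-1)$'s already visible in the dot relations. I would organize the computation so that at each stage all dots on through-strands are pushed to the bottom and all dots on caps/cups to the left end, matching the normal-form conventions, so that comparison with the claimed formula is immediate. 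This is why I expect the proof to be ``straightforward by induction'' exactly as the statement advertises, and why I would not spell out every intermediate diagram.
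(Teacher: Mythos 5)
Your induction is exactly the ``straightforward induction'' the paper alludes to for Lemma~\ref{AMR2.3} (it records no more detail than that): peel off one dot using Lemma~\ref{lem:dot-slide}, apply the inductive hypothesis to $s_iy_{i+1}^{k-1}$ (resp.\ $s_iy_i^{k-1}$), and the two correction families appear already in the stated positions, the extra terms from the $k=1$ relation being precisely the $j=k-1$ (resp.\ $j=0$) summands, so the normalization via Lemma~\ref{lem:dot-on-cup} you anticipate is not actually needed. The one caveat is that the left-right mirror is not literally a symmetry of $\sVW$ (the two snake relations in (R2) carry different signs, and indeed the signs in (a) and (b) do not match under a naive reflection), so for (b) you should use your stated fallback and run the parallel induction from $s_iy_i=y_{i+1}s_i-1-b_i^*b_i$, which goes through verbatim.
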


Furthermore, as we show next, as a generalization of Lemma~\ref{lem:sBRrel}{(c)}, isolated loops in $\sVW$ with any number of dots are zero.

\begin{lemma}[Loop values] \label{lem:dot-loop}
For any $k,\ell\in \mathbb{N}_0$, the following relation holds in $\sVW$:
	
	\begin{equation*}
	\TikZ{[scale=.5]
		\node [left] at (0,1.5) {k};
		\node [right] at (1,1.5) {$\ell$};
		\draw 
		(0,1) arc(-180:0:0.5) (0,2) arc(180:0:0.5)
		(0,1) node{} to (0,2) node{} (1,1) node{} to (1,2) node{}
		(0,1.5) node[fill,circle,inner sep=1.5pt]{}  (1,1.5) node[fill,circle,inner sep=1.5pt]{}
		;}=0, \quad \text{that is,} \quad b_i y_i^k y_{i+1}^{\ell}b_i^*=0 \text{ for any } i \geq 1.
	\end{equation*}
\end{lemma}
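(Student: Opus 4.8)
The plan is to reduce everything to the two base cases $\ell=0$ and use the dot-on-cup/cap formulas from Lemma~\ref{lem:dot-on-cup} to collapse the pair $(k,\ell)$ to a single exponent, then exhibit a recursion that lowers the number of dots. First I would use Lemma~\ref{lem:dot-on-cup}(a) to slide all $\ell$ dots from the right strand of the cap onto the left strand: this rewrites $b_i y_i^k y_{i+1}^\ell b_i^*$ as $\sum_{j=0}^\ell \binom{\ell}{j} b_i y_i^{k+j} b_i^*$, so it suffices to prove the single-exponent statement $b_i y_i^m b_i^* = 0$ for all $m \in \NN_0$. The case $m=0$ is exactly the relation $b_i b_i^* = 0$ from Lemma~\ref{lem:sBRrel}, and $m=1$ follows from the second dot relation in (R4) (moving the dot from the right leg of the cap to the left leg picks up a correction term which is $b_i b_i^* = 0$, so $b_i y_i b_i^* = b_i y_{i+1} b_i^*$; but one can equally feed the dot down through the cup $b_i^*$ using the third relation of Lemma~\ref{lem:dot-slide}, $y_{i+1}b_i^* = y_i b_i^* - b_i^*$, and symmetrically for the cap, which forces $2\, b_i y_i b_i^* = -\,b_i b_i^* = 0$ after combining — this antisymmetry is the crux).

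The key step is to set up the induction on $m$. The idea is that a dot on the left leg of the cap $b_i$ and a dot on the left leg of the cup $b_i^*$ interact antisymmetrically: using Lemma~\ref{lem:dot-slide}(c) (equivalently $y_{i+1}b_i^* = y_i b_i^* - b_i^*$) repeatedly, I can express $b_i y_i^m b_i^*$ in terms of $b_i y_i^a y_{i+1}^c b_i^*$ with lower total degree, and the $\ell\mapsto k$ symmetry of the configuration (a cap stacked directly on a cup, which is rotationally symmetric) together with the sign in the height move for $b, b^*$ produces a relation of the form $b_i y_i^m b_i^* = -\,b_i y_i^m b_i^* + (\text{lower-dot terms})$. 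Concretely, I would move one dot at a time across the ``equator'' of the circle: sliding a dot from the cap's left leg down onto the cup picks up $\pm b_i y_i^{m-1} b_i^* $-type corrections (handled by induction), and the leading term reappears with the opposite sign because rotating the bare circle by $\pi$ swaps the roles of cap and cup while the odd parity of $b$ and $b^*$ contributes a sign via \eqref{signrule}. Hence $2\,b_i y_i^m b_i^* \in \Hom^{\le m-1}$ spanned by loops with fewer dots, all of which vanish by the inductive hypothesis, giving $b_i y_i^m b_i^* = 0$.

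The main obstacle I anticipate is bookkeeping the signs in the rotational/height-move argument: one must be careful that the circle $b_i \circ (\text{dots}) \circ b_i^*$ is genuinely invariant under the flip only up to the Koszul sign coming from $\overline b = \overline{b^*} = \overline 1$, and that the dot-sliding corrections are correctly tracked through this flip (the dot $y$ is even, so it contributes no sign, but its position relative to the odd cap/cup matters for which correction terms appear). An alternative, perhaps cleaner, route that avoids the rotation argument is a pure induction using only Lemma~\ref{lem:dot-slide}: write $b_i y_i^m b_i^* = b_i y_i^{m-1} (y_i b_i^*)$ and apply $y_i b_i^* = y_{i+1} b_i^* + b_i^*$ to get $b_i y_i^m b_i^* = b_i y_i^{m-1} y_{i+1} b_i^* + b_i y_i^{m-1} b_i^*$; the second term vanishes by induction, and the first is handled by Lemma~\ref{lem:dot-on-cup}(c) which slides the lone $y_{i+1}$ dot back to the left, expressing it as $b_i y_i^{m-1} b_i^* + b_i y_i^m \cdot(\text{nothing})$ — but this circularity must be broken by instead pushing the dot through the crossing-free part and invoking $b_i b_i^* = 0$ with the full binomial expansion of Lemma~\ref{lem:dot-on-cup}(a) applied to $y_{i+1}^{m-1}$, which yields $\sum_{j} \binom{m-1}{j} b_i y_i^j b_i^* = 0$; by strong induction all terms with $j < m-1$ vanish, leaving $b_i y_i^{m-1} b_i^* = 0$, completing the induction. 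I would present whichever of these two is shorter once the signs are pinned down, most likely the second.
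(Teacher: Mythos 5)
Your first reduction (sliding the $\ell$ right-hand dots onto the left strand via Lemma~\ref{lem:dot-on-cup}(a), so that it suffices to kill $b_iy_i^mb_i^*$) is exactly the paper's first step, and the base case $b_ib_i^*=0$ is fine. The gap is in the crux. Your ``rotational antisymmetry'' route is not supported by any relation in $\sVW$: the only formal flip available is the functor $\iota$ of Proposition~\ref{invol}, and applying it to the loop merely returns $(-1)^{m+1}$ times the loop inside the opposite algebra -- it does not equate the loop with its own negative, so no relation $b_iy_i^mb_i^*=-b_iy_i^mb_i^*+(\text{lower})$ comes out of it. Your second route founders on the very circularity you notice, and the identity you invoke to break it, $\sum_j{m-1\choose j}b_iy_i^jb_i^*=0$, does not follow from Lemma~\ref{lem:dot-on-cup}(a) plus $b_ib_i^*=0$: that lemma gives $b_iy_{i+1}^{m-1}b_i^*=\sum_j{m-1\choose j}b_iy_i^jb_i^*$, which modulo the inductive hypothesis is just $b_iy_i^{m-1}b_i^*$, so the asserted identity is equivalent to the statement being proved. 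Note also that your closing sentence concludes ``$b_iy_i^{m-1}b_i^*=0$'', which is the inductive hypothesis rather than the inductive step.

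What is missing is the observation that the cap and the cup slide a dot with \emph{opposite} correction signs, and that this must be exploited on a loop with one \emph{extra} dot. From (R4) one has $b_iy_i=b_iy_{i+1}-b_i$, while Lemma~\ref{lem:dot-slide}(c) gives $y_ib_i^*=y_{i+1}b_i^*+b_i^*$. Hence
\begin{align*}
b_iy_i^{m+1}b_i^* &= (b_iy_i)\,y_i^{m}b_i^* = b_iy_i^{m}y_{i+1}b_i^* - b_iy_i^{m}b_i^*,\\
b_iy_i^{m+1}b_i^* &= b_iy_i^{m}(y_ib_i^*) = b_iy_i^{m}y_{i+1}b_i^* + b_iy_i^{m}b_i^*,
\end{align*}
and subtracting gives $2\,b_iy_i^{m}b_i^*=0$ at once, with no induction -- this is precisely the paper's proof. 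If you prefer your binomial formulation, the correct version is to expand a loop with $m+1$ dots on the right strand both through the cap (Lemma~\ref{lem:dot-on-cup}(a)) and through the cup (Lemma~\ref{lem:dot-on-cup}(d)); the difference of the two expansions kills the terms of one parity and, by strong induction, leaves $(m+1)\,b_iy_i^{m}b_i^*=0$. Your choice of exponent $m-1$ is off by two: its top surviving term is the $(m-2)$-dot loop, so it only reproduces the hypothesis.
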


\begin{proof}
	Using Relation (R4) %and Lemma~\ref{lem:dot-slide}{(c)} 
	to consecutively slide dots from the right side of the loop to the left, any loop with dots as above can be written as a linear combination of loops with dots on the left only. Hence, without loss of generality, we can assume $\ell=0$.
Applying Relation (R4) and Lemma~\ref{lem:dot-slide}(c), we can rewrite a loop with $k+1$ dots on the left in two different ways (where the integers always indicate the number of dots on the strand):
	$$
	\TikZ{[scale=.5]
		\node [left] at (0,1.5) {k};
		\node [right] at (1,1.5) {};
		\draw 
		(0,1) arc(-180:0:0.5) (0,2) arc(180:0:0.5)
		(0,1) node{} to (0,2) node{} (1,1) node{} to (1,2) node{}
		(0,1.5) node[fill,circle,inner sep=1.5pt]{} (1,1.5) node[fill,circle,inner sep=1.5pt]{}
		;}+\TikZ{[scale=.5]
		\node [left] at (0,1.5) {k};
		\draw 
		(0,1) arc(-180:0:0.5) (0,2) arc(180:0:0.5)
		(0,1) node{} to (0,2) node{} (1,1) node{} to (1,2) node{}
		(0,1.5) node[fill,circle,inner sep=1.5pt]{} 
		;}=
	\TikZ{[scale=.5]
		\node [left] at (0,1.5) {k};
		\node [left] at (0,1) {};
		\draw 
		(0,1) arc(-180:0:0.5) (0,2) arc(180:0:0.5)
		(0,1) node{} to (0,2) node{} (1,1) node{} to (1,2) node{}
		(0,1.5) node[fill,circle,inner sep=1.5pt]{}  (0,1) node[fill,circle,inner sep=1.5pt]{} (0,1.5) node[V]{}
		;}
	=
	\TikZ{[scale=.5]
		\node [left] at (0,1.5) {k+1};
		\draw 
		(0,1) arc(-180:0:0.5) (0,2) arc(180:0:0.5)
		(0,1) node{} to (0,2) node{} (1,1) node{} to (1,2) node{}
		(0,1.5) node[fill,circle,inner sep=1.5pt]{} 
		;}=
	\TikZ{[scale=.5]
		\node [left] at (0,1.5) {k};
		\node [left] at (0,2) {};
		\draw 
		(0,1) arc(-180:0:0.5) (0,2) arc(180:0:0.5)
		(0,1) node{} to (0,2) node{} (1,1) node{} to (1,2) node{}
		(0,1.5) node[fill,circle,inner sep=1.5pt]{}  (0,1.5) node[fill,circle,inner sep=1.5pt]{} (0,2) node[fill,circle,inner sep=1.5pt]{}
		;}
	=\TikZ{[scale=.5]
		\node [left] at (0,1.5) {k};
		\node [right] at (1,1.5) {};
		\draw 
		(0,1) arc(-180:0:0.5) (0,2) arc(180:0:0.5)
		(0,1) node{} to (0,2) node{} (1,1) node{} to (1,2) node{}
		(0,1.5) node[fill,circle,inner sep=1.5pt]{}  (1,1.5) node[fill,circle,inner sep=1.5pt]{}
		;}-\TikZ{[scale=.5]
		\node [left] at (0,1.5) {k};
		\draw 
		(0,1) arc(-180:0:0.5) (0,2) arc(180:0:0.5)
		(0,1) node{} to (0,2) node{} (1,1) node{} to (1,2) node{}
		(0,1.5) node[fill,circle,inner sep=1.5pt]{} 
		;}.$$
	
	Subtracting $
	\TikZ{[scale=.5]
		\node [left] at (0,1.5) {k};
		\node [right] at (1,1.5) {};
		\draw 
		(0,1) arc(-180:0:0.5) (0,2) arc(180:0:0.5)
		(0,1) node{} to (0,2) node{} (1,1) node{} to (1,2) node{}
		(0,1.5) node[fill,circle,inner sep=1.5pt]{}  (1,1.5) node[fill,circle,inner sep=1.5pt]{}
		;}$ from both sides, we get $2\left(\TikZ{[scale=.5]
		\node [left] at (0,1.5) {k};
		\draw 
		(0,1) arc(-180:0:0.5) (0,2) arc(180:0:0.5)
		(0,1) node{} to (0,2) node{} (1,1) node{} to (1,2) node{}
		(0,1.5) node[fill,circle,inner sep=1.5pt]{} 
		;}\right)=0$.
\end{proof}

\begin{example} Lemma~\ref{lem:dot-loop} shows that all {\it isolated} loops, i.e. those which do not intersect any other strands, with or without dots are equal to zero. This does not mean that all dotted diagrams involving (non-isolated) loops are equal to zero, as the following example shows.
	\begin{align*}
	d=
		\TikZ{[scale=.5]
			\draw 
			(0,1) arc(-180:0:0.5) (0,4) arc(180:0:0.5)
			(0,1) node{} to (0,4) node{}
			(1,1) node{} to (2,2) node{} to (2,3) node{} to (1,4) node{}
			(2,0.5) node{} to (2,1) node{} to (1,2) node{} to (1,3) node{} to (2,4) node{} to (2,4.5) node{}			
			(1,2.5) node[fill,circle,inner sep=1.5pt]{} 
			;}&=
		\TikZ{[scale=.5]
			\draw 
			(0,1) arc(-180:0:0.5) (0,4) arc(180:0:0.5)
			(0,1) node{} to (0,4) node{}
			(1,1) node{} to (2,2) node{} to (2,3) node{} to (1,4) node{}
			(2,0.5) node{} to (2,1) node{} to (1,2) node{} to (1,3) node{} to (2,4) node{} to (2,4.5) node{}		
			(1.8,3.8) node[fill,circle,inner sep=1.5pt]{} 
			;}-
		\TikZ{[scale=.5]
			\draw 
			(0,1) arc(-180:0:0.5) (0,4) arc(180:0:0.5)
			(0,1) node{} to (0,4) node{}
			(1,1) node{} to (2,2) node{} to (2,4.5) node{}
			(2,0.5) node{} to (2,1) node{} to (1,2) node{} to (1,4) node{}		 
			;}-
		\TikZ{[scale=.5]
			\draw 
			(0,1) arc(-180:0:0.5) (0,4) arc(180:0:0.5) (1,4) arc(-180:0:0.5)
			(0,1) node{} to (0,4) node{}
			(1,1) node{} to (2,2) node{}
			(1,2) arc(180:0:0.5)
			(2,0.5) node{} to (2,1) node{} to (1,2) node{}
			(2,4) node{} to (2,4.5) node{}		
			;}=
		\TikZ{[scale=.5]
			\draw 
			(0,1) arc(-180:0:0.5) (0,2.5) arc(180:0:0.5)
			(0,1) node{} to (0,2.5) node{}
			(1,1) node{} to (1,2.5) node{}
			(2,0.5) node{} to (2, 3) node{}		
			(2,1.75) node[fill,circle,inner sep=1.5pt]{} 
			;}-
		\TikZ{[scale=.5] \draw
			(0,0.5) node{} to (0,1) node{} to (1,2) node{} to (1,2.5) node{}
			(0,2.5) arc(180:0:0.5) 
			(2,1) node{} to (2,3)node{} 
			(1,1) node{} to (0,2) node{} to (0,2.5)node{} 
			(1,1) arc(-180:0:0.5)
			;}-
		\TikZ{[scale=.5]
			\draw 
			(0,1) arc(-180:0:0.5) (0,2.5) arc(180:0:0.5) (1,2.5) arc(-180:0:0.5) (1,1) arc(180:0:0.5)
			(0,1) node{} to (0,2.5) node{}
			(2,0.5) node{} to (2,1) node{}
			(2,2.5) node{} to (2,3) node{}
			;}  \\
		&=
		\TikZ{[scale=.5]
			\draw 
			(0,1) arc(-180:0:0.5) (0,2.5) arc(180:0:0.5)
			(0,1) node{} to (0,2.5) node{}
			(1,1) node{} to (1,2.5) node{}
			(2,0.5) node{} to (2, 3) node{}		
			(2,1.75) node[fill,circle,inner sep=1.5pt]{} 
			;}-
		\TikZ{[scale=.5] \draw
			(0,0.5) node{} to (0,2.5) node{}
			(0,2.5) arc(180:0:0.5) 
			(2,1) node{} to (2,3)node{} 
			(1,1) node{} to (1,2.5) node{}
			(1,1) arc(-180:0:0.5)
			;}+
		\TikZ{[scale=.5]
			\draw 
			(0,0) node{} to (0,2.5) node{}
			;}=
		0
		+
		\TikZ{[scale=.5] \draw
			(0,0) node{} to (0,2.5) node{}
			;}+
		\TikZ{[scale=.5]
			\draw 
			(0,0) node{} to (0,2.5) node{};}=
		2 \TikZ{[scale=.5]
			\draw 
			(0,0) node{} to (0,2.5) node{};}
	\end{align*}	
	Note that although $d$ has one dot, but above calculation shows that it can be rewritten as a diagram with no dots. This is a general phenomenon - resolving loops in a diagram with $k$ dots will produce a linear combination of diagrams without loops which all have $<k$ dots (see the proof of Proposition~\ref{oVW-span}). 
\end{example}

\subsection{Spanning set}
We now prove the first part of Theorems \ref{Thm1} and \ref{Thm2} - namely, that the sets $S_{a,b}$ and $S^{\leq k}_{a,b}$ span $\Hom_{\sBr}(a,b)$ and $\Hom_{\sVW}(a,b)^{\leq k}$, respectively.

\begin{lemma}\label{choice of Sab is a choice of sign}
	If $d_1,d_2$ in $\Hom_{\sBr}(a,b)$ are any two normal diagrams with the same connector, $P(d_1)=P(d_2)$, then $d_1=d_2 \in \Hom_{sBr}(a,b)$. 
\end{lemma}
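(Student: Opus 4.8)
The plan is to show that any two normal diagrams $d_1, d_2 \in \Hom_{\sBr}(a,b)$ with the same connector represent the same morphism, by arguing that one can be transformed into the other using only the braid relations (R1) together with the height moves, and that these transformations do not introduce any signs. First I would recall that a normal diagram has no loops and each string carries at most one cup or cap, so $d_1$ and $d_2$ have the same strings (as unordered pairs of endpoints) and differ only in how these strings are drawn, i.e. in the pattern of crossings among them and in the relative heights of the cups and caps. Since in a normal diagram any two strings cross at most once, and this is forced precisely by which pairs of strings ``link'' in the connector, the set of crossing pairs is the same in $d_1$ and $d_2$; what can differ is only the planar isotopy class of the drawing, which is controlled by the braid-type moves.

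The key steps, in order, would be: (1) Reduce to the case where $d_1$ and $d_2$ have their cups, caps and through-strings in the same ``combinatorial position'', using the normality conditions (all cups above all caps, heights of caps decreasing left-to-right by left endpoint, heights of cups increasing left-to-right by left endpoint) — these conditions pin down the vertical ordering of the cups and caps uniquely from the connector, so the only remaining freedom is in the crossings. (2) Observe that between the fixed layers of caps (at the bottom) and cups (at the top) the through-strings, together with the ``legs'' of the cups and caps, form what is essentially a permutation diagram, and two such diagrams realizing the same matching with each pair crossing at most once are related by the braid relations (R1) — this is the standard fact that reduced words for a permutation are connected by braid moves, applied here to the pure-braid/crossing structure. (3) Carefully check the signs: the braid relations (R1) are stated with coefficient $+1$, and the height moves coming from \eqref{signrule} only introduce signs when an odd morphism is slid past another odd morphism; since in a normal diagram all caps are below all cups and their heights are already separated, no such exchange of two odd generators ($b$ or $b^*$) past each other is needed — only slides of $s$ (which is even) past $b$ or $b^*$, which are sign-free. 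Hence the transformation $d_1 \leadsto d_2$ is sign-free and $d_1 = d_2$ in $\Hom_{\sBr}(a,b)$.

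The main obstacle I anticipate is step (3), the bookkeeping of signs: one must be genuinely careful that in passing from $d_1$ to $d_2$ one never needs to commute two odd generators past each other vertically (which would cost a $-1$ by the height move), and that the untwisting relations (R3) — which do carry signs — are not secretly invoked. The normality hypotheses are exactly what rules this out: because all caps sit below all cups and their internal heights are totally ordered by the left-endpoint condition, the isotopy connecting the two drawings can be taken to fix the cap-layer and cup-layer setwise and only rearrange crossings, so only (R1) and sign-free height moves are used. A secondary, more routine point is to make precise the claim in step (2) that ``same matching with pairwise at most one crossing'' forces the two middle parts to be related by braid moves; this is essentially Matsumoto's theorem / the connectedness of reduced expressions, and I would state it as such rather than reprove it. Everything else is a direct unwinding of the definition of ``normal diagram.''
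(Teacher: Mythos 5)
Your proposal is correct and follows essentially the same route as the paper, whose proof is just the one-line observation that two normal diagrams with the same connector differ only in the order of their crossings and are therefore equal by the braid relations (R1). Your extra care in steps (1)--(3) — that normality fixes the cup/cap heights and the crossing set, and that only sign-free height moves (even $s$ past odd $b$, $b^*$) are needed — is a faithful, more detailed unwinding of exactly that argument.
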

\begin{proof}
As they are both normal,  the diagrams $d_1$ and $d_2$ differ by at most the order of the crossings, so by braid relations (R1), $d_1=d_2$ in $\sBr$.
\end{proof}

\begin{proposition}\label{oBr-span} Any diagram $d$ in $\sBr$ is either equal to zero (if it has loops) or (if it has no loops) to $\pm d_c\in S_{a,b}$, where $c=P(d)$ is the connector corresponding to $d$. In particular, $S_{a,b}$ spans $\Hom_{\sBr}(a,b)$.
\end{proposition}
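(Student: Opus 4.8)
The plan is to argue by induction on the number of loops in a diagram $d$, reducing to the case where $d$ has no loops, and then to normalize such a loop-free diagram to one of the chosen representatives $d_c$ up to sign. For the base case, suppose $d$ has no loops. Any generating-morphism word for $d$ can be rewritten, using the braid relations (R1), the snake/adjunction relations (R2), the untwisting relations (R3), and the height moves from \eqref{signrule} together with Lemma~\ref{lem:sBRrel}, into a normal diagram $d'$: first use (R2) and (R3) (and Lemma~\ref{lem:sBRrel}(a),(b)) to pull all cups above all caps and to remove any self-intersections of strings and any pair of strings crossing more than once, then use the height moves to reorder the cups and caps so that their heights are monotone in the prescribed way. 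Each such move either leaves the morphism unchanged or multiplies it by $-1$; crucially, none of them changes the underlying connector, since (R1)--(R3) and the height moves only rearrange how strings are drawn, not which endpoints they join. Hence $d = \pm d'$ with $P(d') = P(d) = c$, and $d'$ is a normal diagram with connector $c$. By Lemma~\ref{choice of Sab is a choice of sign}, $d' = d_c$ in $\sBr$, so $d = \pm d_c \in \{\pm e \mid e \in S_{a,b}\}$.

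For the inductive step, suppose $d$ has $m \ge 1$ loops. A loop is a closed string; isotoping it within the diagram (again using (R1)--(R3) and height moves, which preserve the rest of the connector) we may assume some loop is isolated, i.e. a small circle disjoint from all other strings, so that $d = \lambda \cdot (\text{circle}) \otimes d''$ for some diagram $d''$ with $m-1$ loops and some sign $\lambda = \pm 1$ coming from the height moves. Wait --- more carefully: an arbitrary loop need not be shrinkable to an isolated circle by these relations alone, since (R3) only untwists a crossing of a through-strand with a cup/cap. But in $\sBr$ a loop that crosses another string $t$ does so an even number of times (it is a closed curve), and using (R3) repeatedly one removes these crossings in pairs, shrinking the loop until it is isolated; this is exactly the topological argument alluded to in the introduction, and it is routine. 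Then Lemma~\ref{lem:sBRrel}(c) gives that the isolated circle equals $0$, so $d = 0$.

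The main obstacle I anticipate is making the loop-removal step rigorous: one must check that any loop in any $\sBr$-diagram can indeed be isotoped to an isolated circle using only the available relations (R1)--(R3) and the height moves, rather than appealing vaguely to topology. The cleanest way is probably to induct on the total number of crossings of the loop with other strings: if that number is zero the loop is already isolated, and otherwise pick an innermost arc of the loop cut out by two consecutive such crossings with the \emph{same} strand $t$ (which must exist after possibly first using (R1) to simplify crossings among other strands and the loop with itself), and apply (R3) and Lemma~\ref{lem:sBRrel}(a),(b) to slide that arc across $t$, strictly decreasing the crossing count at the cost of a sign. A secondary, more bookkeeping-type issue is verifying that the normalization procedure in the base case terminates --- e.g.\ by exhibiting a monovariant such as the lexicographically ordered tuple (number of self-intersections, number of pairs of strings crossing $\ge 2$ times, number of cap-above-cup pairs, number of height inversions among caps, among cups) which strictly decreases under the reduction moves. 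Finally, once $S_{a,b}$ is shown to be a spanning set, the spanning part of Theorem~\ref{Thm1} follows, and the remark after Theorem~\ref{Thm1} about other loop-free choices $d'_c$ differing from $S_{a,b}$ only by signs is an immediate consequence of the displayed equality $d = \pm d_c$.
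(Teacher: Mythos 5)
Your proposal is correct and follows essentially the same route as the paper's proof: isolate loops and kill them with Lemma~\ref{lem:sBRrel}(c), and normalize loop-free diagrams via (R1)--(R3), the height moves, and Lemma~\ref{lem:sBRrel} to a normal representative, which Lemma~\ref{choice of Sab is a choice of sign} identifies with $d_c$ up to sign. The paper states this more tersely and appeals to the topological picture without the termination monovariant or the induction-on-loops scaffolding (which is in fact unnecessary, since a single isolated loop already forces $d=0$), but the moves invoked and the case split are the same.
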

\begin{proof}
	If the diagram $d \in \sBr$ has any loops, we can use relations (R1) -- (R3) together with Lemma~\ref{lem:sBRrel} to isolate the loops to one side, which shows $d=0$.  
	
	If the diagram has no loops, we can use relations (R1) -- (R3) and Lemma~\ref{lem:sBRrel} to eliminate any self intersections, double intersections (two strings intersecting twice), and change the height of cups and caps. The resulting normal diagram $d'$ will have the same connector as $d$, $c:=P(d)=P(d')$,  and it will differ from $d$ in $\sBr$ by possibly a sign, $d=\pm d'$. It will possibly differ from $d_c\in S_{a,b}$ by the order of the crossings, so by Lemma~\ref{choice of Sab is a choice of sign} it satisfies $d'=d_c$. Thus, $d=\pm d_c$.
\end{proof}

The situation is only slightly more involved for $\sVW$, as transforming a diagram to an element of $S_{a,b}^\bullet$ can produce additional terms with fewer dots, in effect replacing the diagram by 
 a linear combination of elements of $S_{a,b}^\bullet$. More precisely we have

\begin{proposition}\label{oVW-span} Any dotted diagram $d \in \Hom_{\sVW}(a,b)^{\le k}$ is equal to a linear combination of elements in $S_{a,b}^{\le k}$. 
\end{proposition}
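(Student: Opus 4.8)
The strategy is to reduce an arbitrary dotted diagram to normal form while keeping track of the number of dots. The key observation is that every relation at our disposal either preserves the number of dots or strictly decreases it, so the argument naturally proceeds by downward induction on the number of dots combined with the spanning result already established for $\sBr$ (Proposition~\ref{oBr-span}).

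\textbf{Step 1: Move all dots to the bottom of through strings and the left ends of cups/caps.} Given $d\in\Hom_{\sVW}(a,b)^{\le k}$, first use the dot-sliding relations of Lemma~\ref{lem:dot-slide} (and their iterated versions, Lemmas~\ref{lem:dot-on-cup} and \ref{AMR2.3}) to push every dot either down along through strings or toward the distinguished (leftmost) end of each cup and cap. Each application of these relations replaces a diagram by itself with the dot moved, \emph{plus} correction terms that have strictly fewer dots. So after this step $d$ is a linear combination of a diagram $d'$ with the same underlying undotted diagram as $d$, with all $k$ dots in normal position, together with diagrams in $\Hom_{\sVW}(a,b)^{\le k-1}$, to which the inductive hypothesis applies.

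\textbf{Step 2: Normalize the underlying diagram.} Now treat the underlying undotted diagram of $d'$: apply the relations (R1)--(R3) and Lemma~\ref{lem:sBRrel} exactly as in the proof of Proposition~\ref{oBr-span} to remove self-intersections and double crossings and to arrange the heights of cups and caps. The subtlety, compared to the $\sBr$ case, is twofold. First, the diagram may now carry dots, and commuting a crossing or a cup past a dot may again generate lower-dot correction terms via Lemma~\ref{lem:dot-slide}; as in Step 1 these land in $\Hom_{\sVW}(a,b)^{\le k-1}$ and are dealt with by induction. Second, if resolving crossings produces a loop, then by Lemma~\ref{lem:dot-loop} an \emph{isolated} loop (with any dots) vanishes, but a loop tangled with other strings must first be untangled using (R1)--(R4); by the phenomenon illustrated in the Example, this untangling lowers the dot count, so again these contributions have $<k$ dots and are handled inductively. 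After this step, $d'$ equals $\pm d''$ where $d''$ is a normal dotted diagram with exactly $k$ dots in normal position, modulo $\Hom_{\sVW}(a,b)^{\le k-1}$.

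\textbf{Step 3: Match the diagram to $S^{\le k}_{a,b}$ and induct.} The underlying undotted diagram of $d''$ is a normal diagram with connector $c=P(d)$; by Lemma~\ref{choice of Sab is a choice of sign} it equals the chosen representative $d_c\in S_{a,b}$, possibly after reordering crossings by (R1). Placing the $k$ dots in normal position on $d_c$ yields precisely one element of $S^k_{a,b}\subset S^{\le k}_{a,b}$. Hence $d=\pm(\text{element of }S^{\le k}_{a,b}) + (\text{linear combination of diagrams in }\Hom_{\sVW}(a,b)^{\le k-1})$. By the inductive hypothesis on $k$ (the base case $k=0$ being exactly Proposition~\ref{oBr-span}), the lower-dot part is a linear combination of elements of $S^{\le k-1}_{a,b}\subset S^{\le k}_{a,b}$, and we conclude. \textbf{The main obstacle} is bookkeeping in Step 2: one must argue carefully that \emph{every} manipulation used to normalize the undotted skeleton — crossing slides, loop removals, height moves — either preserves the dot count or strictly decreases it, so that nothing ever increases $k$; the loop-resolution case is where this is least obvious and relies essentially on Lemmas~\ref{lem:dot-loop} and the dot-sliding lemmas, as in the Example.
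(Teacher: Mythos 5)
Your proof is correct and follows essentially the same route as the paper's: induction on the number of dots, using the dot-sliding relations (Lemma~\ref{lem:dot-slide} and its iterates) to push dots into normal position modulo lower-dot correction terms, Lemma~\ref{lem:dot-loop} for diagrams with loops, and Proposition~\ref{oBr-span} together with Lemma~\ref{choice of Sab is a choice of sign} for the undotted skeleton. One phrase in your Step 2 merits correction: untangling a loop via (R1)--(R3) does not by itself lower the dot count — rather (as in the paper's proof) one first uses (R4) and Lemma~\ref{lem:dot-slide} to slide dots off the loop and out of its interior, producing lower-dot corrections, after which the remaining $k$-dot term has an isolated dotted loop and vanishes outright by Lemma~\ref{lem:dot-loop}.
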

\begin{proof}
We argue by induction on $k$, with $k=0$ given by Proposition~\ref{oBr-span}. Assume $k\ge 1$, and let $d$ be a diagram with $k$ dots.

If $d$ contains loops, work with one loop at a time to: 
	\begin{enumerate}[(i)]
		\item slide all the dots on the loop so they are all to the left; 
		\item slide any dots on other strings away from the loop, so that no dots are in the interior of the loop.
	\end{enumerate}
This is accomplished using (R4) and Lemma~\ref{lem:dot-slide}. At each step, we get a linear combination of one diagram with the same number of dots, which are now in a better position, i.e.\  further away from the interior of a loop or more to the left on a loop, and diagrams with fewer dots. Applying the induction assumption to diagrams with fewer dots, it is enough to prove the claim for the diagram with all the dots on loops moved all the way to the left, and no dots in the interior of loops. For such a diagram, any loop can be moved away from the other strings, so by Lemma~\ref{lem:dot-loop} that diagram is equal to zero. This proves the claim for dotted diagrams with loops.

Next, assume that $d$ has no loops. Working with one string at a time,
	\begin{enumerate}[(i)]
		\item slide all the dots on through strings to the bottom.
		\item slide the dots on cups and caps all the way to the left.
	\end{enumerate}
Again, this is done using (R4) and Lemma~\ref{lem:dot-slide}. At the end of this process, we have replaced $d$ by a linear combination of a diagram $d'$ with $k$ dots (which are all the way on the bottom of through strings, and on the left of cups and caps), plus diagrams with fewer dots. Apply the induction assumption to diagrams with fewer dots; it remains to prove the claim for $d'$. The position of dots on $d'$ means that it is of the form $\prod_i  y^{a_i}_i d'' \prod_j y^{b_j}_j$ for some $a_i,b_j \in \NN_0$ and some undotted diagram $d''\in \sBr$. 
Applying Proposition~\ref{oBr-span} to $d''$ completes the proof. 
\end{proof}

\subsection{A flipping functor $\iota: \sVW \rightarrow \sVW^{\text{op}}$}

We describe a functor between the supercategory $\sVW$ and its opposite, which on the level of diagrams corresponds to an upside-down flip, with some additional signs.

\begin{proposition}\label{invol} There is an isomorphism of supercategories $\iota: \sVW \rightarrow \sVW^{\text{op}}$, given on objects by the identity and on morphisms by:
	$$\iota(s_i)=-s_i, \quad	\iota(b_i)=b_i^*, \quad \iota(b_i^*)=-b_i,  \quad	\iota(y_i)=-y_i.$$
The inverse functor is given by $\iota^3$. It restricts to an anti-isomorphism on each $\op{End}_{\sVW}(a)$, $a\in\mathbb{N}$ (sending $s_i$ , $e_i$, $y_i$ to minus themelves in the notation from  Section~\ref{secAbar}).
\end{proposition}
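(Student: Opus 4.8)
The plan is to verify directly that the assignment $\iota$ respects the defining relations of $\sVW$, but with the order of composition reversed — i.e.\ that it defines a monoidal functor to $\sVW^{\mathrm{op}}$ — and then to check $\iota^4 = \mathrm{id}$ (equivalently, that $\iota^3$ is a two-sided inverse). Since $\sVW$ is given by generators and relations as a strict monoidal supercategory, it suffices to specify $\iota$ on the generating morphisms $s$, $b$, $b^*$, $y$ (fixing the generating object $\obj$), to check that the parities are preserved ($\overline{s}=\overline 0$, $\overline b=\overline{b^*}=\overline 1$, $\overline y=\overline 0$ are all respected by the signs introduced), and to check that each generating relation (R1)--(R4), read in $\sVW^{\mathrm{op}}$, is sent to a consequence of (R1)--(R4). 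Diagrammatically $\iota$ is the upside-down flip, so it exchanges cups with caps (hence $b \leftrightarrow b^*$ up to sign), fixes the shape of $s$ and of the dot $y$, and reverses vertical stacking order — which is exactly what passing to $\sVW^{\mathrm{op}}$ encodes.

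First I would record the effect of the flip on each relation. The braid relations (R1) are symmetric under the flip, and the two sign changes on each side ($s \mapsto -s$, two crossings per side) cancel, so (R1) is preserved. For the snake relations (R2) the flip swaps the two snake identities with each other; one checks the signs match: flipping $(1\otimes b)\circ(b^*\otimes 1) = \mathbf 1$ and applying $b\mapsto b^*$, $b^*\mapsto -b$ reproduces (up to the overall sign in the first snake relation) the other snake identity, and similarly in reverse. The untwisting relations (R3) get flipped into the cap-untwisting relations, which are precisely Lemma~\ref{lem:sBRrel}(a),(b) — already established in $\sBr$, hence in $\sVW$ — again after tracking the crossing-signs and the sign on $b^*$. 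For the dot relations (R4), the first relation is flipped to a relation among $s$, two dots, and a cup/cap pair; multiplying through by the appropriate sign and using $y\mapsto -y$ (which contributes $(-1)^{(\text{number of dots})}$, here a single dot on each relevant term) one reduces it to the second relation of (R4) together with Lemma~\ref{lem:dot-slide}(c); symmetrically the second (R4) relation flips back to the first. Throughout, the Koszul sign rule \eqref{signrule} (the height moves) is automatically respected because $\iota$ is being defined as a monoidal functor and the flip is compatible with the monoidal structure up to the chosen signs.

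Once the relations are checked, $\iota$ extends uniquely to a monoidal functor $\sVW \to \sVW^{\mathrm{op}}$ that is the identity on objects; it is then automatically full and faithful on the level of the generate-and-relate presentation, hence an isomorphism of supercategories, provided we exhibit an inverse. For the inverse, compute $\iota^2$ on generators: $\iota^2(s_i) = s_i$, $\iota^2(b_i) = \iota(b_i^*) = -b_i$, $\iota^2(b_i^*) = \iota(-b_i) = -b_i^*$, $\iota^2(y_i) = y_i$. Thus $\iota^2$ is the sign automorphism of $\sVW$ that negates $b_i, b_i^*$ and fixes $s_i, y_i$ (which one checks independently preserves the relations — the $b$'s occur quadratically in (R2), (R3), and the relevant cup-cap term of (R4), so all signs cancel), and it is clearly an involution. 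Hence $\iota^4 = (\iota^2)^2 = \mathrm{id}$, so $\iota^{-1} = \iota^3$. Finally, since $\op{End}_{\sVW}(a)$ is sent to $\op{End}_{\sVW^{\mathrm{op}}}(a) = \op{End}_{\sVW}(a)^{\mathrm{op}}$, the restriction of $\iota$ is an anti-automorphism of the superalgebra $\op{End}_{\sVW}(a)$; on the standard generators it acts by $s_i \mapsto -s_i$, $b_i \mapsto b_i^*$, $b_i^* \mapsto -b_i$, $y_i \mapsto -y_i$, and composing $b_i$ with $b_i^*$ shows $e_i \mapsto -e_i$ in the notation of Section~\ref{secAbar}, as claimed.

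\textbf{Main obstacle.} The real work — and the only place errors are likely — is the sign bookkeeping in verifying (R2), (R3), (R4) under the flip: one must be careful that flipping a relation genuinely lands it in $\sVW^{\mathrm{op}}$ (reversing composition order) before comparing, and that the signs contributed by $\iota(s)=-s$, $\iota(b^*) = -b$, $\iota(y) = -y$ combine with the Koszul signs from \eqref{signrule} to exactly the signs appearing in the target relation. The cleanest way to control this is to observe that the flip on diagrams is a well-defined operation that reverses vertical order and squares (after two applications) to the $b$-negating automorphism, and to push all verifications through the diagrammatic calculus rather than element-by-element; the parity check ($\iota$ preserves $\overline{(-)}$) is immediate and needs no care.
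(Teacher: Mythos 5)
Your proposal follows the same route as the paper's proof: check that $\iota$, defined on generators with the indicated signs, respects (R1)--(R4) when composition is read in the opposite order, using Lemmas~\ref{lem:sBRrel} and~\ref{lem:dot-slide} to supply the flipped forms of (R3) and (R4); then verify $\iota^{-1}=\iota^{3}$ by computing $\iota^{2}$. The computation of $\iota^{2}$ (fixing $s_{i}$ and $y_{i}$, negating $b_{i}$ and $b_{i}^{*}$), the observation that this is an involution giving $\iota^{4}=\mathrm{id}$, and the restriction to an anti-automorphism of $\op{End}_{\sVW}(a)$ sending $e_{i}=b_i^*b_i$ to $-e_{i}$ are all correct and usefully fill in what the paper leaves implicit.

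One detail in your treatment of (R4) is reversed. The \emph{first} dot relation $y_{2}=s\,y_{1}\,s+s+b^{*}b$ is \emph{invariant} under the flip: apply $\iota$, and every term on each side acquires an overall factor $-1$ (from the one dot, the odd number of $s$'s, or the sign in $\iota(b^{*})=-b$), so the relation is reproduced verbatim -- this is exactly what the paper records. It is the \emph{second} dot relation $b_{1}y_{2}=b_{1}y_{1}+b_{1}$ that flips to the cup-sliding identity $y_{2}b_{1}^{*}=y_{1}b_{1}^{*}-b_{1}^{*}$, i.e.\ Lemma~\ref{lem:dot-slide}(c). Your account -- that the first (R4) reduces after the flip to the second (R4) plus Lemma~\ref{lem:dot-slide}(c), and that the second flips back to the first -- does not survive the computation; nor does the flipped first (R4) have ``two dots'' (each term has at most one). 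Since, as you correctly stress, the sign bookkeeping is the entire content of this proposition, this step is worth redoing carefully. The remainder of the argument is sound.
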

\begin{proof}
To see that $\iota$ respects
	the defining relations of $\sVW$, we note that (R1) and the first part of (R4) are invariant under 
	the diagrams upside-down, the flips of (R3) and the second part of (R4) are a consequence of Lemmas \ref{lem:sBRrel} and \ref{lem:dot-slide}, and the first diagram of (R2) turns into the second after the flip, with the sign changes being consistent as well. 
\end{proof}

\section{The periplectic Lie superalgebra $\fp(n)$}\label{sec:periplectic}

We recall some facts from the representation theory of the Lie superalgebra $\fp(n)$. For more details on Lie superalgebras see for instance \cite{Mu}, \cite{Se}, and for $\fp(n)$ see also \cite{BDEHHILNSS}.

\subsection{Definition and bases}
From now on, let $V=\mathbb{C}^{n|n}$ be the superspace of superdimension $n|n$, meaning $V=V_{\overline 0}\oplus V_{\overline 1}$ with $V_{\overline 0}=\C^n$, $V_{\overline 1}=\C^n$. Let $v_1, \ldots , v_n$ be the standard basis of $V_{\overline 0}$ and $v_{1'}, \ldots , v_{n'}$ be the standard basis of $V_{\overline 1}$. We let $[n] := \{1, \dots, n\}$,  $[n']:=\{1',\dots,n'\}$ denote the sets of indices.

The \emph{general linear Lie superalgebra} $\fgl(n|n)$ is the Lie superalgebra of endomorphisms of $V$,  with $\Z/2\Z-$grading induced by $V$, and the Lie superbracket given by the super commutator $[x,y]=xy-(-1)^{\overline{x}\overline{y}}yx$. % for homogeneous elements $x, y\in \fgl(n|n)$. 
In terms of matrices, 
$$\fgl(n|n) = \left\{  \begin{pmatrix} A & B \\ C & D\end{pmatrix} \mid 
A, B, C, D \in M_{n,n}(\CC)\right\},$$
with 
$$\fgl(n|n)_{\overline{0}} = \left\{ \begin{pmatrix} A & 0 \\ 0 & D \end{pmatrix} \right\}=\fgl(n) \oplus \fgl(n) \quad \text{and}\quad
\fgl(n|n)_{\overline{1}} = \left\{ \begin{pmatrix} 0 & B \\ C & 0 \end{pmatrix} \right\}.$$ 
We call $V$ the \emph{vector representation} of $\fgl(n|n)$. 
A basis of $\fgl(n|n)$ is given by the matrix units $E_{rs}$ for $r,s \in [n] \cup [n']$, which act on
$V$ as  
$E_{rs}v_t = \delta_{st} v_r$ for $t \in [n] \cup [n']$. 

Let $\beta: V \otimes V \to \CC$ be the bilinear form given by 
$$\beta|_{V_{\overline{0}}\otimes V_{\overline{0}}}=\beta|_{V_{\overline{1}}\otimes V_{\overline{1}}}=0 \quad \text{ and } \quad 	
\beta(v_i,v_{j'})=\beta(v_{j'},v_i)=\delta_{i,j}\hbox{ for all } i,j \in [n].$$
It is symmetric, odd, and non-degenerate on $V$. 
Andr\'{e} Weil named such forms \textit{periplectic} by analogy with symplectic forms. The corresponding {\it periplectic Lie superalgebra} $\fp(n)$  is then defined as the Lie supersubalgebra of $\fgl(n|n)$ preserving $\beta$, i.e.\ it is spanned by all homogeneous elements $x$ which satisfy $\beta(x u, v) + (-1)^{\bar{x} \bar{u}} \beta(u, xv) = 0$.
In terms of matrices,
$$\fp(n) = \left\{ \begin{pmatrix} A & B \\ C & -A^t \end{pmatrix} \in \fgl(n|n) 
\mid B = B^t, C = -C^t \right\},$$
with 
$$\fp(n)_{\overline 0} = \left\{ \begin{pmatrix} A & 0 \\ 0 & -A^t \end{pmatrix} \right\}, \quad \fp(n)_{\overline 1} = \left\{ \begin{pmatrix} 0 & B \\ C & 0 \end{pmatrix} \right\}.$$

\begin{lemma}
	%\label{basis}
	The set $\cX = \{A_{ij}^- \mid i, j \in [n] \} \cup \{B_{ij}^+ \mid i \leq j \in [n]\} \cup \{C_{ij}^- \mid i < j \in [n]\}$ is a basis for $\fp(n)$, where
$A_{ij}^\pm = E_{ij} \pm E_{j' i'}$, 
$B_{ij}^\pm = E_{ij'} \pm E_{j i'}$, $C_{ij}^\pm = E_{i'j} \pm E_{j' i}$, and $\overline{A_{ij}^{\pm}}=0, \; \overline{B_{ij}^{\pm}}=\overline{C_{ij}^{\pm}}=1$.
\end{lemma}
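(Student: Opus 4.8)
The plan is to establish the three facts that together make a subset of $\fp(n)$ a basis: that every listed element actually lies in $\fp(n)$ (with the claimed parity), that the number of listed elements equals $\dim\fp(n)$, and that the elements are linearly independent inside $\fgl(n|n)$ — for which I will use that the matrix units $\{E_{rs}\mid r,s\in[n]\cup[n']\}$ form a basis of $\fgl(n|n)$.

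For membership, write an arbitrary element of $\fgl(n|n)$ in the block form $\left(\begin{smallmatrix} A & B \\ C & D\end{smallmatrix}\right)$ with $A,B,C,D\in M_{n,n}(\C)$; then $E_{ij}$ and $E_{j'i'}$ are supported in the diagonal blocks, $E_{ij'}$ in the $B$-block, and $E_{i'j}$ in the $C$-block. A direct check shows that $A_{ij}^-=E_{ij}-E_{j'i'}$ is the block matrix with $A=E_{ij}$, $D=-E_{ji}=-A^t$ and $B=C=0$, so $A_{ij}^-\in\fp(n)_{\bar0}$; that $B_{ij}^+=E_{ij'}+E_{ji'}$ has $B$-block equal to $E_{ij}+E_{ji}$, which is symmetric (the case $i=j$ giving $B_{ii}^+=2E_{ii'}$), and all other blocks zero, so $B_{ij}^+\in\fp(n)_{\bar1}$; and that $C_{ij}^-=E_{i'j}-E_{j'i}$ has $C$-block equal to $E_{ij}-E_{ji}$, which is antisymmetric, and all other blocks zero, so $C_{ij}^-\in\fp(n)_{\bar1}$. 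The parities $\overline{A_{ij}^\pm}=0$ and $\overline{B_{ij}^\pm}=\overline{C_{ij}^\pm}=1$ are immediate from these block positions.

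Next I count: there are $n^2$ elements $A_{ij}^-$ ($i,j\in[n]$), $\binom{n+1}{2}$ elements $B_{ij}^+$ ($i\le j$), and $\binom{n}{2}$ elements $C_{ij}^-$ ($i<j$), for a total of $n^2+\binom{n+1}{2}+\binom{n}{2}=2n^2$; and the matrix description $\fp(n)=\{\left(\begin{smallmatrix} A & B \\ C & -A^t\end{smallmatrix}\right)\mid B=B^t,\ C=-C^t\}$ gives $\dim\fp(n)=n^2+\binom{n+1}{2}+\binom{n}{2}=2n^2$ as well. Since $\cX\subseteq\fp(n)$ has cardinality $\dim\fp(n)$, it suffices to prove linear independence. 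The three families involve matrix units from pairwise disjoint blocks, so it is enough to check independence within each family; and within each family every element contains a matrix unit ($E_{ij}$, $E_{ij'}$, resp.\ $E_{i'j}$) that occurs in no other element of that family, so a vanishing linear combination must be trivial. Alternatively one can verify spanning directly by expanding a general $x\in\fp(n)$ in the bases of arbitrary, symmetric, and antisymmetric $n\times n$ matrices and matching coefficients. There is no genuine obstacle here; the only thing to watch is the bookkeeping of transpose and sign conventions in the block form, together with the mild anomaly $B_{ii}^+=2E_{ii'}$ in the diagonal case.
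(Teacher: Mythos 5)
The paper states this lemma without proof, treating it as a routine verification (the matrix description of $\fp(n)$, which you use, is established in the text immediately before the lemma). Your argument is correct: you place $A_{ij}^-$, $B_{ij}^+$, $C_{ij}^-$ into the $A$-, $B$-, $C$-blocks of the block form $\left(\begin{smallmatrix} A & B \\ C & -A^t\end{smallmatrix}\right)$ with the symmetry and antisymmetry constraints checked, count $n^2 + \binom{n+1}{2} + \binom{n}{2} = 2n^2 = \dim\fp(n)$, and establish linear independence by observing that each element contributes a matrix unit occurring in no other element of its family (with the index restrictions $i\le j$ and $i<j$ ensuring no repeats), so a dimension count finishes the argument. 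You also correctly flag the $B_{ii}^+ = 2E_{ii'}$ diagonal case as harmless. This is the standard proof one would write; there is nothing to compare against in the paper.
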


The universal enveloping superalgebra of a Lie superalgebra $\mathfrak{g}$ is the quotient of the tensor algebra $T(\mathfrak{g})$ by the ideal generated by elements of the form $x\otimes y-(-1)^{\overline{x}\overline{y}}y\otimes x - [x,y]$ for all homogeneous $x,y\in \mathfrak{g}$.
Letting
$$\mathfrak{g}=\mathfrak{p}(n), \quad \mathfrak{g}_{-1}=\left\{ \begin{pmatrix} 0 & 0 \\ C & 0 \end{pmatrix} \in \mathfrak{p}(n) \right\},\quad \mathfrak{g}_{0}=\left\{ \begin{pmatrix} A & 0 \\ 0 & -A^t \end{pmatrix} \in \mathfrak{p}(n) \right\}, \quad \mathfrak{g}_{1}=\left\{ \begin{pmatrix} 0 & B \\ 0 & 0 \end{pmatrix} \in \mathfrak{p}(n) \right\},$$
the PBW-Theorem for $\mathfrak{p}(n)$ theorem states that multiplication gives an isomorphism of vector superspaces
$$\Lambda (\mathfrak{g}_{1}) \otimes S(\mathfrak{g}_{0}) \otimes \Lambda (\mathfrak{g}_{-1}) \to \mathcal{U}(\fp(n)).$$

There is a {\it supertrace form} on $\fgl(n|n)$, given by 
\begin{equation}\label{eqn:str}\<x,y\> = \str(xy), \quad  \text{ with } \quad \str\left(\begin{pmatrix} A&B\\C&D \end{pmatrix}\right) = \tr(A) - \tr(D).\end{equation}
It is bilinear, invariant in the sense $\<[x,y],z\>=\<x,[y,z]\>$ for all $x,y,z \in \fgl(n|n)$, and nongdegenerate. The subalgebra $\fp(n)$ is isotropic with respect to this form; however, one can consider the dual space $\fp(n)^\perp$ of $\fp(n)$ in $\fgl(n|n)$ with respect to this form, which satisfies $\fgl(n|n) = \fp(n) \oplus \fp(n)^\perp$. The basis $\cX$ of $\fp(n)$ gives rise to a dual basis $\cX^*=\{x^* \mid x \in \cX\}$ for $\fp(n)^{\perp}$, in the sense that $\<x^*,y\>=\delta_{xy} \; \; \forall \; y \in \cX$. It is explicitly given as $$(A^-_{ij})^* = \half A_{ji}^+, \quad (B_{ij}^+)^* = -\half C_{ji}^+, \quad (B_{ii}^+)^* = -\frac{1}{4} C_{ii}^+, \quad  \text{ and } \quad (C_{ij}^-)^* = \half B_{ji}^-.$$

\subsection{The category $\pn$}
We consider the monoidal supercategory $\pn$ of representations of $\fp(n)$ with the set $\Hom_{\fp(n)}(M,N)$ of morphisms from $M$ to $N$ given by linear combinations of homogeneous $\C$-linear maps $f$ from $M$ to $N$ such that $f(x.m)=(-1)^{\overline{x}\overline{f}}x.f(m)$ for homogeneous elements $m\in M$, $x\in \fp(n)$. We in particular allow morphisms to be odd (i.e.\  they change the parity of elements they are applied to).

This supercategory is symmetric, with the braiding given by the superswap
$$\sigma: M\otimes N\to N\otimes M, \qquad \sigma(m\otimes n)=(-1)^{\overline{m}\overline{n}}n\otimes m.$$

We call $V$ the vector representation of $\mathfrak{p}(n)$. The form $\beta$ induces an (odd) identification of $V\to V^*$ as $\mathfrak{p}(n)$-representations, given by 
$v\mapsto \beta(v,-).$ Similarly, the bilinear form $(\beta\otimes \beta)\circ (1\otimes \sigma\otimes 1):V^{\otimes 4}\to \C$ induces an identification $(V\otimes V)^*\to V\otimes V$. 
 With that, the dual map to the form $\beta$ can be thought of as $\beta^*:\mathbb{C}\to V\otimes V$; it is given by
$$\beta^*(1)=\sum_{i}(v_i \otimes v_{i'} - v_{i'}\otimes v_i).$$

\begin{lemma}
	\label{lem:homsbb*s}
	The following are maps of Lie superalgebra modules of degrees $\overline{1}, \; \overline{1},$ and $\overline{0}$:% respectively:
	$$\beta  \in \Hom_{\fp(n)}(V\otimes V,\C), \quad
	\beta^* \in \Hom_{\fp(n)}(\C,V\otimes V), \quad
	\s \in \Hom_{\fp(n)}(V\otimes V,V \otimes V).$$
\end{lemma}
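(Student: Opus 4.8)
The plan is to verify directly that each of the three maps $\beta$, $\beta^*$, $\s$ intertwines the $\fp(n)$-action up to the stated sign, using the defining property of $\fp(n)$ as the stabilizer of the form $\beta$. The parities are immediate from the definitions: $\beta$ and $\beta^*$ are odd since $\beta$ pairs $V_{\overline 0}$ with $V_{\overline 1}$, while $\s$ is the (even) superswap.

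First I would treat $\beta$. By definition a homogeneous $x\in\fp(n)$ satisfies $\beta(xu,v)+(-1)^{\overline x\overline u}\beta(u,xv)=0$ for all homogeneous $u,v\in V$. I need to check that $\beta\in\Hom_{\fp(n)}(V\otimes V,\mathbb{C})$, i.e.\ $\beta(x.(u\otimes v))=(-1)^{\overline x\,\overline\beta}x.\beta(u\otimes v)$. Since $\mathbb{C}$ is the trivial module, the right-hand side is $0$, and the left-hand side is the action of $x$ on $u\otimes v$ evaluated by $\beta$, namely $\beta(xu\otimes v)+(-1)^{\overline x\overline u}\beta(u\otimes xv)=\beta(xu,v)+(-1)^{\overline x\overline u}\beta(u,xv)$, which vanishes precisely by the defining relation. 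So $\beta$ is a morphism.

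Next, for $\beta^*$, I would argue by duality: $\beta^*$ is obtained from $\beta$ via the identifications $V\cong V^*$ and $(V\otimes V)^*\cong V\otimes V$ induced by $\beta$, which are maps of $\fp(n)$-modules, so $\beta^*$ is automatically a morphism of the appropriate (odd) degree. Alternatively one can check the intertwining property directly on the explicit formula $\beta^*(1)=\sum_i(v_i\otimes v_{i'}-v_{i'}\otimes v_i)$: one computes $x.\beta^*(1)$ for $x=A^-_{ij},B^+_{ij},C^-_{ij}$ ranging over the basis $\cX$ and checks it is zero, using $E_{rs}v_t=\delta_{st}v_r$ and the Koszul signs coming from $\overline x=\overline 1$ on the odd generators. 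For $\s$, being the braiding of the symmetric monoidal supercategory $\pn$, naturality gives that $\s$ intertwines the $\fp(n)$-action on $V\otimes V$ with itself; this is a general fact about the superswap and requires only the Koszul sign bookkeeping, not anything special about $\fp(n)$.

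The only real content is the computation for $\beta$ (or equivalently $\beta^*$), and that is essentially a restatement of the definition of $\fp(n)$; the main thing to be careful about is the placement of the Koszul signs $(-1)^{\overline x\overline u}$ in the $\fp(n)$-action on the tensor product and in the definition of a morphism of supermodules, so that the two signs match up. I expect no genuine obstacle here — the lemma is really a sanity check that the diagrammatic generators $b,b^*,s$ will have well-defined images under the functor $\sVW\to\pn$ constructed later — so the proof will be short, citing the definition of $\fp(n)$ for $\beta$, the induced module identifications for $\beta^*$, and the symmetry of $\pn$ for $\s$.
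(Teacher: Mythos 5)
Your proposal is correct, and the paper in fact omits the proof of this lemma entirely, treating it as an immediate consequence of the preceding definitions (the periplectic form $\beta$ is $\fp(n)$-invariant by definition of $\fp(n)$, $\beta^*$ is introduced as the dual map under the odd identifications $V\cong V^*$ and $(V\otimes V)^*\cong V\otimes V$ induced by $\beta$, and $\sigma$ is the symmetry of $\pn$). Your three arguments fill in exactly those routine steps, and the sign bookkeeping — the Koszul sign $(-1)^{\overline x\overline u}$ in the tensor action matching the $(-1)^{\overline x\overline\beta}$ in the supermorphism condition, with the target $\mathbb C$ carrying the zero action — is handled correctly.
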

%\begin{proof}
%	The map $\beta$ is $\mathfrak{p}(n)$-invariant by definition, and so both $\beta$ and $\beta^*$ are odd $\pn$ morphisms, while the verification for $\s$ is straightforward. 
%\end{proof}

\subsection{A (fake) quadratic Casimir element}\label{Omega}

Because of the absence of the Killing form on $\mathfrak{p}(n)$, there is no Casimir element in $\mathcal{U}(\mathfrak{p}(n))$, nor a quadratic Casimir in $\mathfrak{p}(n)\otimes \mathfrak{p}(n)$. (In fact, the centre of $\mathcal{U}(\mathfrak{p}(n))$ is trivial.) We can however use the supertrace form on $\mathfrak{gl}(n|n) $ to define a \emph{fake Casimir} in  $\mathfrak{p}(n)\otimes \mathfrak{gl}(n|n)$ as follows (see also \cite{BDEHHILNSS}). Let 

$$\Omega = 2\sum_{x \in \cX} x \otimes x^* \in \fp(n) \otimes \fgl(n|n);$$
explicitly,
\begin{equation}
\label{eq:Casimir}
\Omega = 
\sum_{i,j} A_{ij}^- \otimes A_{ji}^+ 
- \half \sum_{i} B_{ii}^+ \otimes C_{ii}^+
- \sum_{i < j} B_{ij}^+ \otimes C_{ji}^+
+ \sum_{i < j} C_{ij}^- \otimes B_{ji}^-.
\end{equation}

This element does not act on an arbitrary tensor product $M\otimes N$ of $\fp(n)$-representations, but acts on $M \otimes V$, for $M$ any $\fp(n)$-representation, and $V$ the above described vector representation.   
Its action gives a morphism in $\pn$ by the following proposition, first observed in \cite[Lemma 4.1.4]{BDEHHILNSS}.

\begin{proposition}\label{prop:Omega-fpn-commute}
	The actions of $\Omega$ and $\fp(n)$ on $M \otimes V$ commute, i.e.\ 
	$\Omega\in\operatorname{End}_{\fp(n)}(M\otimes V)$.	
\end{proposition}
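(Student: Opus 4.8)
The statement to prove is that $\Omega = 2\sum_{x\in\cX} x\otimes x^*$, viewed as an operator on $M\otimes V$, commutes with the diagonal action of $\fp(n)$. The natural approach is to compute the commutator $[\Delta(z),\Omega\text{-action}]$ for an arbitrary homogeneous $z\in\fp(n)$ and show it vanishes. More precisely, write the $\fp(n)$-action on $M\otimes V$ as $z\mapsto z\otimes 1 + 1\otimes z$ (with Koszul signs), and the $\Omega$-action as $m\otimes v \mapsto 2\sum_x (-1)^{\overline{x}\,\overline{m}} x.m \otimes x^*.v$; then expand $[z, \Omega]$ into a sum over $x\in\cX$ of terms $[z,x]\otimes x^* + x\otimes [z,x^*]$, being careful with signs. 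The key point is that although $x^*$ lives in $\fgl(n|n)$ rather than $\fp(n)$, the combination is still controlled because $\fgl(n|n)=\fp(n)\oplus\fp(n)^\perp$ is an $\operatorname{ad}\fp(n)$-stable decomposition (this follows from invariance of the supertrace form together with the fact that $\fp(n)$ is a subalgebra: $\langle[z,x^*],y\rangle = -\langle x^*,[z,y]\rangle$, and $[z,y]\in\fp(n)$ when $y\in\fp(n)$, so $[z,x^*]\in\fp(n)^\perp$).

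The crucial algebraic identity is the standard "Casimir invariance" computation adapted to this dual-basis-in-a-larger-algebra setting: for a fixed basis $\cX$ of $\fp(n)$ with supertrace-dual basis $\cX^*$ of $\fp(n)^\perp$, one has
\[
\sum_{x\in\cX}\bigl([z,x]\otimes x^* + (-1)^{\overline{z}\,\overline{x}} x\otimes [z,x^*]\bigr) = 0
\]
for all $z\in\fp(n)$, where each side is interpreted as an element of $\fp(n)\otimes\fgl(n|n)$. This is proved by pairing the second tensor slot against an arbitrary $y\in\fp(n)$ using the supertrace form: $\sum_x \langle [z,x^*],y\rangle$-type manipulations convert the second sum into $-\sum_x \langle x^*,[z,y]\rangle x = -[z,y]$ (expanding $[z,y]\in\fp(n)$ in the basis $\cX$ and using $\langle x^*, x'\rangle=\delta_{x,x'}$), which precisely cancels the first sum $\sum_x \langle x^*,y\rangle[z,x] = [z,y]$. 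One must track the Koszul signs carefully — the parities $\overline{x}=\overline{x^*}$ (visible from the explicit formulas for the dual basis in the excerpt) make the bookkeeping manageable — and verify that the identity is independent of the choice of basis $\cX$, which is immediate from the defining property of the dual basis.

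Finally, I would translate this tensor identity into a statement about operators on $M\otimes V$: since $\Omega$ acts only on the $M$-slot (via $\fp(n)$) and the $V$-slot (via $\fgl(n|n)$, where $V$ genuinely is a module), the operator commutator $[\,z\otimes 1 + 1\otimes z,\ \Omega\text{-action}\,]$ is exactly the action of the left-hand side of the displayed identity, hence zero. The only subtlety to flag is the Koszul-sign bookkeeping when distributing $z$ across the tensor factors and when the operator $x\otimes x^*$ is applied to $m\otimes v$ — this is the main obstacle, though it is purely mechanical. One should also confirm at the outset that the expression $\sum_{x\in\cX} x\otimes x^*$ does not depend on the chosen basis of $\fp(n)$ (so that "the" fake Casimir is well-defined), which again follows from the duality $\langle x^*,y\rangle=\delta_{xy}$; alternatively, since the explicit formula \eqref{eq:Casimir} is given, one can simply work with that concrete expression and the concrete basis $\cX$ from the lemma, trading conceptual cleanliness for a direct but longer verification.
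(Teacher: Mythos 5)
Your proof plan follows essentially the same route as the paper: expand the commutator $[\Delta(z),\Omega]$, use the dual-basis relations $\langle x^*,y\rangle=\delta_{xy}$, and invoke invariance of the supertrace form to get cancellation. The paper does this by directly expanding both $[y,x_i]$ and $[y,x_i^*]$ in the basis $\cX\cup\cX^*$ and swapping summation indices; your version of pairing the second tensor slot against a test vector $y\in\fp(n)$ is a mild reformulation of the same computation.

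There is, however, a gap in the justification you offer for the $\operatorname{ad}\fp(n)$-stability of $\fp(n)^\perp$. Your parenthetical argument --- ``$\langle[z,x^*],y\rangle=-\langle x^*,[z,y]\rangle$ and $[z,y]\in\fp(n)$, so $[z,x^*]\in\fp(n)^\perp$'' --- would require $\langle x^*,[z,y]\rangle=0$, but $x^*\in\fp(n)^\perp$ and $[z,y]\in\fp(n)$ pair \emph{nondegenerately}, not trivially: $\fp(n)$ is isotropic of half dimension (hence its own orthogonal complement), and $\fp(n)^\perp$ here denotes a complementary Lagrangian dual to $\fp(n)$, not its supertrace-orthogonal complement. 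Consequently your ``pairing against $y\in\fp(n)$'' step only shows that the $\fp(n)\otimes\fp(n)^\perp$-component of the putative identity vanishes (because $\fp(n)^\perp$ is also isotropic, pairing against $\fp(n)$ only sees that component); you still need to rule out an $\fp(n)\otimes\fp(n)$-component coming from the $\fp(n)$-part of $[z,x^*]$. The fact you need --- that $\fp(n)^\perp$ really is $\operatorname{ad}\fp(n)$-stable, equivalently $[\fp(n)^\perp,\fp(n)^\perp]\subseteq\fp(n)$ --- is true, but for a different reason: $\fp(n)$ and $\fp(n)^\perp$ are the $\pm 1$-eigenspaces of a Lie superalgebra involution on $\fgl(n|n)$ (sending $\left(\begin{smallmatrix}A&B\\C&D\end{smallmatrix}\right)\mapsto\left(\begin{smallmatrix}-D^t&B^t\\-C^t&-A^t\end{smallmatrix}\right)$), which is easily checked directly. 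To be fair, the paper's own proof also uses this stability implicitly, in the expansion $[y,x_i^*]=\sum_j\langle[y,x_i^*],x_j\rangle x_j^*$, without spelling out why there is no $\fp(n)$-component; so your plan is no less rigorous than the printed argument, but the specific justification you sketched does not hold up and should be replaced.
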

\begin{proof} The Lie superalgebra $\fp(n)$ acts on $M \otimes V$ via the coproduct $\Delta$ of $\mathcal{U}(\fp(n))$, given by $\Delta(y)=y \otimes 1 + 1 \otimes y$. For any homogeneous element $y \in \fp(n) \subset \fgl(n|n)$, we have
	\begin{equation*}[y \otimes 1 + 1 \otimes y,x_i\otimes x^*_i]=[y,x_i]\otimes x^*_i + (-1)^{\bar{y}\bar{x_i}}x_i \otimes [y,x^*_i].\end{equation*}	
Furthermore, by expanding in the basis $\{x_i\}_i\cup\{x^*_i\}_i$ of $\fgl(n|n)$, we can see that	
	$$[y, x_i] = \sum_j \<x_j^*,[y, x_i]\> x_j, \quad  \text{ and } \quad 
	{[y, x_i^*] = \sum_j \<[y, x_i^*], x_j\> x_j^*},$$
Therefore, using the invariance of the supertrace form (\ref{eqn:str}),
	\begin{align*}
	[\Delta(y), \Omega] &= [ y \otimes 1 + 1 \otimes y, \sum_{i} x_i \otimes x_i^*] = \sum_i [y, x_i] \otimes x_i^* +  \sum_i (-1)^{\bar{y} \bar{x}_i} x_i \otimes [y, x_i^*]\\
	&= \sum_{i,j} \<x_j^*,[y, x_i] \> (x_j \otimes x_i^*) + \sum_{i,j}(-1)^{\bar{y} \bar{x}_i}  \<[y, x_i^*],x_j \> (x_i \otimes x_j^*) \\
	&= \sum_{i,j} \<x_j^*,[y, x_i] \> (x_j \otimes x_i^*) - \sum_{i,j} \<[x_i^*,y], x_j\> (x_i \otimes x_j^*) \\
	&= \sum_{i,j} \<x_j^*,[y, x_i] \> (x_j \otimes x_i^*) - \sum_{i,j} \<x_i^*, [y,x_j]\> (x_i \otimes x_j^*)  = 0.\hfill\qedhere
	\end{align*}
\end{proof}

\remark \label{rem:Omegaparity} Note that $\Omega$ is even, $\overline{\Omega}=\overline{0}$, since from (\ref{eq:Casimir}) we see that
$$\Omega \in \left(\fgl(n|n)_{\bar{1}} \otimes \fgl(n|n)_{\bar{1}}\right) \oplus \left(\fgl(n|n)_{\bar{0}} \otimes \fgl(n|n)_{\bar{0}}\right) \subset (\fgl(n|n) \otimes \fgl(n|n))_{\bar{0}}.$$

\vspace{0.3cm}

We consider the special case when $M=V$, and calculate the action of $\Omega$ in that case.

\begin{lemma}\label{lem:OmegaOnVV}
	The action of $\Omega$ on $V \otimes V$ is explicitly given by $\s + \beta^* \beta$. 
\end{lemma}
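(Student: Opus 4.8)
The plan is to compute $\Omega$ acting on $V\otimes V$ directly from the explicit formula~\eqref{eq:Casimir}, using that the action on the first factor is via $\fp(n)\subset\fgl(n|n)$ and on the second factor via $\fgl(n|n)$ on its vector representation, with the Koszul signs dictated by the parities recorded in the basis lemma. Concretely, I would evaluate $\Omega(v_r\otimes v_s)$ for $r,s\in[n]\cup[n']$, keeping careful track of the fact that $\Omega$ is even but its summands $x\otimes x^*$ involve odd elements $B^\pm, C^\pm$, so applying the second tensor factor to $v_s$ produces a sign $(-1)^{\overline{x^*}\cdot\overline{v_s}}$ relative to the naive formula $(x\otimes x^*)(v_r\otimes v_s)=x v_r\otimes x^* v_s$. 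Since $\s$ and $\beta^*\beta$ are already known to be $\fp(n)$-morphisms (Lemma~\ref{lem:homsbb*s}) and $\Omega$ is too (Proposition~\ref{prop:Omega-fpn-commute}), it actually suffices to verify the claimed identity on a single well-chosen vector in each indecomposable summand, or simply to push through the full bookkeeping on all four parity-types of basis vectors $v_i\otimes v_j$, $v_i\otimes v_{j'}$, $v_{i'}\otimes v_j$, $v_{i'}\otimes v_{j'}$.

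The key steps, in order, are: (1) recall the explicit actions of the basis elements on $V$, namely $A^-_{ij}=E_{ij}-E_{j'i'}$, $B^+_{ij}=E_{ij'}+E_{ji'}$, $C^-_{ij}=E_{i'j}-E_{j'i}$, together with $E_{rs}v_t=\delta_{st}v_r$; (2) substitute into $\Omega=\sum_{i,j}A^-_{ij}\otimes A^+_{ji}-\tfrac12\sum_i B^+_{ii}\otimes C^+_{ii}-\sum_{i<j}B^+_{ij}\otimes C^+_{ji}+\sum_{i<j}C^-_{ij}\otimes B^-_{ji}$ and apply it to $v_r\otimes v_s$, inserting the sign $(-1)^{\overline{x^*}\,\overline{v_s}}$ from the definition of $f\otimes g$ on superspaces; (3) collect terms and recognize the result. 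The "diagonal-type" part $\sum_{i,j}A^-_{ij}\otimes A^+_{ji}$ should produce the swap $\s$ (up to the even/odd signs, which is where the signature of $\s$ as an \emph{odd-form-related} even map enters), while the $B\otimes C$ and $C\otimes B$ parts should assemble into $\beta^*\beta$, using $\beta^*(1)=\sum_i(v_i\otimes v_{i'}-v_{i'}\otimes v_i)$ and $\beta(v_i,v_{j'})=\beta(v_{j'},v_i)=\delta_{ij}$, $\beta$ vanishing on $V_{\bar0}\otimes V_{\bar0}$ and $V_{\bar1}\otimes V_{\bar1}$. The factor $-\tfrac12$ on the $B^+_{ii}\otimes C^+_{ii}$ term and the restriction $i<j$ versus $i\le j$ in the sums are exactly what make the off-diagonal and diagonal contributions to $\beta^*\beta$ come out with uniform coefficient $1$.

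I expect the main obstacle to be \emph{sign management}: there are three independent sources of signs — the Koszul rule in $(x\otimes x^*)(v_r\otimes v_s)$, the symmetry conventions in identifying $(V\otimes V)^*$ with $V\otimes V$ that underlie the formula for $\beta^*$, and the antisymmetrization $v_i\otimes v_{i'}-v_{i'}\otimes v_i$ in $\beta^*(1)$ versus the symmetry of $\beta$. Getting $\beta^*\beta$ rather than $-\beta^*\beta$, and $\s$ rather than $-\s$, requires being scrupulous about which convention for $f\otimes g$ is in force (the one displayed before~\eqref{signrule}) and about the parity $\overline{v_s}$ of the vector the odd operator $x^*$ hits. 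A clean way to sidestep most of this is to first treat the four parity blocks separately: on $V_{\bar0}\otimes V_{\bar0}$ and $V_{\bar1}\otimes V_{\bar1}$ only the $A^-\otimes A^+$ term and, respectively, the $C^-\otimes B^-$ / $B^+\otimes C^+$ terms survive acting nontrivially, so one checks $\Omega=\s$ there and $\Omega=\s+\beta^*\beta$ on the mixed blocks $V_{\bar0}\otimes V_{\bar1}$ and $V_{\bar1}\otimes V_{\bar0}$, where $\beta^*\beta$ is supported. A final consistency cross-check: both sides are $\fp(n)$-equivariant of degree $\bar0$, and one can confirm the answer by computing the trace of $\Omega|_{V\otimes V}$ against $\operatorname{str}$-type pairings, or by testing on $\beta^*(1)$ itself, for which $\Omega\,\beta^*(1)$ should equal $(\s+\beta^*\beta)\beta^*(1)$; since $\beta\beta^*$ is a scalar (computable as $\sum_i 2\cdot(\pm1)$, i.e.\ essentially $\pm 2n$ up to sign conventions, though here it will turn out that the relevant combination is what matters), this gives an independent numerical check on the coefficients.
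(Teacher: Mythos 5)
Your approach is correct and is the same as the paper's: the paper's proof is exactly an explicit evaluation of $\Omega$ from formula~\eqref{eq:Casimir} on the standard basis $\{v_r\otimes v_s\mid r,s\in[n]\cup[n']\}$, split into the four parity blocks (it writes out the case $r,s\in[n]$ and says the other three are similar). So the plan would work if carried out carefully.

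That said, two of the claims in your write-up are wrong and, since you correctly identify sign management as the crux, they would send a literal execution astray. First, the Koszul sign in $(x\otimes x^*)(v_r\otimes v_s)$ comes from $x^*$ passing over the \emph{first} factor: by the convention before~\eqref{signrule}, it is $(-1)^{\overline{x^*}\,\overline{v_r}}$, not $(-1)^{\overline{x^*}\,\overline{v_s}}$. Second, the proposed "numerical cross-check" via $\beta\beta^*$ cannot work, because $\beta\beta^*=0$ (this is exactly relation (R2)/Lemma~\ref{lem:sBRrel}(c) in the category, and one sees it directly from $\beta^*(1)=\sum_i(v_i\otimes v_{i'}-v_{i'}\otimes v_i)$ and the symmetry of $\beta$), so it is not "essentially $\pm2n$". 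A smaller inaccuracy: on $V_{\bar1}\otimes V_{\bar1}$ it is again only the $A^-\otimes A^+$ summand that acts nontrivially (the $B^+\otimes C^+$ and $C^-\otimes B^-$ terms vanish there because the second tensor factor kills $v_{b'}$); the $B,C$-summands contribute only on the mixed blocks $V_{\bar0}\otimes V_{\bar1}$ and $V_{\bar1}\otimes V_{\bar0}$, and even there the $A^-\otimes A^+$ term still contributes (to the diagonal $a=b$ part). None of this changes the final answer, but it does mean the heuristic "$A\otimes A\mapsto\sigma$, $B,C\mapsto\beta^*\beta$" should not be trusted block by block; the identity $\Omega=\sigma+\beta^*\beta$ only emerges after summing all contributions.
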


\begin{proof}
%	To obtain the action of $\Omega$ on $V \otimes V$, we use the Koszul sign rule
%	$$(x \otimes y)(u \otimes v) = (-1)^{\bar{y}\bar{u}}(xu \otimes yv), $$
	%\noindent to determine what happens to 
	
This is an explicit calculation in the basis $\{v_a \otimes v_b \mid a,b \in [n] \cup [n'] \}$ of $V \otimes V$. We include the computation for the case $a,b\in [n]$. The remaining three cases follow similarly.
%	\noindent{\underline{$\Omega$ acting on $v_a \otimes v_b$:}} Since $\overline{v_a} = 0$, $(x \otimes y)(v_a \otimes v_b) = xv_a \otimes yv_b$ for all $x,y$. So

Let $a,b\in [n]$. Then 
	\begin{align*}
	(A_{ij}^- \otimes A_{ji}^+)(v_a \otimes v_b) 
	& = A_{ij}^- v_a \otimes A_{ji}^+ v_b 
	= \delta_{aj}v_i \otimes \delta_{bi}v_j
	= \delta_{aj}\delta_{bi} (v_b \otimes v_a),\\
	(B_{ij}^+ \otimes C_{ji}^+) (v_a \otimes v_b)  
	&= B_{ij}^+ v_a  \otimes C_{ji}^+ v_b = 0, \text{ and }\\
	(C_{ij}^- \otimes B_{ji}^-) (v_a \otimes v_b)  
	&= C_{ij}^- v_a  \otimes B_{ji}^- v_b = 0,
	\end{align*}
	and therefore
	$\Omega (v_a \otimes v_b) 
	= \sum_{i, j} \delta_{aj}\delta_{bi} v_b \otimes v_a + 0+ 0 + 0 = v_b \otimes v_a=(\s+\beta^*\beta)(v_a \otimes v_b).$
\end{proof}

\subsection{Jucys-Murphy type elements}\label{JM}
Once we have the above fake Casimir operator, we can define certain commuting elements of $\End_{\fp(n)}(M \otimes V^{\otimes a})$. They are intended to mimic the action of the polynomial generators of the degenerate affine Hecke algebra in case of $\mathfrak{gl}(n)$.

Label the tensor factors of $M\otimes V^{\otimes a}$ by $0, 1,\ldots, a$, and let $\Omega_{ij}$ denote the operator acting as $\Omega$ applied to the $i$th and $j$th factor and the identity everywhere else. For $1\le j \le a$, let 
$$Y_j = \sum_{i=0}^{j-1} \Omega_{i j} \; \; \in \; \; \End_{\fp(n)}(M\otimes V^{\otimes a}),$$
(see \cite[Section 4.1]{BDEHHILNSS}) The following result is then standard. 
\begin{proposition} The operators $Y_1, Y_2, \dots, Y_a$ pairwise commute.  
\end{proposition}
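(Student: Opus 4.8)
The plan is to reduce the commutativity of the $Y_j$'s to two facts: (1) that $\Omega_{ij}$ and $\Omega_{kl}$ commute when $\{i,j\}\cap\{k,l\}=\emptyset$, which is immediate since they act on disjoint tensor factors; and (2) a "three-term" relation controlling the overlapping case, namely that for $i<j<k$ the partial sums behave like classical Jucys--Murphy elements. Concretely, I would first observe that $Y_j$ commutes with $Y_l$ trivially from the disjoint-factors case once all the interacting terms are accounted for, so the only real content is $[Y_j, Y_k]=0$ for $j<k$, and after expanding $Y_k=\sum_{i=0}^{k-1}\Omega_{ik}$ and $Y_j=\sum_{i=0}^{j-1}\Omega_{ij}$, every cross-term with disjoint index sets vanishes, leaving a sum of commutators of the shape $[\Omega_{ij},\Omega_{ik}]$, $[\Omega_{ij},\Omega_{jk}]$ for $i<j$.

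\textbf{Key steps.} First I would record the basic commutation $[\Omega_{ab},\Omega_{cd}]=0$ whenever $\{a,b\}\cap\{c,d\}=\emptyset$, a formal consequence of how $\Omega$ acts factor-wise. Second, I would establish the crucial identity on the triple tensor product $M\otimes V\otimes V$ (factors labelled $i,j,k$): that $\Omega_{ij}+\Omega_{ik}$ commutes with $\Omega_{jk}$, equivalently $[\Omega_{ik},\Omega_{jk}]=[\Omega_{ij},\Omega_{jk}]$ after rearranging. This is the periplectic analogue of the standard statement that the Casimir acting on adjacent strands satisfies a "coproduct" compatibility; here one uses that $\Omega\in\fp(n)\otimes\fgl(n|n)$ with $\fp(n)$ acting in the first slot, that $\Omega$ on $V\otimes V$ equals $\s+\beta^*\beta$ (Lemma~\ref{lem:OmegaOnVV}), and the $\fp(n)$-invariance of $\Omega$ (Proposition~\ref{prop:Omega-fpn-commute}) applied with $V\otimes V$ as the "$V$"-slot --- more precisely, one needs $(\Delta\otimes\mathrm{id})\Omega$-type compatibility, which follows because $\s$ and $\beta^*\beta$ intertwine the $\fp(n)$-action. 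Third, I would combine these: in the expansion of $[Y_j,Y_k]$ the terms group into blocks indexed by $i<j$, each block being $[\Omega_{ij}+\Omega_{ik},\,\text{(stuff from }Y_k)]$ plus $[\Omega_{jk},\cdot]$-contributions, and the triple identity makes each block telescope to zero. The cleanest packaging is the classical inductive one: show $[Y_j,\Omega_{jk}]$-type expressions and then induct on $k$.

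\textbf{Main obstacle.} The delicate point is Step two: unlike the $\fgl_n$ case, $\Omega$ does not live in $\fp(n)\otimes\fp(n)$ but in the mixed space $\fp(n)\otimes\fgl(n|n)$, and it only acts when the second slot is (a tensor power of) $V$. So the naive "$[\Delta(\Omega), \Omega]$" manipulation is not literally available; one must instead verify the triple-overlap identity by a direct computation on $M\otimes V\otimes V$, using that the second-and-third factors carry the $\fgl(n|n)$-module structure on $V\otimes V$ only through $\fp(n)$, and invoking Lemma~\ref{lem:OmegaOnVV} to replace $\Omega_{jk}$ by the honest $\fp(n)$-morphism $\s_{jk}+(\beta^*\beta)_{jk}$. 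Since $\s_{jk}$ and $(\beta^*\beta)_{jk}$ are genuine $\fp(n)$-module maps (Lemma~\ref{lem:homsbb*s}), they commute past the $\fp(n)$-equivariant operator $Y_j$ up to the factor-swap they induce, and the bookkeeping of this swap is exactly what produces the telescoping. I would expect the write-up of this bookkeeping --- keeping track of which strands $\s$ permutes and of the odd signs from $\beta,\beta^*$ --- to be the only genuinely laborious part, the rest being formal.
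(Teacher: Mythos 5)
Your reduction of $[Y_j,Y_k]$ for $j<k$ is set up correctly: the disjoint-factor terms vanish and what survives is $\sum_{i<j}\bigl([\Omega_{ij},\Omega_{ik}]+[\Omega_{ij},\Omega_{jk}]\bigr)$, so the identity you actually need is $[\Omega_{ij},\,\Omega_{ik}+\Omega_{jk}]=0$ for $i<j<k$. The triple identity you propose to prove instead, $[\Omega_{ij}+\Omega_{ik},\,\Omega_{jk}]=0$, is not just the wrong grouping --- it is false in the periplectic setting. Up to the global factor $2$ one has $\Omega_{ij}+\Omega_{ik}=\sum_{x\in\cX}x_i\,\Delta_{jk}(x^*)$, so its commutator with $\Omega_{jk}=(\s+\beta^*\beta)_{jk}$ equals $\sum_{x}x_i\otimes[\Delta(x^*),\s+\beta^*\beta]_{jk}$; already on $V\otimes V\otimes V$ the operators $x|_V$ are linearly independent, so vanishing would force $[\Delta(x^*),\s+\beta^*\beta]=0$ for every dual-basis element $x^*\in\fp(n)^\perp$, i.e.\ $\fgl(n|n)$-equivariance of $\s+\beta^*\beta$. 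But $\beta$ is invariant only under $\fp(n)$: for instance $z=E_{1'1}$ (proportional to $(B_{11}^+)^*$) gives $[\Delta(z),\beta^*\beta](v_1\otimes v_{1'})=2\,v_{1'}\otimes v_{1'}\neq 0$. For the same reason the mechanism in your last paragraph does not close the gap: that $\s_{jk}$ and $(\beta^*\beta)_{jk}$ are $\fp(n)$-module maps only says they commute with the diagonal $\fp(n)$-action, not with $\Omega_{ij}$ or $Y_j$, whose leg on the $j$-th factor lies in $\fgl(n|n)$ rather than $\fp(n)$; two $\fp(n)$-equivariant operators need not commute. (There is also a sign slip: your identity rearranges to $[\Omega_{ik},\Omega_{jk}]=-[\Omega_{ij},\Omega_{jk}]$.)

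The identity that is both needed and true is $[\Omega_{ij},\,\Omega_{ik}+\Omega_{jk}]=0$, and the point is that there the coproduct sits in the $\fp(n)$-slot of the fake Casimir: $\Omega_{ik}+\Omega_{jk}=\sum_{x\in\cX}\Delta_{ij}(x)\,x^*_k$, so $\Omega_{ij}$ commutes with each $\Delta_{ij}(x)$ by Proposition~\ref{prop:Omega-fpn-commute}, and with $x^*_k$ because it acts on a disjoint factor and $\Omega$ is even. This asymmetry between the two slots of $\Omega$ is the entire subtlety of the periplectic case, and it is exactly how the paper argues: one writes $Y_j=(\Delta^j\otimes 1)\Omega=\sum_x\Delta^j(x)\otimes x^*$ and observes that for $k<j$ the operator $Y_k$ commutes with $\Delta^j(x)$ (being an $\fp(n)$-intertwiner of the first $j$ factors) and with $x^*$ acting on the $j$-th factor. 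If you replace your Step two by this version --- coproduct in the first slot, invariance from Proposition~\ref{prop:Omega-fpn-commute}, disjointness for the $x^*$-leg --- your expansion goes through, and no computation with $\s+\beta^*\beta$ from Lemma~\ref{lem:OmegaOnVV} is needed at all.
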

\begin{proof}
Now $\Omega$ commutes with the coproduct $\Delta(y)$, $y \in \fp(n)$, so $\Omega \otimes 1 =Y_1$  commutes with 
	$$(\Delta \otimes 1)\Omega = \sum_{x \in \cX} \Delta(x) \otimes x^* = \sum_{x \in \cX}(x \otimes 1 \otimes x^* + 1 \otimes x \otimes x^*)=Y_2.$$
	As operators on $M \otimes V \otimes V$, this says that $Y_{1}$ commutes with $Y_2$. 
	Using $\Delta^j$ to denote the iterated coproduct $\mathfrak{p}(n)\to \mathfrak{p}(n)^{\otimes j}$, by induction we get that
	$$Y_j = (\Delta^j \otimes 1) \Omega \quad \text{ commutes with } \quad Y_k  =  \sum_{i=0}^{k-1} \Omega_{i, k} \text{ for $k<j$},$$
	since $\Delta^j(x)$ for $x\in \cX$ commutes with $\Omega_{i, k}$ for $i, k < j$. \end{proof}

\remark \label{rem:rel}
There is a quotient map $\sVW \to \sBr$, determined by $y_1\to 0$, $b_i\mapsto b_i$, $b_i^*\mapsto b_i^*$, $s_i\mapsto s_i$. Under this quotient map, 
$$y_j\mapsto \sum^{j-1}_{i=1} \left(\hspace{-0.15cm}
\TikZ{[scale=.5]
	\draw 
	(0,0) node{} to (1,1.5) node{}
	(1,0) node{} to (0,1.5) node{}
	(1.5,0.75) node{+}
	(2,0) arc(180:0:0.5) (2,1.5) arc(-180:0:0.5) (2,2) node{};}\right)_{ij}.
$$
These commuting elements of $\sBr_a$ are the analogues of Jucys-Murphy elements for the symmetric group or the Brauer algebra, see \cite{CST} and \cite[Section 2]{Nazarov}. As elements of the superalgebra $\sBr_a$, they were independently defined in \cite[Section 6]{C}, and their eigenvalues are then used, following the approach of \cite{OV}, to study the representation theory of $\sBr_a$ and consequently $\fp(n)$. In terms of the action on $M\otimes V^{\otimes a}$, taking the cyclotomic quotient determined by $y_1\mapsto 0$ corresponds to taking $M$ to be the trivial module (see Lemma~\ref{lem:OmegaOnVV}). This recovers the action of $\sBr$ on $V^{\otimes a}$ from \cite{M}. 

\remark \label{rem:rel2}
We have the following relation in $\mathrm{Hom}_{\fp(n)}(M\otimes V^{\otimes a})$, for any $1\leq j<a$, which can be checked directly: 
$$\Omega_{i,j+1}=\s_j\;\Omega_{ij}\;\s_j \text{ for } i<j.$$
%It suffices to check it for $i=1,j=2$, which can be done directly by evaluating both sides of the relation on homogeneous pure tensors in $M \otimes V \otimes V$.

\subsection{The functor $\Psi^M_n$}	\label{sec:funct}

The diagrammatically described supercategory $\sVW$ can be related to $\pn$ and used to study the representation theory of the periplectic Lie superalgebra.% via the morphisms described earlier.

Analogous to the notation $\Omega_{ji}$, we will denote by $\s_i, \beta_i$ and $\beta^*_i$ the operators acting as $\s, \beta$ and $\beta^*$ in the $i$th and $(i+1)$st positions of a tensor product $M \otimes V^{\otimes a}$, and identity elsewhere. Here, $M$ is considered as the $0$th factor.

\begin{proposition}
	\label{prop:funct}
	For any $M \in \pn$, there is a superfunctor $\Psi^M_n: \sVW \longrightarrow \pn$ defined on objects by $a \mapsto M \otimes V^{\otimes a}$ and on morphisms by
	$$
	s_i \mapsto  \sigma_i, \quad\quad
	b_i \mapsto  \beta_i, \quad\quad
	b_i^* \mapsto \beta^*_i, \quad\quad
	y_i \mapsto Y_i=\sum_{0\le j <i} \Omega_{ji}.
	$$
\end{proposition}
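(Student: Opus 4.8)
The plan is to verify that the assignments $s_i\mapsto\sigma_i$, $b_i\mapsto\beta_i$, $b_i^*\mapsto\beta_i^*$, $y_i\mapsto Y_i$ respect the defining relations (R1)--(R4) of $\sVW$, since once this is checked the functor $\Psi^M_n$ is well-defined on all of $\sVW$ (morphisms being generated by vertical stackings of the $b_i,b_i^*,s_i,y_i$), and functoriality on compositions and tensor products is automatic. The first step is to record that each generator lands in a morphism space of the correct parity: $\sigma_i$ is even, $\beta_i,\beta_i^*$ are odd, $Y_i$ is even (since $\Omega$ is even by Remark~\ref{rem:Omegaparity}), matching $\overline s=\overline 0$, $\overline b=\overline{b^*}=\overline 1$, $\overline y=\overline 0$. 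Moreover each of these operators is $\fp(n)$-equivariant: for $\sigma_i,\beta_i,\beta_i^*$ this is Lemma~\ref{lem:homsbb*s} applied in the relevant tensor slots together with the symmetry of the category $\pn$, and for $Y_i$ it is Proposition~\ref{prop:Omega-fpn-commute} (iterated via the coproduct, as in the proof that the $Y_j$ commute). So the target is genuinely $\pn$, and we also need the image to respect the Koszul height moves \eqref{signrule}, which holds automatically because $\pn$ is a symmetric monoidal supercategory and the operators are honest (homogeneous) module maps.

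Next I would check the relations one family at a time. Relations (R1), the braid relations, hold because $\sigma$ is the braiding of a symmetric monoidal supercategory, so the $\sigma_i$ satisfy the symmetric-group relations on the nose. Relations (R2), the snake relations, and (R3), the untwisting relations, are identities purely among $\sigma,\beta,\beta^*$; these are finite explicit computations in $V\otimes V$ (and $V^{\otimes 3}$) in the basis $\{v_a\otimes v_b\}$, using $\beta^*(1)=\sum_i(v_i\otimes v_{i'}-v_{i'}\otimes v_i)$ and the definition of $\beta$; the signs in the relations (e.g.\ the $-1$ in the second snake relation and in the second untwisting relation) are exactly the signs produced by the Koszul rule on the odd maps $\beta,\beta^*$, so they match. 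For the dot relations (R4), the key computational input is Lemma~\ref{lem:OmegaOnVV}: $\Omega$ acts on two adjacent $V$-factors as $\sigma+\beta^*\beta$. The first relation in (R4), $y_{i+1}=s_iy_is_i+s_i+b_i^*b_i$ in diagram form, becomes the operator identity $Y_{i+1}=\sigma_i Y_i\sigma_i+\sigma_i+\beta_i^*\beta_i$ on $M\otimes V^{\otimes a}$; writing $Y_{i+1}=\sum_{j<i}\Omega_{j,i+1}+\Omega_{i,i+1}$, using Remark~\ref{rem:rel2} ($\Omega_{j,i+1}=\sigma_i\Omega_{ji}\sigma_i$ for $j<i$) to rewrite the sum over $j<i$ as $\sigma_iY_i\sigma_i$, it remains to identify $\Omega_{i,i+1}$ with $\sigma_i+\beta_i^*\beta_i$, which is precisely Lemma~\ref{lem:OmegaOnVV} placed in slots $i,i+1$. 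The second relation in (R4), $b_iy_{i+1}^{\phantom{*}}=b_iy_i+b_i$, i.e.\ $\beta_i Y_{i+1}=\beta_i Y_i+\beta_i$, follows from the first by composing with $\beta_i$ on the left and using the snake/untwisting identities already verified together with $\beta_i\beta_i^*=0$ (the vanishing of the loop, Lemma~\ref{lem:sBRrel}(c) on the module side, which is an immediate computation: $\beta\circ\beta^*(1)=\sum_i(\beta(v_i,v_{i'})-\beta(v_{i'},v_i))=0$); alternatively one slides via the same Lemma~\ref{lem:OmegaOnVV} computation contracted with $\beta$.

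I expect the main obstacle to be bookkeeping of signs rather than any conceptual difficulty: because $\beta$ and $\beta^*$ are odd and the superswap carries Koszul signs, every time one composes or tensors these maps one picks up signs $(-1)^{\bar x\bar y}$ that must be tracked through the chain of identities, and these must line up exactly with the signs built into (R2)--(R4) and with the height moves. The cleanest way to manage this is to phrase the verification of (R2) and (R3) abstractly: $(V,\beta,\beta^*)$ should be shown to be a self-dual object in the symmetric monoidal supercategory $\pn$ with the stated (anti)symmetry of the duality data, from which (R2) and (R3) are formal consequences of the axioms of a symmetric monoidal category with duals, and only (R1) and (R4) then require the specific module $M\otimes V^{\otimes a}$ and the Casimir computation. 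A secondary point to be careful about is that $\Psi^M_n$ must be checked to be a \emph{superfunctor}, i.e.\ to commute with the parity shift functor and send even/odd morphisms to even/odd morphisms; this is immediate from the parity count in the first step, but should be stated. Finally, one notes that the assignments are consistent with the filtration (dots $\leftrightarrow$ powers of $\Omega$), which is what makes $\Psi^M_n$ a \emph{filtered} functor, a fact that will be used later; this requires no extra work beyond observing $Y_i$ is a sum of $\Omega$'s, hence in filtration degree $\leq 1$ in each new strand.
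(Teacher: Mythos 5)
Your overall structure matches the paper's proof: note that $\sigma,\beta,\beta^*,\Omega$ are $\fp(n)$-equivariant with the correct parities, then verify that the images of the generators satisfy (R1)--(R4). The treatment of (R1)--(R3) and (R4)(a) is essentially the same as the paper (explicit computations, using Remark~\ref{rem:rel2} and Lemma~\ref{lem:OmegaOnVV} for the first dot relation), and the abstract framing of (R2)--(R3) via self-duality of $(V,\beta,\beta^*)$ is a reasonable variant.

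However, there is a genuine gap in your verification of the second dot relation (R4)(b). You claim that $\beta_i Y_{i+1}=\beta_i Y_i+\beta_i$ follows from (R4)(a) by left-composing with $\beta_i$ and using $\beta_i\sigma_i=\beta_i$ and $\beta_i\beta_i^*=0$. Carrying this out gives
$\beta_i Y_{i+1}=\beta_i\sigma_i Y_i\sigma_i+\beta_i\sigma_i+\beta_i\beta_i^*\beta_i=\beta_i Y_i\sigma_i+\beta_i$.
To finish you would still need $\beta_i Y_i\sigma_i=\beta_i Y_i$. Since $\sigma_i$ swaps slots $i$ and $i+1$ while $Y_i=\sum_{j<i}\Omega_{ji}$ touches slot $i$, one has $\beta_i Y_i\sigma_i=\beta_i\sum_{j<i}\Omega_{j,i+1}$, so the needed identity is $\beta_i\sum_{j<i}(\Omega_{ji}-\Omega_{j,i+1})=0$. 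This is not a formal consequence of the already-verified diagrammatic relations; in fact, attempting to derive (R4)(b) from (R4)(a) purely diagrammatically is circular (one reduces to $b_i y_i s_i=b_i y_i$, which is equivalent to (R4)(b)). The missing ingredient is the module-level computation, stated and used in the paper, that $\beta\circ(x^*\otimes 1-1\otimes x^*)=0$ for every $x^*\in\fp(n)^\perp$, which gives $\beta_i(\Omega_{ji}-\Omega_{j,i+1})=0$ for $j<i$. Your alternative suggestion of contracting Lemma~\ref{lem:OmegaOnVV} with $\beta$ only handles the $\Omega_{i,i+1}$ term (which is already disposed of via $\beta_i\sigma_i=\beta_i$ and $\beta_i\beta_i^*\beta_i=0$), not the remaining $j<i$ terms, so it does not close the gap either. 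You should add the explicit check that $\beta$ annihilates $x^*\otimes 1-1\otimes x^*$ for $x^*\in\fp(n)^\perp$ and then the argument goes through.
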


%In case  $M=\C$ is the trivial representation, the functor $\Psi^{\C}_n: \sBr \longrightarrow \pn$ factors through the functor defined in \cite{M}, see Section \ref{final-proof-1}. 

\begin{proof}[Proof of Proposition~\ref{prop:funct}]
	From Lemma~\ref{lem:homsbb*s} and Proposition~\ref{prop:Omega-fpn-commute}, we know that $\beta, \beta^*,\s$, and $\Omega$ are morphisms in $\pn$, hence so are the images of $s_i, b_i, b^*_i, y_i$ under $\Psi_n^M$. Furthermore, $\Psi^M_n$ preserves parity, since $\overline{s_i}=\overline{\s_i}=0$, $\overline{b_i}=\overline{\beta_i}=\overline{b^*_i}=\overline{\beta^*_i}=1$, and $\overline{y_i}=\overline{\sum_{0\le j <i} \Omega_{ji}}=0$, see Remark \ref{rem:Omegaparity}. It remains to check that the images of the generating morphisms satisfy the defining relations of $\sVW$. In the calculations we suppress the $0$-th tensor factor $M$.
	
	\begin{enumerate}[(R1)]
		\item \begin{enumerate}
		\item $\s_i^2=1$.
		This follows from
		$\s^2(v\otimes w)= (-1)^{\overline{v}\overline{w}}\s(w\otimes v)=(-1)^{2\overline{v}\overline{w}}v\otimes w=v\otimes w$. 
		
		\item $ \s_i\s_{i+1}\s_i= \s_{i+1}\s_i\s_{i+1}$.
		It is enough to prove this for $i=1, \; a=3$:
		\begin{align*}
		(\s_{1}\s_2\s_{1})(u\otimes v\otimes w) &=(-1)^{\overline{u}\overline{v}} (\s_{1}\s_2)(v\otimes u\otimes w)=(-1)^{\overline{u}\overline{v}+\overline{u}\overline{w}} \s_{1}(v\otimes w\otimes u) \\
		&=(-1)^{\overline{u}\overline{v}+\overline{u}\overline{w}+\overline{v}\overline{w}} w\otimes v\otimes u=(\s_{2}\s_1\s_{2})(u\otimes v\otimes w).
		\end{align*}
		\end{enumerate}
		
		\item \begin{enumerate}
		\item $\beta_i \beta_{i+1}^*=-1$.
		It is enough to prove this for $i=1$:
		\begin{align*}
		\beta_{1} \beta_{2}^*(v)&=(-1)^{\overline v}\beta_{1} (v \otimes \beta^*(1))
		=(-1)^{\overline v}\beta_{1} \left(v \otimes 
		\left( \sum_{i=1}^n v_i\otimes v_{i'} -v_{i'}\otimes v_i
		\right)
		\right) \\
		&=(-1)^{\overline v}\sum_{i=1}^ n (\beta(v,v_i) v_{i'} -\beta(v,v_{i'})v_i)=-v.
		\end{align*}
		The last equality is easily checked on every $v=v_j$, $j\in [n]\cup[n']$.
		
		\item $\beta_{i+1} \beta_{i}^*=1$.
		Similar. 
		\end{enumerate}
		
		\item \begin{enumerate}
		\item $\s_{i+1}\beta_{i}^*=\s_{i}\beta_{i+1}^*$.
		It is enough to prove this for $i=1$:
		\begin{align*}
		\s_{2}\beta_{1}^*(v) &= \s_2\left(\sum_{i=1}^n(v_i\otimes v_{i'} -v_{i'}\otimes v_i )\otimes v\right)= \sum_{i=1}^n ((-1)^{\overline{v}}v_i\otimes v\otimes v_{i'} - v_{i'}\otimes v\otimes v_i),\\
		\s_{1}\beta_{2}^*(v)&=\sum_{i=1}^n ((-1)^{\overline v}v_i\otimes v \otimes v_{i'} - (-1)^{\overline v+\overline v}v_{i'}\otimes v \otimes v_i).
		\end{align*}
		
		\item 
		$\s_{i}\beta_{i}^*=-\beta_{i}^*$.
		This follows from the fact that $\beta^*(1)$ is skew supersymmetric. Note that this, together with the previous relations, also implies that $\beta_i\s_i=\beta_i$ and $\beta^*_i\beta_i=0$, which will be used in proving (R4)(b).
		\end{enumerate}
		\item  \begin{enumerate}
			\item \label{reln-y2=second}
		$Y_{i+1}=\s_iY_i\s_i+\s_i+\beta_i^*\beta_i $.
		This formula follows via the following computation, using Remarks \ref{rem:rel} and \ref{rem:rel2}, and Lemma~\ref{lem:OmegaOnVV}
		\begin{align*}
			Y_{i+1}&=\sum_{0\le k <i+1} \Omega_{k,i+1}
		=\sum_{0\le k <i} \Omega_{k,i+1}+ \Omega_{i,i+1}
		=\sum_{0\le k <i} \s_i\Omega_{k,i}\s_i+ \Omega_{i,i+1} \\
		&= \s_i\left(\sum_{0\le k <i} \Omega_{k,i}\right) \s_i+\s_i+\beta_i^*\beta_i =\s_iY_i \s_i+\s_i+\beta_i^*\beta_i
		\end{align*}
		
		\item \label{beta(y1-y2)-second}
		$\beta_1(Y_1-Y_2)=-\beta_1$.
		We have $\beta \circ (x^* \otimes 1 - 1\otimes x^*)=0$ for any $x^* \in \fp(n)^{\perp}$, which can be checked directly on a basis of $V \otimes V$, and hence $\beta_1 \circ (\Omega_{01}-\Omega_{02})=0$. It follows that 
		$\beta_1(\Omega_{01}-\Omega_{02}-\Omega_{12})=-\beta_1\Omega_{12}=-\beta_1 (\s_1+\beta_1^*\beta_1)=-\beta_1\s_1+0=-\beta_1$.\qedhere
	\end{enumerate}	
	\end{enumerate}
\end{proof}

\section{Linear independence of $S_{a,b}^\bullet$} 
\label{linindep}
The purpose of this section is to prove linear independence of the sets $S_{a,b}$ and $S_{a,b}^\bullet$, and thus prove Theorems \ref{Thm1} and \ref{Thm2}. The idea is to exploit a close connection of $\sVW$ and the representation theory of the periplectic Lie superalgebra $\mathfrak{p}(n)$. Namely, as explained in Proposition~\ref{prop:funct}, for every $n$ and every $\mathfrak{p}(n)$-representation $M$, the functor $\Psi_n^M:\sVW \to \pn$ gives a way of interpreting diagrams $d\in \Hom_{\sVW}(a,b)$ as linear $\mathfrak{p}(n)$-homomorphisms $\Psi_n^M(d):M\otimes V^{\otimes a}\to M\otimes V^{\otimes b}$. For given $a,b$, and $k$ in $\mathbb{N}_0$, we will pick $n$ and an appropriate $M\in \pn$ so that the corresponding functor $\Psi_n=\Psi_n^M:\sVW \to \pn$ maps $S_{a,b}^{\le k}$ to a linearly independent set in $\Hom_{\fp(n)}(M\otimes V^{\otimes a}, M\otimes V^{\otimes b})$.

The argument for linear independence is slightly easier in the associated graded setting. For that purpose, we define an auxillary category $\GVW$ and auxilary functors $\Phi_n$, which will turn out to be the associated graded of $\sVW$ and $\Psi_n$. This is analogous to the structure of the main proof in \cite{BCNR}, where a close connection between the affine oriented Brauer category and $\mathcal{W}$-algebras is exploited to construct certain functors, which are then used to prove linear independence. We start with some preliminaries about filtrations and gradings.

\subsection{Graded and filtered supercategories}

An $\NN_0$-filtered superspace is a superspace $U$ with a filtration by subspaces $\{0\}=U^{\leq -1} \subseteq U^{\leq 0} \subseteq U^{\leq 1} \subseteq \hdots \subseteq U$, and $U=\bigcup_{k \geq 0} U^{\leq k}$. %, where by convention $U^{\leq i}=0$ for all $i<0$. 
A supercategory $\mathcal{C}$ such that for every $M,N \in \mathcal{C}$, $\Hom_{\mathcal{C}}(M,N)$ has a fixed filtration compatible with composition of morphisms, $\Hom_{\mathcal{C}}(M,N)^{\leq k} \times \Hom_{\mathcal{C}}(N,P)^{\leq \ell} \rightarrow \Hom_{\mathcal{C}}(M,P)^{\leq (k+\ell)}$ is a supercategory $\mathcal{C}$ enriched in the category of filtered superspaces (that is in the category whose objects are filtered superspaces and morphisms are homogeneous linear maps of degree zero). We call such a supercategory a \emph{filtered supercategory}. A \emph{graded supercategory} is a supercategory enriched in graded superspaces; this means its morphism spaces are graded superspaces, and composition is a homogeneous linear map of degree zero.

We say a functor $F:\mathcal{C} \to \mathcal{D}$ between two filtered (respectively, graded) supercategories $\mathcal{C}$ and $\mathcal{D}$ is \emph{filtered} (respectively, \emph{graded}) if it preserves the filtration (respectively, grading) on the morphism spaces. 

Now assume we have a filtered supercategory $\mathcal{C}$. Its \emph{associated graded supercategory} $gr\mathcal{C}$ is the graded supercategory with the same objects as $\mathcal{C}$, and morphism spaces the graded superspaces $\Hom_{gr\mathcal{C}}(M,N)=gr (\Hom_{\mathcal{C}}(M,N))= \bigoplus_{k\geq 0}\Hom_{gr\mathcal{C}}(M,N)^k$, where $\Hom_{gr\mathcal{C}}(M,N)^k=\Hom_{\mathcal{C}}(M,N)^{\leq k}/\Hom_{\mathcal{C}}(M,N)^{\leq (k-1)}$. 

A filtered functor $F:\mathcal{C} \to \mathcal{D}$ between two filtered supercategories induces a graded functor $gr(F):gr\mathcal{C} \to gr\mathcal{D}$. The functor $gr(F)$ is equal to $F$ on objects, and takes the associated graded map of $F$ on the morphism superspaces.

\subsection{The supercategories $\FVect$ and $\GVect$, and the functor $G$}

Let $\FVect$ be the supercategory with objects $\NN_0$-filtered superspaces, and morphisms given by the filtered superspaces
$\Hom_{\FVect}(M,N)=\bigcup_{k \in \NN_0}\Hom_{\FVect}(M,N)^{\leq k}$, where $\Hom_{\FVect}(M,N)^{\leq k}=\{f:M\rightarrow N \mid f \textrm{ linear, } f(M^{\leq i}) \subseteq N^{\leq (i+k)} \textrm{ for all } i\}$. This is an $\NN_0$-filtered supercategory as above.

Similarly, let $\GVect$ denote the supercategory whose objects are $\NN_0$-graded superspaces, and whose morphisms are superspaces of linear maps equipped with the grading coming from the objects, that is $\Hom_{\GVect}(M,N)=\bigoplus_{k \in \NN_0}\Hom_{\GVect}(M,N)^{k}$, where $\Hom_{\GVect}(M,N)^{k}=\{f:M\rightarrow N \mid f \textrm{ linear, } f(M^{i}) \subseteq N^{(i+k)} \textrm{ for all } i\}$. It is an $\NN_0$-graded supercategory in the above sense.

In particular, we can consider the associated graded category $gr(\FVect)$ described above. (Note that $gr(\FVect)$ and $\GVect$ are not the same categories; objects of $gr(\FVect)$ are filtered while objects of $\GVect$ are graded vector superspaces.)

There is a functor $G:gr(\FVect)\to \GVect$ which associates to a filtered superspace $M=\bigcup_i M^{\leq i}$ its associated graded superspace $G(M)=gr(M)=\bigoplus_i M^{\leq i}/M^{\leq (i-1)}$. On morphisms $G:\Hom_{\FVect}(M,N)^{\leq k}/\Hom_{\FVect}(M,N)^{\leq (k-1)} \to \Hom_{\GVect}(gr(M), gr(N))^k$ is given on $f\in \Hom_{\FVect}(M,N)^{\leq k}$ and $m\in M^{\le i}$
by $$G(f+\Hom_{\FVect}(M,N)^{\leq (k-1)})(m+M^{\le (i-1)})=f(m)+N^{\leq (k+i-1)}.$$

\subsection{$\sVW$ as a filtered supercategory}

The affine VW supercategory $\sVW$ can be viewed as a filtered supercategory, with the filtration on the morphism spaces given by the number of dots. Let $gr(\sVW)$ be its associated graded supercategory, defined as above. In particular, the following relations hold in $gr(\sVW)$: 
\begin{equation*} \tag{grR-4}
\begin{aligned}
\TikZ{[scale=.5] \draw
(0,0) node{} to (0,2) node{} 
(1,0) node{} to (1,2) node{} (1,1) node[fill,circle,inner sep=1.5pt]{} 
;}&  =
\TikZ{[scale=.5] \draw
(0,0) node{} to (1,1) node{}  to (0,2) node{} 
(1,0) node{} to (0,1) node{}  to (1,2) node{} (0,1) node[fill,circle,inner sep=1.5pt]{} 
;} & \in \Hom_{gr(\sVW)}(2,2)^1=\Hom_{\sVW}(2,2)^{\le 1}/ \Hom_{\sVW}(2,2)^{\le 0},\\
\TikZ{[scale=.5] \draw 
(0,1) arc(180:0:0.5) (0,0) node{} to (0,1) node{} (1,0) node{} to (1,1) node{} (1,0.5) node[fill,circle,inner sep=1.5pt]{} 
;}&=
\TikZ{[scale=.5] \draw 
(0,1) arc(180:0:0.5) (0,0) node{} to (0,1) node{} (1,0) node{} to (1,1) node{} (0,0.5) node[fill,circle,inner sep=1.5pt]{} 
;} &\in \Hom_{gr(\sVW)}(2,0)^1=\Hom_{\sVW}(2,0)^{\le 1}/ \Hom_{\sVW}(2,0)^{\le 0}. \nonumber
\end{aligned}
\end{equation*}

It is however not a priori obvious that these, along with (R1)-(R3), are the only defining relations for $gr(\sVW)$. In general, given a filtered algebra or a category, describing its associated graded by generators and relations is a nontrivial problem, and the solution to this problem usually goes most of the way towards proving a basis theorem for the filtered version (as basis theorems for graded versions are usually easier). With that in mind, we define another category $\GVW$ by generators and relations, and prove in Section \ref{final-proof-2} that $gr(\sVW)$ and $\GVW$ are indeed isomorphic as graded supercategories.

\subsection{The category $\GVW$}
Let $\GVW$ be the $\mathbb{C}$-linear monoidal supercategory generated as a monoidal supercategory by a single object $\obj$, morphisms $s=\TikZ{[scale=.5] \draw (0,0) node{} to (1,1)node{} (1,0) node{} to (0,1)node{} ;}:\;\obj\otimes \obj\longrightarrow \obj\otimes \obj$, $b=\TikZ{[scale=.5] \draw (1,0) node{} (0,0) node{} (0,0) arc(180:0:0.5) ;}:\;\obj\otimes \obj\longrightarrow\mathbbm{1}$, $b^*=\TikZ{[scale=.5] \draw (1,1) node{} (0,1) node{} (0,1) arc(-180:0:0.5) ;}:\;\mathbbm{1}\longrightarrow \obj\otimes \obj$ and $
y=\TikZ{[scale=.5] \draw (0,0) node{} to (0,1) node{} (0,0.5) node[fill,circle,inner sep=1.5pt]{};}:\;\obj \longrightarrow \obj,$ subject to relations (R1)--(R3) and (grR-4). The $\Z/2\Z$ parity is given by $\overline {s}=\overline{y}=0$, $\overline{b}=\overline{b^*}=1$. The $\mathbb{N}_0$-grading is given by $\deg s=\deg b=\deg b^*=0, \deg y=1$. Note that the imposed relations are $\NN_0$-homogeneous and so the category is well-defined. In other words, the objects of $\GVW$ are nonnegative integers, the morphisms are linear combinations of dotted diagrams, and the $\NN_0$-grading is given by the number of dots on the diagram.  

The following is analogous to Proposition~\ref{oVW-span}, and proved in exactly the same way. 
\begin{lemma}\label{span-GVW}
For any $a,b,k\in \mathbb{N}_0$, the set $S_{a,b}^k$ is a spanning set for $\Hom_{\GVW}(a,b)^k$.
\end{lemma}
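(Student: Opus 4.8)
The plan is to mimic the proof of Proposition~\ref{oVW-span} verbatim, replacing each use of relation (R4) and the dot-sliding lemmas by their $\GVW$-analogues coming from (grR-4). Concretely, we argue by induction on the number of dots $k$; the base case $k=0$ is Proposition~\ref{oBr-span} (which only used relations (R1)--(R3), all of which hold in $\GVW$), together with the fact that these relations do not change the underlying connector. So fix $k\ge 1$ and a dotted diagram $d\in\Hom_{\GVW}(a,b)^k$.

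First I would record the graded versions of the dot-sliding relations. In $\GVW$, relation (grR-4) says a dot slides freely through a crossing and freely from the right strand of a cup (or cap) to the left strand, \emph{without any lower-order correction terms} — this is precisely the statement that the ``error terms'' in Lemma~\ref{lem:dot-slide}(a),(b),(c) have strictly fewer dots and hence vanish in the degree-$k$ graded piece. Likewise, the graded analogue of Lemma~\ref{lem:dot-loop} holds: in $\GVW$ an isolated loop carrying any dots is zero (run the same argument as in the proof of Lemma~\ref{lem:dot-loop}, noting the subtracted terms there have fewer dots so are absent in the graded category, and one still concludes $2(\text{loop with }k\text{ dots})=0$). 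With these two facts in hand the proof of Proposition~\ref{oVW-span} goes through with \emph{no} lower-order terms appearing at all, which actually simplifies matters: if $d$ has a loop, slide all dots on the loop to its left strand and push all other dots out of the loop's interior (using (grR-4)), isolate the loop, and conclude $d=0$ by the graded loop lemma; and if $d$ has no loops, slide dots on through-strings to the bottom and dots on cups/caps to the far left (again using (grR-4)), so that $d$ becomes of the form $\prod_i y_i^{a_i}\, d''\, \prod_j y_j^{b_j}$ for an undotted diagram $d''$, to which Proposition~\ref{oBr-span} applies.

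There is essentially no obstacle here — the statement is deliberately set up to be ``analogous to Proposition~\ref{oVW-span}, and proved in exactly the same way,'' as the paper itself notes. The only point requiring a moment's care is verifying that every auxiliary relation invoked in the proof of Proposition~\ref{oVW-span} (namely Lemmas~\ref{lem:sBRrel}, \ref{lem:dot-slide}, \ref{lem:dot-loop}) still holds in $\GVW$: those in Lemma~\ref{lem:sBRrel} use only (R1)--(R3) and so are literally the same, while those in Lemmas~\ref{lem:dot-slide} and \ref{lem:dot-loop} hold in their dot-degree-homogeneous (graded) form because (grR-4) is exactly the top-degree part of (R4). Once that bookkeeping is done, the spanning argument is immediate, and in fact cleaner than in the filtered case since no induction-on-fewer-dots step is strictly needed for the sliding moves themselves (it is still needed only insofar as resolving loops may drop the dot count — but in $\GVW$ a diagram with a resolvable loop is simply $0$ in its top degree). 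This completes the proof that $S_{a,b}^k$ spans $\Hom_{\GVW}(a,b)^k$.
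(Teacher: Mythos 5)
Your argument is on the right track and matches the paper's intent (the paper literally dismisses this as ``proved in exactly the same way'' as Proposition~\ref{oVW-span}), but there is one genuine gap in the step you parenthesize: the vanishing of a dotted isolated loop in $\GVW$ does \emph{not} follow by ``running the same argument as in the proof of Lemma~\ref{lem:dot-loop}.'' That argument rewrites $b_1y_1^{k+1}b_1^*$ in two ways and derives $2\,b_1y_1^kb_1^*=0$ precisely from the fact that the lower-degree correction terms in the cap and cup slides appear with \emph{opposite} signs ($b_1y_2=b_1y_1+b_1$ versus $y_1b_1^*=y_2b_1^*+b_1^*$). If you discard those correction terms (as the graded relations (grR-4) do), both expansions collapse to the tautology $b_1y_1^ky_2b_1^*=b_1y_1^{k+1}b_1^*=b_1y_1^ky_2b_1^*$, and you learn nothing about $b_1y_1^kb_1^*$. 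So the claim ``one still concludes $2(\text{loop with }k\text{ dots})=0$'' as stated is false.

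The fact you need is nevertheless true in $\GVW$, and it has a short direct proof using the sign in the untwisting relation rather than the (absent) dot-slide corrections. Insert $s_1^2=1$: in $\GVW$, $(\mathrm{grR}\text{-}4)$ gives $y_1s_1=s_1y_2$, hence $y_1^ks_1=s_1y_2^k$, and with $b_1s_1=b_1$ (Lemma~\ref{lem:sBRrel}(b), which uses only (R1)--(R3)) and $s_1b_1^*=-b_1^*$ (relation (R3)) one gets
\begin{equation*}
b_1y_1^kb_1^*=b_1y_1^k s_1 s_1 b_1^*=(b_1s_1)y_2^k(s_1b_1^*)=-\,b_1y_2^kb_1^*=-\,b_1y_1^kb_1^*,
\end{equation*}
where the last equality is the cap dot-slide of (grR-4). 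Hence $2\,b_1y_1^kb_1^*=0$, so loops with dots vanish in $\GVW$. With this replacement the rest of your outline (sliding dots on loops to the left and out of the interior using (grR-4), isolating loops, and the loop-free case reducing to $\prod_i y_i^{a_i}\, d''\, \prod_j y_j^{b_j}$ with $d''$ undotted) goes through as you describe, and all the auxiliary relations you invoke from Lemma~\ref{lem:sBRrel} indeed hold verbatim in $\GVW$ since they use only (R1)--(R3).
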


\subsection{The functor $\Theta: \GVW \to gr(\sVW)$}\label{Theta}
The tautological assignments $\Theta(\star)=\star$, $\Theta(s)=s$, $\Theta(b)=b$, $\Theta(b^*)=b^*$, $\Theta(y)=y$ define a graded monoidal superfunctor $\Theta: \GVW \to gr(\sVW)$. 
It is bijective on objects, and full, i.e.\  surjective on morphisms.

\subsection{The Verma module $M(0)$ and the functor $\Psi_n$}

For $n\in \mathbb{N}$, let $\mathfrak{n}_+$ denote the Lie subalgebra of strictly upper triangular matrices, and $\mathfrak{b}$ the Lie subalgebra of lower triangular matrices in $\mathfrak{gl}(n)$. They can be considered as subalgebras of $\mathfrak{gl}(n)= \mathfrak{g}_0\subseteq \fp(n)$ via the inclusion $E_{ij}\mapsto A^-_{ij}$. Consider $\mathbb{C}$ as the trivial representation of $\mathfrak{b}\oplus \mathfrak{g}_{-1}\subseteq \mathfrak{p}(n)$ by letting $A_{ij}^-$ with $i\ge j$ and $C_{ij}^-$ with $i<j$ act on it by $0$. Consider the $\fp(n)$-module
$M(0)=\mathrm{Ind}_{\mathfrak{b}\oplus \mathfrak{g}_{-1}}^{\fp(n)} \mathbb{C}$, the Verma module of highest weight $0$. Using the PBW theorem we can see that, as a vector superspace, this is
$\mathcal{U}(\mathfrak{p}(n))\otimes_{\mathcal{U}(\mathfrak{b}\oplus \mathfrak{g}_{-1})}\mathbb{C}
\cong \Lambda (\mathfrak{g}_1) \otimes \mathrm{S}(\mathfrak{n}_+).$

Consider the filtration on $M(0)$ 
coming from the PBW theorem, i.e.\  given by $\deg(B^+_{ij})=\deg(A^-_{ij})=1$. In particular, $M(0) \otimes V^{\otimes a}$ inherits a filtration (by putting $V$ in degree $0$). In this way, $M(0) \otimes V^{\otimes a}$ can be considered, for any $a \in \NN_0$, as an object in $\FVect$.

\begin{lemma}\label{Psi is filtered}
The superfunctor $\Psi^{M(0)}_n:\sVW \to \pn$ induces (by forgetting the action of $\fp(n)$ on the image of $\Psi^{M(0)}_n$) 
a filtered superfunctor $\Psi_n: \sVW\to \FVect$. 
\end{lemma}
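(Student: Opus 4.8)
The plan is to verify that the superfunctor $\Psi^{M(0)}_n:\sVW\to\pn$, once composed with the forgetful functor $\pn\to\FVect$, respects the filtrations on both sides: the filtration on $\Hom_{\sVW}(a,b)$ by the number of dots, and the filtration on $\Hom_{\FVect}(M(0)\otimes V^{\otimes a},M(0)\otimes V^{\otimes b})$ coming from the PBW filtration on $M(0)$ (with $V$ placed in degree $0$). Since $\Psi_n$ is already a superfunctor by Proposition~\ref{prop:funct} and $\FVect$ is genuinely a filtered supercategory with composition compatible with the filtration, the only thing to check is that each generating morphism of $\sVW$ is sent to an operator of the correct filtered degree, because any dotted diagram is a composition of horizontal and vertical stackings of the generators $s_i,b_i,b_i^*,y_i$ and identities, and filtered degrees add under composition.

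First I would record that $\Hom_{\sVW}(a,b)^{\le k}$ is spanned (by Proposition~\ref{oVW-span}) by the normal dotted diagrams in $S^{\le k}_{a,b}$, and that each such diagram is, up to sign, of the form $\bigl(\prod_i y_i^{a_i}\bigr)\circ d''\circ\bigl(\prod_j y_j^{b_j}\bigr)$ with $d''$ an undotted Brauer diagram and $\sum a_i+\sum b_j\le k$; hence it suffices to treat the four generators separately. For $s_i\mapsto\sigma_i$ and $b_i\mapsto\beta_i$, $b_i^*\mapsto\beta_i^*$: these operators only permute or contract tensor factors among the $V$'s, acting as the identity on the $M(0)$ factor, so they preserve the PBW filtration degree exactly, i.e.\ they lie in $\Hom_{\FVect}(\,\cdot\,,\cdot\,)^{\le 0}$. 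The crucial case is $y_i\mapsto Y_i=\sum_{0\le j<i}\Omega_{ji}$. The summands $\Omega_{ji}$ with $1\le j<i$ act entirely on the $V$-factors (by Lemma~\ref{lem:OmegaOnVV}, $\Omega$ acts on $V\otimes V$ as $\sigma+\beta^*\beta$, which has filtered degree $0$), so the only potentially filtration-increasing term is $\Omega_{0i}$, the action of $\Omega=2\sum_{x\in\cX}x\otimes x^*$ on the $M(0)$-factor and the $i$-th $V$-factor.

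The key step is then: $\Omega_{0i}$ raises the PBW filtration degree on $M(0)$ by at most $1$. From the explicit formula \eqref{eq:Casimir}, $\Omega$ is a sum of terms $x\otimes x^*$ where $x\in\fp(n)$ is a single basis element acting on $M(0)$ and $x^*\in\fgl(n|n)$ acts on $V$. The action of a single element $x\in\fp(n)$ on $\mathcal{U}(\fp(n))\otimes_{\mathcal{U}(\mathfrak{b}\oplus\mathfrak{g}_{-1})}\mathbb{C}$ is by left multiplication followed by reduction via the PBW basis $\Lambda(\mathfrak{g}_1)\otimes\mathrm{S}(\mathfrak{n}_+)$; since the chosen filtration assigns $\deg(B^+_{ij})=\deg(A^-_{ij})=1$ and the generators of $\mathfrak{b}\oplus\mathfrak{g}_{-1}$ act as $0$, multiplying by any single element $x$ of $\fp(n)$ increases the PBW degree by at most $1$ — the commutators arising in straightening have strictly lower degree, and the lowering/Cartan parts of $x$ act either by zero or without raising degree. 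Hence $\Omega_{0i}$, and therefore $Y_i$, lies in $\Hom_{\FVect}(M(0)\otimes V^{\otimes a},M(0)\otimes V^{\otimes a})^{\le 1}$. Combining the four cases, any diagram with at most $k$ dots is sent to an operator in $\Hom_{\FVect}(\,\cdot\,,\cdot\,)^{\le k}$, so $\Psi_n$ is filtered. The main obstacle is making the degree-$(\le 1)$ claim for the $\fp(n)$-action on $M(0)$ precise: one must use the PBW straightening carefully, checking that although $\Omega$ involves both $\mathfrak{g}_1$- and $\mathfrak{g}_0$-type basis elements of $\fp(n)$ (the $B^+_{ij}$ and $A^-_{ij}$), each acts with filtered degree exactly $1$, while the $\mathfrak{g}_{-1}$-type elements $C^-_{ij}$ act with filtered degree $\le 1$ as well (in fact they can only lower, since on $\mathbb{C}$ they act by $0$ and commutators reduce degree), so no term of $\Omega_{0i}$ exceeds degree $1$.
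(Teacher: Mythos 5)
Your argument is correct and essentially identical to the paper's: one checks the generators, notes that $\sigma_i,\beta_i,\beta_i^*$ and the $\Omega_{ji}$ with $j\ge 1$ act only on the $V$-factors and hence have filtered degree $0$, and bounds $\Omega_{0k}$ by the fact that each $\fp(n)$-basis element appearing in the explicit formula \eqref{eq:Casimir} raises the PBW degree of $M(0)$ by at most $1$. The only cosmetic slip is your closing claim that all $A^-_{ij}$ act with filtered degree exactly $1$ (those with $i\ge j$ do not raise the degree at all, as the paper's finer case analysis records, and this distinction is what later feeds into the definition of $\Phi_n$), but since the lemma only needs the bound $\le 1$ this is harmless.
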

\begin{proof}
The generators $s_i,b_i,b^*_i$ of $\sVW$ have filtered degree $0$, and map under the functor $\Psi_n$ to $\sigma_i, \beta_i, \beta^*_i$ which only act on the $i$-th and $(i+1)$-st tensor factors of $M(0)\otimes V^{\otimes a}$, $1 \leq i \leq a-1$, thus do not change the filtered degree defined on the $0$-th tensor factor $M(0)$. 

The generator $y_k$ has filtered degree $1$ in $\sVW$, and its image under $\Psi_n$ is the operator
$$\Psi_n(y_k)=\sum_{i=0}^{k-1}\Omega_{i k}.$$
For $i =1,\ldots,  k-1$ the operator $\Omega_{i k}$ does not change the filtered degree. 
For $i =0$, the operator $\Omega_{0k}$ acts on $M(0)\otimes V^{\otimes a}$ as
$$\Omega_{0k}=\left( 	\sum_{i,j} A_{ij}^- \otimes A_{ji}^+ 
		- \frac{1}{2} \sum_{i} B_{ii}^+ \otimes C_{ii}^+
		- \sum_{i < j} B_{ij}^+ \otimes C_{ji}^+
		+ \sum_{i < j} C_{ij}^- \otimes B_{ji}^-
 \right)_{0k}.$$
The summands with $C_{ij}^-$, $i<j$ and $A_{ij}^-$, $i\ge j$ in the $0$-th tensor factor preserve the filtered degree. The summands with $B_{ij}^+$, $i\le j$, and $A_{ij}^-$, $i<j$ in the $0$-th tensor factor increase the filtered degree by $1$. Thus, $\Psi_n(y_k)$ acts by increasing the filtered degree by $1$.
\end{proof}

\subsection{The functor $\Phi_n$}
Next, we define a certain graded superfunctor, which will eventually turn out to be $\gr (\Psi_n)$. 

Consider again the vector space $\Lambda (\mathfrak{g}_1) \otimes \mathrm{S}(\mathfrak{n}_+)$, now as a graded superspace with the grading given by $\deg(B_{ij}^+)=\deg(A_{ij}^-)=1$. This gives a grading on $\left( \Lambda (\mathfrak{g}_1) \otimes \mathrm{S}(\mathfrak{n}_+)\right) \otimes V^{\otimes a}$.%, with the grading given in the $0$-th tensor factor $\Lambda \mathfrak{g}_1 \otimes \mathrm{S}\mathfrak{n}_+$. 

Define a functor $\Phi_n: \GVW\to \GVect$ on objects by
$\Phi_n(a)=\left(\Lambda (\mathfrak{g}_1) \otimes \mathrm{S}(\mathfrak{n}_+) \right) \otimes V^{\otimes a}$. In the image, we again label $\Lambda (\mathfrak{g}_1) \otimes \mathrm{S}(\mathfrak{n}_+)$ as the $0$-th tensor factor, and $V\otimes \ldots \otimes V$ as factors $1,2,\ldots, a$. With this convention, set
$\Phi_n(s_i)=\sigma_i$, $\Phi_n(b_i)= \beta_i$, $\Phi_n(b_i^*)= \beta_i^*$, and let
$$\Phi_n(y_k)=\left( 	\sum_{i<j} A_{ij}^- \otimes A_{ji}^+ 
		- \frac{1}{2} \sum_{i} B_{ii}^+ \otimes C_{ii}^+
		- \sum_{i < j} B_{ij}^+ \otimes C_{ji}^+  \right)_{0k},$$
with the action of $A_{ij}^-\in \mathfrak{n}_+$ and of $B_{ij}^+\in \mathfrak{g}_1$ on $\Lambda  (\mathfrak{g}_1) \otimes \mathrm{S}(\mathfrak{n}_+)$ given by multiplication. 

\begin{lemma}\label{Phi is graded}
$\Phi_n:\GVW\to \GVect$ is a well-defined graded superfunctor. 
\end{lemma}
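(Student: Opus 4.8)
The plan is to verify directly that $\Phi_n$ respects the defining relations of $\GVW$, namely (R1)--(R3) and (grR-4), and that it is compatible with both the $\mathbb{Z}/2\mathbb{Z}$-parity and the $\mathbb{N}_0$-grading. Compatibility with parity is immediate: $\sigma_i$ is even, $\beta_i,\beta_i^*$ are odd, and $\Phi_n(y_k)$ is a sum of terms of the form (even)$\otimes$(even) or (odd)$\otimes$(odd) acting on the $0$-th and $k$-th factors, hence even, matching $\overline{s}=\overline{y}=0$ and $\overline{b}=\overline{b^*}=1$. Compatibility with the grading is also immediate once one checks degrees: $\sigma_i,\beta_i,\beta_i^*$ preserve the grading on $\bigl(\Lambda(\mathfrak{g}_1)\otimes \mathrm{S}(\mathfrak{n}_+)\bigr)\otimes V^{\otimes a}$ since they act only on tensor factors placed in degree $0$, while $\Phi_n(y_k)$ raises the degree by exactly $1$, because each summand multiplies the $0$-th factor by one of $A_{ij}^-\in\mathfrak{n}_+$ or $B_{ij}^+\in\mathfrak{g}_1$, each of degree $1$ by definition of the grading on $\Lambda(\mathfrak{g}_1)\otimes \mathrm{S}(\mathfrak{n}_+)$, and acts by operators of degree $0$ on the $V$-factors; this matches $\deg y=1$.

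The substantive part is checking the relations. For (R1)--(R3), which involve only $s_i,b_i,b_i^*$, the images $\sigma_i,\beta_i,\beta_i^*$ satisfy exactly the same identities that were verified for $\Psi_n^M$ in the proof of Proposition~\ref{prop:funct}; indeed those computations take place entirely in the $V$-factors and do not involve the $0$-th factor at all, so they apply verbatim here with $M$ replaced by $\Lambda(\mathfrak{g}_1)\otimes \mathrm{S}(\mathfrak{n}_+)$. Thus I would simply invoke that proof. The only genuinely new relations are the two parts of (grR-4). The first, $y_iy_j$-type commutation $\TikZ{[scale=.5] \draw (0,0) node{} to (0,2) node{} (1,0) node{} to (1,2) node{} (1,1) node[fill,circle,inner sep=1.5pt]{};} = \TikZ{[scale=.5] \draw (0,0) node{} to (1,1) node{} to (0,2) node{} (1,0) node{} to (0,1) node{} to (1,2) node{} (0,1) node[fill,circle,inner sep=1.5pt]{};}$ in $\Hom(2,2)$ (read in the two-strand picture as $\sigma_1\Phi_n(y_1)\sigma_1 = \Phi_n(y_2)$ up to lower-degree terms which vanish in the graded category), amounts to the identity $\sigma_i\,\Phi_n(y_i)\,\sigma_i=\Phi_n(y_{i+1})$ holding on the nose; this follows from Remark~\ref{rem:rel2}, i.e. $\Omega_{i,j+1}=\sigma_j\Omega_{ij}\sigma_j$, applied to each of the three surviving summands of the truncated $\Omega$, together with the observation that the term $\sigma_i$ and the term $\beta_i^*\beta_i$ appearing in the non-graded relation~(R4) have filtered degree $0$ and hence are killed in passing to the associated graded — which is precisely why (grR-4) has no lower-order correction. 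The second part of (grR-4), the dot-on-cap relation $\beta_i\,\Phi_n(y_i)=\beta_i\,\Phi_n(y_{i+1})$ modulo degree-zero terms, reduces to the identity $\beta\circ(x^*\otimes 1 - 1\otimes x^*)=0$ for all $x^*\in\mathfrak{p}(n)^\perp$, already used in the proof of Proposition~\ref{prop:funct}(R4)(b); again the $-\beta_i$ correction present in the non-graded version is degree $0$ and disappears.

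The main obstacle is bookkeeping rather than conceptual: one must be careful that $\Phi_n(y_k)$ uses the \emph{truncated} sum (only $A_{ij}^-$ with $i<j$, and the $B^+$ terms, with the $C^-_{ij}\otimes B^-_{ji}$ term dropped) rather than the full $\Omega_{0k}$, so the identities borrowed from Section~\ref{sec:funct} must be checked to survive this truncation — concretely, one needs that the $C^-$-summand and the diagonal $A^-_{ij}$ ($i\ge j$) summand are exactly the ones contributing to the filtration-degree-$0$ part of $\Omega_{0k}$ on $M(0)\otimes V^{\otimes a}$ (this is the content of the degree analysis in the proof of Lemma~\ref{Psi is filtered}), so that deleting them is the same operation as taking the principal symbol. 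Granting that, every relation reduces to one already established, and there is nothing left to prove. I would close by remarking that this verification is exactly what is needed later (Section~\ref{final-proof-2}) to identify $\Phi_n$ with $\gr(\Psi_n)\circ\Theta$.
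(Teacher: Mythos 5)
Your proof is correct, and it is essentially the (unstated) calculation the paper has in mind: the paper disposes of the lemma with the single sentence ``This is a direct calculation analogous to Proposition~\ref{prop:funct} and Lemma~\ref{Psi is filtered},'' and you have carried out exactly that calculation. Your handling of the two substantive points is right: conjugation by $\sigma_k$ moves any operator of the form $(x\otimes x^*)_{0k}$ to $(x\otimes x^*)_{0,k+1}$ exactly (a formal identity valid summand by summand, so the truncation of $\Omega_{0k}$ is harmless), and the cap relation reduces term by term to $\beta\circ(x^*\otimes 1-1\otimes x^*)=0$ for $x^*\in\fp(n)^\perp$, with no leftover $-\beta_i$ because $\Phi_n(y_{i+1})$ does not contain the summand $\Omega_{i,i+1}$.

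One phrasing caveat worth tightening before this goes in print: you write that the relation holds ``up to lower-degree terms which vanish in the graded category.'' That wording invites a misreading, since $\GVW$ is defined abstractly by the homogeneous relations (R1)--(R3), (grR-4), not as a quotient of $\sVW$ or of $gr(\sVW)$ in which something can be discarded; and the identification $\GVW\cong gr(\sVW)$ is only established later (Corollary~\ref{theta-iso}), using this very lemma. What you actually verify — and correctly so — is that the images satisfy (grR-4) on the nose, precisely because the degree-$0$ summands of $\Psi_n(y_k)$ (namely the $\Omega_{jk}$ with $j\geq 1$, and within $\Omega_{0k}$ the $A^-_{ij}$ with $i\geq j$ and the $C^-$ terms) are simply absent from the definition of $\Phi_n(y_k)$. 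Phrase it as a computation about $\Phi_n$ rather than as a passage to a quotient, and the argument is airtight.

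Two very minor remarks. First, when listing what is dropped from $\Omega_{0k}$ you should also mention the terms $\Omega_{jk}$ for $1\le j\le k-1$: these are degree-preserving too (they only touch $V$-factors) and are likewise omitted in $\Phi_n(y_k)$; it is this omission, not the truncation inside $\Omega_{0k}$, that removes the $\sigma_i$ and $\beta_i^*\beta_i$ corrections from (R4)(a). Second, your closing remark slightly overstates the role of this lemma: the identification $G\circ gr\Psi_n\circ\Theta=\Phi_n$ is a separate check (Lemma~\ref{square}); this lemma only establishes that $\Phi_n$ is well-defined, which is a prerequisite but not the same thing.
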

\begin{proof}
This is a direct calculation analogous to Proposition ~\ref{prop:funct} and Lemma ~\ref{Psi is filtered}.
\end{proof}

\begin{lemma}\label{square}
With our fixed $n \in \NN$, the following square strictly commutes:
$$
\xymatrix{
\ar @{} [dr] |{=}
 \GVW \ar[d]_{\Theta} \ar[r]^{\Phi_n}
                 & \GVect        \\
 gr(\sVW) \ar[r]^{gr\Psi_n}   & gr(\FVect)  \ar[u]_G              }$$

That is, $G\circ gr\Psi_n \circ \Theta=\Phi_n$ on all objects and morphisms.
\end{lemma}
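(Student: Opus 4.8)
The plan is to verify the identity $G\circ \gr\Psi_n\circ\Theta=\Phi_n$ separately on objects and on generating morphisms, and then invoke functoriality to conclude it holds on all morphisms. Since all four functors are monoidal and $\GVW$ is generated as a monoidal supercategory by the single object $\obj$ and the morphisms $s,b,b^*,y$, it suffices to check equality on $\obj$ and on these four generators.

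On objects this is immediate: $\Theta(a)=a$, $\gr\Psi_n(a)=\Psi_n(a)=M(0)\otimes V^{\otimes a}$ as a filtered superspace, and applying $G$ takes the associated graded, which by the PBW filtration on $M(0)$ and the placement of each $V$ in degree $0$ yields exactly $\bigl(\Lambda(\mathfrak{g}_1)\otimes \mathrm{S}(\mathfrak{n}_+)\bigr)\otimes V^{\otimes a}=\Phi_n(a)$. For the morphisms $s_i,b_i,b_i^*$, which have filtered degree $0$, both $\Theta$ and then $\gr\Psi_n$ send them to $\sigma_i,\beta_i,\beta_i^*$ acting only on tensor factors $1,\dots,a$; since these operators preserve the filtration (Lemma~\ref{Psi is filtered}), applying $G$ leaves them unchanged, matching $\Phi_n(s_i)=\sigma_i$, $\Phi_n(b_i)=\beta_i$, $\Phi_n(b_i^*)=\beta_i^*$.

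The only generator requiring actual computation is $y_k$. Here $\Theta(y_k)=y_k\in\Hom_{\sVW}(a,a)^{\le 1}$, and $\gr\Psi_n$ sends its class to the class of $\Psi_n(y_k)=\sum_{i=0}^{k-1}\Omega_{ik}$ in $\Hom^{\le 1}/\Hom^{\le 0}$. Applying $G$ extracts the ``degree-raising part'' of this operator: by the analysis in the proof of Lemma~\ref{Psi is filtered}, the summands $\Omega_{ik}$ with $1\le i\le k-1$ preserve the PBW-filtered degree and hence contribute $0$ in the associated graded, while in $\Omega_{0k}$ the summands with $C_{ij}^-$ ($i<j$) and $A_{ij}^-$ ($i\ge j$) in the $0$-th factor act on $\mathbb{C}\hookrightarrow M(0)$ by $0$ (these are in $\mathfrak{b}\oplus\mathfrak{g}_{-1}$) and so act on $M(0)$ without raising degree, again contributing $0$. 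What survives is precisely the action of the summands $\sum_{i<j}A_{ij}^-\otimes A_{ji}^+ - \tfrac12\sum_i B_{ii}^+\otimes C_{ii}^+ - \sum_{i<j}B_{ij}^+\otimes C_{ji}^+$ on the $0$-th and $k$-th factors, where $A_{ij}^-\in\mathfrak{n}_+$ and $B_{ij}^+\in\mathfrak{g}_1$ now act on $\Lambda(\mathfrak{g}_1)\otimes\mathrm{S}(\mathfrak{n}_+)$ by multiplication. This is exactly the definition of $\Phi_n(y_k)$.

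The main obstacle, such as it is, lies in this last step: one must be careful that the associated-graded operator genuinely equals multiplication by $A_{ij}^-$ and $B_{ij}^+$ on $\Lambda(\mathfrak{g}_1)\otimes\mathrm{S}(\mathfrak{n}_+)$, i.e.\ that the ``lower-order corrections'' coming from commuting these PBW generators past existing ones in $M(0)$ genuinely drop the filtered degree and thus vanish in the graded picture — this is where the PBW theorem for $\fp(n)$ and the explicit choice of degrees $\deg(B_{ij}^+)=\deg(A_{ij}^-)=1$ are used. Once the generators are matched, strict commutativity of the square on all morphisms follows since all functors in sight are strict monoidal superfunctors and commute with horizontal and vertical composition, so the equality propagates from generators to arbitrary diagrams.
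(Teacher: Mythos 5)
Your proof is correct and follows essentially the same route as the paper: the square commutes trivially on objects and on the degree-zero generators $s_i,b_i,b_i^*$, and the only real check is for $y_k$, which you settle exactly as the paper does, by combining the filtration analysis from the proof of Lemma~\ref{Psi is filtered} with the definition of $\Phi_n$. Your elaboration of which summands of $\Omega_{0k}$ survive in the associated graded is precisely the content the paper leaves implicit.
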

\begin{proof}
It clearly strictly commutes on objects, and on the generating morphisms $s_i, b_i,b_i^*$ of degree $0$, so it only remains to check it on $y_k$ of filtered degree $1$. This follows from the proof of Lemma~\ref{Psi is filtered} and from the definition of $\Phi_n$. 
\end{proof}

Define a total ordering $\to $ on the set $[n]\cup [n']$ by saying that $i\to j$ if there is a path (of length at least one) from $i$ to $j$ in the graph
\begin{equation}\label{eqn:graph}
1 \to 2 \to \ldots \to n \to n' \to (n-1)' \to \ldots \to 2' \to 1'. 
\end{equation}
With this we have the following technical tool:
\begin{lemma}\label{graph}
Let $0\not=m\in M(0)$, $i_1,\ldots, i_a \in [n]\cup [n']$, and $1\le k\le a$ be arbitrary.  Then
\begin{align*}
\Phi_n(y_k) (m\otimes v_{i_1}\otimes  v_{i_2}\otimes \ldots \otimes v_{i_a})&=
\sum_{i_k\to j} m_j \otimes v_{i_1}\otimes  \ldots  \otimes v_{i_{k-1}} \otimes v_j \otimes v_{i_{k+1}}  \otimes \ldots   \otimes   v_{i_a}
\end{align*}
for some $m_j\in M(0)$.
Additionally, if $i_k\in [n-1]$, then $m_{i_k+1}=A_{i_k, i_k+1}^- m\ne 0$.
\end{lemma}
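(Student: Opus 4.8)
The plan is to compute $\Phi_n(y_k)(m\otimes v_{i_1}\otimes\cdots\otimes v_{i_a})$ directly from the definition of $\Phi_n(y_k)$ as
$$\left(\sum_{i<j}A_{ij}^-\otimes A_{ji}^+-\tfrac12\sum_i B_{ii}^+\otimes C_{ii}^+-\sum_{i<j}B_{ij}^+\otimes C_{ji}^+\right)_{0k},$$
acting on the $0$-th factor $M(0)$ and the $k$-th factor $V$. First I would recall from Section~\ref{sec:periplectic} how the odd matrix units act on the standard basis of $V=\mathbb{C}^{n|n}$: since $A_{ji}^+=E_{ji}+E_{i'j'}$, $B_{ji}^-=E_{ji'}-E_{ij'}$ and $C_{ji}^+=E_{j'i}+E_{i'j}$, one reads off exactly which $v_{i_k}$ they kill and to which $v_j$ they send it. Concretely, for $i_k\in[n]$, only the terms $A_{ji_k}^+$ (with $j<i_k$, giving $v_j$ with $j\to i_k$... wait, careful with orientation) and $B_{ji_k}^+$-type / $C$-type terms that produce $v_{j'}$ contribute; for $i_k\in[n']$ the complementary set of terms contributes. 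The point is to check, case by case on whether $i_k$ lies in $[n]$ or $[n']$, that every index $j$ that appears in the $k$-th slot of the output satisfies $i_k\to j$ in the graph \eqref{eqn:graph}, i.e.\ $j$ is strictly later than $i_k$ in the order $1\to 2\to\cdots\to n\to n'\to\cdots\to 1'$. The even summands $A_{ij}^-\otimes A_{ji}^+$ with $i<j$ move $v_{i_k}\mapsto v_j$ only when $j<i_k$ — no, again I must track this carefully: $A_{ji}^+v_{i_k}=\delta_{i,i_k}v_j+\delta_{j',i_k}v_{i'}$, so with the constraint $i<j$ we get contributions $v_j$ (for $j>i_k$, good) and, when $i_k=j'\in[n']$, we get $v_{i'}$ with $i<j=(i_k)'$, i.e.\ $i'>(i_k)$ in the $[n']$-part of the chain (good). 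Similarly the $B$ and $C$ terms land in the $[n']$-block, which is entirely "after" the $[n]$-block, handling $i_k\in[n]$, and within the $[n']$-block respect the reversed order. Assembling these observations gives the displayed formula with $m_j$ the corresponding element of $M(0)$ obtained by acting with the matching generator ($A_{ij}^-$ or $B_{ij}^+$ or $B_{ii}^+$, all of which lie in $\mathfrak{n}_+$ or $\mathfrak{g}_1$ and hence act by multiplication, so the result does lie in $M(0)$).

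For the final claim, I would isolate the unique term in $\Phi_n(y_k)$ that produces $v_{i_k+1}$ in the $k$-th slot when $i_k\in[n-1]$: this is the summand $A_{i_k,\,i_k+1}^-\otimes A_{i_k+1,\,i_k}^+$ (the only one, since the $B$- and $C$-terms land in $[n']$, the other $A$-terms $A_{ij}^-\otimes A_{ji}^+$ with $i<j$ send $v_{i_k}$ to $v_j$ only for $j=i_k$ forced $i<i_k$ giving $v_i$... so the coefficient of $v_{i_k+1}$ comes solely from $i=i_k,\ j=i_k+1$). Its contribution is $(A_{i_k,i_k+1}^-m)\otimes v_{i_1}\otimes\cdots\otimes v_{i_k+1}\otimes\cdots\otimes v_{i_a}$, so $m_{i_k+1}=A_{i_k,i_k+1}^-m$. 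It then remains to argue $A_{i_k,i_k+1}^-m\neq 0$ for $0\neq m\in M(0)$. The main obstacle is precisely this nonvanishing statement: it says that the raising operators $A_{j,j+1}^-\in\mathfrak{n}_+$ act injectively on the Verma module $M(0)$. This follows from the PBW description $M(0)\cong\Lambda(\mathfrak{g}_1)\otimes S(\mathfrak{n}_+)$ as a $U(\mathfrak{n}_+)$-module (indeed $S(\mathfrak{n}_+)$ is a free module under left multiplication by $U(\mathfrak{n}_+)$, more precisely the PBW basis shows multiplication by the generator $A_{j,j+1}^-$ is injective on the polynomial-type algebra $\Lambda(\mathfrak{g}_1)\otimes S(\mathfrak{n}_+)$ since it has no zero divisors in the relevant sense — one fixes a PBW monomial basis ordered so that $A_{j,j+1}^-$ comes first, and observes left multiplication by it is injective on the associated graded, hence on $M(0)$). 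I would phrase this via the associated graded of $U(\mathfrak p(n))$, where $\operatorname{gr}M(0)$ is a polynomial ring tensored with an exterior algebra and multiplication by the nonzero homogeneous element (the symbol of) $A_{j,j+1}^-$ is injective, then lift injectivity back to $M(0)$ using the PBW filtration.

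In summary: the bulk of the proof is a bookkeeping check, organized by the two cases $i_k\in[n]$ and $i_k\in[n']$, that every index produced in the $k$-th tensor slot is $\to$-later than $i_k$ — which is exactly the shape of $\Phi_n(y_k)$ compared with $\Omega_{0k}$ after discarding the filtered-degree-zero summands (those involving $A_{ij}^-$ with $i\ge j$ and $C_{ij}^-$ with $i<j$), as recorded in Lemma~\ref{Psi is filtered}. The only genuinely non-formal input is the injectivity of left multiplication by $A_{i_k,i_k+1}^-$ on $M(0)$, which I expect to be the hard part and which I would settle by the PBW/associated-graded argument sketched above.
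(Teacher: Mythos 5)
Your proposal is correct and follows essentially the same route as the paper: expand $\Phi_n(y_k)$ in its defining summands, use the explicit formulas for $A_{ji}^+$ and $C_{ji}^+$ on the standard basis of $V$ to check that every index produced in the $k$-th slot is $\to$-later than $i_k$, and observe that the unique summand producing $v_{i_k+1}$ is $A_{i_k,i_k+1}^-\otimes A_{i_k+1,i_k}^+$, so $m_{i_k+1}=A_{i_k,i_k+1}^-m$. The only divergence is that you treat the nonvanishing $A_{i_k,i_k+1}^-m\neq 0$ as the hard step and argue via the associated graded of $\mathcal{U}(\fp(n))$; this is more than needed, since in the graded functor $\Phi_n$ the $0$-th factor is already $\Lambda(\mathfrak{g}_1)\otimes S(\mathfrak{n}_+)$ with $A_{i_k,i_k+1}^-$ acting by multiplication, and multiplication by a polynomial generator is injective on this free $S(\mathfrak{n}_+)$-module (the paper leaves this point implicit).
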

\begin{proof} First note that by definition, $\Phi_n(y_k)(m\otimes v_{i_1}\otimes  v_{i_2}\otimes \ldots \otimes v_{i_a})$ equals
\begin{align*}
&\left( 	\sum_{i<j} A_{ij}^- \otimes A_{ji}^+ 
		- \frac{1}{2} \sum_{i} B_{ii}^+ \otimes C_{ii}^+
		- \sum_{i < j} B_{ij}^+ \otimes C_{ji}^+  \right)_{0k} (m\otimes v_{i_1}\otimes  v_{i_2}\otimes \ldots \otimes v_{i_a})=\\
&=\sum_{i<j} A_{ij}^-m \otimes v_{i_1}\otimes  \ldots  \otimes A_{ji}^+  v_{i_k} \otimes \ldots   \otimes   v_{i_a} -\frac{1}{2} \sum_{i} B_{ii}^+ m \otimes v_{i_1}\otimes  \ldots  \otimes C_{ii}^+  v_{i_k} \otimes \ldots   \otimes   v_{i_a} - \\
 &	\quad	- \sum_{i < j} B_{ij}^+m \otimes v_{i_1}\otimes  \ldots  \otimes C_{ji}^+  v_{i_k} \otimes \ldots   \otimes   v_{i_a}. 
\end{align*}
Thus, all summands are of the form $m_j \otimes v_{i_1}\otimes  \ldots  \otimes v_{j} \otimes \ldots   \otimes   v_{i_a}$ for $m_j \in M(0)$. 

To determine the occuring $v_j$, recall that
$A_{ji}^+ = E_{ji} + E_{i' j'}$ and $C_{ji}^+ = E_{j'i} + E_{i' j}$, and therefore we have
\begin{equation}\label{eqn:Aji-action}
A_{ji}^+v_l =\delta_{il}v_j, \quad A_{ji}^+v_{l'} =\delta_{jl}v_{i'} ,  \text{ for } i<j \quad \text{ and } \quad C_{ji}^+v_l =\delta_{il}v_{j'}+\delta_{jl}v_{i'}, \quad C_{ij}^+v_{l'} =0.\end{equation}
In either case, $v_j$ is (possibly a constant multiple of) another standard basis vector, whose index appears strictly to the right of $i_k$ in (\ref{eqn:graph}), thus proving the first claim. 
For the second, it follows from \eqref{eqn:Aji-action} that the only summand transforming $v_{i_k}$ to $v_{i_{k}+1}$ acts by $A_{i_k+1,i_k}^+$ on the $k$-th tensor factor, and thus acts by $A_{i_k,i_k+1}^-$ in the $0$-th tensor factor, replacing $m$ by $A_{i_k,i_k+1}^-m$. 
\end{proof}

\subsection{The key construction}\label{key-construction}
The following construction, associating two vectors $v_d$ and $w_d$ to a diagram $d\in \Hom_{\sVW}(a,b)$, is key to the proof of Theorem~\ref{Thm2} in Section \ref{final-proof-2}. In the special case when $d$ has no dots and has the same number of cups and caps (i.e.\  $d\in \Hom_{\sVW}(a,a)^0$), it specializes to a certain construction from \cite[Section 4]{M}; see Section~\ref{final-proof-1} for details. 

Given a diagram $d\in \Hom_{\sVW}(a,b)^k$ and $n\ge\frac{a+b}{2}+k$, define $v_d\in V^{\otimes a}$ and $w_d\in V^{\otimes b}$ by the following algorithm.

\begin{itemize}
\item[\bf{STEP 0.}] Put an ordering on the strings in $d$ so that caps come first, ordered left to right with respect to their left end; then through strings, ordered left to right with respect to their bottom end; then cups, ordered right to left with respect to their right end. (See for instance (\ref{eqn:Numberstrings}), where the strings are ordered using the set $\{\textcircled{1},\textcircled{2},\textcircled{3},\textcircled{4},\textcircled{5},\textcircled{6},\textcircled{7}\}$ with the usual ordering.)
\item[\bf{STEP 1.}] %Pick an ordering on the caps. 
Starting with the smallest cap label, and repeating along the order, label its left end by the minimal $i \in \left[n\right]$ which is bigger than all the labels already assigned. If the cap has $\ell$ dots, label its right end by $i+\ell$.
\item[\bf{STEP 2.}]%Pick an ordering on the through strings. 
Continue with the through strings in the assigned order, and for each, label its bottom end by the minimal $i \in \left[n\right]$ which is bigger than all the labels already assigned. If the through string has $\ell$ dots, label its top end by $i+\ell$.
\item[\bf{STEP 3.}]%Pick an ordering on the cups. 
For each cup in order, label its right end by the minimal element $i$ of the set $\left[n\right]$ which is bigger than all the labels already assigned. If the cup has $\ell$ dots, label its left end by $i+\ell$.
\item[\bf{STEP 4.}] For each cup and cap, change the right end label from $i$ to $i'$.
\item[\bf{STEP 5.}] Now we have assigned to the bottom of the diagram labels $i_1, i_2,\ldots ,i_a$ and to the top $j_1, j_2,\ldots ,j_b$ for some $i_1, \ldots ,i_a, j_1,\ldots , j_b \in [n] \cup [n'] $. Set
$$v_d =v_{i_1}\otimes v_{i_2}\otimes \ldots \otimes v_{i_a}\in V^{\otimes a}, \quad\text{and}\quad
w_d =v_{j_1}\otimes v_{j_2}\otimes \ldots \otimes v_{j_b}\in V^{\otimes b}.$$
\end{itemize}

\begin{example}\label{big-example-vd-wd}
For instance, for $d=y_1^2 s_2 s_6\beta_3^*\beta_1^* s_3 s_2 \beta_1s_2 y_1^2 y_2 y_4^2 y_6 \in \Hom_{\sVW}(6,8)^8,$

\begin{equation}\label{eqn:Numberstrings}
d=\qquad
\TikZ{[scale=0.7]
\draw 
(0,5) node{} to (0,3.5) node{}
(1,5) node{} to (2,4) node{}
(2,5) node{} to (1,4) node{}
(1,4) node{} to (1,3.5) node{}
(3,5) node{} to (3,4) node{}
(4,5) node{} to (4,4) node{}
(5,5) node{} to (6,4) node{}
(6,5) node{} to (5,4) node{}
(7,5) node{} to (7,4) node{}
(0,4.7) node[fill,circle,inner sep=1.5pt]{}
(0,4.3) node[fill,circle,inner sep=1.5pt]{}
%%%%%%%%%%%%%
(0,3.5) arc(-180:0:0.5) 
(2,4) arc(-180:0:0.5) 
(4,4) node{} to (4,3) node{}
(5,4) node{} to (5,3) node{}
(6,4) node{} to (7,3) node{}
(7,4) node{} to (6,3) node{}
%%%%%%%%%%%%%
(4,3) node{} to (4,2) node{}
(5,3) node{} to (6,2) node{}
(6,3) node{} to (5,2) node{}
(7,3) node{} to (7,2) node{}
%%%%%%%%%%%%%
(0,1) arc(180:0:0.5) 
(4,2) node{} to (2,1) node{}
(5,2) node{} to (3,1) node{}
(6,2) node{} to (4,1) node{}
(7,2) node{} to (5,1) node{}
%%%%%%%%%%%%%
(0,1) node{} to (0,0) node{}
(1,1) node{} to (2,0) node{}
(2,1) node{} to (1,0) node{}
(3,1) node{} to (3,0) node{}
(4,1) node{} to (4,0) node{}
(5,1) node{} to (5,0) node{}
%%%%%%%%%%%%%
(0,0.3) node[fill,circle,inner sep=1.5pt]{}
(0,0.6) node[fill,circle,inner sep=1.5pt]{}
(1.2,0.2) node[fill,circle,inner sep=1.5pt]{}
(3,0.3) node[fill,circle,inner sep=1.5pt]{}
(3,0.6) node[fill,circle,inner sep=1.5pt]{}
(5,0.5) node[fill,circle,inner sep=1.5pt]{}
%%%%%%%%%%%%%
(0,-0.5) node{1}
(0,-1.2) node{\textcircled{1}}
(2,-0.5) node{3'}
(1,-0.5) node{4}
(1,-1.2) node{\textcircled{2}}
(4,5.5) node{5}
(3,-0.5) node{6}
(3,-1.2) node{\textcircled{3}}
(7,5.5) node{8}
(4,-0.5) node{9}
(4,-1.2) node{\textcircled{4}}
(6,5.5) node{9}
(5,-0.5) node{10}
(5,-1.2) node{\textcircled{5}}
(5,5.5) node{11}
(3,5.5) node{12'}
(1,5.5) node{12}
(1,6.2) node{\textcircled{6}}
(2,5.5) node{13'}
(0,5.5) node{15}
(0,6.2) node{\textcircled{7}}
;}\end{equation}
we get $v=v_1\otimes v_4\otimes v_{3'}\otimes v_6\otimes v_9\otimes v_{10}\in V^{\otimes 6}$, and $w_d=v_{15}\otimes v_{12}\otimes v_{13'}\otimes v_{12'}\otimes v_{5}\otimes v_{11}\otimes v_{9}\otimes v_8 \in V^{\otimes 8}$.
\end{example}

\begin{remark} \label{remark:theKeyconstruction}
The largest label is always $\frac{a+b}{2}+k$. We require $n\ge \frac{a+b}{2}+k$ in order to be able to realize $v_{\frac{a+b}{2}+k}\in V=\mathbb{C}^{n|n}$ in STEP 5.

The ordering in STEP 0 could be changed, as long as all caps come first, then all through strings, then all cups. This changes the vectors $v_d$ and $w_d$, but preserves the important features of the construction. 

Observe also that if $i',j'\in [n']$ are labels with $i'$ at the bottom, $j'$ at the top, then $i<j$. 
\end{remark}

\subsection{The key lemma}

The proof of linear independence relies on the observation that the vectors $v_d$ and $w_d$ can be used to distinguish diagrams  in $S^k_{a,b}$.  

Namely, the standard basis $v_1,\ldots , v_n,v_{1'}, \ldots, v_{n'}$ of $V$ induces a standard basis $B_b$ of $V^{\otimes b}$. For any vector $z\in M(0)\otimes V^{\otimes b}$ and any standard basis vector $w\in B_b$ we denote by $\left< w \mid z\right> \in M(0)$ the coefficient of $z$ in this standard basis. In other words, 
$$z=\sum_{w\in B_b} \left< w \mid z\right>\otimes w.$$

\begin{lemma}\label{Key lemma}
Let $a, b, k \in \NN_0$. For any $d, d' \in S_{a,b}^k$, we have $\left< w_d \mid \Phi_n(d') v_d\right> \ne 0$ iff  $d=d'$.
\end{lemma}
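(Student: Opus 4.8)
The strategy is to track how $\Phi_n(d')$ acts on the vector $v_d$ by following the diagram $d'$ from bottom to top, using Lemma~\ref{graph} as the engine. Recall from the key construction (STEP 0--5) that the bottom labels $i_1,\dots,i_a$ of $d$ are assigned in a strictly increasing fashion along a prescribed order of strings, and similarly the largest label appearing anywhere is $\tfrac{a+b}{2}+k$. The point of the assignment is that, reading $d$, each cap forces an equality $i_p = $ (left end label) together with $i_q = i_p + \ell$ at its right end (shifted to a primed index), each through string carries its bottom label $i$ to $i+\ell$ at the top, and each cup introduces the next two fresh labels. We want to show $\langle w_d \mid \Phi_n(d') v_d \rangle \ne 0$ precisely when $d' = d$.

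First I would handle the ``if'' direction, i.e.\ compute $\langle w_d \mid \Phi_n(d) v_d\rangle$ and show it is a nonzero element of $M(0)$. Using the second statement of Lemma~\ref{graph} repeatedly, each dot on a through string or cup of $d$, applied to the basis vector indexed by the minimal available label $i \in [n-1]$, contributes a factor $A^-_{i,i+1}$ acting on $M(0)$ by multiplication. Because the labels are strictly increasing and the relevant indices lie in $[n-1]$, the resulting product of the $A^-_{i,i+1}$'s (in the order dictated by STEP 1--3) is a nonzero element of $\mathrm{S}(\mathfrak{n}_+) \subseteq M(0)$ by the PBW theorem; the cap contributions come from $A^+_{ji}$ and $C^+_{ji}$ terms and, by \eqref{eqn:Aji-action}, match the primed labels assigned in STEP 4 exactly, while $\sigma_i$ (for the undotted part of $d$) just permutes tensor factors. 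Since $d$ is a normal diagram with dots in the normal position, there is essentially a unique term in the iterated sum that produces the target basis vector $w_d$, so no cancellation occurs and the coefficient is this nonzero monomial in the $A^-_{i,i+1}$'s (times the odd $B^+$-factors coming from any cups carrying dots, again nonzero in $\Lambda(\mathfrak{g}_1)$ by PBW).

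For the ``only if'' direction, suppose $d, d' \in S^k_{a,b}$ with $\langle w_d \mid \Phi_n(d') v_d\rangle \ne 0$. Applying Lemma~\ref{graph} from the bottom of $d'$ upward, every string of $d'$ can only move a label $i_k$ to some $j$ with $i_k \to j$ in the graph \eqref{eqn:graph}, i.e.\ ``weakly to the right''; combined with Remark~\ref{remark:theKeyconstruction} (if $i',j' \in [n']$ are labels with $i'$ at the bottom and $j'$ at the top then $i < j$), the requirement that the output is exactly $w_d$ forces, slot by slot, that $d'$ pairs up the same endpoints as $d$ and has dots in the same positions and numbers. Concretely: the labels $i_1,\dots,i_a$ on the bottom of $v_d$ were chosen so that the multiset of labels together with the ordering constraints pins down the connector $P(d)$ and the dot-distribution; any $d'$ whose action on $v_d$ has nonzero $w_d$-component must realize exactly those moves, hence $P(d') = P(d)$ and $d'$ has the same dots, so $d' = d$ as elements of $S^k_{a,b}$ (here I use that $S^k_{a,b}$ is obtained from a fixed choice of normal diagrams, so equal connector plus equal dots means literally the same element).

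The main obstacle I anticipate is the bookkeeping in the ``only if'' direction: one must argue that a \emph{single} term in the big iterated sum defining $\Phi_n(d') v_d$ can hit $w_d$, and that the strict-increase property of the labels, the ``move-right'' property of Lemma~\ref{graph}, and the primed-index constraint of Remark~\ref{remark:theKeyconstruction} jointly rule out any ``accidental'' match coming from a genuinely different diagram $d'$ or a different routing of dots within $d'$. I would organize this by induction on the string order from STEP 0 (caps, then through strings, then cups), showing at each stage that the label forced at the current endpoint leaves no freedom, so that the connector and dots of $d'$ are reconstructed uniquely and must coincide with those of $d$. The $\mathfrak{gl}_n$-nilpotency (all indices staying in $[n]$, never wrapping to primed indices except at cups and caps as prescribed) is what makes this rigidity work, and is exactly why we chose $M(0)$ and the grading with $\deg A^-_{ij} = \deg B^+_{ij} = 1$.
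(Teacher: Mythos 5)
Your overall approach is the same as the paper's: you track how $\Phi_n(d')$ moves the labels of $v_d$, using Lemma~\ref{graph} as the ``move-right'' engine, and you argue that the label data encoded in $v_d$ and $w_d$ rigidly forces $P(d')=P(d)$ and the dot counts. However, there is a genuine gap in your ``only if'' direction, plus a small slip in the ``if'' direction.

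The gap: your proposed ``slot by slot'' induction following the STEP~0 order of $d$'s strings implicitly assumes you already know which strings of $d'$ are caps, cups, and through strings. But this is exactly what needs to be established. The rigidity of Lemma~\ref{graph} plus Remark~\ref{remark:theKeyconstruction} only tells you that any primed bottom label of $v_d$ must lie on a cap of $d'$, and that a cup of $d'$ must have at least one primed top label. A primed \emph{top} label of $w_d$, however, can a priori sit on either a cup or a through string of $d'$ (a through string can go $(i,j')$). So before you can ``reconstruct slot by slot'' you need the counting argument: from $\cap(d')\ge \cap(d)$ and $\cup(d')\le \cup(d)$ and the identity $t(d')=a-2\cap(d')=b-2\cup(d')$, deduce equality of the numbers of caps, cups, and through strings of $d$ and $d'$. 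Only with these equalities in hand are all primed top labels forced onto cups, after which the stepwise pairing of the endpoints (caps, then cups, then through strings) and the dot-count comparison go through. Without this intermediate step the induction you describe does not close.

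The minor error: in the ``if'' direction the nonzero coefficient $\left< w_d \mid \Phi_n(d)v_d\right>$ is a pure product of factors $A^-_{i,i+1}\cdots A^-_{i+\ell-1,i+\ell}$ (one chain per dotted string), living entirely in $\mathrm{S}(\mathfrak{n}_+)\subseteq M(0)$. No $B^+$-factors arise: for the unique surviving term the dots always move an index $i\in[n]$ to $i+1\in[n]$ (which is why one needs $n\ge \tfrac{a+b}{2}+k$), and the $B^+$-summand of $\Omega$ would send it to a primed index, producing a term annihilated by the later pairing with $w_d$. Your parenthetical about ``odd $B^+$-factors coming from any cups carrying dots'' is therefore not what happens, though it does not affect the nonvanishing conclusion.
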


\begin{proof}

\fbox{$\Leftarrow$}
We repeatedly use the second part of  Lemma~\ref{graph}. 

Consider a cap with $\ell$ dots on it, and the edges labelled $i$ and $(i+\ell)'$. By Lemma ~\ref{graph}, applying the $\ell$ dots replaces $v_d$ by a linear combination of vectors which have the tensor factor $v_i$ of $v_d$ replaced by some $v_{j}$'s with $i\to j$, such that the path in \eqref{eqn:graph} from $i$ to $j$ has length at least $\ell$.
Exactly one such summand will give a non-zero contribution when such a $v_j$ is paired with $v_{(i+\ell)'}$ via $\beta$; namely, the one with $j= i+\ell$. Applying this dotted cap transforms the $0$-th tensor factor, say $m$, into the factor $A_{i,i+1}^-A_{i+1,i+2}^-\ldots A_{i+\ell -1,i+\ell }^- m$. 

Next, consider a through string with $\ell $ dots and labels $i$ and $i+\ell$. It prescribes the order of some superswaps of tensor factors of $v_d$. After applying the $\ell$ dots,  $v_d$ is replaced by a linear combination of vectors which have the tensor factor $v_i$ of $v_d$ replaced by some $v_{j}$ with $i\to j$, for which the path in \eqref{eqn:graph} from $i$ to $j$ has length at least $\ell$.
Reading off the coefficient of $w_d$ manifests itself in the tensor factor corresponding to this string to reading off the coefficient of $v_{i+\ell}$. The only summand with a non-zero contribution is the one with $j={i+\ell}$; in effect the $0$-th tensor factor got acted on by $A_{i,i+1}^-A_{i+1,i+2}^-\ldots A_{i+\ell -1,i+\ell}^-$. 

Finally, consider a cup with $\ell$ dots and labels $i+\ell$ and $i'$. The $\beta^*$ corresponding to this cup produced $\sum_{j}(v_j\otimes v_{j'}-v_{j'} \otimes v_{j})$; applying the $\ell$ dots on the left end of it and reading off the coefficient of $v_{i+\ell}\otimes v_{i'}$ (as prescribed by $\left< w_d \mid\cdot \;\right>$) gives exactly one summand with a non-zero contribution. The effect on the $0$-th tensor factor is action by $A_{i,i+1}^-A_{i+1,i+2}^-\ldots A_{i+\ell-1,i+\ell}^-$. Thus,  $\left< w_d \mid \Phi_n(d')v_d \right>$ is, up to a possible sign, equal to
\begin{align*}
\prod_{\TikZ{[scale=0.4]
\draw 
(-0.1,0.5) arc(180:0:0.5)
(-0.1,0.5) node[fill,circle,inner sep=1.5pt]{}
(-0.4,0) node{\tiny{i}}
(1.1,-0) node{\tiny{(i+$\ell$)'}}
;}} A_{i,i+1}^-\ldots A_{i+\ell-1,i+\ell}^- \cdot 
\prod_{\TikZ{[scale=0.4] 
\draw 
(0,0) node{} to (0,1) node{}
(0,0) node[fill,circle,inner sep=1.5pt]{}
(-0.4,0) node{\tiny{i}}
(-0.7,1) node{\tiny{i+$\ell$}}
;}} A_{i,i+1}^-\ldots A_{i+\ell-1,i+\ell}^- \cdot 
 \prod_{\TikZ{[scale=0.4]
\draw 
(0,0) arc(-180:0:0.5)
(0,0) node[fill,circle,inner sep=1.5pt]{}
(-0.4,0.5) node{\tiny{i+$\ell$}}
(1.2,0.5) node{\tiny{i'}}
;}} A_{i,i+1}^-\ldots A_{i+\ell-1,i+\ell}^- \ne 0,
\end{align*}
where the factors are given by the shape and the assigned labels of $d$.

\fbox{$\Rightarrow$}
Let $d' \in S_{a,b}^k$ be any diagram for which
$\left< w_d \mid  \Phi_n(d')v_d \right> \ne 0.$ We first recover the underlying connector $P(d')$ from the labelling of $d$. 

Consider any cap in $d'$. By Lemma~\ref{graph} and the ordering $\to$, the dots increase indices $i\in [n]$ or replace them by $j'\in [n']$, and they decrease $j'\in [n']$. From that, and the facts $\left< w_d \mid \Phi_n(d')v_d\right> \ne 0$ and $\beta(v_i,v_j)=\beta(v_{i'},v_{j'})=0$, $\beta(v_i,v_{j'})=\delta_{ij}$, it follows that a cap in $d'$ can connect two points which are labelled in $d$ by an (unordered) pair of the form $\{i,j\}$ or $\{i,j'\}$ with $i\le j$. 

Next, consider any cup in $d'$. Note that $\beta^*(1)=\sum_{i}(v_i\otimes v_{i'}-v_{i'}\otimes v_i)$, and that subsequent application of dots increases $i\in [n]$ or replaces it by $j'\in [n']$, and decreases $j'\in [n']$. Hence $\left< w_d \mid \Phi_n(d')v_d \right> \ne 0$ implies that a cup in $d'$ can only connect those pairs of points in $d$ labelled by $\{i',j'\}$, or by $\{i,j'\}$ with $i\ge j$. 

Finally, consider any through string in $d'$. The possibilities for its labels (bottom and top) are then, by Lemma~ \ref{graph} and $\left< w_d \mid \Phi_n(d')v_d \right> \ne 0$, given by ordered pairs of the form $(i,j')$, or of the form $(i,j)$ with $i\le j$, or of the form $(i',j')$ with $i\ge j$. However, the last of these is not possible by Remark \ref{remark:theKeyconstruction}, so the remaining possibilities for the bottom and top labels of a through string are $(i,j')$ and $(i,j)$ with $i\le j$.

For any diagram $d''$, let $\cap(d'')$ denote the number of caps of $d''$; $\cup(d'')$ the number of cups, and $t(d'')$ the number of through strings. By the above analysis, 
all labels $i'\in [n']$ on the bottom are on caps in $d'$, so
\begin{align}\label{eqn:ineq1}
\cap(d')\ge \# \textrm{ labels } j' \in [n'] \textrm{ at the bottom}= \cap(d).
\end{align}
As every cup in $d'$ has at least one label of type $ j' \in [n'] $, we also see that 
\begin{align}\label{eqn:ineq2}
\cup(d')\le \# \textrm{ labels } j' \in [n'] \textrm{ at the top}= \cup(d).
\end{align}
We get a sequence of inequalities 
\begin{align*}
t(d')= a-2\cap(d') \stackrel{(\ref{eqn:ineq1})}{\le} a-2\cap(d) = t(d)=
 b-2\cup(d) \stackrel{(\ref{eqn:ineq2})}{\le} b-2\cup(d') = t(d').
\end{align*}
This implies that \eqref{eqn:ineq1} and \eqref{eqn:ineq2} are equalities, and moreover
\begin{align}\label{eqn:ineq3}
\cap(d')= \cap(d), \quad \cup(d')= \cup(d), \quad t(d')=t(d). 
\end{align}
So, $d$ and $d'$ have the same number of cups, of caps and of through strings. 

Next, we reconstruct the caps of $d'$. We saw in \eqref{eqn:ineq1}, \eqref{eqn:ineq3} that any label $j'\in [n']$ on the bottom of the diagram $d'$ needs to be on a cap, and all caps have exactly one label of type $j'\in [n']$. The other end of that cap is labelled by some $i\in [n]$ with $i\le j$. 
Starting from the smallest bottom label of type $j'\in [n']$, there is exactly one label at the bottom of type $i\in [n]$ with $i\le j$, so these two labels must be joined by a cap in $d'$. To get the non-vanishing of the action of the dots composed with $\beta$ prescribed by this cap, this cap needs by Lemma~\ref{graph}  to have at most $j-i$ dots in $d'$. (It has exactly $j-i$ dots in $d$). Proceed with the next smallest label of type $j'\in [n']$, noticing that there is exactly one unpaired label $i$ with $i\le j$, and pair them. After doing this for all $j' \in \left[n'\right]$ on the bottom, we see that the connectors $P(d')$ and $P(d)$ have the same pairing of the points given by caps, and every cap in $d'$ has at most as many dots as the corresponding cap in $d$.

Next, we recover the cups. By (\ref{eqn:ineq2}) and (\ref{eqn:ineq3}), every label of type $ j' \in [n']$ needs to be on an end of a cup, whereas the other end is labelled by some $i\in [n]$ with $j\le i$, and which has at most $i-j$ dots. By STEP 0 the cups come last, so there is exactly one such pairing of points on the top. So, $P(d')$ and $P(d)$ also have the same pairing of the points given by cups, and every cup in $d'$ has at most as many dots as the corresponding cup in $d$. Finally, all remaining unassigned labels are of type $i\in [n]$, and there is exactly one pairing such that the bottom label is smaller than the top label. So, the connectors $P(d')$ and $P(d)$ have the same pairing of the points given by through strings, and every through string in $d'$ has at most as many dots as the corresponding through string in $d$.

Therefore, $P(d)=P(d')$. As the underlying undotted diagrams of $d$ and $d'$ are both in $S_{a,b}$, they are the same. Finally, as $d'$ has at most as many dots as $d$ on every string, and they have the same total number of dots, we conclude that $d'=d$. 
\end{proof}

\begin{example}
For the diagram $d$ from Example \ref{big-example-vd-wd}, 
$$\left< w_d \mid \Phi_n(d)v_d \right> =A_{12}^-A_{23}^-A_{45}^-A_{67}^-A_{78}^-A_{10,11}^-A_{13,14}^-A_{14,15}^- \in \mathcal{U}(\mathfrak{n}_-)=M(0).$$

\end{example}

\subsection{Proof of Theorem~\ref{Thm2}}\label{final-proof-2}

In this section we will finally prove the linearly independence of $S_{a,b}^\bullet$, thus proving Theorem~\ref{Thm2}. We start by proving it in the graded setting. 

\begin{lemma}\label{Phi_n injecitve}
Given $a, b, k\in \mathbb{N}_0$, and $n\ge \frac{a+b}{2}+k$, the map 
$$\Phi_n:\Hom_{\GVW}(a,b)^k\longrightarrow \Hom_{\GVect}(M(0)\otimes V^{\otimes a}, M(0)\otimes V^{\otimes b})^k$$ maps the set $S_{a,b}^k$ to a linearly independent set. Thus, $S_{a,b}^k$ is linearly independent in $\Hom_{\GVW}(a,b)^k$, and $\Phi_n$ is injective on $\Hom_{\GVW}(a,b)^k$.  
\end{lemma}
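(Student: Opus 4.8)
The plan is to read the statement off from the Key Lemma (Lemma~\ref{Key lemma}) together with the spanning statement (Lemma~\ref{span-GVW}), in that order.

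\emph{Step 1: linear independence of the images.} First I would show that $\{\Phi_n(d)\mid d\in S_{a,b}^k\}$ is linearly independent in $\Hom_{\GVect}(M(0)\otimes V^{\otimes a}, M(0)\otimes V^{\otimes b})^k$. Suppose $\sum_{d\in S_{a,b}^k} c_d\,\Phi_n(d)=0$ with $c_d\in\mathbb{C}$ (all the $\Phi_n(d)$ with $d\in S_{a,b}^k$ are homogeneous of degree $k$, so this is a genuine relation in the degree-$k$ piece). Fix $d_0\in S_{a,b}^k$. Since $n\ge\frac{a+b}{2}+k$, the key construction of Section~\ref{key-construction} produces vectors $v_{d_0}\in V^{\otimes a}$ and $w_{d_0}\in V^{\otimes b}$. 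Evaluating the relation on $v_{d_0}$ and extracting the coefficient $\langle w_{d_0}\mid-\rangle\in M(0)$, Lemma~\ref{Key lemma} gives $\langle w_{d_0}\mid\Phi_n(d)v_{d_0}\rangle=0$ for $d\ne d_0$ and $\langle w_{d_0}\mid\Phi_n(d_0)v_{d_0}\rangle\ne 0$. Hence $c_{d_0}\cdot\big(\text{nonzero vector of }M(0)\big)=0$, so $c_{d_0}=0$; as $d_0$ was arbitrary, all $c_d$ vanish.

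\emph{Step 2: basis and injectivity.} Since $\Phi_n$ is a ($\mathbb{C}$-linear) functor, any linear relation among the elements of $S_{a,b}^k$ in $\Hom_{\GVW}(a,b)^k$ would map to a linear relation among the $\Phi_n(d)$, which Step~1 excludes; thus $S_{a,b}^k$ is linearly independent in $\Hom_{\GVW}(a,b)^k$. Combined with Lemma~\ref{span-GVW}, which says $S_{a,b}^k$ spans $\Hom_{\GVW}(a,b)^k$, we conclude $S_{a,b}^k$ is a basis of $\Hom_{\GVW}(a,b)^k$. Finally, $\Phi_n$ sends this basis to a linearly independent set (Step~1), hence is injective on $\Hom_{\GVW}(a,b)^k$, which is the last assertion.

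\emph{Main obstacle.} Essentially all the difficulty is already packaged into the Key Lemma, which is available; here the only points requiring care are bookkeeping — matching the grading degrees so that the relation lives in the degree-$k$ component, and noting that multiplication by a fixed nonzero vector of the vector space $M(0)$ is injective, so $c_{d_0}\,m=0$ with $m\ne 0$ really forces $c_{d_0}=0$. Beyond invoking Lemmas~\ref{span-GVW} and~\ref{Key lemma} in the correct order, no serious difficulty arises.
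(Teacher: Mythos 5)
Your proposal is correct and is essentially the paper's own argument: evaluate a putative relation on $v_d$, extract the coefficient of $w_d$, and invoke Lemma~\ref{Key lemma} to kill each coefficient, then combine with the spanning statement to get linear independence in $\Hom_{\GVW}(a,b)^k$ and injectivity of $\Phi_n$. The only (cosmetic) difference is that you cite Lemma~\ref{span-GVW} for the spanning step, which is in fact the right reference, whereas the paper's text points to Lemma~\ref{Phi is graded}.
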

\begin{proof}
Assume there are some $\alpha_{d'} \in \mathbb{C}$ such that
$\sum_{d' \in S_{a,b}^k} \alpha_{d'} \Phi_n(d')=0$.
For any $d \in S_{a,b}^k$, applying both sides of the above equation to the vector $v_d$, reading off the coefficient of $w_d$, and applying Lemma~\ref{Key lemma}, we get
$\alpha_d=0.$

So, the set $\{ \Phi_n(d) \mid d\in S_{a,b}^k\}$ is linearly independent. From that it follows that $S_{a,b}^k$ is linearly independent in $\Hom_{\GVW}(a,b)^k$. It is also a spanning set for $\Hom_{\GVW}(a,b)^k$ by Lemma~\ref{Phi is graded}, so $\Phi_n$ is injective on $\Hom_{\GVW}(a,b)^k$.
\end{proof}

\begin{corollary}\label{Basis-gsVW}
For all $a,b\in \mathbb{N}_0$, the set $S_{a,b}^\bullet$ is a basis of $\Hom_{\GVW}(a,b)$.
\end{corollary}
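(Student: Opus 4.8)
The plan is to deduce this directly from the two facts already established about $\GVW$: that $S_{a,b}^k$ is a spanning set for $\Hom_{\GVW}(a,b)^k$ (Lemma~\ref{span-GVW}), and that, for $n$ large enough, the functor $\Phi_n$ sends $S_{a,b}^k$ to a linearly independent set, whence $S_{a,b}^k$ is itself linearly independent (Lemma~\ref{Phi_n injecitve}). The only additional observation needed is that the grading on $\GVW$ lets us treat one homogeneous degree at a time.

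More precisely, I would first recall that $\GVW$ is an $\mathbb{N}_0$-graded supercategory whose grading on morphisms is by the number of dots, so that $\Hom_{\GVW}(a,b)=\bigoplus_{k\ge 0}\Hom_{\GVW}(a,b)^k$. By construction $S_{a,b}^\bullet$ is the disjoint union $\bigsqcup_{k\ge 0}S_{a,b}^k$, with $S_{a,b}^k$ contained in the homogeneous component $\Hom_{\GVW}(a,b)^k$. Consequently $S_{a,b}^\bullet$ is a basis of $\Hom_{\GVW}(a,b)$ if and only if, for every $k$, the set $S_{a,b}^k$ is a basis of $\Hom_{\GVW}(a,b)^k$. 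Fixing $k$, Lemma~\ref{span-GVW} gives that $S_{a,b}^k$ spans $\Hom_{\GVW}(a,b)^k$; choosing any $n\ge\frac{a+b}{2}+k$, Lemma~\ref{Phi_n injecitve} gives that $S_{a,b}^k$ is linearly independent (in particular its elements are pairwise distinct and nonzero). Hence $S_{a,b}^k$ is a basis of $\Hom_{\GVW}(a,b)^k$, and assembling over $k$ completes the argument.

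I do not expect a genuine obstacle here: the substantive work — the diagrammatic reductions behind the spanning statement, and the construction of the vectors $v_d, w_d$ together with the Key Lemma~\ref{Key lemma} behind linear independence — has already been carried out. The one point to keep in mind is that linear independence is verified degree by degree, with the auxiliary dimension $n$ allowed to grow with $k$; this causes no difficulty precisely because $\Hom_{\GVW}(a,b)$ decomposes as the direct sum of its homogeneous pieces, so no single $n$ needs to work simultaneously for all degrees.
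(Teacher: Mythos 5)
Your proposal is correct and is exactly the argument the paper intends: the corollary is stated without proof precisely because it follows by combining the spanning statement (Lemma~\ref{span-GVW}) with the linear independence from Lemma~\ref{Phi_n injecitve} in each homogeneous degree $k$ (with $n\ge\frac{a+b}{2}+k$ chosen per degree), and then summing over $k$ via the grading $\Hom_{\GVW}(a,b)=\bigoplus_{k\ge 0}\Hom_{\GVW}(a,b)^k$. Your remark that no single $n$ need work for all degrees is the right observation and matches the paper's degree-by-degree use of $\Phi_n$.
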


\begin{lemma}
\label{lemmathm2}
For all $a,b\in \mathbb{N}_0$, the set $S_{a,b}^\bullet$ is linearly independent in $\Hom_{\sVW}(a,b).$
\end{lemma}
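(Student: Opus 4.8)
The plan is to deduce the statement for the filtered category $\sVW$ from its graded counterpart, Corollary~\ref{Basis-gsVW}, using the standard fact that a lift of a basis of the associated graded of a filtered vector space is a basis of the filtered space. Concretely, suppose $\sum_{d\in S^\bullet_{a,b}}\alpha_d\,d=0$ in $\Hom_{\sVW}(a,b)$ with not all $\alpha_d$ zero. Let $k$ be the maximal number of dots occurring among the diagrams $d$ with $\alpha_d\neq 0$; write $S^\bullet_{a,b}=\bigsqcup_l S^l_{a,b}$ according to the dot count, and split the relation as $\sum_{d\in S^k_{a,b}}\alpha_d\,d = -\sum_{l<k}\sum_{d\in S^l_{a,b}}\alpha_d\,d$. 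The right-hand side lies in $\Hom_{\sVW}(a,b)^{\le k-1}$ by definition of the filtration, so the image of $\sum_{d\in S^k_{a,b}}\alpha_d\,d$ in $\Hom_{gr(\sVW)}(a,b)^k=\Hom_{\sVW}(a,b)^{\le k}/\Hom_{\sVW}(a,b)^{\le k-1}$ is zero.

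Next I would transport this to $\GVW$. By Section~\ref{Theta}, the functor $\Theta:\GVW\to gr(\sVW)$ is a graded full functor sending the diagram $d$ (viewed in $\GVW$) to the class of $d$ in $gr(\sVW)$; in particular $\Theta$ carries $S^k_{a,b}\subset\Hom_{\GVW}(a,b)^k$ onto the images of the diagrams of $S^k_{a,b}$ in $\Hom_{gr(\sVW)}(a,b)^k$. Composing with $\Phi_n=G\circ gr\Psi_n\circ\Theta$ (Lemma~\ref{square}) for some $n\ge\frac{a+b}{2}+k$, the relation $\sum_{d\in S^k_{a,b}}\alpha_d\,[d]=0$ in $gr(\sVW)$ gives $\sum_{d\in S^k_{a,b}}\alpha_d\,\Phi_n(d)=0$ in $\Hom_{\GVect}$. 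But Lemma~\ref{Phi_n injecitve} says precisely that $\{\Phi_n(d)\mid d\in S^k_{a,b}\}$ is linearly independent, so $\alpha_d=0$ for all $d\in S^k_{a,b}$, contradicting the maximality of $k$ (it was chosen so that some $\alpha_d\neq 0$ with $d\in S^k_{a,b}$). Hence all $\alpha_d=0$ and $S^\bullet_{a,b}$ is linearly independent.

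There is no serious obstacle here: this is the routine "filtered from graded" bookkeeping argument, and every ingredient — the description of the filtration on $\sVW$ by dot count, the fullness and gradedness of $\Theta$, the commuting square of Lemma~\ref{square}, and the injectivity statement of Lemma~\ref{Phi_n injecitve} — is already available in the excerpt. The only point requiring a little care is to phrase the argument so that the maximal-$k$ reduction is clean: one must note that $S^{\le k}_{a,b}$ spans $\Hom_{\sVW}(a,b)^{\le k}$ (Proposition~\ref{oVW-span}) only implicitly, since here we are proving independence and so just need that a nontrivial relation has a well-defined top dot-degree $k$, which is immediate because the set of $d$ with $\alpha_d\neq 0$ is finite. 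Combined with Proposition~\ref{oVW-span} this then yields Theorem~\ref{Thm2}: $S^{\le k}_{a,b}$ is a spanning set and is linearly independent, hence a basis of $\Hom_{\sVW}(a,b)^{\le k}$, and letting $k\to\infty$ gives the basis $S^\bullet_{a,b}$ of $\Hom_{\sVW}(a,b)$.
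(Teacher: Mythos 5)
Your proof is correct and follows essentially the same route as the paper's: reduce to the top dot-degree part living in $\Hom_{gr(\sVW)}(a,b)^k$, transport through the commuting square of Lemma~\ref{square}, and invoke the injectivity of $\Phi_n$ from Lemma~\ref{Phi_n injecitve}. The only cosmetic difference is that the paper first deduces $\Theta$ is an isomorphism and then transports the basis, whereas you apply $G\circ gr\Psi_n$ directly to the relation; this saves one small step but uses the same ingredients.
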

\begin{proof}
Assume there is a nontrivial relation among elements of $S_{a,b}^\bullet$ in $\Hom_{\sVW}(a,b).$ As this is a filtered category, the highest order terms (of degree $k$) in this relation give a nontrivial relation among the elements of $S_{a,b}^k$ in $\Hom_{gr(\sVW)}(a,b)$. Thus, it is enough to prove that the set $S_{a,b}^k$ is linearly independent in $\Hom_{gr(\sVW)}(a,b)$ for each $k$. 

Set $n=\frac{a+b}{2}+k$ and consider the square
$$
\xymatrix{
 \Hom_{\GVW}(a,b)^k \ar@{->>}[d]_{\Theta} \ar@{^{(}->}[r]^<<<<<<{\Phi_n}
                 & \Hom_{\GVect}(M(0)\otimes V^{\otimes a}, M(0)\otimes V^{\otimes b})^k        \\
 \Hom_{gr(\sVW)}(a,b)^k \ar[r]^<<<<<{gr\Psi_n}   & \Hom_{gr(\FVect)}(M(0)\otimes V^{\otimes a}, M(0)\otimes V^{\otimes b})^k  \ar[u]_G     }$$

The map $\Phi_n$ is injective by Lemma~\ref{Phi_n injecitve}, and the diagram strictly commutes by Lemma~\ref{square}. Thus, $\Theta$ is injective. It is surjective by Section \ref{Theta}, so it is an isomorphism of superspaces. 

In particular, $\Theta$ maps the basis $S_{a,b}^k$ of $\Hom_{\GVW}(a,b)^k$ to a basis in $\Hom_{gr(\sVW)}(a,b)^k$ which by construction is $S_{a,b}^k$.
\end{proof}
\begin{corollary}$\Theta:\GVW\to gr(\sVW)$ is a graded isomorphism. \label{theta-iso}
\end{corollary}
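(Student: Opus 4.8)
The plan is to show that the graded monoidal superfunctor $\Theta\colon\GVW\to gr(\sVW)$ from Section~\ref{Theta} is an isomorphism by checking it is bijective on objects and on each graded component of each morphism space. Bijectivity on objects is immediate since both categories have object set $\NN_0$ and $\Theta$ is the identity there. Fullness, i.e.\ surjectivity on morphisms, was already recorded in Section~\ref{Theta}: the generating morphisms $s,b,b^*,y$ of $gr(\sVW)$ are hit, and $\Theta$ is monoidal, so it is surjective on all of $\Hom_{gr(\sVW)}(a,b)$, hence on each graded piece $\Hom_{gr(\sVW)}(a,b)^k$.

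It therefore remains to prove injectivity of $\Theta$ on each $\Hom_{\GVW}(a,b)^k$, and this is exactly what the argument in the proof of Lemma~\ref{lemmathm2} establishes. First I would invoke the strictly commuting square from Lemma~\ref{square}, restricted to the relevant degree-$k$ morphism spaces with the choice $n=\frac{a+b}{2}+k$ (so that the key construction of Section~\ref{key-construction} applies). In that square, $\Phi_n$ is injective on $\Hom_{\GVW}(a,b)^k$ by Lemma~\ref{Phi_n injecitve} (which rests on the Key Lemma~\ref{Key lemma} and the vectors $v_d,w_d$). Since $G\circ gr\Psi_n\circ\Theta=\Phi_n$ is injective, $\Theta$ must be injective on $\Hom_{\GVW}(a,b)^k$ as well. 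Combined with fullness, $\Theta$ is an isomorphism of superspaces in each degree, hence a graded isomorphism of supercategories, and it is monoidal by construction.

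No step here is a genuine obstacle, as all the work is done by Lemmas~\ref{Phi_n injecitve}, \ref{square}, and the spanning statement Lemma~\ref{span-GVW}: the only thing to be careful about is bookkeeping, namely that $\Theta$ is graded (clear, since it sends each generator to a generator of the same degree, and both categories carry the grading by number of dots) and that the commuting square is taken at the matching degree $k$ and matching $n\ge\frac{a+b}{2}+k$. In fact this corollary is merely the categorical repackaging of Lemma~\ref{lemmathm2}, so the proof can be stated in a couple of lines by citing that lemma's proof; I would phrase it as: the square in the proof of Lemma~\ref{lemmathm2} shows $\Theta$ is injective on $\Hom_{\GVW}(a,b)^k$ for every $k$, it is surjective by Section~\ref{Theta}, and these spaces exhaust $\Hom$, so $\Theta$ is a graded isomorphism.
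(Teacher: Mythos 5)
Your proposal is correct and follows the paper's own route exactly: the corollary is deduced from the argument in the proof of Lemma~\ref{lemmathm2}, where the strictly commuting square of Lemma~\ref{square} with $n=\frac{a+b}{2}+k$ and the injectivity of $\Phi_n$ from Lemma~\ref{Phi_n injecitve} give injectivity of $\Theta$ on each $\Hom_{\GVW}(a,b)^k$, while surjectivity is the fullness noted in Section~\ref{Theta}. Nothing is missing.
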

\begin{corollary}
For any $a, b, k$, and $n\ge  \frac{a+b}{2}+k$, the map $\Psi_n$ is injective on $\Hom_{\sVW}(a,b)^{\le k }$.
\end{corollary}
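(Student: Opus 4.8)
The plan is to deduce injectivity of the filtered functor $\Psi_n$ from the already-proved injectivity of the graded functor $\Phi_n$, using the commuting square of Lemma~\ref{square}. Write $P=M(0)\otimes V^{\otimes a}$ and $Q=M(0)\otimes V^{\otimes b}$. By Theorem~\ref{Thm2}, $S_{a,b}^{\le k}$ is a basis of $\Hom_{\sVW}(a,b)^{\le k}$, so it is enough to show that $\{\Psi_n(d)\mid d\in S_{a,b}^{\le k}\}$ is linearly independent in $\Hom_{\FVect}(P,Q)$. Suppose $\sum_{d}\alpha_d\,\Psi_n(d)=0$ with $\alpha_d\in\mathbb{C}$, not all zero, and let $m\le k$ be the largest degree for which $\alpha_d\neq 0$ for some $d\in S_{a,b}^m$.

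First I would pass to the associated graded. Since $\Psi_n$ is a filtered superfunctor (Lemma~\ref{Psi is filtered}), a diagram $d$ of degree $j$ has $\Psi_n(d)\in\Hom_{\FVect}(P,Q)^{\le j}$; hence all contributions of degree $<m$ lie in $\Hom_{\FVect}(P,Q)^{\le m-1}$, and the degree-$m$ component of the relation in $\Hom_{gr(\FVect)}(P,Q)^m$ reads $\sum_{d\in S_{a,b}^m}\alpha_d\,\big(gr\Psi_n\big)(\bar d)=0$, where $\bar d\in\Hom_{gr(\sVW)}(a,b)^m$ denotes the class of $d$. By the tautological description of $\Theta$ in Section~\ref{Theta} we have $\bar d=\Theta(d)$, so applying the functor $G$ and invoking Lemma~\ref{square} ($G\circ gr\Psi_n\circ\Theta=\Phi_n$) yields
\[
\sum_{d\in S_{a,b}^m}\alpha_d\,\Phi_n(d)=0\quad\text{in }\Hom_{\GVect}(P,Q)^m .
\]

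Now I would invoke Lemma~\ref{Phi_n injecitve}: as $n\ge \frac{a+b}{2}+k\ge \frac{a+b}{2}+m$, the functor $\Phi_n$ maps $S_{a,b}^m$ to a linearly independent set, so $\alpha_d=0$ for every $d\in S_{a,b}^m$, contradicting the choice of $m$. Therefore all $\alpha_d$ vanish and $\Psi_n$ is injective on $\Hom_{\sVW}(a,b)^{\le k}$. Equivalently, one can phrase this contrapositively: a nonzero $f\in\Hom_{\sVW}(a,b)^{\le k}$ with $\Psi_n(f)=0$ would have a nonzero top-degree class $\bar f\in\Hom_{gr(\sVW)}(a,b)^m$ satisfying $(gr\Psi_n)(\bar f)=0$, whence $\Phi_n(\Theta^{-1}\bar f)=0$ using that $\Theta$ is an isomorphism by Corollary~\ref{theta-iso}, again contradicting Lemma~\ref{Phi_n injecitve}. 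No genuine obstacle arises here: all the substance has been front-loaded into Lemmas~\ref{Key lemma} and~\ref{Phi_n injecitve}, and the only point requiring care is the degree bookkeeping --- ensuring that $\Psi_n(f)=0$ really forces the vanishing of the degree-$m$ graded piece, which is exactly where one uses that $\Psi_n$ is filtered.
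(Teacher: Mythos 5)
Your proof is correct and follows exactly the route the paper intends: the corollary is stated without an explicit argument, relying on the commuting square of Lemma~\ref{square}, the isomorphism $\Theta$ (Corollary~\ref{theta-iso}), and the injectivity of $\Phi_n$ (Lemma~\ref{Phi_n injecitve}), from which $gr\,\Psi_n$ is injective and hence $\Psi_n$ is injective on each filtered piece by the standard associated-graded argument; you have simply carried out the degree bookkeeping that the paper leaves to the reader.
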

Theorem~\ref{Thm2} now follows directly from Proposition~\ref{oVW-span} and Lemma~\ref{lemmathm2}.

\subsection{A basis theorem for $\sBr$ as a special case} \label{final-proof-1}

Theorem~\ref{Thm1} now follows immediately by realizing the supercategory $\sBr$ as the $0$-th filtration piece of the supercategory $\sVW$. 

\begin{proof}[Proof of Theorem 1]
Consider the functor $I: \sBr\to \sVW$ which is the identity on objects and interprets undotted diagrams as dotted diagrams with zero dots. For every $a$ and $b$, $I:\Hom_{\sBr}(a,b)\to \Hom_{\sVW}(a,b)$ maps the spanning set $S_{a,b}$ to the set $S_{a,b}^0$, which by Theorem~\ref{Thm2} is a basis of $\Hom_{\sVW}(a,b)^0$. Thus, the set $S_{a,b}$ is a basis of $\Hom_{\sBr}(a,b)$.
\end{proof}

\begin{remark}
The functor $\Psi_n\circ I:\sBr\to \FVect$ can be decomposed as
$\Psi_n\circ I=J_n\circ \Psi_n^{\mathbb{C}}$
where $\Psi_n^{\mathbb{C}}:\sBr\to \mathcal{V}ect$ is given on objects by $\Psi_n^{\mathbb{C}}(a)=V^{\otimes a}$ and the expected map on morphisms, %$\Psi_n'(s_i)=\sigma_i$, $\Psi_n'(b_i)=\beta_i$, $\Psi_n'(b^*_i)=\beta^*_i$, 
and $J_n:\mathcal{V}ect \to \FVect$, is given by $J_n(W)=M(0)\otimes W$. The functor $\Psi_n^{\mathbb{C}}$ appears in \cite{M}. It is shown there that when $n\ge a$, $\Psi_n^{\mathbb{C}}:\Hom_{\sBr}(a,a)\to \Hom_{\fp(n)}(V^{\otimes a}, V^{\otimes a})$ maps $S_{a,a}$ to a linearly independent set, thus proving that $S_{a,a}$ is a basis, and that $\Psi_n^{\mathbb{C}}$ is injective on $\Hom_{\sBr}(a,a)$. It is also proved that $\Psi_n^{\mathbb{C}}$ is surjective, so $\mathrm{End}_{\sBr}(a) \cong \mathrm{End}_{\pn}(V^{\otimes a})$ for $a \leq n$ (see \cite[Theorem 4.5]{M}). 
\end{remark}

\begin{remark}
Clearly $\Psi^{\C}_n$ is not injective if $n<a$ since it is not injective when restricted to the symmetric group $S_a$. The question of surjectivity of the functors $\Psi_n^M$ for different modules $M$ is interesting and so far not understood. One would need to better understand the combinatorics of decomposition numbers in $\pn$ or category $\mathcal{O}(\fp(n))$. To our knowledge, only the decomposition numbers of the finite dimensional (thick and thin) Kac modules are known, see \cite{BDEHHILNSS}. Even in these cases, a precise surjectivity statement is so far not available. Based on explicitly calculated examples, we expect a more involved behaviour than in the $\fgl(n|n)$ case, see \cite{BS4}.
\end{remark}

\section{The affine VW superalgebra $\sVW_a$ and its centre} 
\label{centre}

We fix $a\ge 2 \in \mathbb{N}$ for the whole section, and study the affine VW superalgebra $\sVW_a=\End_{\sVW}(a)$. The results from the previous section show that the algebra $\sVW_a$ is a PBW deformation of the algebra $\GVW_a$, in the sense that $\sVW_a$ is a filtered algebra, and $\gr(\sVW_a)=\GVW_a$. For $\hbar$ a parameter, the Rees construction gives the algebra $A_{\hbar}$ over $\mathbb{C}[\hbar]$, such that its specializations at $\hbar=0$ and $\hbar=1$ are precisely $A_1= \sVW_a$ and $A_0=\GVW_a$. We then use Theorem \ref{Thm2} to  describe the center of the $\mathbb{C}[\hbar]$-algebra $A_{\hbar}$, and all its specializations $A_t$ for any $t\in \mathbb{C}$; in particular we find the centre of $\sVW_a$ and $\GVW_a$. 
We refer e.g.  to \cite{BG}, \cite{HSS}, \cite{Schedler}, \cite{SW} for the general theory.

\subsection{The algebras $A_{\hbar}$}
\label{secAbar}

We first define a $\mathbb{C}[\hbar]$-algebra $A_{\hbar}$ and its specializations $A_t$ at $t\in \mathbb{C}$ directly using generators and relations. 

\begin{definition}\label{def:Ahbar}
Let $A_{\hbar}$ be the superalgebra over $\mathbb{C}[\hbar]$
with generators
	$$s_i,\; e_i,\; y_j\quad 1 \leq i \leq a-1, \; 1 \leq j\leq a$$
	where $\overline{s_i}=\overline{e_i}=\overline{y_j}=0$, subject to the relations:
	
	\begin{multicols}{2}
		\begin{enumerate}[(VW1)]
			
			\item Involutions: $s_i^2=1 \text{ for } 1\leq i <a.$
			
			\item Commutation relations:\label{comm}
			\begin{enumerate}[label=(\roman*)]
				\item $s_ie_j=e_js_i$ if $|i-j|>1$,
				\item $e_ie_j=e_je_i$ if $|i-j|>1$,
				\item $e_iy_j=y_je_i$ if $j\neq i, i+1$,
				\item $y_iy_j=y_jy_i$ for $1\leq i,j\leq a$.
			\end{enumerate}
			
			\item Affine braid relations:\label{affbraid}
			\begin{enumerate}[label=(\roman*)]
				\item $s_is_j=s_js_i$ if $|i-j|>1$,
				\item $s_is_{i+1}s_i=s_{i+1}s_is_{i+1}$ 
				
				for $1\leq i \leq a-1$,
				\item $s_iy_j=y_js_i$ if $j\neq i, i+1$.
			\end{enumerate}
			
			\item Snake relations:
			\begin{enumerate}[label=(\roman*)]
				\item $e_{i+1}e_ie_{i+1}=- e_{i+1}$,
				\item $e_ie_{i+1}e_i=- e_i$
				
				for  $1\leq i\leq a-2$.
			\end{enumerate}

			\item Tangle and untwisting relations:\label{tang}
			\begin{enumerate}[label=(\roman*)]
				\item \label{twist}$e_is_i=e_i$ and $s_ie_i=-e_i$
				
				for $1\leq i\leq a-1$,
				
				\item $s_ie_{i+1}e_i= s_{i+1}e_i$,
				\item $s_{i+1}e_ie_{i+1}=-s_ie_{i+1}$,
				\item $e_{i+1}e_is_{i+1}= e_{i+1}s_i$,
				\item $e_ie_{i+1}s_i=-e_is_{i+1}$
				
				for $1\leq i \leq a-2$.
				
			\end{enumerate}
			
			\item \label{idemp}Idempotent relations:
			
			$e_i^2=0$ for $1\leq i \leq a-1$.
			
			\item Skein relations:\label{skein}
			\begin{enumerate}[label=(\roman*)]
				\item $s_iy_i-y_{i+1}s_i=-\hbar e_i-\hbar$,
				\item $y_is_i-s_iy_{i+1}= \hbar e_i-\hbar$
				
				for $1\leq i\leq a-1.$
			\end{enumerate}
			
			\item \label{unwrap} Unwrapping relations: 
			
			$e_1y_1^ke_1=0 \text{ for }k \in \NN$.
			
			\item\label{sym} (Anti)-Symmetry relations:
			\begin{enumerate}[label=(\roman*)]
				\item $e_i(y_{i+1}-y_{i})=\hbar e_i$,
				\item $(y_{i+1}- y_{i})e_i= -\hbar e_i$
				
				for $1\leq i\leq a-1$.	
			\end{enumerate}			
			
		\end{enumerate}
	\end{multicols}
	
%\item 
For any $t\in \mathbb{C}$, let $A_{t}$ be the quotient of $A_{\hbar}$ by the ideal generated by $\hbar-t$. 
	%\end{enumerate}
\end{definition}

\begin{remark}
	The above set of relations is not minimal. For instance, relations \ref{idemp} and \ref{unwrap} can be deduced from \ref{tang}\ref{twist} and \ref{sym}.
\end{remark} 
	
As a $\mathbb{C}[\hbar]$-algebra, $A_{\hbar}$ is filtered by $\deg(y_i)=1$, $\deg(s_i)=\deg(e_i)=0.$
Considered as a $\mathbb{C}$-algebra, $A_{\hbar}$ can be given a grading by setting $\deg(y_i)=\deg(\hbar)=1$, $\deg(s_i)=\deg(e_i)=0.$ Interpreting  $s_i$ , $e_i=b_i*b_i$, $y_i$ as diagrams as in Section~\ref{sectionone}, the elements of  $A_{\hbar}$ and $A_t$ can be written as linear combinations of dotted diagrams with $a$ bottom points and $a$ top points.

\begin{lemma}
The set $S_{a,a}^{\bullet}$ is a spanning set for $A_{\hbar}$ and $A_t$ for any $t$. 
\end{lemma}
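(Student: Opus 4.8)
The plan is to deduce the spanning statement from the one already proved for $\sVW$, namely Proposition~\ref{oVW-span}. As recorded just above, interpreting $s_i$, $e_i=b_i^*b_i$ and $y_i$ as dotted diagrams makes every product of generators of $A_\hbar$ into a dotted diagram with $a$ points at the bottom and $a$ at the top, so $A_\hbar$ is spanned over $\mathbb{C}[\hbar]$ by such dotted diagrams; since $A_t=A_\hbar/(\hbar-t)$, it is enough to show that an arbitrary dotted diagram in $A_\hbar$ lies in the $\mathbb{C}[\hbar]$-span of $S_{a,a}^\bullet$, and then reduce modulo $\hbar-t$, using $\mathbb{C}[\hbar]/(\hbar-t)=\mathbb{C}$. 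The key observation is that relations (VW1)--(VW9) are obtained from the diagrammatic relations of $\sVW$ by inserting $\hbar$ so as to make them homogeneous for the grading $\deg(y_i)=\deg(\hbar)=1$, $\deg(s_i)=\deg(e_i)=0$; in particular (VW7) and (VW9) are the $\hbar$-deformations of the dot-sliding relations of Lemma~\ref{lem:dot-slide}, in which every term of strictly smaller dot-degree than the left-hand side carries a positive power of $\hbar$.

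First I would assemble in $A_\hbar$ the analogues of the auxiliary results of Section~\ref{Sect-span}: the untwisting and snake relations for caps and the vanishing of isolated undotted loops (the argument of Lemma~\ref{lem:sBRrel}, which used only the braid, snake and untwisting relations, now available as (VW1)--(VW6), with no $\hbar$ involved), the generalized dot-sliding relations (from (VW7), (VW9), with $\hbar$ on the lower-order terms), and the $\hbar$-analogue of Lemma~\ref{lem:dot-loop}, namely $e_1 y_1^k y_2^\ell e_1=0$, which follows at once from (VW8) together with (VW7), (VW9). Then I would run the induction on the number $k$ of dots exactly as in the proof of Proposition~\ref{oVW-span}. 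For the base case $k=0$ one uses only the $\hbar$-free relations among $s_i$ and $e_i$; these are precisely the defining relations of the Brauer superalgebra $\Hom_{\sBr}(a,a)$, so the argument of Proposition~\ref{oBr-span} applies verbatim and every undotted word is either $0$ or $\pm d_c\in S_{a,a}=S_{a,a}^0$. For $k\ge1$ one uses (VW7), (VW9) to push dots into normal position: out of the interior of loops and as far left as possible along a loop, then to the bottom of through strings and to the left of cups and caps. Each such move rewrites a diagram with $k$ dots as one diagram with $k$ dots in a better position plus $\hbar$ times diagrams with fewer than $k$ dots, to which the inductive hypothesis applies, and the remaining diagram is either killed by the loop lemma or is of the form $\prod_i y_i^{a_i}\,d''\,\prod_j y_j^{b_j}$ with $d''$ an undotted Brauer diagram, which the base case rewrites as a $\pm$-multiple of an element of $S_{a,a}^\bullet$.

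The step I expect to require the most care is the base case, i.e.\ checking that the fragment of the presentation of $A_\hbar$ not involving $\hbar$ or the $y_j$ --- relations (VW1), (VW2)(i)--(ii), (VW3)(i)--(ii), (VW4), (VW5)(i)--(v), (VW6) --- really is a presentation of $\Hom_{\sBr}(a,a)$ (equivalently, that the passage from the $(s,b,b^*)$-presentation of $\sBr$ used in Section~\ref{sectionone} to the $(s,e)$-presentation used here neither adds nor omits relations), together with the bookkeeping that the dot-sliding and loop-resolving manipulations only ever create lower-order terms with a genuine factor of $\hbar$, so that the homogenization is internally consistent. Once these points are settled, the argument is word-for-word that of Proposition~\ref{oVW-span}, with $\hbar$ carried along, and the statement for $A_t$ is immediate.
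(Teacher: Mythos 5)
Your argument is correct and is essentially the paper's own proof written out in full: the paper simply observes that the relations in $A_\hbar$ are the $\hbar$-homogenized analogues of (R1)--(R4), so every normal dotted diagram is well-defined as an element of $A_\hbar$, and then instructs the reader to repeat the spanning argument of Proposition~\ref{oVW-span}, which is exactly what you do. One small correction to the framing of your final paragraph: for the \emph{spanning} direction you do not need to know that (VW1)--(VW6) form a complete presentation of $\Hom_{\sBr}(a,a)$ (completeness is a linear-independence concern, settled by the basis theorem afterwards); you only need the one-sided check that the rewriting moves of Lemmas~\ref{lem:sBRrel}, \ref{lem:dot-slide}, \ref{lem:dot-loop} and Proposition~\ref{oBr-span} are consequences of (VW1)--(VW9), and that each lower-order term produced really carries a factor of $\hbar$ --- which is the bookkeeping you already flag.
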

\begin{proof}
Using the braid, snake and untwisting relations (analogous to  (R1)-(R4)) in $A_{\hbar}$ or $A_t$ we see that every element of $S_{a,a}^{\bullet}$ gives rise to a well-defined element of $A_{\hbar}$, respectively $A_t$. Then we can repeat the proof that $S_{a,a}^{\bullet}$ spans $\sVW_a$ for these algebras. 
\end{proof}

 \begin{proposition}
 \begin{enumerate}[(a)]
 \item The assignments  $\varphi_1(y_i)=y_i$, $\varphi_1(s_i)=s_i$ and $\varphi_1(e_i)=b_i^*b_i$  define an isomorphism of algebras $\varphi_1:A_1 \to \sVW_a$. 
  \item  The assignments  $\varphi_0(y_i)=y_i$, $\varphi_0(s_i)=s_i$ and $\varphi_0(e_i)=b_i^*b_i$  define an isomorphism of algebras $\varphi_0:A_0 \to \GVW_a$. 
  \item For any $t\ne 0$, the assignments  $\psi_t(y_i)=ty_i$, $\psi_t(s_i)=s_i$ and $\psi_t(e_i)=e_i$  define an isomorphism of algebras $\psi_t:A_t \to A_1$.
   \item The set $S^\bullet_{a,a}$ is a $\mathbb{C}$-basis of $A_t$ for any $t$, and a $\mathbb{C}[\hbar]$-basis of $A_{\hbar}$.  
 \end{enumerate}
 \end{proposition}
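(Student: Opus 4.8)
The plan is to establish (a)–(c) first as statements about presentations, and then deduce (d) by a dimension/rank count that uses Theorem~\ref{Thm2}. For part (a): the relations (VW1)–(SYM) of $A_1$ (i.e.\ $A_\hbar$ with $\hbar=1$) are, after the substitution $e_i=b_i^*b_i$, exactly the relations that hold in $\sVW_a=\End_{\sVW}(a)$. Indeed, the braid, snake, and untwisting relations correspond to (R1)–(R3), the skein relations correspond to (R4) (and its consequences in Lemma~\ref{lem:dot-slide}), while (SYM), (UNWRAP), (IDEMP) etc.\ are consequences established in Lemmas~\ref{lem:sBRrel}, \ref{lem:dot-slide}, \ref{lem:dot-loop}. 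So $\varphi_1$ is a well-defined algebra homomorphism. It is surjective because $s_i,b_i^*b_i,y_i$ generate $\sVW_a$ as an algebra (the $b_i,b_i^*$ only enter $\End_{\sVW}(a)$ through products like $b_i^*b_i$, together with the through-string diagrams, which are generated by $s_i$ and $y_i$; more precisely, every normal diagram in $S_{a,a}^\bullet$ is visibly a product of $s_i$'s, $y_i$'s and $e_i$'s, as can be read off from its cup–cap–through-string structure). For (b) the argument is identical with $\sVW_a$ replaced by $\GVW_a$ and (R4) replaced by (grR-4): the only relations of $A_0$ that differ from those of $A_1$ are the skein and (anti)symmetry relations with $\hbar=0$, which are precisely the homogeneous degree-$0$ truncations matching (grR-4).

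For part (c): the rescaling $\psi_t(y_i)=ty_i$, $\psi_t(s_i)=s_i$, $\psi_t(e_i)=e_i$ is checked relation-by-relation; the only relations that are not homogeneous of a single degree in the $y_i$'s are the skein relations (SKEIN) and the (anti)symmetry relations (SYM), and in each of these every term is either degree $1$ in $y$ (hence scaled by $t$) or is the explicit $\hbar$-term (which carries the factor $t$ because $\hbar$ specializes to $t$ in $A_t$) — so both sides scale by $t$ and the relation is preserved. Thus $\psi_t$ is a well-defined homomorphism $A_t\to A_1$; its inverse is $y_i\mapsto t^{-1}y_i$, $s_i\mapsto s_i$, $e_i\mapsto e_i$, so it is an isomorphism. (Here we use $t\neq 0$.)

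For part (d): by the Lemma immediately preceding the Proposition, $S_{a,a}^\bullet$ spans $A_\hbar$ over $\mathbb{C}[\hbar]$ and $A_t$ over $\mathbb{C}$ for every $t$. It remains to prove linear independence. For $t\neq 0$ this follows from (a) and (c): $A_t\cong A_1\cong \sVW_a$, and $S_{a,a}^\bullet$ is a basis of $\sVW_a$ by Theorem~\ref{Thm2} (note that the isomorphisms $\varphi_1$ and $\psi_t$ send the spanning set $S^\bullet_{a,a}$ of $A_t$ to the spanning set $S^\bullet_{a,a}$ of $\sVW_a$, up to the scaling of dots which only rescales each basis vector by a nonzero constant). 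For $t=0$, by (b) we have $A_0\cong \GVW_a$, and $S_{a,a}^\bullet$ is a basis of $\GVW_a$ by Corollary~\ref{Basis-gsVW}. Finally, for the $\mathbb{C}[\hbar]$-algebra $A_\hbar$: the filtered degree-$\le k$ part has the spanning set $S_{a,a}^{\le k}$ (with polynomial coefficients of bounded degree), and since $A_\hbar$ is free as a $\mathbb{C}[\hbar]$-module — each graded piece in the grading $\deg(y_i)=\deg(\hbar)=1$ has $A_1$ (equivalently $\GVW_a$) as a specialization at $\hbar=1$, and we have just seen $S^\bullet_{a,a}$ is a basis there — a $\mathbb{C}[\hbar]$-linear dependence among elements of $S_{a,a}^\bullet$ would, upon specializing $\hbar\mapsto 1$, contradict their independence in $A_1$. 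Hence $S^\bullet_{a,a}$ is a $\mathbb{C}[\hbar]$-basis of $A_\hbar$.

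The main obstacle is the bookkeeping in (a): one must verify carefully that no relation of $\sVW_a$ beyond those listed in Definition~\ref{def:Ahbar} is needed — i.e.\ that the tensor-ideal relations (R1)–(R4) of the \emph{category} $\sVW$, restricted to the endomorphism algebra of the object $a$ and rewritten in terms of $s_i,e_i=b_i^*b_i,y_i$, are all consequences of (VW1)–(SYM). This is where Lemmas~\ref{lem:sBRrel}, \ref{lem:dot-slide}, \ref{lem:dot-on-cup}, \ref{AMR2.3}, \ref{lem:dot-loop} do the work; the subtle point is that relations involving a single $b_i$ or $b_i^*$ (not paired into an $e_i$) never occur inside $\End_{\sVW}(a)$, so working with $e_i$ alone loses no information. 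Once this is granted, (b)–(d) are essentially formal consequences of Theorem~\ref{Thm2} and Corollary~\ref{Basis-gsVW}.
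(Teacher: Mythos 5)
Your verification that the relations of $A_1$ hold in $\sVW_a$ (well-definedness of $\varphi_1$), your surjectivity argument via the cup--cap--through-string structure, and your treatment of (c) all match the paper. The genuine gap is injectivity of $\varphi_1$ (and likewise $\varphi_0$), which you never actually prove: you propose to show that the categorical relations (R1)--(R4), ``restricted to $\End_{\sVW}(a)$ and rewritten in terms of $s_i,e_i,y_i$,'' are consequences of the relations in Definition~\ref{def:Ahbar}, and you assert that the quoted lemmas do this work. They do not: Lemmas~\ref{lem:sBRrel}, \ref{lem:dot-slide}, \ref{lem:dot-loop} establish the opposite direction, namely that certain relations hold in $\sVW$, which is exactly what well-definedness needed. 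Since $\sVW_a$ is the endomorphism algebra of one object in a monoidal supercategory given by generators and relations, it carries no a priori presentation by $s_i,e_i,y_i$; relations between endomorphisms of $a$ may be imposed through morphisms factoring via other objects, and your remark that ``relations involving a single $b_i$ or $b_i^*$ never occur inside $\End_{\sVW}(a)$'' is precisely the hard statement, not something that can be granted. The paper sidesteps this entirely: by the lemma immediately preceding the proposition, $S^{\bullet}_{a,a}$ spans $A_1$ (a proof using only the relations of $A_1$), and $\varphi_1$ carries this spanning set to the set $S^{\bullet}_{a,a}$, which is a basis of $\sVW_a$ by Theorem~\ref{Thm2}; a surjective homomorphism taking a spanning set to a basis is an isomorphism. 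You already invoke that spanning lemma in (d), so the fix is at hand, but as written your (d) for $t\neq 0$ rests on the unproved isomorphism in (a), so the gap propagates.

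A smaller point in (d): for $A_{\hbar}$ you specialize a putative $\mathbb{C}[\hbar]$-linear dependence at $\hbar=1$; this proves nothing if every coefficient happens to vanish at $1$. The paper instead evaluates at some $t$ where not all coefficients vanish (such $t$ exists because the coefficients are polynomials, not all zero), using that $S^{\bullet}_{a,a}$ is a basis of $A_t$ for every $t$, including $t=0$. Your argument is repaired by the same choice of specialization point.
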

  \begin{proof}
 \begin{enumerate}[(a)]
\item One checks directly that $\varphi_1$ can be extended to an algebra homomorphism by checking that all relations from Definition \ref{def:Ahbar} hold in $\sVW_a$. To see surjectivity, consider an arbitrary element $b$ of $\sVW_a$, and let us construct its preimage. Assume without loss of generality that $b=p(y_1,\ldots, y_a) \, d \, q(y_1,\ldots, y_a)$ for some monomials $p,q$, and some undotted diagram $d$. If $d$ has $c$ cups, then it also has $c$ caps, and can be written in the form $d=\sigma_1 (b_1^*b_1)(b_2^*b_2)\ldots (b_c^*b_c)\sigma_2$ for some permutations $\sigma_1, \sigma_2$. Thus, $b=p\sigma_1 (b_1^*b_1)(b_2^*b_2)\ldots (b_c^*b_c)\sigma_2  q=\varphi_1(p \sigma_1 e_1e_2\ldots e_c\sigma_2  q)$. So, $\varphi_1$ is a surjective homomorphism mapping a spanning set to a basis, so it is an isomorphism. 

\item Analogous to (a). 
\item A direct check of the relations shows that this assignment extends to an algebra homomorphism for any $t\in \mathbb{C}$. For $t\ne 0$, the inverse is given by $\psi_t^{-1}(y_i)=\frac{1}{t}y_i$, $\psi_t^{-1}(s_i)=s_i$ and $\psi_t^{-1}(e_i)=e_i$.  
\item 
For any $t\ne 0$, $S_{a,a}^\bullet$ is a basis of $\sVW_a$ by Theorem~\ref{Thm2}, so by (a) and (c) above it is also a basis of $A_t\cong A_1\cong \sVW_a$. For $t=0$, $S_{a,a}^\bullet$ is a basis of $\GVW_a\cong A_0$ by Corollary~\ref{Basis-gsVW}. Assume there is a relation among the elements of $S_{a,a}^{\bullet}$ in  $A_{\hbar}$, with coefficients in $\mathbb{C}[\hbar]$. Evaluating at some $t\in \mathbb{C}$ for which not all coefficients vanish, we get a relation in $A_t$, which is impossible. So, $S_{a,a}^{\bullet}$ is also a basis of $A_{\hbar}$. \qedhere
 \end{enumerate} 
\end{proof}

 \subsection{The Rees construction}
 Let $B=\bigcup_{k\ge 0}B^{\le k}$ be a filtered  $\mathbb{C}$-algebra. The \emph{Rees algebra} of $B$ is the $ \mathbb{C}[\hbar]$-algebra $\op{Rees}(B)$, given as a $\mathbb{C}$-vector space by $\op{Rees}(B)=\bigoplus_{k\geq 0}B^{\le k}\hbar^k$, with multiplication and the $\hbar$-action both given by $(a\hbar^i)(b\hbar^j)=(ab)\hbar^{i+j}$ for $a\in B^{\le i}$, $b\in B^{\le j}$, and $ab\in B^{\le i+j}$ the product in $B$.  It is graded as a $\mathbb{C}$-algebra by the powers of $\hbar$.

\begin{lemma}\label{lem:Rees-basis}
Let $\bigcup_{i\ge 0} S_i$ be a basis of $B$ compatible with the filtration, in the sense that the $S_i$'s are pairwise disjoint, and $\bigcup_{i= 0}^k S_i$ is a basis of $B^{\le k}$. Then $\bigcup_{i\ge 0} S_i\hbar^i$ is a $\mathbb{C}[\hbar]$-basis of $\op{Rees}(B)$. 
\end{lemma}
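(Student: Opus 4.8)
The statement to prove is Lemma~\ref{lem:Rees-basis}: if $\bigcup_{i\ge 0}S_i$ is a basis of the filtered algebra $B$ compatible with the filtration, then $\bigcup_{i\ge 0}S_i\hbar^i$ is a $\mathbb{C}[\hbar]$-basis of $\op{Rees}(B)$. The plan is to verify the two defining properties of a basis separately: spanning and linear independence over $\mathbb{C}[\hbar]$. Both will reduce, degree by degree in $\hbar$, to the corresponding statements about the filtered pieces of $B$, using the crucial hypothesis that $\bigcup_{i=0}^k S_i$ is a basis of $B^{\le k}$.

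\textbf{Spanning.} First I would recall that as a $\mathbb{C}$-vector space $\op{Rees}(B)=\bigoplus_{k\ge 0}B^{\le k}\hbar^k$, and that a $\mathbb{C}[\hbar]$-module is generated by a set $T$ iff every homogeneous component (in the $\hbar$-grading) lies in $\mathbb{C}[\hbar]\cdot T$. So it suffices to show that an arbitrary element $b\hbar^k$ with $b\in B^{\le k}$ lies in the $\mathbb{C}[\hbar]$-span of $\bigcup_{i\ge 0}S_i\hbar^i$. Since $\bigcup_{i=0}^k S_i$ is a $\mathbb{C}$-basis of $B^{\le k}$, we may write $b=\sum_{i=0}^k\sum_{s\in S_i}\lambda_{s}\,s$ with $\lambda_s\in\mathbb{C}$. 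Then
\[
b\hbar^k=\sum_{i=0}^k\sum_{s\in S_i}\lambda_s\,s\hbar^k=\sum_{i=0}^k\sum_{s\in S_i}\lambda_s\,\hbar^{k-i}\,(s\hbar^i),
\]
which exhibits $b\hbar^k$ as a $\mathbb{C}[\hbar]$-linear combination of elements $s\hbar^i$ with $s\in S_i$, as desired. (Here one uses that the $\hbar$-action is $(s\hbar^i)\hbar^{k-i}=s\hbar^k$, i.e. that multiplication by $\hbar$ in $\op{Rees}(B)$ is just the inclusion $B^{\le i}\hookrightarrow B^{\le i+1}$ followed by a degree shift.)

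\textbf{Linear independence.} Suppose $\sum_j f_j(\hbar)\,t_j=0$ in $\op{Rees}(B)$, where the $t_j=s_j\hbar^{i_j}$ are distinct elements of $\bigcup_{i\ge0}S_i\hbar^i$ (so $s_j\in S_{i_j}$) and $f_j\in\mathbb{C}[\hbar]$. Both the relation and each generator are $\hbar$-homogeneous after expanding $f_j$ into monomials, so I would pass to a fixed $\hbar$-degree $k$: collecting the degree-$k$ part gives $\sum_j c_{j,k}\,\hbar^{k-i_j}(s_j\hbar^{i_j})=0$ where $c_{j,k}$ is the coefficient of $\hbar^{k-i_j}$ in $f_j$ (and the sum is over those $j$ with $i_j\le k$). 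Under the $\mathbb{C}$-linear identification of the degree-$k$ component of $\op{Rees}(B)$ with $B^{\le k}$, this reads $\sum_j c_{j,k}\,s_j=0$ in $B^{\le k}$. Since $\bigcup_{i=0}^k S_i$ is a $\mathbb{C}$-basis of $B^{\le k}$ and the $s_j$ occurring (with $i_j\le k$) are distinct elements of this basis, every $c_{j,k}=0$. As $k$ was arbitrary, all coefficients of all $f_j$ vanish, so $f_j=0$ for every $j$. This establishes $\mathbb{C}[\hbar]$-linear independence and completes the proof. The only mild subtlety — and hence the step to state carefully — is the bookkeeping identifying the $\hbar$-degree-$k$ component of $\op{Rees}(B)$ with $B^{\le k}$ and tracking how the coefficients $f_j(\hbar)$ distribute across degrees; once that is set up, both halves are immediate from the compatibility hypothesis.
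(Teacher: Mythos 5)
Your proof is correct and rests on the same idea as the paper's: the graded decomposition $\op{Rees}(B)=\bigoplus_{k\ge 0}B^{\le k}\hbar^k$ together with the hypothesis that $\bigcup_{i=0}^kS_i$ is a $\mathbb{C}$-basis of $B^{\le k}$. The paper packages this as a single re-indexing of the resulting $\mathbb{C}$-basis $\bigcup_{k\ge 0}\bigcup_{i=0}^kS_i\hbar^k$, whereas you verify spanning and $\mathbb{C}[\hbar]$-linear independence separately, degree by degree; this is a more explicit but equivalent presentation.
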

\begin{proof}
The set $\bigcup_{i= 0}^k S_i$ is a $\mathbb{C}$-basis of $B^{\le k}$, so  
$\bigcup_{i= 0}^k S_i\hbar^k$ is a $\mathbb{C}$-basis of $B^{\le k}\hbar^k$, and then $\bigcup_{k\ge 0} \bigcup_{i= 0}^k S_i\hbar^k$ is a $\mathbb{C}$-basis of $\op{Rees}(B)$. On the other hand,  $\bigcup_{k\ge 0} \bigcup_{i= 0}^k S_i\hbar^k=\bigcup_{i\ge 0} \bigcup_{k\ge i} S_i\hbar^k=\bigcup_{i\ge 0} \bigcup_{j\ge 0} S_i\hbar^{i+j}=\bigcup_{j\ge 0} \hbar^j (\bigcup_{i\ge 0} S_i\hbar^{i})$. Thus, the set $\bigcup_{i\ge 0}S_i\hbar^i$ is a $\mathbb{C}[\hbar]$-basis of $\op{Rees}(B)$. 
 \end{proof}

For any algebra $B$, let $Z(B)$ denote the centre of $B$. 

\begin{lemma}
\label{ZRees}
$Z(\op{Rees}(B))=\op{Rees}(Z(B))$.
\end{lemma}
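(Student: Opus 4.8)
The plan is to compare the two $\mathbb{C}[\hbar]$-submodules of $\op{Rees}(B)=\bigoplus_{k\geq 0}B^{\le k}\hbar^k$ directly, working homogeneously with respect to the $\hbar$-grading. First I would record that $Z(B)$, as a subalgebra of $B$, inherits the filtration $Z(B)^{\le k}:=Z(B)\cap B^{\le k}$, so that inside $\op{Rees}(B)$ the subspace $\op{Rees}(Z(B))=\bigoplus_{k\geq 0}(Z(B)\cap B^{\le k})\hbar^k$ is precisely the $\mathbb{C}$-span of the elements $z\hbar^k$ with $z\in Z(B)\cap B^{\le k}$.

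For the inclusion $\op{Rees}(Z(B))\subseteq Z(\op{Rees}(B))$ I would argue directly: if $z\in Z(B)\cap B^{\le k}$ and $b\hbar^j$ (with $b\in B^{\le j}$) is an arbitrary homogeneous element of $\op{Rees}(B)$, then by the definition of the multiplication $(z\hbar^k)(b\hbar^j)=(zb)\hbar^{k+j}=(bz)\hbar^{k+j}=(b\hbar^j)(z\hbar^k)$, using $zb=bz$ in $B$. Since homogeneous elements span $\op{Rees}(B)$ over $\mathbb{C}$, $z\hbar^k$ is central, hence so is every $\mathbb{C}[\hbar]$-combination of such elements.

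For the reverse inclusion I would use that $\op{Rees}(B)$ is $\mathbb{N}_0$-graded by powers of $\hbar$, with $\hbar$ central. In any $\mathbb{N}_0$-graded algebra the centre is a graded subspace: comparing the degree-$(l+k)$ components of $ac=ca$, for $a$ homogeneous of $\hbar$-degree $l$ and $c=\sum_k c_k$ central, yields $ac_k=c_ka$ for each $k$, and since homogeneous elements span, each $c_k$ is central. Thus it is enough to take an $\hbar$-homogeneous central element $z\hbar^k\in Z(\op{Rees}(B))$, with $z\in B^{\le k}$, and show $z\in Z(B)$. For any $b\in B$, chosen in some $B^{\le j}$, centrality of $z\hbar^k$ against $b\hbar^j$ gives $(zb)\hbar^{k+j}=(z\hbar^k)(b\hbar^j)=(b\hbar^j)(z\hbar^k)=(bz)\hbar^{k+j}$ in the degree-$(k+j)$ component $B^{\le k+j}\hbar^{k+j}$; since $a\mapsto a\hbar^{k+j}$ is a bijection of $B^{\le k+j}$ onto that component, we deduce $zb=bz$. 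As $b$ was arbitrary, $z\in Z(B)\cap B^{\le k}=Z(B)^{\le k}$, so $z\hbar^k\in\op{Rees}(Z(B))$. Combining the two inclusions proves the statement.

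I do not expect a genuine obstacle here; the only step meriting care is the general fact that the centre of an $\mathbb{N}_0$-graded algebra is a graded subspace, which is what permits the reduction to $\hbar$-homogeneous central elements. (If $B$ is viewed as a superalgebra and $Z(-)$ as the supercentre, the identical argument applies, $\hbar$ being even.)
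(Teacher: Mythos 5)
Your proof is correct and follows the same route as the paper's: the easy inclusion by the multiplication rule, and the reverse inclusion by reducing to $\hbar$-homogeneous central elements and using that $a\mapsto a\hbar^{k+j}$ is a bijection onto the degree-$(k+j)$ piece. You merely spell out the ``WLOG homogeneous'' step (centre of a graded algebra is graded), which the paper leaves implicit.
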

\begin{proof}
The centre of $B$ inherits the filtration of $B$, and $\op{Rees}(Z(B))$ embeds naturally into $\op{Rees}(B)$, with the image contained in $Z(\op{Rees}(B))$. To see the other inclusion, assume $c$ is central in $\op{Rees}(B)$.  Without loss of generality $c$ is of homogeneous graded degree $i$, so $c=b \hbar^i$ for some $b \in B^{\le i}$. This shows that $b$ is a central in $B$, proving the claim. 
\end{proof}

\begin{lemma} \label{algiso}
There is an isomorphism of $\mathbb{C}[{\hbar}]$-algebras
$\op{Rees}(A_1)\cong A_{\hbar}$.
\end{lemma}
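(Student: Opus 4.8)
The plan is to write down the obvious candidate morphism and then identify it as an isomorphism by a basis comparison. Concretely, I would define a $\mathbb{C}[\hbar]$-algebra homomorphism $\Phi\colon A_{\hbar}\to \op{Rees}(A_1)$ on generators by
\[
\Phi(s_i)=s_i,\qquad \Phi(e_i)=e_i,\qquad \Phi(y_j)=\hbar\,y_j,\qquad \Phi(\hbar)=\hbar,
\]
where on the right $s_i,e_i\in A_1^{\le 0}$ sit in degree $0$ of $\op{Rees}(A_1)=\bigoplus_k A_1^{\le k}\hbar^k$, while $\hbar\,y_j=y_j\hbar^1\in A_1^{\le 1}\hbar$ sits in degree $1$. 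The first task is to check that $\Phi$ respects the defining relations (VW1)--(VW9) of Definition~\ref{def:Ahbar}. All of these are homogeneous for the $\hbar$-weight grading ($\deg y_j=\deg\hbar=1$, $\deg s_i=\deg e_i=0$) described after Definition~\ref{def:Ahbar}, and $\Phi$ preserves that degree, so it suffices to verify the relations. For the relations not involving $\hbar$ one substitutes and cancels a common power of $\hbar$ (legitimate since $\hbar$ acts injectively on $\op{Rees}(A_1)$, which is therefore $\hbar$-torsion free), reducing each to the corresponding relation, which holds in $A_1=\sVW_a$. For the skein relations (VW7) and the (anti)symmetry relations (VW9) one obtains, after substitution, exactly $\hbar$ times a relation valid in $\sVW_a$ by Lemma~\ref{lem:dot-slide}; relation (VW8) becomes $\hbar^k$ times the vanishing statement of Lemma~\ref{lem:dot-loop}. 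So $\Phi$ is a well-defined $\mathbb{C}[\hbar]$-algebra homomorphism, and in fact a graded one.

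Next I would evaluate $\Phi$ on the basis. By the Proposition in Section~\ref{secAbar} (part~(d)), $S^{\bullet}_{a,a}$ is a $\mathbb{C}[\hbar]$-basis of $A_{\hbar}$, and each $d\in S^{k}_{a,a}$ is homogeneous of $\hbar$-weight $k$ in $A_{\hbar}$. Composing $\Phi$ with the specialisation $\op{Rees}(A_1)\to A_1$, $\hbar\mapsto 1$ (i.e.\ $b\hbar^m\mapsto b$), yields the canonical quotient $A_{\hbar}\to A_{\hbar}/(\hbar-1)=A_1$, which sends a normal dotted diagram to the same normal dotted diagram in $\sVW_a$. Since $\Phi(d)$ is homogeneous of degree $k$ in $\op{Rees}(A_1)$, it is of the form $b\,\hbar^k$ with $b\in A_1^{\le k}$ unique, and specialising shows $b=d$; hence $\Phi(d)=d\,\hbar^k$. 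On the other hand, by Theorem~\ref{Thm2} the family $\{S^{k}_{a,a}\}_{k\ge 0}$ is a filtration-compatible basis of $A_1$, so Lemma~\ref{lem:Rees-basis} shows that $\bigcup_{k\ge 0}S^{k}_{a,a}\hbar^k$ is a $\mathbb{C}[\hbar]$-basis of $\op{Rees}(A_1)$. Thus $\Phi$ carries the $\mathbb{C}[\hbar]$-basis $S^{\bullet}_{a,a}$ of $A_{\hbar}$ bijectively onto a $\mathbb{C}[\hbar]$-basis of $\op{Rees}(A_1)$, so $\Phi$ is a $\mathbb{C}[\hbar]$-module isomorphism; being an algebra map, it is the desired isomorphism of $\mathbb{C}[\hbar]$-algebras.

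The only genuinely delicate point — and the step I expect to be the main obstacle — is the claim that each $d\in S^{k}_{a,a}$, viewed inside $A_{\hbar}$, is homogeneous of $\hbar$-weight exactly $k$, equivalently that $\Phi$ is grading-compatible on the nose; a priori relation~(R4) mixes diagrams with different dot numbers. I would settle this using that $A_{\hbar}$ is honestly graded by $\hbar$-weight (all defining relations are weight-homogeneous, as noted above), together with the observation that a normal dotted diagram with $k$ dots can be written as a single monomial in $s_i,e_i,y_j$ with exactly $k$ occurrences of $y$-generators — realising a dotted cup, resp.\ cap, as $y_ie_i$, resp.\ $e_iy_i$, and the remaining dots as $y$-factors on through strings — so no (R4)-type rewriting is needed to exhibit $d$ as a weight-$k$ element. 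Once this is in place, everything else is the routine relation check and the basis count described above.
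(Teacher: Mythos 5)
Your proposal is correct and follows essentially the same route as the paper: define $\Phi$ on generators by $s_i\mapsto s_i$, $e_i\mapsto e_i$, $y_j\mapsto\hbar y_j$, verify the defining relations, and conclude that $\Phi$ is an isomorphism because it carries the $\mathbb{C}[\hbar]$-basis $S_{a,a}^\bullet$ of $A_{\hbar}$ onto the $\mathbb{C}[\hbar]$-basis $\bigcup_k S^k_{a,a}\hbar^k$ of $\op{Rees}(A_1)$ furnished by Lemma~\ref{lem:Rees-basis}. Your extra paragraph (observing that a normal dotted diagram with $k$ dots is a single monomial in $s_i,e_i,y_j$ with exactly $k$ occurrences of $y$, so $\Phi(d)=d\hbar^k$ without any (R4)-rewriting) is a correct and useful elaboration of a point the paper leaves tacit when it simply says the map ``maps the basis $S_{a,a}^\bullet$ to the basis $S_{a,a}^\bullet$''.
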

\begin{proof}
The map $A_{\hbar}\to \op{Rees}(A_1)$ is given on generators by $y_i\mapsto \hbar y_i$, $s_i \mapsto s_i$, $e_i \mapsto e_i$. It is verified to be a morphism of algebras by directly comparing relations, and it is an isomorphism as it maps the basis $S_{a,a}^\bullet$ to the basis $S_{a,a}^\bullet$. 
\end{proof}

\subsection{The centre is a subalgebra of the symmetric polynomials}

We now start computing the centre of $A_{\hbar}$, and show that $Z(A_{\hbar})\subseteq \mathbb{C}[\hbar][y_1,\ldots , y_a]^{S_a}$.

\begin{lemma}\label{centre-poly}
For $f \in A_{\hbar}$, the following are equivalent: 
\begin{enumerate}[(a)] 
\item $fy_i=y_if$ for all $i\in [a]$;
\item $f\in \mathbb{C}[\hbar][y_1,\ldots , y_a]$.
\end{enumerate}
\end{lemma}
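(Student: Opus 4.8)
The plan is to prove the nontrivial direction (a)$\Rightarrow$(b); (b)$\Rightarrow$(a) is immediate from (VW2)(iv). I would first establish the statement in the associated graded setting, i.e.\ for $\GVW_a\cong A_0$, where a dot slides through crossings and along cups and caps \emph{without any lower-order corrections}, and then transfer it to $\sVW_a=A_1$, to $A_t$ for $t\neq 0$, and to $A_\hbar$.

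\emph{The case $\GVW_a$.} Since the defining relations (R1)--(R3) and (grR-4) of $\GVW$ preserve the underlying connector of a diagram, the basis theorem (Corollary~\ref{Basis-gsVW}) gives a vector space decomposition $\GVW_a=\bigoplus_{c\in\Conn(a,a)}M_c$, where $M_c$ is spanned by the normal dotted diagrams with connector $c$. The first key point: because in $\GVW$ a dot moves freely through crossings and along cups and caps (this is exactly (grR-4) together with the relations (R1)--(R3)), left and right multiplication by any $y_i$ preserve each $M_c$, so each $M_c$ is a $\C[y_1,\dots,y_a]$-sub-bimodule. The second key point: recording the number of dots on each of the $a$ strands of $c$ identifies $M_c$ with the polynomial ring $\C[z_1,\dots,z_a]$, in such a way that left multiplication by $y_i$ corresponds to multiplication by $z_\gamma$, with $\gamma$ the strand meeting the top row at position $i$, and right multiplication by $y_i$ to multiplication by $z_{\gamma'}$, with $\gamma'$ the strand meeting the bottom row at position $i$. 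Thus for $m\in M_c$ the requirement $y_im=my_i$ becomes $(z_\gamma-z_{\gamma'})m=0$ inside the domain $\C[z_1,\dots,z_a]$, which forces $m=0$ unless $z_\gamma=z_{\gamma'}$, i.e.\ unless the $i$-th points at the top and bottom lie on a common through strand. If this holds for every $i$, then $c$ has no cups or caps and no crossings, i.e.\ $c$ is the trivial connector and $M_c=\C[y_1,\dots,y_a]$. Hence $\{f\in\GVW_a\mid fy_i=y_if\ \forall i\}=\C[y_1,\dots,y_a]$, and via the isomorphism $\varphi_0$ the same holds in $A_0$.

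\emph{Transfer to $\sVW_a$, $A_t$, $A_\hbar$.} For $\sVW_a=A_1$ I would use that it is filtered by the number of dots with $\gr(\sVW_a)\cong\GVW_a$ (Corollary~\ref{theta-iso}): given $f$ with $fy_i=y_if$ for all $i$, its symbol in $\gr(\sVW_a)\cong\GVW_a$ again commutes with all $y_i$, hence by the graded case is a homogeneous polynomial in the $y_i$; lifting that polynomial to $\tilde f\in\sVW_a$, the difference $f-\tilde f$ still commutes with the $y_i$ and has strictly fewer dots, so induction on the number of dots finishes it. The isomorphism $\psi_t\colon A_t\to A_1$, $y_i\mapsto ty_i$, carries $\C[y_1,\dots,y_a]$ onto itself, so the $t\neq0$ case follows from the $A_1$ case. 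Finally, for $A_\hbar$ I would invoke $A_\hbar\cong\op{Rees}(\sVW_a)$ (Lemma~\ref{algiso}), under which $y_i$ corresponds to $y_i\hbar$: writing an element as $g=\sum_k b_k\hbar^k$ with $b_k\in(\sVW_a)^{\le k}$ and using that $\hbar$ is a central nonzerodivisor, commuting with all $y_i\hbar$ forces each $b_k$ to commute with all $y_i$ in $\sVW_a$, hence to be a polynomial in the $y_i$ of degree $\le k$, whence $g\in\C[\hbar][y_1,\dots,y_a]$.

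\emph{The main obstacle} is really just finding the right vantage point. In $\sVW_a$ (or $A_\hbar$) a naive leading-dot-term argument fails: for a non-identity diagram shape the top-degree contributions of different basis diagrams can conspire to cancel, so one would be forced to chase the lower-order correction terms coming from Lemmas~\ref{lem:dot-slide} and \ref{lem:dot-on-cup}. Passing to $\GVW$ removes all these corrections and turns each shape-component into a bimodule over $\C[y_1,\dots,y_a]$ that is literally a polynomial ring, on which it is transparent that the centralizer of the $y_i$ is trivial unless the shape is the identity; the filtered statements then follow formally by the standard symbol-and-induction argument.
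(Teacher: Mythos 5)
Your proposal is correct, but it takes a genuinely different route from the paper. The paper argues directly in $A_{\hbar}$ with the basis $S^{\bullet}_{a,a}$: if the expansion of $f$ contains a diagram whose underlying shape is not $1_a$, one picks a diagram with the maximal number $k$ of dots on a cup (or, if there are no cups, on a through string joining differently labelled endpoints $i\neq j$) and observes that $y_if$ contains a basis diagram with $k+1$ dots on such a string while $fy_i$ cannot, since right multiplication only adds dots at the bottom and normalising never raises the cup-dot count; so $y_if\neq fy_i$. You instead prove the statement first in $\GVW_a\cong A_0$, where the connector decomposition $\bigoplus_c M_c$ is stable under left and right multiplication by the $y_i$ and each $M_c$ is literally a polynomial ring on which the commutation condition becomes $(z_{\gamma}-z_{\gamma'})m=0$ in a domain, and then transfer to $A_1$ by the standard symbol-and-induction argument on the filtration (using Corollary~\ref{theta-iso}) and to $A_{\hbar}$ via $A_{\hbar}\cong\op{Rees}(A_1)$ and comparison of graded components; all the ingredients you invoke are established before this lemma, so there is no circularity. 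What each approach buys: the paper's argument is shorter, stays inside $A_{\hbar}$ (and applies verbatim to every $A_t$), while yours eliminates all correction-term bookkeeping at the price of the graded/Rees machinery, and yields as a byproduct the componentwise description of the centralizer of the $y_i$ in $\GVW_a$. One small correction to your closing remark: the ``naive leading-dot-term argument'' does not in fact fail --- the paper's proof is exactly such an argument, made to work by choosing the right leading statistic and exploiting the asymmetry between adding dots at the top and at the bottom --- though this does not affect the validity of your own proof.
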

\begin{proof}
Because of relation \ref{comm} (iv), only $(a)\Rightarrow (b)$ requires proof.

Assume that $f\notin\mathbb{C}[\hbar][y_1,\ldots , y_a]$. That means that the expansion of $f$ in the basis $S^\bullet_{a,a}$ contains at least one dotted diagram whose underlying undotted diagram is not the identity $1_a$.

Assume that this expansion of $f$ in the basis $S^\bullet_{a,a}$ contains at least one dotted diagram with a cup. Label the top and bottom endpoints of strings $1,\ldots, a$ from left to right. Among all diagrams with a cup, choose $d$ with a maximal number of dots on a cup; say that this cup is connecting $i$ and $j$, and has $k$ dots on it. Then $y_if$, written in the basis $S^\bullet_{a,a}$, contains at least one diagram with a cup and $k+1$ dots on it (namely, $y_id$). On the other hand, $fy_i$ contains no diagrams with $k+1$ dots on a cup, so $y_if\ne fy_i$. 

Now assume that the expansion of $f$ in the basis $S^\bullet_{a,a}$ contains no diagrams with cups, and consequently no diagrams with caps. Then it contains at least one dotted diagram with a through strand connecting differently labelled points at the top and the bottom. Among all such diagrams and all such strings, choose $d$ with a maximal number of dots on such a string; say the string is connecting $i$ at the top of the diagram and $j$ at the bottom, $i\ne j$, and it has $k$ dots on it. Then $y_if$, written in the basis $S^\bullet_{a,a}$, contains at least one diagram with a string connecting $i$ and $j$ and with $k+1$ dots on it, while $fy_i$ contains no such diagrams as $i\ne j$. So, $y_i f\ne fy_i$. 
\end{proof}

In particular, $Z(A_{\hbar})\subseteq \mathbb{C}[\hbar][y_1,\ldots , y_a]$. The following lemma shows that $Z(A_{\hbar})$ is in fact a subalgebra of the symmetric polynomials $\mathbb{C}[\hbar][y_1,\ldots , y_a]^{S_a}.$
\begin{lemma}\label{centre-sym}
Let $f \in \mathbb{C}[\hbar][y_1,\ldots , y_a]\subseteq A_{\hbar}$ and $1\le i\le a-1$.
\begin{enumerate}[(a)]
\item
If $fs_i=s_if$, then $f(y_1,\ldots, y_i,y_{i+1},\ldots, y_a)=f(y_1,\ldots, y_{i+1},y_i,\ldots, y_a)$.
\item For the special value $\hbar=0$, the converse also holds: if $f(y_1,\ldots, y_i,y_{i+1},\ldots, y_a)=f(y_1,\ldots, y_{i+1},y_i,\ldots, y_a)$, then $fs_i=s_if$ in $A_0$.
\end{enumerate}
\end{lemma}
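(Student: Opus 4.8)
The plan is to work directly with the skein relations (VW7) and the (anti)symmetry relations (VW9), commuting the polynomial $f$ past $s_i$ term by term in the $y$-variables and tracking the error terms that involve $e_i$.

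\smallskip
\textbf{Part (a).}
First I would reduce to monomials: write $f = \sum_{\mathbf{k}} c_{\mathbf k}(\hbar)\, y_1^{k_1}\cdots y_a^{k_a}$ in the basis $S^\bullet_{a,a}$, where all $y$'s are distinct variables commuting with each other by (VW2)(iv). The key computational input is a ``straightening'' formula: in $A_{\hbar}$ one has
\[
s_i\, y_i^{k}\, y_{i+1}^{l} = y_i^{l}\, y_{i+1}^{k}\, s_i + (\text{terms of strictly lower total $y$-degree, each divisible on the left by } e_i),
\]
obtained by repeatedly applying (VW7)(i), namely $s_i y_i = y_{i+1} s_i - \hbar e_i - \hbar$, together with the commutation relation $e_i y_j = y_j e_i$ for $j\ne i,i+1$ and the (anti)symmetry relations (VW9) to normalise the $y$-powers sitting next to $e_i$. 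Since the variables $y_j$ with $j\ne i,i+1$ commute past everything in sight, it suffices to prove this for $a=2$, $i=1$, by induction on $k+l$. Applying this to each monomial of $f$ and using Theorem~\ref{Thm2} (so that the basis $S^\bullet_{a,a}$ is honest, hence coefficient comparison is legitimate), the equation $fs_i = s_i f$ forces, by comparing the top-$y$-degree part that is \emph{not} divisible by $e_i$ (i.e.\ the genuine polynomial part, using that $S^\bullet_{a,a}$ distinguishes the identity diagram from all others), the identity $f(\ldots,y_i,y_{i+1},\ldots) = f(\ldots,y_{i+1},y_i,\ldots)$. I expect the bookkeeping of the $e_i$-error terms to be the main obstacle here: one must be sure those terms live in a complementary span to $\mathbb C[\hbar][y_1,\dots,y_a]$ inside $A_\hbar$, which is exactly guaranteed by the basis theorem, so the argument should go through cleanly once that is invoked.

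\smallskip
\textbf{Part (b).}
At $\hbar = 0$ the skein relations become $s_i y_i = y_{i+1} s_i$ and $y_i s_i = s_i y_{i+1}$, so $s_i$ \emph{exactly} intertwines $y_i$ and $y_{i+1}$ and commutes with all other $y_j$ by (VW3)(iii) and (VW2)(iii). Hence for any monomial $m = y_1^{k_1}\cdots y_a^{k_a}$ we get $s_i m = (s^{(i)}\!\cdot m)\, s_i$ in $A_0$, where $s^{(i)}$ is the transposition swapping the $i$ and $i+1$ exponents. Therefore if $f$ is symmetric in $y_i, y_{i+1}$, i.e.\ invariant under $s^{(i)}$, then $s_i f = (s^{(i)}\!\cdot f)\, s_i = f s_i$ in $A_0$, which is the claim. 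This direction is essentially immediate once the $\hbar=0$ specialisation of (VW7) is written down; no error terms appear precisely because the $e_i$-corrections in the skein relations are proportional to $\hbar$.
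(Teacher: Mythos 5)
Your overall strategy matches the paper's: express $fs_i$ in the basis $S^\bullet_{a,a}$ via a commutation/straightening formula and then use Theorem~\ref{Thm2} to compare coefficients; part~(b) is essentially identical to the paper's argument (at $\hbar=0$ the skein relations make $s_i$ intertwine $y_i$ and $y_{i+1}$ exactly, so a polynomial symmetric in $y_i,y_{i+1}$ commutes with $s_i$).

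There is, however, a real slip in part~(a). Your straightening formula asserts that the error terms in $s_iy_i^ky_{i+1}^l = y_i^ly_{i+1}^k s_i + (\cdots)$ are \emph{each divisible on the left by $e_i$}. This is false already in the base case: relation (VW7)(i) gives $s_iy_i = y_{i+1}s_i - \hbar e_i - \hbar$, and the $-\hbar$ term is a genuine polynomial contribution not involving $e_i$. The same pattern persists in general (and is visible in the paper's Lemma~\ref{AMR2.3}): the lower-degree corrections are a mixture of terms with underlying undotted diagram $1$ (pure polynomials multiplied by $\hbar$) and terms with underlying undotted diagram $e_i$. Consequently your proposed comparison criterion — ``the top-$y$-degree part that is not divisible by $e_i$'' — does not cleanly separate the piece you want, since the polynomial error terms are also not divisible by $e_i$. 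The correct way to organize the comparison (and what the paper does) is to sort the basis elements by their \emph{underlying undotted diagram}: after writing $f(\ldots,y_i,y_{i+1},\ldots)s_i = s_i\, f(\ldots,y_{i+1},y_i,\ldots) + \hbar\bigl(\text{polynomial terms} + e_i\text{-terms}\bigr)$, every term in $s_if$ has underlying diagram $s_i$, the main term $s_i f(\ldots,y_{i+1},y_i,\ldots)$ also has underlying diagram $s_i$, and \emph{all} error terms have underlying diagram $1$ or $e_i$. Reading off the $s_i$-component of the equality $fs_i = s_if$ then gives $s_i f(\ldots,y_{i+1},y_i,\ldots) = s_i f(\ldots,y_i,y_{i+1},\ldots)$ directly, with no need to induct on degree. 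With this correction your argument goes through and coincides with the paper's.
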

\begin{proof} It is enough to prove this for $a=2$.
\begin{enumerate}[(a)]
\item By Lemma~\ref{AMR2.3}, the expansion of $fs_1$ in the basis $S^\bullet_{a,a}$ is
\begin{equation}\label{eqn:fsi}f(y_1,y_2)s_1=s_1f(y_2,y_1)+\hbar \sum_{i,j}\left(\alpha_{ij}y_1^iy_2^j+\beta_{ij}y_1^ie_1y_1^j\right)
\end{equation} for some $\alpha_{ij},\beta_{ij}\in \mathbb{C}$.
On the other hand, $s_1f$ is already a linear combination of normal diagrams. If $fs_1=s_1f$, then using that $S^\bullet_{a,a}$ is a basis, and reading off the terms with the underlying undotted diagram $s_1$, we get $s_1f(y_2,y_1)=s_1f(y_1,y_2)$, and so $f(y_2,y_1)=f(y_1,y_2)$.
\item For $\hbar=0$ and $f$ symmetric in $y_1,y_2$, equation~\eqref{eqn:fsi} turns into the equalities
$f(y_1,y_2)s_1=s_1f(y_2,y_1)=s_1f(y_1,y_2),$
thus $fs_1=s_1f$. \qedhere
\end{enumerate}
\end{proof}

\subsection{Some central elements}
Consider the following elements in  $\mathbb{C}[\hbar][y_1,\ldots , y_a]$:
$$z_{ij}=(y_i-y_j)^2, \text{ for }1\leq i\not=j\leq a\quad \text{and}\quad
D_{\hbar}=\prod_{1\le i<j\le a}(z_{ij}-\hbar^2).$$ Notice that the deformed squared Vandermonde determinant $D_{\hbar}$ is symmetric, $D_{\hbar}\in \mathbb{C}[\hbar][y_1,\ldots, y_a]^{S_a}$.
We will use these to produce central elements in $A_{\hbar}$. 

\begin{lemma}For any $1\le i\le a-1$, we have in $A_{\hbar}$ the equality
 $$e_i\cdot (z_{i,i+1}-\hbar^2)=(z_{i,i+1}-\hbar^2)\cdot e_i=0,$$ and consequently
 $D_{\hbar}e_i=e_i D_{\hbar}=0$.\label{centre-ei}
\end{lemma}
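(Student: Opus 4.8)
The plan is to reduce everything to the single relation $e_i(z_{i,i+1}-\hbar^2)=0$ and its mirror image, and then obtain the statement for $D_\hbar$ by factoring.

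\textbf{Step 1: Prove $e_i(z_{i,i+1}-\hbar^2)=0$.} Write $z_{i,i+1}=(y_{i+1}-y_i)^2$ and compute $e_i(y_{i+1}-y_i)^2$. Using the (Anti)-Symmetry relation \ref{sym}(i), namely $e_i(y_{i+1}-y_i)=\hbar e_i$, I would first multiply on the right by $(y_{i+1}-y_i)$ to get
\[
e_i(y_{i+1}-y_i)^2 = \hbar\, e_i (y_{i+1}-y_i) = \hbar^2 e_i,
\]
where in the last equality I applied \ref{sym}(i) again. Hence $e_i(z_{i,i+1}-\hbar^2)= \hbar^2 e_i - \hbar^2 e_i = 0$. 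Symmetrically, using \ref{sym}(ii), $(y_{i+1}-y_i)e_i=-\hbar e_i$, multiplying on the left by $(y_{i+1}-y_i)$ gives $(y_{i+1}-y_i)^2 e_i = -\hbar (y_{i+1}-y_i)e_i = \hbar^2 e_i$, so $(z_{i,i+1}-\hbar^2)e_i=0$. This gives the first displayed equality of the lemma.

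\textbf{Step 2: Deduce $D_\hbar e_i = e_i D_\hbar = 0$.} By definition $D_\hbar = \prod_{1\le p<q\le a}(z_{pq}-\hbar^2)$, and all the factors $z_{pq}-\hbar^2$ lie in the commutative polynomial subalgebra $\mathbb{C}[\hbar][y_1,\dots,y_a]$ (relation \ref{comm}(iv)), so they commute with one another. In particular I can pull the single factor $z_{i,i+1}-\hbar^2$ all the way to the right (resp. left) of the product without changing anything:
\[
D_\hbar = \Big(\prod_{(p,q)\ne (i,i+1)}(z_{pq}-\hbar^2)\Big)\,(z_{i,i+1}-\hbar^2).
\]
Therefore $D_\hbar e_i = \big(\prod_{(p,q)\ne(i,i+1)}(z_{pq}-\hbar^2)\big)(z_{i,i+1}-\hbar^2)e_i = 0$ by Step 1, and likewise $e_i D_\hbar = e_i (z_{i,i+1}-\hbar^2)\big(\prod_{(p,q)\ne(i,i+1)}(z_{pq}-\hbar^2)\big) = 0$.

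\textbf{Remarks on difficulty.} There is essentially no obstacle here: the whole lemma is a two-line manipulation once one has the symmetry relations \ref{sym}(i)--(ii) at hand, together with the commutativity of the $y_j$'s. The only point requiring a moment's care is making sure the factor $z_{i,i+1}-\hbar^2$ really does commute past the other factors $z_{pq}-\hbar^2$ in $D_\hbar$ — but this is immediate from relation \ref{comm}(iv), since every $z_{pq}$ is a polynomial in the (pairwise commuting) $y_j$. One could alternatively derive \ref{sym} from \ref{tang}\ref{twist} and \ref{skein} as indicated in the Remark following Definition~\ref{def:Ahbar}, but it is cleanest to just invoke \ref{sym} directly.
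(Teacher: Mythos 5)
Your proof is correct and follows essentially the same route as the paper: apply the (anti-)symmetry relation (VW9)(i) twice to get $e_i(y_{i+1}-y_i)^2=\hbar^2 e_i$, then do the mirror computation with (VW9)(ii), and finally use commutativity of the $y_j$'s to pull the factor $z_{i,i+1}-\hbar^2$ adjacent to $e_i$ in the product $D_\hbar$. Your exposition is merely slightly more explicit about the factoring step.
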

\begin{proof}
Using \ref{sym} (i), we get
$
e_i\cdot (z_{i,i+1}-\hbar^2)=e_i (y_{i+1}-y_i)^2 -\hbar^2e_i=\hbar e_i(y_{i+1}-y_i) -\hbar^2e_i=\hbar^2 e_i -\hbar^2 e_i=0,
$
which implies $e_iD_{\hbar}=0$.
The claim $D_{\hbar}e_i=0$ is proved analogously. 
\end{proof}

\begin{lemma}For any $1\le k\le a-1$ we have $D_{\hbar}s_k=s_k D_{\hbar}$.\label{Dsi}
\end{lemma}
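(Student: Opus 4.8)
The plan is to prove $D_{\hbar}s_k = s_k D_{\hbar}$ by reducing to the two-strand case and using the skein relation \ref{skein} together with Lemma~\ref{centre-ei}. First I would observe that since $D_{\hbar} = \prod_{i<j}(z_{ij} - \hbar^2)$ and $s_k$ only interacts nontrivially (via \ref{skein} and \ref{affbraid}) with the factors involving the indices $k$ and $k+1$, we may group $D_{\hbar} = P \cdot (z_{k,k+1}-\hbar^2) \cdot Q$ where $P$ collects all factors $(z_{ij}-\hbar^2)$ with $\{i,j\}\cap\{k,k+1\}=\emptyset$ and $Q$ collects the remaining factors $(z_{ki}-\hbar^2)(z_{k+1,i}-\hbar^2)$ for $i\neq k,k+1$ (here I use that all the $z_{ij}$ and $\hbar$ commute, by \ref{comm}(iv), so the factors may be reordered freely). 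The polynomial $P$ is built from $y$'s with indices disjoint from $\{k,k+1\}$, so $Ps_k = s_kP$ by \ref{affbraid}(iii). For $Q$, note that $s_k$ swaps $y_k \leftrightarrow y_{k+1}$ up to lower-order correction terms: precisely, from \ref{skein} one gets $s_k y_k = y_{k+1}s_k - \hbar e_k - \hbar$ and $s_k y_{k+1} = y_k s_k + \hbar e_k + \hbar$, but since the product $(z_{ki}-\hbar^2)(z_{k+1,i}-\hbar^2)$ is \emph{symmetric} under exchanging $y_k$ and $y_{k+1}$, the $s_k$ can be pushed through $Q$ at the cost of terms involving $e_k$.

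More carefully, I would argue as follows. Write $g(y_k,y_{k+1}) = Q = \prod_{i\neq k,k+1}(z_{ki}-\hbar^2)(z_{k+1,i}-\hbar^2)$, a polynomial symmetric in $y_k, y_{k+1}$ (with coefficients in $\mathbb{C}[\hbar][y_j : j\neq k,k+1]$, all of which commute with $s_k$). By Lemma~\ref{centre-sym}(a)'s proof technique, or directly by Lemma~\ref{AMR2.3}, we have $g(y_k,y_{k+1})s_k = s_k g(y_{k+1},y_k) + \hbar(\cdots e_k \cdots) = s_k g(y_k,y_{k+1}) + \hbar R$ for some $R$ that is a linear combination of terms each containing a factor $e_k$ (this is the content of equation~\eqref{eqn:fsi} applied with $f=g$, symmetric). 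Thus
$$
D_{\hbar}s_k = P(z_{k,k+1}-\hbar^2)g\, s_k = P(z_{k,k+1}-\hbar^2)\big(s_k g + \hbar R\big) = P(z_{k,k+1}-\hbar^2)s_k g + \hbar P(z_{k,k+1}-\hbar^2)R.
$$
In the second summand, $R$ is a sum of terms with an $e_k$ factor; since $(z_{k,k+1}-\hbar^2)$ commutes with everything and $(z_{k,k+1}-\hbar^2)e_k = 0$ by Lemma~\ref{centre-ei}, we may slide $(z_{k,k+1}-\hbar^2)$ next to $e_k$ in each term of $R$ to kill it, so the second summand vanishes. For the first summand, I use $(z_{k,k+1}-\hbar^2)s_k = s_k(z_{k,k+1}-\hbar^2)$, which holds because $z_{k,k+1}-\hbar^2 = (y_k-y_{k+1})^2 - \hbar^2$ and the skein relations \ref{skein} give $s_k(y_k-y_{k+1}) = -(y_k-y_{k+1})s_k - 2\hbar e_k$ hence $s_k(y_k-y_{k+1})^2 = (y_k-y_{k+1})^2 s_k + (\text{terms with } e_k \text{ on the right of } (y_k-y_{k+1}))$; but $(y_k-y_{k+1})e_k = -\hbar e_k$ by \ref{sym}(ii), so those correction terms are again multiples of $e_k$, and after multiplying by the outer $(z_{k,k+1}-\hbar^2)$ — wait, here I should instead verify $(z_{k,k+1}-\hbar^2)s_k = s_k(z_{k,k+1}-\hbar^2)$ directly: one computes $(y_k-y_{k+1})^2 s_k - s_k(y_k-y_{k+1})^2$ lands in the left ideal-type expression which, when prefixed by $(z_{k,k+1}-\hbar^2)$ in the context of $D_\hbar$, is annihilated — so it is cleanest to keep the factor $P(z_{k,k+1}-\hbar^2)$ attached throughout and only commute $s_k$ past it modulo $e_k$-terms, all of which die against $(z_{k,k+1}-\hbar^2)$.

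Putting this together: $D_{\hbar}s_k = P(z_{k,k+1}-\hbar^2)s_k g = P s_k (z_{k,k+1}-\hbar^2) g = s_k P (z_{k,k+1}-\hbar^2)g = s_k D_{\hbar}$, using $Ps_k = s_k P$ at the last step. The main obstacle I anticipate is bookkeeping: carefully tracking the $e_k$-correction terms produced when commuting $s_k$ past the symmetric polynomials $g$ and past $(y_k - y_{k+1})^2$, and making sure every such term really does acquire a factor annihilated by $(z_{k,k+1}-\hbar^2)$ via Lemma~\ref{centre-ei} and \ref{sym}. A clean way to organize this is to first establish the auxiliary identity $(z_{k,k+1}-\hbar^2)\,s_k = s_k\,(z_{k,k+1}-\hbar^2)$ as its own sublemma — it follows from \ref{skein} by a short computation modulo the ideal annihilated on both sides, using $e_k(y_{k+1}-y_k)=\hbar e_k$ and $(y_{k+1}-y_k)e_k = -\hbar e_k$ — and then the general statement follows formally as above. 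Alternatively, and perhaps most efficiently, one can note that for symmetric $h(y_k,y_{k+1})$ the element $(z_{k,k+1}-\hbar^2)h$ already commutes with $s_k$ because $(z_{k,k+1}-\hbar^2)$ commutes with $s_k$ (the sublemma) and $(z_{k,k+1}-\hbar^2)h\, s_k = (z_{k,k+1}-\hbar^2)s_k h + (z_{k,k+1}-\hbar^2)\hbar R = s_k(z_{k,k+1}-\hbar^2)h$ since $(z_{k,k+1}-\hbar^2)R=0$; applying this with $h = g$ and multiplying on the left by the commuting, $s_k$-central factor $P$ finishes the proof.
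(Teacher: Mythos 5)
Your overall strategy matches the paper's: split $D_\hbar$ into three types of factors, commute $s_k$ past each, and kill the $e_k$-correction terms with the $(z_{k,k+1}-\hbar^2)$ factor via Lemma~\ref{centre-ei}. The paper handles the mixed factors $(z_{k,j}-\hbar^2)(z_{k+1,j}-\hbar^2)$ one pair at a time (proving~\eqref{commute3}), whereas you bundle them all into one symmetric polynomial $g$ and invoke the fact (which the paper isolates as Lemma~\ref{centre-tildefsi}, proved independently of Lemma~\ref{Dsi}) that commuting $s_k$ past a polynomial symmetric in $y_k,y_{k+1}$ produces only $e_k$-correction terms. That reorganization is legitimate and arguably slicker.

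However, your treatment of the key auxiliary identity $(z_{k,k+1}-\hbar^2)s_k = s_k(z_{k,k+1}-\hbar^2)$ is where the proposal goes wrong. You compute $s_k(y_k-y_{k+1}) = -(y_k-y_{k+1})s_k - 2\hbar e_k$, but the $e_k$ should not be there: from \ref{skein}(i) and (ii),
$$s_k y_k = y_{k+1}s_k - \hbar e_k - \hbar, \qquad s_k y_{k+1} = y_k s_k - \hbar e_k + \hbar,$$
so the $\hbar e_k$ terms cancel and one gets the exact identity $s_k(y_k-y_{k+1}) = -(y_k-y_{k+1})s_k - 2\hbar$. Squaring then gives $(y_k-y_{k+1})^2 s_k = s_k(y_k-y_{k+1})^2$ on the nose, hence $(z_{k,k+1}-\hbar^2)s_k = s_k(z_{k,k+1}-\hbar^2)$ holds exactly as an identity in $A_\hbar$ — there are no $e_k$-correction terms and no ``modulo the ideal'' reasoning is needed. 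The spurious $e_k$ sends you down a detour (worrying about killing correction terms from this factor, which in fact do not arise, and which in your setup you have no leftover $(z_{k,k+1}-\hbar^2)$ to kill anyway), and you never actually complete the verification of the sublemma. Once you repair that single computation, the rest of the argument closes up cleanly exactly as you describe.
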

\begin{proof}We analyze the commutation of $s_k$ with different factors $(z_{ij}-\hbar^2)$ of $D_{\hbar}$ separately. 

Assume $i,j \notin \{k,k+1\}$. Then \ref{affbraid}(iii) says that $y_i$ and $y_j$ commute with $s_k$. Therefore, 
\begin{align}(z_{ij}-\hbar^2)s_k=s_k(z_{ij}-\hbar^2).\label{commute1}\end{align}

Now assume $i=k, j=k+1$. We claim that
\begin{align}(z_{k,k+1}-\hbar^2)s_k=s_k(z_{k,k+1}-\hbar^2).\label{commute2}\end{align}
To prove it, use \ref{skein} to calculate $(y_k-y_{k+1})s_k=s_k(y_{k+1}-y_k)-2\hbar$, and then 
\begin{align*}
(y_k-y_{k+1})^2s_k&=(y_k-y_{k+1})s_k(y_{k+1}-y_{k})-2\hbar(y_k-y_{k+1})\\
&=(s_k(y_{k+1}-y_k)-2\hbar)(y_{k+1}-y_k)-2\hbar(y_k-y_{k+1})\;=\;s_k(y_k-y_{k+1})^2.
\end{align*}

The remaining factors of $D_{\hbar}$ contain $z_{ij}$ with exactly one of $i,j$ in $\{k,k+1\}$. Since $z_{ij}=z_{ji}$, it suffices to consider $j\ne k,k+1$, and further assume $j>k+1$. We claim that
\begin{align}(z_{k,k+1}-\hbar^2)\left( (z_{k,j}-\hbar^2)(z_{k+1,j}-\hbar^2)s_k\right)=(z_{k,k+1}-\hbar^2)\left(s_k(z_{k,j}-\hbar^2)(z_{k+1,j}-\hbar^2)\right).\label{commute3}\end{align}
To prove \eqref{commute3}, let us first calculate
\begin{align}
z_{k,j}s_k&=(y_k-y_j)^2s_k\;=\;(y_k-y_j)s_k(y_{k+1}-y_j)+\hbar(y_k-y_j)(e_k-1)\notag\\
&=s_kz_{k+1,j}+\hbar(e_k-1)(y_{k+1}-y_j)+\hbar(y_k-y_j)(e_k-1).\notag
\end{align}
From this and Lemma~\ref{centre-ei}, we get \
\begin{align}
(z_{k,k+1}-\hbar^2)(z_{k,j} -\hbar^2)s_k&=(z_{k,k+1}-\hbar^2)\left( s_k (z_{k+1,j} -\hbar^2) -\hbar (y_k+ y_{k+1}-2y_j)\right).
\label{zijskstep1}
\end{align}
Similarly, 
\begin{align}
(z_{k,k+1}-\hbar^2)(z_{k+1,j} -\hbar^2)s_k&=(z_{k,k+1}-\hbar^2)\left( s_k (z_{k,j} -\hbar^2) +\hbar (y_k+ y_{k+1}-2y_j)\right).
\label{zijskstep2}
\end{align}
Using \eqref{zijskstep1} and \eqref{zijskstep2}, we then obtain \eqref{commute3}, since $(z_{k,k+1}-\hbar^2)\left( (z_{k,j}-\hbar^2)(z_{k+1,j}-\hbar^2)s_k\right)$ equals
$$(z_{k,k+1}-\hbar^2)\left[s_k(z_{k,j}-\hbar^2)(z_{k+1,j}-\hbar^2) +(\hbar (y_k+ y_{k+1}-2y_j)-\hbar (y_k+ y_{k+1}-2y_j))(z_{k,j}-\hbar^2)\right]$$ which is however the same as
$(z_{k,k+1}-\hbar^2)\left(s_k(z_{k,j}-\hbar^2)(z_{k+1,j}-\hbar^2)\right)$. Thus \eqref{commute3} holds. Finally, \eqref{commute1}, \eqref{commute2} and \eqref{commute3} imply $D_{\hbar}s_k=s_k D_{\hbar}$. 
\end{proof}

\begin{lemma} Let $1\le i \le a-1$, and let $\tilde{f}\in \mathbb{C}[\hbar][y_1,\ldots , y_a]$ be  symmetric in $y_i,y_{i+1}$. Then there exist polynomials $ p_j= p_j(y_1,\ldots , y_a)\in \mathbb{C}[\hbar][y_1,\ldots , y_a]$ such that 
$$\tilde{f}s_i=s_i\tilde{f}+\sum_{j=0}^{\deg{\tilde{f}}-1}  y_{i}^j \cdot e_i \cdot  p_j.$$
\label{centre-tildefsi}

\end{lemma}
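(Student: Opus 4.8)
The plan is to reduce immediately to the case $a=2$, $i=1$, since $s_i$, $e_i$ and $y_i,y_{i+1}$ only interact nontrivially with each other, and the other $y_j$ commute with all of $s_i,e_i,y_i,y_{i+1}$ by relations \ref{affbraid}(iii) and \ref{comm}(iii),(iv); any polynomial factor purely in the remaining variables can be carried along untouched. So write $\tilde f=\tilde f(y_1,y_2)$, symmetric. First I would establish, by induction on $m$ using Lemma~\ref{AMR2.3} (or directly the skein relations \ref{skein}), a ``straightening'' formula for each monomial: for all $m,\ell\ge 0$,
$$
y_1^m y_2^\ell\, s_1 = s_1\, y_2^m y_1^\ell + \sum_{j} y_1^j\, e_1\, q_{j}^{(m,\ell)}(y_1,y_2) + \sum_j y_1^j\, r_j^{(m,\ell)}(y_1,y_2),
$$
where the last (diagram-free) sum collects the ``$-\hbar$'' contributions of \ref{skein}; the point is that moving $s_1$ past $y_1$'s produces, by \ref{skein}(i)-(ii), a term with $e_1$ and a scalar term, and iterating keeps everything of the stated shape.

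The key step is then to exploit the symmetry of $\tilde f$ to cancel the purely diagrammatic-free error terms. Write $\tilde f=\sum_{m,\ell} c_{m\ell}\, y_1^m y_2^\ell$ with $c_{m\ell}=c_{\ell m}$. Summing the straightening identity over the monomials of $\tilde f$, the ``$s_1\, y_2^m y_1^\ell$'' terms assemble to $s_1\tilde f(y_2,y_1)=s_1\tilde f(y_1,y_2)=s_1\tilde f$ by symmetry, exactly as in the proof of Lemma~\ref{centre-sym}(a). It remains to argue that the leftover terms \emph{not} containing $e_1$ — i.e.\ the $\sum_j y_1^j r_j(y_1,y_2)$ pieces — also cancel in the sum. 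The cleanest way is to specialize: set $e_1\mapsto 0$ formally, or better, use the quotient $\sVW_a\twoheadrightarrow$ (the algebra obtained by killing $e_1$), equivalently compare with the purely symmetric-group computation where $s_1 y_1 = y_2 s_1 - \hbar$, $y_1 s_1 = s_1 y_2 - \hbar$ (this is the degenerate affine Hecke algebra $H_2$ with parameter $\hbar$). In $H_2$ it is classical that a symmetric polynomial $\tilde f(y_1,y_2)$ is \emph{central}, hence commutes with $s_1$ on the nose: $\tilde f s_1 = s_1\tilde f$ with no correction. Therefore, modulo the two-sided ideal generated by $e_1$, the relation $\tilde f s_1 - s_1\tilde f = 0$ holds; lifting back, $\tilde f s_1 - s_1\tilde f$ lies in that ideal, and by the basis theorem (Theorem~\ref{Thm2}, via $S_{a,a}^\bullet$) every element of the two-sided ideal generated by $e_1$ in the relevant bidegree can be written as a sum of terms $y_1^j e_1 p_j$; degree-counting (since $\deg(\tilde f s_1 - s_1\tilde f)\le \deg\tilde f$ in the dot-filtration, and each $e_1$-sandwich consumes one ``level'') gives the bound $j$ up to $\deg\tilde f - 1$ and shows the $p_j$ are polynomials in the $y$'s.

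The main obstacle I anticipate is the last bookkeeping step: controlling precisely the \emph{form} of the error — that it can be written with all remaining dots collected as $y_i^j$ on the left of a single $e_i$, with a polynomial $p_j$ on the right, and with $j$ bounded by $\deg\tilde f-1$. The skein relations \ref{skein} a priori produce terms like $y_1^a e_1 y_1^b$, $y_1^a e_1 y_2^b$, etc., in various positions; straightening these to the normal form $y_1^j e_1 p_j(y_1,\dots,y_a)$ requires repeated use of \ref{comm}(iii) ($e_iy_j=y_je_i$ for $j\ne i,i+1$), the (anti)symmetry relations \ref{sym} (to move $y_{i+1}$ across $e_i$ at the cost of $y_i$ and $\hbar$), and the basis theorem to guarantee no further relations intervene. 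Once the straightening lemma is phrased carefully, the symmetry cancellation and the degree bound are routine, so I would budget most of the effort into stating and proving the straightening identity in exactly the normal form needed, rather than into the cancellation argument itself.
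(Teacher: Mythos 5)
Your proposal takes a genuinely different route from the paper's proof. The paper exploits the observation that for the power sums $y_i^k+y_{i+1}^k$, the two skein relations \ref{skein}(i) and \ref{skein}(ii) conspire so that the scalar ($-\hbar$) error terms cancel exactly, leaving only $e_i$-terms; it then proves the formula for these power sums by induction, notes it holds trivially for $y_j$ with $j\ne i,i+1$, observes the property is closed under sums and products, and concludes because the power sums together with the other $y_j$ generate $\mathbb{C}[\hbar][y_1,\dots,y_a]^{\langle s_i\rangle}$. You instead expand an arbitrary symmetric polynomial, accept the scalar error terms, and kill them by comparison with the degenerate affine Hecke quotient $A_{\hbar}/\langle e_1\rangle\cong H_a$ (note $\langle e_1\rangle=\langle e_1,\dots,e_{a-1}\rangle$ by relation (VW4)), where polynomials symmetric in $y_i,y_{i+1}$ commute with $s_i$ by the Bernstein--Lusztig relation. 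Both are correct; the paper's choice of power sums is a clean trick that avoids invoking the quotient and the basis theorem, whereas your route is more conceptual and makes the mechanism of cancellation transparent.

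One step needs tightening. The sentence ``every element of the two-sided ideal generated by $e_1$ in the relevant bidegree can be written as a sum of terms $y_1^je_1p_j$'' is false as stated (that ideal contains $s_2e_1s_2$, for example). What you actually need, and what is true, is that $\tilde f s_i - s_i\tilde f$, when expanded in the basis $S_{a,a}^\bullet$, involves only dotted diagrams whose underlying undotted diagram is $1_a$, $s_i$, or $e_i$ — this follows because the skein relations only produce these three shapes when sliding dots through $s_i$ — and that normal dotted diagrams with underlying shape $e_i$ have all their dots collected as $y_i^j e_i y_i^k\cdot(\text{terms in the other }y$'s$)$, hence are of the form $y_i^je_ip_j$. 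After the $s_i$-part cancels by symmetry and the $1_a$-part vanishes via the Hecke quotient (using that the quotient map is injective on $\mathbb{C}[\hbar][y_1,\dots,y_a]$), what remains is exactly in the desired form. For the degree bound, note that in the dot-filtration ($\deg y_i=1$, $\deg \hbar=\deg s_i=\deg e_i=0$) the RHS of each skein relation drops by one in the error terms, so $\tilde f s_i-s_i\tilde f$ has filtration degree $\le \deg\tilde f-1$, which forces $j\le\deg\tilde f-1$ in each $y_i^je_ip_j$.
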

\begin{proof}
Analogues of the formulas in Lemma~\ref{lem:dot-on-cup} and~\ref{AMR2.3} imply that for any $k$,
\begin{align*}
(y_i^k+y_{i+1}^k)s_i&=s_i(y_i^k+y_{i+1}^k)+\hbar \sum_{j=0}^{k-1} \left(y_i^{k-1-j}e_i y_{i+1}^j+ y_{i+1}^je_i y_{i}^{k-1-j}\right)\\
&=s_i(y_i^k+y_{i+1}^k)+\hbar \sum_{j=0}^{k-1} y_i^{k-1-j}e_i y_{i+1}^j+ \sum_{j=0}^{k-1}\sum_{\ell=0}^{j} \hbar^{1+j-\ell}(-1)^{j+\ell} y_{i}^\ell e_i y_{i}^{k-1-j}.
\end{align*}
Thus, the claim holds for $\tilde{f}=y_i^k+y_{i+1}^k$. It also trivially holds for $\tilde{f}=y_j$ if $j\ne i,i+1$, as such $y_j$ commute with $s_i$. Finally, note that if the claim holds for $\tilde{f}_1$ and $\tilde{f}_2$, it also holds for $\tilde{f}_1\tilde{f}_2$ and $\tilde{f}_1+\tilde{f}_2$. On the other hand, the algebra of polynomials symmetric in $y_i,y_{i+1}$ is generated by the $y_i^k+y_{i+1}^k$, $k\ge 1$, and $y_j$'s with $ j\ne i,i+1$, and the claim follows.
\end{proof}

\begin{lemma}\label{lem:manycentralelts}
Let $\tilde{f}\in \mathbb{C}[\hbar][y_1,\ldots , y_a]^{S_a}$ be an arbitrary symmetric polynomial, and $c$ a constant. Then $f=D_{\hbar}\tilde{f}+c$ lies in the centre of $A_{\hbar}$. 
\end{lemma}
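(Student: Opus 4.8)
The plan is to show that $f = D_{\hbar}\tilde{f} + c$ commutes with each generator $s_i$, $e_i$, and $y_j$ of $A_{\hbar}$; since these generate the algebra, this suffices. The constant $c$ is manifestly central, so it is enough to treat $D_{\hbar}\tilde f$, and in fact by the same reasoning we may assume $c = 0$.

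\textbf{Commuting with $y_j$.} Since $D_{\hbar}\tilde f \in \mathbb{C}[\hbar][y_1,\ldots,y_a]$, this is immediate from relation (VW2)(iv) (the $y_i$'s commute among themselves), or equivalently from the direction $(b)\Rightarrow(a)$ of Lemma~\ref{centre-poly}.

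\textbf{Commuting with $e_i$.} Here the point is that $D_{\hbar} = (z_{i,i+1}-\hbar^2)\cdot D'$ where $D' = \prod_{\{p,q\}\neq\{i,i+1\}}(z_{pq}-\hbar^2)$, and $D'$ is again a polynomial in the $y$'s. By Lemma~\ref{centre-ei} we have $e_i(z_{i,i+1}-\hbar^2) = (z_{i,i+1}-\hbar^2)e_i = 0$. Since $\tilde f$ and $D'$ are polynomials in commuting variables $y_1,\ldots,y_a$, the whole product $D_{\hbar}\tilde f$ has $(z_{i,i+1}-\hbar^2)$ as a factor that can be slid to be adjacent to $e_i$ on either side, giving $e_i\cdot D_{\hbar}\tilde f = 0 = D_{\hbar}\tilde f\cdot e_i$. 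Hence $e_i$ and $D_{\hbar}\tilde f$ commute (both products being zero).

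\textbf{Commuting with $s_i$.} This is the main obstacle. Write $D_{\hbar}\tilde f = D_{\hbar}\cdot\tilde f$; both factors are symmetric in $y_i, y_{i+1}$ (for $D_\hbar$ this uses that the pair $\{i,i+1\}$ appears symmetrically in its definition, for $\tilde f$ it is fully symmetric). Apply Lemma~\ref{centre-tildefsi} to the symmetric polynomial $D_{\hbar}\tilde f$: there exist polynomials $p_j \in \mathbb{C}[\hbar][y_1,\ldots,y_a]$ with
$$(D_{\hbar}\tilde f)s_i = s_i(D_{\hbar}\tilde f) + \sum_{j=0}^{\deg(D_\hbar\tilde f)-1} y_i^j\, e_i\, p_j.$$
Now I observe that each error term $y_i^j\, e_i\, p_j$ vanishes. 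Indeed, each $p_j$ is obtained, in the proof of Lemma~\ref{centre-tildefsi}, by moving $s_i$ past factors; but more robustly, one can factor $D_\hbar\tilde f = (z_{i,i+1}-\hbar^2)\cdot g$ with $g = D'\tilde f$ a polynomial in the $y$'s, and then rerun the computation: since $e_i(z_{i,i+1}-\hbar^2)=0$, the commutator $[D_\hbar\tilde f, s_i]$, which by the lemma is a sum of terms each divisible on the left by $e_i$ after sliding, in fact already carries the factor $(z_{i,i+1}-\hbar^2)$ next to $e_i$ and so vanishes. Concretely: by Lemma~\ref{centre-tildefsi} applied to $g$ (symmetric in $y_i, y_{i+1}$ since $D'$ and $\tilde f$ are), $gs_i = s_i g + \sum_j y_i^j e_i q_j$; multiplying on the left by $(z_{i,i+1}-\hbar^2)$ and using that $(z_{i,i+1}-\hbar^2)$ commutes with $s_i$ (Lemma~\ref{Dsi}'s equation~\eqref{commute2}) and with the $y_i^j$, we get $(z_{i,i+1}-\hbar^2)g s_i = s_i (z_{i,i+1}-\hbar^2)g + \sum_j y_i^j (z_{i,i+1}-\hbar^2) e_i q_j = s_i(z_{i,i+1}-\hbar^2)g$, since $(z_{i,i+1}-\hbar^2)e_i = 0$. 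As $(z_{i,i+1}-\hbar^2)g = D_\hbar\tilde f$, this is exactly $(D_\hbar\tilde f)s_i = s_i(D_\hbar\tilde f)$.

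Combining the three cases, $f$ commutes with all generators, hence $f\in Z(A_{\hbar})$, which completes the proof.
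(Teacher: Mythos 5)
Your proof is correct and takes essentially the same approach as the paper: commutation with the $y_j$ is immediate, commutation with $e_i$ follows from Lemma~\ref{centre-ei}, and commutation with $s_i$ follows by combining Lemma~\ref{centre-tildefsi} with the facts that $(z_{i,i+1}-\hbar^2)$ annihilates $e_i$ and commutes with $s_i$. The only difference is cosmetic: you apply Lemma~\ref{centre-tildefsi} to $\tilde f$ times the remaining factors of $D_{\hbar}$ and then multiply by the single factor $(z_{i,i+1}-\hbar^2)$, needing only \eqref{commute2} rather than the full Lemma~\ref{Dsi} as in the paper, and your preliminary remark that ``each error term vanishes'' is not justified as stated (the lemma only asserts existence of some $p_j$) but is immediately superseded by your correct factoring argument, so the proof stands.
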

\begin{proof}The element $f$ is in $\mathbb{C}[\hbar][y_1,\ldots, y_a]$ so it commutes with $y_i$ for all $i$. By Lemma~\ref{centre-ei}, $$fe_i=\tilde{f}D_{\hbar}e_i+ce_i=ce_i=e_i D_{\hbar}\tilde{f}+ce_i=e_if.$$ 
Using Lemma~\ref{centre-tildefsi}, and then Lemmas  \ref{Dsi} and \ref{centre-ei}  we get 
\begin{align*}
fs_i& =D_{\hbar}\tilde{f}s_i+cs_i=D_{\hbar}\left( s_i\tilde{f}+\sum_j  y_{i}^j \cdot e_j \cdot  p_j \right)+cs_i= s_iD_{\hbar}\tilde{f}+s_ic=  s_i f.  \qedhere
%\\
%& = s_i D_{\hbar}\tilde{f} + \sum_j  y_{i}^j D_{\hbar}  e_j \cdot  p_j(y_1,\ldots , y_a)+cs_i= s_iD_{\hbar}\tilde{f}+cs_i=  s_i f.  
\end{align*}
%Thus, $f\in Z(A_{\hbar})$. 
\end{proof}

\subsection{The centre of $\sVW_a$ and of $A_{\hbar}$}
\begin{proposition}\label{prop:Z(A0)}
	The centre $Z(A_0)$ of the graded VW superalgebra $\GVW_a$ consists of  all $f\in \C[y_1,\ldots, y_a]$ of the form 
	$f=D_{0}\tilde{f}+c$, for $\tilde{f}\in \C[y_1,\ldots, y_a]^{S_a}$ and $c\in \mathbb{C}$.  
\end{proposition}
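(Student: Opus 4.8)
The plan is to prove the two inclusions separately, with the non-trivial direction being that every central element has the stated form.

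\textbf{The easy inclusion.} By Lemma~\ref{lem:manycentralelts} specialized at $\hbar=0$, every element of the form $f=D_0\tilde f+c$ with $\tilde f\in\C[y_1,\ldots,y_a]^{S_a}$ and $c\in\C$ lies in $Z(A_0)=Z(\GVW_a)$. So this half is immediate from the already-established lemma.

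\textbf{The hard inclusion.} Suppose $f\in Z(A_0)$. First I would apply Lemma~\ref{centre-poly} (at $\hbar=0$) to conclude $f\in\C[y_1,\ldots,y_a]$, since $f$ commutes with all $y_i$. Next, since $f$ commutes with each $s_i$, Lemma~\ref{centre-sym}(a) gives that $f$ is symmetric in each adjacent pair $y_i,y_{i+1}$, hence $f\in\C[y_1,\ldots,y_a]^{S_a}$. The remaining and main point is to show that, modulo constants, such a symmetric polynomial must be divisible by $D_0=\prod_{i<j}(y_i-y_j)^2$ (the squared Vandermonde) in order to additionally commute with the $e_i$. The key computation is the condition $fe_i=e_if$; but $f\in\C[y_1,\ldots,y_a]$ means $fe_i$ and $e_if$ are computed via the symmetry relations \ref{sym}. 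Using relation \ref{sym}(i), $e_i(y_{i+1}-y_i)=0$ in $A_0$ (as $\hbar=0$), so $e_i$ kills $y_{i+1}-y_i$ on the right; similarly on the left by \ref{sym}(ii). Thus for $f$ to satisfy $e_if=fe_i$, writing $f$ in terms of $y_i,y_{i+1}$ and the other variables, one finds that $e_if$ only sees $f$ evaluated on the subspace $y_i=y_{i+1}$, from both sides but with potentially different ``leftover'' terms; I expect the precise statement to be that $e_i(f-f|_{y_i=y_{i+1}})=0$ forces, together with symmetry, the factor $(y_i-y_{i+1})^2$ to divide $f-c$. Running this over all $i<j$ (using that the $(y_i-y_j)^2$ are pairwise coprime in the UFD $\C[y_1,\ldots,y_a]$) yields $D_0\mid (f-c)$ for the appropriate constant $c$ (the value of $f$ on the diagonal), and then $\tilde f:=(f-c)/D_0$ is symmetric because both $f-c$ and $D_0$ are.

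\textbf{Main obstacle.} The delicate step is extracting divisibility by $(y_i-y_{i+1})^2$ — not just by $(y_i-y_{i+1})$ — from the relation $e_if=fe_i$ in $A_0$. One clean way is to use Lemma~\ref{centre-tildefsi}: since $f$ is symmetric, $fs_i=s_if+\sum_j y_i^j e_i p_j$ in $A_{\hbar}$; at $\hbar=0$ the correction terms involving $e_i$ must vanish for $f$ to be central, and tracing through the explicit formula in the proof of Lemma~\ref{centre-tildefsi} (where the $\hbar$-coefficients encode a discrete derivative of $f$ in the direction $y_i-y_{i+1}$) shows the obstruction to centrality is exactly the failure of $(y_i-y_{i+1})^2$ to divide $f-c$; alternatively one argues directly that $e_i g=0$ in $A_0$ for $g\in\C[y_1,\ldots,y_a]$ if and only if $g$ is divisible by $y_i-y_{i+1}$, then bootstraps using symmetry and the left-right versions \ref{sym}(i),(ii) to upgrade to the square. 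Once divisibility by each $(y_i-y_j)^2$ is in hand, coprimality in the polynomial ring finishes the argument, and Lemma~\ref{lem:manycentralelts} confirms the form obtained is genuinely central, closing the loop.
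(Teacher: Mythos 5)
Your overall plan coincides with the paper's: the easy inclusion is indeed Lemma~\ref{lem:manycentralelts} at $\hbar=0$, and Lemmas~\ref{centre-poly} and~\ref{centre-sym} reduce the hard inclusion to deciding which symmetric polynomials commute with the $e_i$. The gap is in that last step, which you flag but do not resolve, and both mechanisms you propose fail as stated. First, the condition you ``expect'', namely $e_i\bigl(f-f|_{y_i=y_{i+1}}\bigr)=0$, is vacuous: in $A_0$ the relations \ref{sym} give $e_iy_{i+1}=e_iy_i$ and $y_{i+1}e_i=y_ie_i$, so $e_i g=e_i\,g|_{y_{i+1}=y_i}$ holds for \emph{every} polynomial $g$, and hence this identity cannot force divisibility of $f-c$ by $(y_i-y_{i+1})^2$ (for instance $f=y_1+y_2$ satisfies it but is not central and is not congruent to a constant modulo $(y_1-y_2)^2$). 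Second, the route via Lemma~\ref{centre-tildefsi} gives nothing at $\hbar=0$: all correction terms there carry a factor of $\hbar$, so for symmetric $f$ one has $fs_i=s_if$ in $A_0$ automatically (this is exactly Lemma~\ref{centre-sym}(b)); there is no obstruction hidden in the $s_i$-commutation, and the only extra condition on a symmetric polynomial comes from the $e_i$.

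The actual content, which the paper's proof supplies, is a basis comparison: writing $fe_1$ and $e_1f$ in the basis $S^\bullet_{a,a}$, the relations turn $fe_1$ into $f(y_1,y_1,y_3,\dots,y_a)\,e_1$ with all dots sitting \emph{above} $e_1$ (on the cup), and $e_1f$ into $e_1\,f(y_1,y_1,y_3,\dots,y_a)$ with all dots \emph{below} (on the cap); since $y_1^ke_1p$ and $e_1y_1^kp$ are distinct basis elements for $k>0$, centrality forces $f(y_1,y_1,y_3,\dots,y_a)=p_0(y_3,\dots,y_a)$, i.e.\ the restriction of $f$ to $y_1=y_2$ is independent of $y_1$ --- strictly stronger than ``$e_1$ kills multiples of $y_1-y_2$'', and this is what eventually produces the square. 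Moreover, your constant $c$ is taken for granted: what falls out of the computation is a polynomial $p_0$ in the remaining variables, and showing $p_0\in\C$ requires a separate argument (the paper runs a monomial-by-monomial argument using the full $S_a$-symmetry of $f$); without it you cannot subtract a single constant and invoke coprimality of the $(y_i-y_j)^2$. Once these two points are supplied, your finish (antisymmetry of the cofactor gives $(y_1-y_2)^2\mid f-p_0$, then symmetry gives $D_0\mid f-c$, and $\tilde f=(f-c)/D_0$ is symmetric) agrees with the paper.
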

\begin{proof}
	We showed in Lemmas \ref{centre-poly} and \ref{centre-sym} that $Z(A_0) \subseteq \C[y_1,\ldots, y_a]^{S_a}$, and that any symmetric polynomial commutes with $s_i$ for $1 \leq i \leq a-1$ and $y_j$ for $1 \leq j \leq a$. It remains to check which symmetric polynomials commute with $e_i$ for all  $1 \leq i \leq a-1$. To this end, fix $f\in Z(A_0)$. We will compute a condition on commutation with $e_1$; then the symmetry of $f$ will complete the proof.  
	
	Expanding $fe_1$ in the normal dotted diagram basis, the terms appearing with nonzero coefficient all have underlying (undotted) diagrams equal to $e_1$; i.e.\ $fe_1$ is a linear combination of terms of the form $y_1^ke_1p_{k}$  with $p_k \in \C[y_3,\ldots , y_a]$. Similarly, $e_1f$ is a linear combination of terms of the form $e_1y_1^k p_{k}$. Comparing, we get $p_k=0$ for $k>0$, and that $fe_1=p_{0}(y_3,\ldots , y_a)e_1$. Using the presentation of $A_0$ given in Definition \ref{def:Ahbar}, we have that a polynomial in the $y_i$'s is annihilated by $e_1$ if any only if it is a multiple of $(y_1-y_2)$ (see \ref{sym}, specializing to $\hbar = 0$). Thus 
	$$f = (y_1-y_2) g+ p_0, \quad \text{ with } g \in \mC[y_1,\ldots, y_a] \text{ and } p_0\in \mC[y_3,\ldots, y_a].$$

	We claim that $p_0 \in \C$, which will follow from the symmetry of $f$. For this let $b y_3^{\lambda_3}\cdots y_a^{\lambda_a}$ be a non-zero summand of  $p_0$, and write $y^\lambda =  y_4^{\lambda_4} \cdots y_a^{\lambda_a}$ for short. 
	If $\lambda_3\geq 1$, 
	then symmetry implies 
	$b y_1^{\lambda_3} y^{\lambda}$ is a term in $f$, so that 
	$by_1^{\lambda_3-1} y^{\lambda}$ is a term in $g$. So $-by_1^{\lambda_3-1} y_2 y^{\lambda}$, and therefore $-b y_2 y_3^{\lambda_3-1} y^{\lambda}$, are summands in $f$. Going back to $g$, we get that $b y_3^{\lambda_3-1} y^{\lambda}$ is a summand there, so that $b y_1 y_3^{\lambda_3-1} y^{\lambda}$ is a summand of $f$. But comparing the coefficient to that of $y_2 y_3^{\lambda_3-1} y^{\lambda}$, we see that this contradicts the symmetry of $f$. Therefore $\lambda_3 = 0$ for all non-zero summands of $p_0$, and thus by symmetry, $p_0 \in \CC$ as claimed. 
	
	Next, since $f$ is symmetric (specifically in $y_1$ and $y_2$), we have $g$ is antisymmetric in $y_1$ and $y_2$. Thus $g$ itself is a multiple of $(y_1 - y_2)$, i.e.\ $f-p_0$ is a multiple of $(y_1 - y_2)^2$. But now, since $f - p_0$ is symmetric, it must also be a multiple of $D_{0} = \prod_{1\le i<j\le a}(y_i-y_j)^2$. 
	So finally, $f$ is of the form 
	$$f=\prod_{1\le i<j\le a}(y_i-y_j)^2\cdot \tilde{f}+c=D_{0}\tilde{f}+c,$$
	for some symmetric polynomial $\tilde{f} \in  \C[y_1,\ldots, y_a]^{S_a}$ and constant $c \in \C$. 
\end{proof}

\subsection{The centre of $\sVW_a$}

The main result of this section, Theorem~\ref{centreA1}, describes the centre of  $\sVW_a$. To do that, we use the fact that the algebra $\sVW_a$ is a PBW deformation of the algebra $\GVW_a$, determine the centre of $\GVW_a$ and find a lift of the appropriate basis elements to $\sVW_a$. This approach differs from the common arguments for diagram algebras, where often the centre is realized as a subring of invariant polynomials satisfying certain cancellation properties, \cite{DRV2}. In our situation the cancellation properties did not appear very manageable, and we therefore omitted them. It would however be nice to know if an explicit result as Theorem~\ref{centreA1} could be achieved for instance for affine VW algebras as in \cite{Nazarov}, \cite{ES2}, BMW-algebras, see e.g. \cite{DRV2}, or walled Brauer algebras, see e.g. \cite{JK}, \cite{Sartori}. Compare also with \cite{C}, where the center of the Brauer superalgebra $\sBr_a$ is described in a similar way.

\begin{theorem}\label{centreA1}
The centre $Z(\sVW_a)$ of the VW superalgebra $\sVW_a=A_1$ consists of  all $f\in \C[y_1,\ldots, y_n]$, of the form 
$f=D_{1}\tilde{f}+c$, for $\tilde{f}\in \C[y_1,\ldots, y_a]^{S_a}$ an arbitrary symmetric polynomial and $c\in \mathbb{C}$.  
\end{theorem}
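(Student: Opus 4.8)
The plan is to derive $Z(\sVW_a) = Z(A_1)$ from the already-established description of $Z(A_0) = Z(\GVW_a)$ in Proposition~\ref{prop:Z(A0)}, using the PBW deformation / Rees algebra machinery set up in the previous subsections. The one containment is essentially free: Lemma~\ref{lem:manycentralelts} specialized to $\hbar = 1$ shows that every element of the form $f = D_1\tilde f + c$ with $\tilde f \in \C[y_1,\ldots,y_a]^{S_a}$ and $c \in \C$ lies in $Z(A_1)$. So the content is the reverse containment: any central element of $A_1$ has this shape.

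For the reverse containment, first I would record that $Z(A_1) \subseteq \C[y_1,\ldots,y_a]$: this is Lemma~\ref{centre-poly} at $\hbar = 1$, since a central element in particular commutes with all $y_i$. Next, Lemma~\ref{centre-sym}(a) (at $\hbar=1$) forces any $f \in Z(A_1)$ to be symmetric, so $Z(A_1) \subseteq \C[y_1,\ldots,y_a]^{S_a}$. Now I would pass to the Rees picture: by Lemma~\ref{algiso} we have $\op{Rees}(A_1) \cong A_{\hbar}$, and by Lemma~\ref{ZRees} this gives $Z(A_{\hbar}) \cong \op{Rees}(Z(A_1))$. On the other hand, one also has the graded specialization $A_0 = \GVW_a$, and the grading on $A_\hbar$ (with $\deg y_i = \deg \hbar = 1$) identifies the associated graded of $Z(A_1)$ (with respect to the dot-filtration) with a subalgebra of $Z(A_0)$ — indeed setting $\hbar = 0$ in a central element of $A_\hbar$ whose top term represents a given element of $\gr Z(A_1)$ produces a central element of $A_0$. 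Concretely: given $f \in Z(A_1)$ of filtered degree $d$, its symbol $\bar f \in \C[y_1,\ldots,y_a]^{S_a}$ of degree $d$ must (by Proposition~\ref{prop:Z(A0)}) be of the form $D_0 \bar g$ for some symmetric $\bar g$ homogeneous of degree $d - \binom{a}{2}$ (there is no constant unless $d = 0$, in which case $f$ is a scalar and we are done).

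The induction is then on filtered degree $d$. Given such $f$ with symbol $D_0\bar g$, choose a homogeneous symmetric lift $g$ of $\bar g$ in $\C[y_1,\ldots,y_a]$ and form $f' = f - D_1 g$. By Lemma~\ref{lem:manycentralelts} (at $\hbar = 1$), $D_1 g \in Z(A_1)$, so $f' \in Z(A_1)$; and since $D_1 = D_0 + (\text{lower order})$ and $D_0 g$ is exactly the degree-$d$ part of $f$, the element $f'$ has filtered degree $< d$. (Here I am using that $D_1 = \prod_{i<j}\big((y_i - y_j)^2 - 1\big)$ and $D_0 = \prod_{i<j}(y_i-y_j)^2$ agree in top degree.) By the inductive hypothesis $f' = D_1 \tilde f' + c$ for some symmetric $\tilde f'$ and constant $c$, hence $f = D_1(g + \tilde f') + c$ has the required form. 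The base case $d = 0$ (or $d < \binom{a}{2}$, where the symbol is forced to be zero unless $d=0$) is immediate: such $f$ is a scalar. This completes the reverse containment and hence the theorem.

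The main obstacle is the bookkeeping connecting the dot-filtration on $Z(A_1)$ with $Z(A_0)$ via the grading on $A_\hbar$ — one has to be careful that taking the top-degree symbol of a central element of $A_1$ genuinely lands in $Z(A_0)$ and not merely in $\C[y_1,\ldots,y_a]^{S_a}$, i.e. that symbols of central elements are central. This follows because $S^\bullet_{a,a}$ is simultaneously a filtered basis of $A_1$ and a graded basis of $A_0 = \gr A_1$ (Corollary~\ref{Basis-gsVW} and Theorem~\ref{Thm2}), so the commutator $[f, x] = 0$ in $A_1$ forces $[\bar f, \bar x] = 0$ in $\gr A_1 = A_0$ for every generator $x \in \{s_i, e_i, y_j\}$; the only subtlety is that $e_i$ and $s_i$ have degree $0$, so their symbols are themselves, and the degree-$d$ part of $[f, y_j]$ being zero gives $[\bar f, y_j] = 0$ directly. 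Once this is pinned down, the rest is the routine descending induction sketched above, and the explicit formula $D_1 = \prod_{1\le i<j\le a}\big((y_i - y_j)^2 - \hbar^2\big)\big|_{\hbar = 1}$ makes the comparison of top-degree parts transparent.
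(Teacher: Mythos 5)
Your proposal is correct and takes essentially the same route as the paper's proof: both rest on the canonical injection $\gr Z(A_1) \hookrightarrow Z(\gr A_1) = Z(A_0)$, the description of $Z(A_0)$ from Proposition~\ref{prop:Z(A0)}, and Lemma~\ref{lem:manycentralelts} to produce preimages --- the paper phrases this as surjectivity of the injection while you carry out the equivalent degree-descent induction explicitly. One minor slip worth noting: $\deg D_0 = a(a-1) = 2\binom{a}{2}$, not $\binom{a}{2}$, so a central element of filtered degree $d$ has symbol $D_0\bar g$ with $\deg\bar g = d - a(a-1)$; this does not affect the argument.
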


\begin{proof}
For any filtered algebra $B$ there exists a canonical injective algebra homomorphism $\varphi :\gr Z(B)\hookrightarrow Z(\gr(B))$, given for $f\in Z(B)^{\le k}$ by $\varphi(f+Z(B)^{\le (k-1)})= f+B^{\le (k-1)}$, see~\cite[6.13, 6.14]{MR}. For $B=\sVW_a$ and $\gr(B)=\GVW_a$, by Proposition~\ref{prop:Z(A0)} the centre of $A_0$ consists of elements of the form $f=D_{0}\tilde{f}+c$ for $\tilde{f}$ a symmetric polynomial and $c$ a constant. By Lemma~\ref{lem:manycentralelts}, $D_{1}\tilde{f}+c$ lies in the centre of $\sVW_a$, and we have $\varphi (c)=c$, and for $\tilde{f}$ symmetric and homogeneous of degree $k$, $\varphi(D_{1}\tilde{f}+\sVW_a^{\le a(a-1)+k-1})=D_{0}\tilde{f}$. Using Proposition~\ref{prop:Z(A0)}, we see that every $f\in Z(\GVW_a)$ is in the image of $\varphi$, so $\varphi$ is an isomorphism. 
\end{proof}

\begin{remark}
It is interesting to compare the description of the centre of $\sVW_a$ with \cite[Theorem 4.8]{Verapn}. It is shown there that the centre of $\mathcal{U}(\mathfrak{p}(n))/I$, where $I$ is the Jacobson radical of $\mathcal{U}(\mathfrak{p}(n))$, is isomorphic to the subring in the polynomial ring $\mathbb C[z_1,\dots z_n]$ of the form $\mathbb C\oplus\mathbb C[z_1,\dots,z_n]^{S_n}\Theta$, where $\Theta=\prod_{i<j}(z_i-z_j)^2$. In other words, this centre is isomorphic to $Z(\sVW_a)$ when $a=n$.
\end{remark}

\begin{theorem}
The centre $Z(A_{\hbar})$ of the superalgebra $A_{\hbar}$ consists of polynomials $f\in \C[\hbar][y_1,\ldots, y_n]$, of the form 
$f=D_{\hbar}\tilde{f}+c$, for $\tilde{f}\in \C[\hbar][y_1,\ldots, y_a]^{S_a}$ an arbitrary symmetric polynomial and $c\in \mathbb{C}[\hbar]$.  
\end{theorem}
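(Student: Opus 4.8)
The plan is to reduce the statement to the already-established description of $Z(\sVW_a)=Z(A_1)$ via the Rees construction, so that almost all the work has been done in the preceding lemmas. First I would invoke Lemma~\ref{algiso}: the isomorphism of $\mathbb{C}[\hbar]$-algebras $A_{\hbar}\cong\op{Rees}(A_1)$ (sending $y_i\mapsto\hbar y_i$, $s_i\mapsto s_i$, $e_i\mapsto e_i$) restricts to an isomorphism of centres, and by Lemma~\ref{ZRees} we have $Z(\op{Rees}(A_1))=\op{Rees}(Z(A_1))$. Thus the task becomes: compute $\op{Rees}(Z(A_1))$ explicitly and transport the answer back through the isomorphism of Lemma~\ref{algiso}.

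By Theorem~\ref{centreA1}, $Z(A_1)=\mathbb{C}\cdot 1\;\oplus\;D_1\cdot\mathbb{C}[y_1,\dots,y_a]^{S_a}$. Fix a homogeneous $\mathbb{C}$-basis $\{m_\lambda\}$ of $\mathbb{C}[y_1,\dots,y_a]^{S_a}$ (say the monomial symmetric polynomials). Since $D_1=\prod_{1\le i<j\le a}\bigl((y_i-y_j)^2-1\bigr)$ has filtration degree exactly $a(a-1)$, with top-degree part $D_0=\prod_{i<j}(y_i-y_j)^2\neq 0$, the element $D_1 m_\lambda$ has filtration degree $a(a-1)+\deg m_\lambda$. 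Hence $\{1\}\cup\{D_1 m_\lambda\}$ is a $\mathbb{C}$-basis of $Z(A_1)$ compatible with the filtration inherited from $A_1$, in the sense of Lemma~\ref{lem:Rees-basis}. That lemma then produces a $\mathbb{C}[\hbar]$-basis $\{1\}\cup\{D_1 m_\lambda\,\hbar^{\,a(a-1)+\deg m_\lambda}\}$ of $\op{Rees}(Z(A_1))$.

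Finally I would transport this basis back to $A_{\hbar}$. Under the isomorphism $A_{\hbar}\to\op{Rees}(A_1)$, the deformed squared Vandermonde maps as $D_{\hbar}\mapsto\prod_{i<j}\bigl(\hbar^2(y_i-y_j)^2-\hbar^2\bigr)=\hbar^{a(a-1)}D_1$, and a homogeneous symmetric polynomial $m_\lambda$ maps to $m_\lambda(\hbar y_1,\dots,\hbar y_a)=\hbar^{\deg m_\lambda}m_\lambda$; therefore $D_{\hbar}m_\lambda\mapsto\hbar^{\,a(a-1)+\deg m_\lambda}D_1 m_\lambda$ and $1\mapsto 1$. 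So $\{1\}\cup\{D_{\hbar}m_\lambda\}$ is the preimage of the Rees basis and hence a $\mathbb{C}[\hbar]$-basis of $Z(A_{\hbar})$; its $\mathbb{C}[\hbar]$-span is precisely $\mathbb{C}[\hbar]\cdot 1\;\oplus\;D_{\hbar}\cdot\mathbb{C}[\hbar][y_1,\dots,y_a]^{S_a}$, which is the claimed description. (Lemma~\ref{lem:manycentralelts} gives the inclusion $\supseteq$ independently, as a consistency check.)

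The proof has no deep obstacle; the one point requiring care is the degree bookkeeping in the Rees computation --- namely verifying that $D_1$ has filtration degree exactly $a(a-1)$ and that $D_{\hbar}$ goes to $\hbar^{a(a-1)}D_1$, so that the natural basis $\{1\}\cup\{D_{\hbar}m_\lambda\}$ of the claimed centre matches the Rees basis degree for degree. As an alternative route avoiding the Rees machinery, one could rerun the argument of Proposition~\ref{prop:Z(A0)} and Theorem~\ref{centreA1} verbatim over $\mathbb{C}[\hbar]$, using the relation $e_1(y_2-y_1-\hbar)=0$ from (S9) in place of its $\hbar=0$ specialization and $(y_i-y_j)^2-\hbar^2$ in place of $(y_i-y_j)^2$; the extra powers of $\hbar$ do not disturb the coefficient comparisons, but the Rees argument above is shorter and reuses what is already proved.
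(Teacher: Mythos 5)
Your proposal is correct and takes essentially the same route as the paper: pass through Lemma~\ref{algiso} and Lemma~\ref{ZRees} to reduce to $\op{Rees}(Z(A_1))$, apply Theorem~\ref{centreA1}, and transport back using that $D_{\hbar}\mapsto \hbar^{a(a-1)}D_1$ and homogeneous symmetric polynomials pick up their degree in $\hbar$. The only difference is cosmetic --- you make the degree bookkeeping explicit by invoking Lemma~\ref{lem:Rees-basis} with the basis $\{1\}\cup\{D_1 m_\lambda\}$, whereas the paper records the same filtration-degree computation directly without naming that lemma.
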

\begin{proof}
The centre $Z(A_{\hbar})$ is by  Lemma~\ref{algiso} isomorphic to 
$Z(\op{Rees}(A_{1}))$, which is by  Lemma~\ref{ZRees} isomorphic to $\op{Rees}(Z(A_1))$. The centre $Z(A_1)$ consists by Theorem~\ref{centreA1} of elements of the form $f=D_{1}\tilde{f}+c$, with $\tilde{f}\in \C[y_1,\ldots, y_a]^{S_a}$ and $c\in \mathbb{C}$. Assume $\tilde{f}$ is homogeneous of degree $k$. Then $D_{1}\tilde{f}\in A_1^{\le k+a(a-1)}$, which gives an element $D_{1}\tilde{f}\hbar^{k+a(a-1)}$ of $\op{Rees}(Z(A_1))\cong Z(\op{Rees}(A_1))$. Using Lemma~\ref{algiso}, we see that $Z(A_{\hbar})$ is spanned by constants and the preimages under the isomorphism $A_{\hbar}\cong \op{Rees}(A_1)$ of elements $D_{1}\tilde{f}\hbar^{k+a(a-1)}$, 
which are equal to $D_{\hbar}\tilde{f}$. 
\end{proof}

\addtocontents{toc}{\protect\setcounter{tocdepth}{0}}

\end{document}